\DeclareMathAlphabet {\mathbfit}{OML}{cmm}{b}{it}
\DeclareMathOperator{\tw}{tw}
\DeclareMathOperator{\cw}{cw}
\DeclareMathOperator{\pw}{pw}
\DeclareMathOperator{\rw}{rw}
\DeclareMathOperator{\claw}{claw}
\DeclareMathOperator{\diamondgraph}{diamond}
\DeclareMathOperator{\gem}{gem}
\DeclareMathOperator{\sun}{sun}
\DeclareMathOperator{\banner}{banner}
\DeclareMathOperator{\paw}{paw}
\DeclareMathOperator{\bull}{bull}
\DeclareMathOperator{\chair}{chair}
\DeclareMathOperator{\hammer}{hammer}
\DeclareMathOperator{\house}{house}
\DeclareMathOperator{\domino}{domino}
\newcommand{\NP}{{\sf NP}}
\newcommand{\GI}{{\sf GI}}
\newcommand\figurenames{Figures}
\newcommand{\ssi}{\subseteq_i}
\newcommand{\si}{\supseteq_i}
\newcounter{ctrclaim}[example]
\newcommand{\clm}[1]{\medskip\phantomsection\refstepcounter{ctrclaim}\noindent{\em Claim \thectrclaim. }{\em #1}\\}
\newcommand{\clmnonewline}[1]{\medskip\phantomsection\refstepcounter{ctrclaim}\noindent{\em Claim \thectrclaim. }{\em #1}}
\newtheorem{oproblem}[example]{Open Problem}
\newtheorem{observation}[example]{Observation}
\begin{document}
\makebcctitle

\begin{abstract}
Clique-width is a well-studied graph parameter owing to its use in understanding algorithmic tractability: if the clique-width of a graph class~${\cal G}$ is bounded by a constant, a wide range of problems that are \NP-complete in general can be shown to be polynomial-time solvable on~${\cal G}$.
For this reason, the boundedness or unboundedness of clique-width has been investigated and determined for many graph classes.
We survey these results for hereditary graph classes, which are the graph classes closed under taking induced subgraphs.
We then discuss the algorithmic consequences of these results, in particular for the {\sc Colouring} and {\sc Graph Isomorphism} problems.
We also explain a possible strong connection between results on boundedness of clique-width and on well-quasi-orderability by the induced subgraph relation for hereditary graph classes.
\end{abstract}

\newpage
\tableofcontents
\newpage
\listoffigures
\newpage

\section{Introduction}\label{s-intro}

Many decision problems are known to be \NP-complete~\cite{GJ79}, and it is generally believed that such problems cannot be solved in time polynomial in the input size.
For many of these hard problems, placing restrictions on the input (that is, insisting that the input has certain stated properties) can lead to significant changes in the computational complexity of the problem.
This leads one to ask fundamental questions: under which input restrictions can an \NP-complete problem be solved in polynomial time, and under which input restrictions does the problem remain \NP-complete? For problems defined on graphs, we can restrict the input to some special class of graphs that have some commonality.
The ultimate goal is to obtain complexity dichotomies for large families of graph problems, which tell us exactly for which graph classes a certain problem is efficiently solvable and for which it stays computationally hard.
Such dichotomies may not always exist if P$\neq$ \NP~\cite{La75}, but rather than solving problems one by one, and graph class by graph class, we want to discover general properties of graph classes from which we can determine the tractability or hardness of families of problems.

\subsection{Width Parameters}

One way to define a graph class is to use a notion of ``width'' and consider the set of graphs for which the width is bounded by a constant.
Though it will not be our focus, let us briefly illustrate this idea with the most well-known width parameter, \emph{treewidth}.
A \defword{tree decomposition} of a graph~$G=(V,E)$ is a tree~$T$ whose nodes are subsets of~$V$ and has the properties that, for each~$v$ in~$V$, the tree nodes that contain~$v$ induce a non-empty connected subgraph, and, for each edge~$vw$ in~$E$, there is at least one tree node that contains~$v$ and~$w$.
See \figurename~\ref{fig:tw} for an illustration of a graph and one of its tree decompositions.
\begin{figure}[b]
\begin{center} 
\begin{tabular}{ccc}
\begin{minipage}{0.25\textwidth}
\centering
\scalebox{0.7}{
{\begin{tikzpicture}[scale=1,rotate=0]
\GraphInit[vstyle=Normal]
\Vertex[x=1,y=6]{A}
\Vertex[x=3,y=6]{B}
\Vertex[x=1,y=4]{C}
\Vertex[x=3,y=4]{D}
\Vertex[x=0,y=2]{E}
\Vertex[x=2,y=2]{F}
\Vertex[x=2,y=0]{G}

\Edges(A,B,D,F,E,C,A)
\Edges(C,F)
\Edges(F,G)
\end{tikzpicture}}}
\end{minipage}
&
\hspace{1cm}
&
\begin{minipage}{0.25\textwidth}
\centering
\scalebox{0.7}{
{\begin{tikzpicture}[scale=1,rotate=0]
\GraphInit[vstyle=Normal]
\SetVertexNormal[MinSize=33pt]
\Vertex[x=1,y=6]{ABD}
\Vertex[x=1,y=4]{ACD}
\Vertex[x=1,y=2]{CDF}
\Vertex[x=0,y=0]{CEF}
\Vertex[x=2,y=0]{FG}

\Edges(ABD,ACD,CDF,CEF)
\Edges(CDF,FG)
\end{tikzpicture}}}
\end{minipage}\\
\end{tabular}
\end{center}
\caption{A graph, and a tree decomposition of the graph.}
\label{fig:tw}
\end{figure}
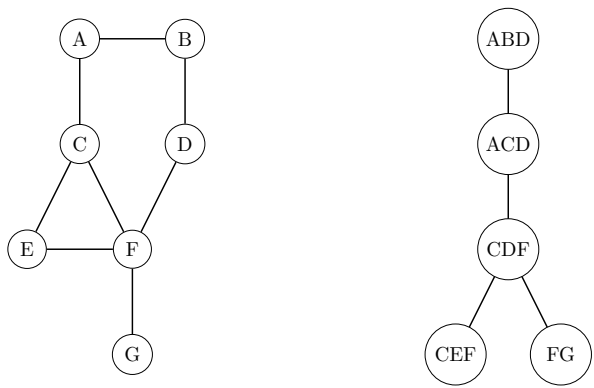
The sets of vertices that form the nodes of the tree are called \defword{bags} and the width of the decomposition is one less than the size of the largest bag.
The treewidth of~$G$ is the minimum width of its tree decompositions.
One can therefore define a class of graphs of bounded treewidth; that is, for some constant~$c$, the collection of graphs that each have treewidth at most~$c$.
The example in \figurename~\ref{fig:tw} has treewidth~$2$.
Moreover, it is easy to see that trees form exactly the class of graphs with treewidth~$1$.
Hence, the treewidth of a graph can be seen as a measure that indicates how close a graph is to being a tree.
Many graph problems can be solved in polynomial time on trees. For such problems it is natural to investigate whether restricting the problem to inputs that have bounded treewidth still yields algorithmic tractability.
An approach that often yields polynomial-time algorithms is to apply dynamic programming over the decomposition tree.
A disadvantage
of this approach
is that only sufficiently sparse graphs have bounded treewidth.

We further discuss reasons for focussing on width parameters in Section~\ref{s-mwp}, but let us first note that there are many alternative width parameters, each of which has led to progress in understanding the complexity of problems on graphs.

Clique-width, the central width parameter in our survey, is another well-known example, which has received significant attention since it was introduced by Courcelle, Engelfriet and Rozenberg~\cite{CER93} at the start of the 1990s.
Clique-width can be seen as a generalisation of treewidth that \emph{can} deal with dense graphs, such as complete graphs and complete bipartite graphs, provided these instances are sufficiently regular.
We will give explain this in Section~\ref{s-operations}, where we also give a formal definition, but, in outline, the idea is, given a graph~$G$, to determine how it can be built up vertex-by-vertex using four specific graph operations that involve assigning labels to the vertices.
The operations ensure that vertices labelled alike will keep the same label and thus, in some sense, behave identically.
The clique-width of~$G$ is the minimum number of different labels needed to construct~$G$ in this way.
Hence, if the clique-width of a graph~$G$ is small, we can decompose~$G$ into large sets of similarly behaving vertices, and these decompositions can be exploited to find polynomial-time algorithms (as we shall see later in this paper). 

We remark that many other width parameters have been defined including boolean-width, branch-width, MIM-width, MM-width, module-width, NLC-width, path-width and rank-width, to name just a few.
These parameters differ in strength, as we explain below; we refer to~\cite{Gu17,HOSG08,KLM09,Va12} for surveys on width parameters.

Given two width parameters~$p$ and~$q$, we say that~$p$ \defword{dominates}~$q$ if there is a function~$f$ such that $p(G)\leq f(q(G))$ for all graphs~$G$.
If~$p$ dominates~$q$ but not the reverse, then~$p$ is \defword{more general} than~$q$, as~$p$ is bounded for larger graph classes: 
whenever~$q$ is bounded for some graph class, then this is also the case for~$p$, but there exists an infinite family of graphs for which the reverse does not hold.
If~$p$ dominates~$q$ and~$q$ dominates~$p$, then~$p$ and~$q$ are \defword{equivalent}.
For instance, MIM-width is more general than boolean-width, clique-width, module-width, NLC-width and rank-width, all of which are equivalent~\cite{BTV11,Jo98,OS06,Ra08,Va12}.
The latter parameters are more general than the equivalent group of parameters branch-width, MM-width and treewidth, which are, in turn, more general than path-width~\cite{CO00,RS91,Va12}.
To give a concrete example, recall that the treewidth of the class of complete graphs is unbounded, in contrast to the clique-width.
More precisely, a complete graph on $n\geq 2$ vertices has treewidth~$n-1$ but clique-width~$2$.
As another example, the reason that rank-width and clique-width are equivalent is because the inequalities $\rw(G)\leq \cw(G)\leq 2^{\rw(G)+1}-1$ hold for every graph~$G$~\cite{OS06}.
These two inequalities are essentially tight~\cite{Ou17}, and, as such, the latter example also shows that two equivalent parameters may not necessarily be linearly, or even polynomially, related.

\subsection{Motivation for Width Parameters}\label{s-mwp}

The main computational reason for the large interest in width parameters is that many well-known \NP-complete graph problems become polynomial-time solvable if some width parameter is bounded.
There are a number of meta-theorems which prescribe general, sufficient conditions for a problem to be tractable on a graph class of bounded width.
For treewidth and equivalent parameters, such as branch-width and MM-width, one can use the celebrated theorem of Courcelle~\cite{Co90}.
This theorem, slightly extended from its original form, states that for every graph class of bounded treewidth, every problem definable in MSO$_2$ can be solved in time linear in the number of vertices of the graph.\footnote{MSO$_2$ refers to the fragment of second order logic where quantified relation symbols must have arity at most~$2$, 
which means that, with graphs, one can quantify over both sets of vertices and sets of edges. 
Many graph problems can be defined using MSO$_2$, such as deciding whether a graph has a $k$-colouring (for fixed~$k$) or a Hamiltonian path, but there are also problems that cannot be defined in this way.}
In order to use this theorem, one can use the linear-time algorithm of Bodlaender~\cite{Bo96} to verify whether a graph has treewidth at most~$c$ for any fixed constant~$c$ (that is, $c$ is not part of the input).
However, many natural graph classes, such as all those that contain graphs with arbitrarily large cliques, have unbounded treewidth.

We have noted that clique-width is more general than treewidth.
This means that if we have shown that a problem can be solved in polynomial time on graphs of bounded clique-width, then it can also be solved in polynomial time on graphs of bounded treewidth.
Similarly, if a problem is \NP-complete for graphs of bounded treewidth, then the same holds for graphs of bounded clique-width.
For graph classes of bounded clique-width, one can use several other meta-theorems.
The first such result is due to Courcelle, Makowsky and Rotics~\cite{CMR00}.
They proved that graph problems that can be defined in MSO$_1$ are linear-time solvable on graph classes of bounded clique-width.\footnote{MSO$_1$ is monadic second order logic with the use of quantifiers permitted on relations of arity~$1$ (such as vertices), but not of arity~$2$ (such as edges) or more.
Hence, MSO$_1$ is more restricted than MSO$_2$.
We refer to~\cite{CE12} for more information on MSO$_1$ and MSO$_2$.}
An example of such a problem is the well-known {\sc Dominating Set} problem.
This problem is to decide, for a graph~$G=(V,E)$ and integer~$k$, if~$G$ contains a set~$S\subseteq V$ of size at most~$k$ such that every vertex of $G-S$ has at least one neighbour in~$S$.\footnote{Several other problems, such as {\sc List Colouring} and {\sc Precolouring Extension} are polynomial-time solvable on graphs of bounded treewidth~\cite{JS97}, but stay \NP-complete on graph of bounded clique-width; the latter follows from results of~\cite{JS97} and~\cite{BDM09}, respectively; see also~\cite{GJPS17}.}

\subsection{Focus: Clique-Width}

As mentioned, in this survey we focus on clique-width.
Despite the usefulness of boundedness of clique-width, our understanding of clique-width itself is still very limited.
For example, although computing the clique-width of a graph is known to be \NP-hard in 
general~\cite{FRRS09},\footnote{It is also \NP-hard to compute treewidth~\cite{ACP87} and parameters equivalent to clique-width, such as NLC-width~\cite{GW05}, rank-width (see~\cite{HO08,Oum08}) and boolean-width~\cite{SV16}.} 
the complexity of computing the clique-width is open even on very restricted graph classes, such as unit interval graphs (see~\cite{HMR10} for some partial results).
To give another example, the complexity of determining whether a given graph has clique-width at most~$c$ is still open for every fixed constant $c\geq 4$.
On the positive side, see~\cite{CHLRR12} for a polynomial-time algorithm for $c=3$ and~\cite{EGW03} for a polynomial-time algorithm, for every fixed~$c$, on graphs of bounded treewidth.

To get a better handle on clique-width, many properties of clique-width, and relationships between clique-width and other graph parameters, have been determined over the years.
In particular, numerous graph classes of bounded and unbounded clique-width have been identified.
This has led to several dichotomies for various families of graph classes, which state exactly which graph classes of the family have
bounded or unbounded clique-width.
However, determining (un)boundedness of clique-width of a graph class is usually a highly non-trivial task, as it requires a thorough understanding of the structure of graphs in the class.
As such, there are still many gaps in our knowledge.

A number of results on clique-width are collected in the surveys on clique-width by Gurski~\cite{Gu17} and Kami\'nski, Lozin and Milani\v{c}~\cite{KLM09}.
Gurski focuses on the behaviour of clique-width (and NLC-width) under graph operations and graph transformations.
Kami\'nski, Lozin and Milani\v{c} also discuss results for special graph classes.
We refer to a recent survey of Oum~\cite{Ou17} for algorithmic and structural results on the equivalent width parameter rank-width.

\subsection{Aims and Outline}

In Section~\ref{s-pre} we introduce some basic terminology and notation that we use throughout the paper.
In Section~\ref{s-operations} we formally define clique-width.
In the same section we present a number of basic results on clique-width and explain two general techniques for showing that the clique-width of a graph class is bounded or unbounded.
For this purpose, in the same section we also list a number of graph operations that preserve (un)boundedness of clique-width for hereditary graph classes.

A graph class is \defword{hereditary} if it is closed under taking induced subgraphs, or equivalently, under vertex deletion.
Due to its natural definition, the framework of hereditary graph classes captures many well-known graph classes, such as bipartite, chordal, planar, interval and perfect graphs; we refer to the textbook of Brandst{\"a}dt, Le and Spinrad~\cite{BLS99} for a survey.
As we shall see, boundedness of clique-width has been particularly well studied for hereditary graph classes.
We discuss the state-of-the-art and other known results on boundedness of clique-width for hereditary graph classes in Section~\ref{s-hereditary}.
This is all related to our first aim: to update the paper of Kami\'nski, Lozin and Milani\v{c}~\cite{KLM09} from 2009 by surveying, in a systematic way, known results and open problems on boundedness of clique-width for hereditary graph classes.

Our second aim is to discuss algorithmic implications of the results from Section~\ref{s-hereditary}.
We do this in Section~\ref{s-algo} by focussing on two well-known problems.
We first discuss implications for the {\sc Colouring} problem, which is well known to be \NP-complete~\cite{Lo73}.
We focus on (hereditary) graph classes defined by two forbidden induced subgraphs.
Afterwards, we consider the algorithmic consequences for the {\sc Graph Isomorphism} problem.
This problem can be solved in quasi-polynomial time~\cite{Ba16}.
It is not known if {\sc Graph Isomorphism} can be solved in polynomial time, but it is not \NP-complete unless the polynomial hierarchy collapses~\cite{Sc88}.
As such, we define the complexity class \GI, which consists of all problems that can be polynomially reduced to {\sc Graph Isomorphism} and a problem in \GI\ is \GI-complete if {\sc Graph Isomorphism} can be polynomially reduced to it.
The {\sc Graph Isomorphism} problem is of particular interest, as there are similarities between proving unboundedness of clique-width of some graph class and proving that {\sc Graph Isomorphism} stays {\sc \GI}-complete on this class~\cite{Sc17}.

Our third aim is to discuss a conjectured relationship between boundedness of clique-width and well-quasi-orderability by the induced subgraph relation.
If it can be shown that a graph class is well-quasi-ordered, we can apply several powerful results to prove further properties of the class.
This is, for instance, illustrated by the Robertson-Seymour Theorem~\cite{RS04-Wagner}, which states that the set of all finite graphs is well-quasi-ordered by the minor relation.
This result makes it possible to test in cubic time whether a graph belongs to some given minor-closed graph class~\cite{RS95} (see~\cite{KKR12} for a quadratic algorithm).
For the induced subgraph relation, it is easy to construct examples of hereditary graph classes that are not well-quasi-ordered.
Take, for instance, the class of graphs of degree at most~$2$, which contains an infinite anti-chain, namely the set of all cycles.

If every hereditary graph class that is well-quasi-ordered by the induced subgraph relation also has bounded clique-width, then all algorithmic consequences of having bounded clique-width would also hold for being well-quasi-ordered by the induced subgraph relation.
However, Lozin, Razgon and Zamaraev~\cite{LRZ18} gave a negative answer to a question of Daligault, Rao and Thomass{\'e}~\cite{DRT10} about this implication, by presenting a hereditary graph class of unbounded clique-width that is nevertheless well-quasi-ordered by the induced subgraph relation.
Their graph class can be characterized only by infinitely many forbidden induced subgraphs.
This led the authors of~\cite{LRZ18} to conjecture that every finitely defined hereditary graph class that is well-quasi-ordered by the induced subgraph relation has bounded clique-width, which, if true, would still be very useful.
All known results agree with this conjecture, and we survey these results in Section~\ref{s-wqo}.
In the same section we explain that  the graph operations given in Section~\ref{s-operations} do not preserve well-quasi-orderability by the induced subgraph relation. 
However, we also explain that a number of these operations can be used for a stronger property, namely well-quasi-orderability by the labelled induced subgraph relation.

In Section~\ref{s-con} we conclude our survey with a list of other relevant open problems.
There, we also discuss some variants of clique-width, including linear clique-width and power-bounded clique-width.

\section{Preliminaries}\label{s-pre}

Throughout the paper we consider only finite, undirected graphs without multiple edges or self-loops.

Let $G=(V,E)$ be a graph.
The \defword{degree} of a vertex~$u\in V$ is the size of its neighbourhood $N(u)=\{v\in V\; |\; uv\in E\}$.
For a subset $S\subseteq V$, the graph~$G[S]$ denotes the subgraph of~$G$ \defword{induced by}~$S$, which is the graph with vertex set~$S$ and an edge between two vertices $u,v\in S$ if and only if $uv\in E$.
If~$F$ is an induced subgraph of~$G$, then we denote this by $F\ssi G$.
Note that~$G[S]$ can be obtained from~$G$ by deleting the vertices of $V\setminus S$.
The \defword{line graph} of~$G$ is the graph with vertex set~$E$ and an edge between two vertices~$e_1$ and~$e_2$ if and only if~$e_1$ and~$e_2$ share a common end-vertex in~$G$.

An \defword{isomorphism} from a graph~$G$ to a graph~$H$ is a bijective mapping $f:V(G)\to V(H)$ such that there is an edge between two vertices~$u$ and~$v$ in~$G$ if and only if there is an edge between~$f(u)$ and~$f(v)$ in~$H$.
If such an isomorphism exists then~$G$ and~$H$ are said to be \defword{isomorphic}.
We say that~$G$ is \defword{$H$-free} if~$G$ contains no induced subgraph isomorphic to~$H$.

Let $G=(V,E)$ be a graph.
A set $K\subseteq V$ is a \defword{clique} of~$G$ and~$G[K]$ is \defword{complete} if there is an edge between every pair of vertices in~$K$.
If~$G$ is connected, then a vertex $v\in V$ is a \defword{cut-vertex} of~$G$ if $G[V\setminus \{v\}]$ is disconnected, and a clique $K\subset V$ is a \defword{clique cut-set} of~$G$ if $G[V\setminus K]$ is disconnected.
If~$G$ is connected and has at least three vertices but no cut-vertices, then~$G$ is \defword{$2$-connected}.
A maximal induced subgraph of~$G$ that has no cut-vertices is a \defword{block} of~$G$.
If~$G$ is connected and has no clique cut-set, then~$G$ is an \defword{atom}.

The graphs~$C_n$, $P_n$ and~$K_n$ denote the cycle, path and complete graph on~$n$ vertices, respectively.
The \defword{length} of a path or a cycle is the number of its edges.
The \defword{distance} between two vertices~$u$ and~$v$ in a graph~$G$ is the length of a shortest path between them.
For an integer~$r\geq 1$, the \defword{$r$-th power} of~$G$ is the graph with vertex set~$V(G)$ and an edge between two vertices~$u$ and~$v$ if and only if~$u$ and~$v$ are at distance at most~$r$ from each other in~$G$.

If~$F$ and~$G$ are graphs with disjoint vertex sets, then the \defword{disjoint union} of~$F$ and~$G$ is the graph
$G+\nobreak F=(V(F)\cup V(G),E(F)\cup E(G))$.
The disjoint union of~$s$ copies of a graph~$G$ is denoted~$sG$.
A \defword{forest} is a graph with no cycles, that is, every connected component is a \defword{tree}.
A forest is \defword{linear} if it has no vertices of degree at least~$3$, or equivalently, if it is the disjoint union of paths.
A \defword{leaf} in a tree is a vertex of degree~$1$.
In a \defword{complete binary} tree all non-leaf vertices have degree~$3$.

Let~$S$ and~$T$ be disjoint vertex subsets of a graph~$G=(V,E)$.
A vertex~$v$ is \defword{(anti-)complete} to~$T$ if it is (non-)adjacent to every vertex in~$T$.
Similarly, $S$ is \defword{(anti-)complete} to~$T$ if every vertex in~$S$ is (non-)adjacent to every vertex in~$T$.
A set of vertices~$M$ is a \defword{module} of~$G$ if every vertex of~$G$ that is not in~$M$ is either complete or anti-complete to~$M$.
A module of~$G$ is \defword{trivial} if it contains zero, one or all vertices of~$G$, otherwise it is \defword{non-trivial}.
We say that~$G$ is \defword{prime} if every module of~$G$ is trivial.

A graph~$G$ is \defword{bipartite} if its vertex set can be partitioned into two (possibly empty) subsets~$X$ and~$Y$ such that every edge of~$G$ has one end-vertex in~$X$ and the other one in~$Y$.
If~$X$ is complete to~$Y$, then~$G$ is \defword{complete bipartite}.
For two non-negative integers~$s$ and~$t$, we
denote the complete bipartite graph with partition classes of size~$s$ and~$t$, respectively, by~$K_{s,t}$. 
The graph~$K_{1,t}$ is also known as the $(t+1)$-vertex \defword{star}.
The \defword{subdivision} of an edge~$uv$ in a graph replaces~$uv$ by a new vertex~$w$ and edges~$uw$ and~$vw$.
We let~$K_{1,t}^+$ and~$K_{1,t}^{++}$ be the graphs obtained from~$K_{1,t}$ by subdividing one of its edges once or twice, respectively (see \figurename~\ref{fig:K1t+} for examples).
\begin{figure}
\begin{center}
\begin{tabular}{cc}
\begin{minipage}{0.29\textwidth}
\begin{center}
\begin{tikzpicture}[xscale=-1]
\GraphInit[vstyle=Simple]
\SetVertexSimple[MinSize=6pt]
\Vertex[x=0,y=0]{a0}
\Vertex[x=0,y=0.5]{a1}
\Vertex[x=0,y=1]{a2}
\Vertex[x=0,y=1.5]{a3}
\Vertex[x=0,y=2]{a4}
\Vertex[x=-1,y=1]{x}
\Vertex[x=1,y=0]{b0}
\Edges(a1,x,a0,b0)
\Edges(a2,x,a3)
\Edges(x,a4)
\end{tikzpicture}
\end{center}
\end{minipage}
&
\begin{minipage}{0.29\textwidth}
\begin{center}
\begin{tikzpicture}[scale=1]
\GraphInit[vstyle=Simple]
\SetVertexSimple[MinSize=6pt]
\Vertex[x=0,y=0]{a0}
\Vertex[x=0,y=0.5]{a1}
\Vertex[x=0,y=1]{a2}
\Vertex[x=0,y=1.5]{a3}
\Vertex[x=0,y=2]{a4}
\Vertex[x=-1,y=1]{x}
\Vertex[x=1,y=0]{b0}
\Vertex[x=2,y=0]{c0}
\Edges(a1,x,a0,b0,c0)
\Edges(a2,x,a3)
\Edges(x,a4)
\end{tikzpicture}
\end{center}
\end{minipage}\\
\\
$K_{1,t}^+$~($t=\nobreak 5$~shown)&
$K_{1,t}^{++}$~($t=\nobreak 5$~shown)
\end{tabular}
\end{center}
\caption{\label{fig:K1t+}The graphs~$K_{1,t}^+$ and~$K_{1,t}^{++}$.}
\end{figure}
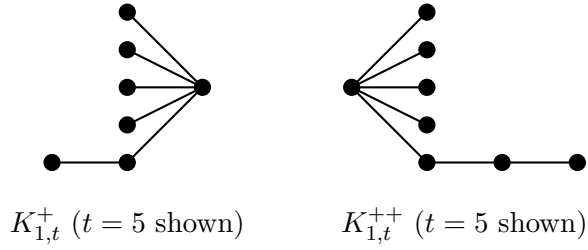

A graph is \defword{complete $r$-partite}, for some $r\geq 1$, if its vertex set can be partitioned into~$r$ independent sets $V_1,\ldots,V_r$ such that there exists an edge between two vertices~$u$ and~$v$ if and only if~$u$ and~$v$ do not belong to the same set~$V_i$.
Note that a non-empty graph is complete $r$-partite for some $r\geq 1$ if and only if it is $(P_1+\nobreak P_2)$-free.

Let $G=(V,E)$ be a graph.
Its \defword{complement}~$\overline{G}$ is the graph with vertex set~$V$ and an edge between two vertices~$u$ and~$v$ if and only if~$uv$ is not an edge of~$G$.
We say that~$G$ is \defword{self-complementary} if~$G$ is isomorphic to~$\overline{G}$.
The complement of a bipartite graph is a \defword{co-bipartite} graph.

\begin{figure}
\begin{center}
\begin{tabular}{ccccc}
\begin{tikzpicture}[scale=1]
\GraphInit[vstyle=Simple]
\SetVertexSimple[MinSize=6pt]
\Vertex[x=0,y=0.5]{a}
\Vertex[x=0.866025403784438646763723,y=0]{b}
\Vertex[x=0,y=-0.5]{c}
\Vertex[x=-0.866025403784438646763723,y=0]{d}
\Edges(a,b,c,d,a,c)
\end{tikzpicture}
&
\begin{tikzpicture}[scale=1.5,rotate=90]
\GraphInit[vstyle=Simple]
\SetVertexSimple[MinSize=6pt]
\Vertex[x=0,y=0]{x}
\Vertex[a=-45,d=1]{a}
\Vertex[a=-15,d=1]{b}
\Vertex[a=15,d=1]{c}
\Vertex[a=45,d=1]{d}
\Edges(a,x,b)
\Edges(c,x,d)
\Edges(a,b,c,d)
\end{tikzpicture}
&
\begin{tikzpicture}[scale=1]
\GraphInit[vstyle=Simple]
\SetVertexSimple[MinSize=6pt]
\Vertex[x=0.5,y=0.70710678118]{x}
\Vertex[x=0,y=0]{a}
\Vertex[x=1,y=0]{b}
\Vertex[x=0,y=-1]{c}
\Vertex[x=1,y=-1]{d}
\Edges(a,x,b)
\Edges(a,b,d,c,a)
\end{tikzpicture}
&
{\begin{tikzpicture}[scale=1,rotate=90]
\GraphInit[vstyle=Simple]
\SetVertexSimple[MinSize=6pt]
\Vertex[x=0,y=0]{x00}
\Vertex[x=1,y=0]{x10}
\Vertex[x=2,y=0]{x20}
\Vertex[x=0,y=1]{x01}
\Vertex[x=1,y=1]{x11}
\Vertex[x=2,y=1]{x21}
\Edges(x10,x00,x01,x11,x21,x20,x10,x11)
\end{tikzpicture}}\\\\
$\diamondgraph=\overline{2P_1+P_2}$ & $\gem = \overline{P_1+P_4}$ & $\house=\overline{P_5} $ & $\domino$\\\\
\begin{tikzpicture}[scale=1]
\GraphInit[vstyle=Simple]
\SetVertexSimple[MinSize=6pt]
\Vertex[x=0,y=0]{x}
\Vertex[a=0,d=1]{a1}
\Vertex[a=0,d=2]{a2}
\Vertex[a=0,d=3]{a3}
\Vertex[a=240,d=1]{b1}
\Vertex[a=240,d=2]{b2}
\Vertex[a=120,d=1]{c1}
\Edges(c1,x,a1,a2,a3)
\Edges(x,b1,b2)
\end{tikzpicture}
&
\begin{tikzpicture}[scale=1,rotate=90]
\GraphInit[vstyle=Simple]
\SetVertexSimple[MinSize=6pt]
\Vertices{circle}{a,b,c,d,e}
\Edges(a,b,c,d,e,a)
\end{tikzpicture}
&
\begin{tikzpicture}[scale=1,rotate=90]
\GraphInit[vstyle=Simple]
\SetVertexSimple[MinSize=6pt]
\Vertices{circle}{a,b,c,d,e}
\Edges(a,b,c,d,e,a,c,e,b,d,a)
\end{tikzpicture}
&
\begin{tikzpicture}[scale=1,rotate=90]
\GraphInit[vstyle=Simple]
\SetVertexSimple[MinSize=6pt]
\Vertices{circle}{a,b,c,d,e}
\Edges(d,e,a,b,c)
\end{tikzpicture}\\\\ 
$S_{h,i,j}$ & $C_n$ & $K_n$ & $P_n$ \\
($(h,i,j)=(1,2,3)$ shown) & ($n=5$ shown) & ($n=5$ shown) & ($n=5$ shown)\\\\
\end{tabular}
\end{center}
\caption{\label{fig:particular-graphs}Some common graphs used throughout the paper.}
\end{figure}
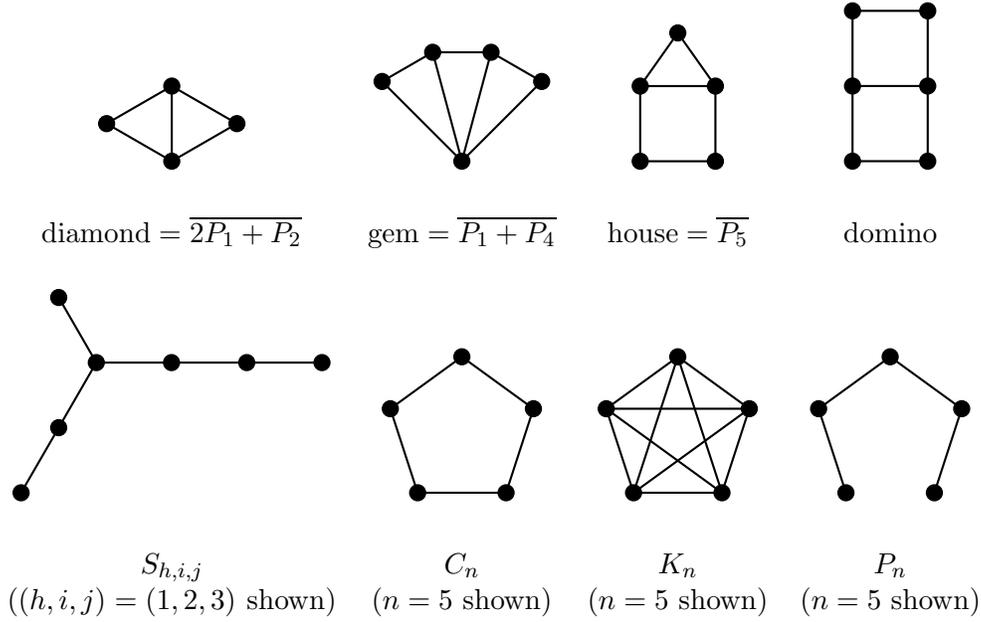

The graphs $K_{1,3}$, $\overline{2P_1+P_2}$, $\overline{P_1+P_4}$, and~$\overline{P_5}$ are also known as the \defword{claw}, \defword{diamond}, \defword{gem}, and \defword{house}, respectively.
The latter three graphs are shown in \figurename~\ref{fig:particular-graphs}, along with the \defword{domino}.
The graph~$S_{h,i,j}$, for $1\leq h\leq i\leq j$, denotes the \defword{subdivided claw}, which is the tree with one vertex~$x$ of degree~$3$ and exactly three leaves, which are of distance~$h$,~$i$ and~$j$ from~$x$, respectively.
Note that $S_{1,1,1}=K_{1,3}$, $S_{1,1,2}=K_{1,3}^+$ and $S_{1,1,3}=K_{1,3}^{++}$.
See \figurename~\ref{fig:particular-graphs} for an example.
We let~${\cal S}$ be the class of graphs every connected component of which is either a subdivided claw or a path on at least one vertex.
The graph~$T_{h,i,j}$ with $0\leq h\leq i\leq j$ denotes the triangle with pendant paths of length~$h$, $i$ and $j$, respectively.
That is, $T_{h,i,j}$ is the graph with vertices $a_0,\ldots,a_h$, $b_0,\ldots,b_i$ and $c_0,\ldots,c_j$ and edges $a_0b_0$, $b_0c_0$, $c_0a_0$, $a_pa_{p+1}$ for $p \in \{0,\ldots, h-\nobreak1\}$, $b_pb_{p+1}$ for $p\in\{0,\ldots,i-\nobreak 1\}$ and $c_pc_{p+1}$ for $p\in\{0,\ldots,j-\nobreak 1\}$.
Note that $T_{0,0,0}=C_3=K_3$.
The graphs $T_{0,0,1}=\overline{P_1+P_3}$, $T_{0,1,1}$, $T_{1,1,1}$ and~$T_{0,0,2}$ are also known as the \defword{paw}, \defword{bull}, \defword{net} and \defword{hammer}, respectively; see also \figurename~\ref{fig:Thij-examples}.
Also note that~$T_{h,i,j}$ is the line graph of~$S_{h+1,i+1,j+1}$.
We let~${\cal T}$ be the class of graphs that are the line graphs of graphs in~${\cal S}$.
Note that~${\cal T}$ contains every graph~$T_{h,i,j}$ and every path (as the line graph of~$P_t$ is~$P_{t-1}$ for $t\geq 2$).

\begin{figure}
\begin{center}
\begin{tabular}{ccccc}
\begin{minipage}{0.215\textwidth}
\begin{center}
\begin{tikzpicture}
\coordinate (a0) at (0.5,0) ;
\coordinate (b0) at (-0.5,0) ;
\coordinate (c0) at (0,0.86602540378) ;
\coordinate (c1) at (0,1.86602540378) ;
\draw [fill=black] (a0) circle (3pt) ;
\draw [fill=black] (b0) circle (3pt) ;
\draw [fill=black] (c0) circle (3pt) ;
\draw [fill=black] (c1) circle (3pt) ;
\draw [thick] (c1) -- (c0) -- (b0) -- (a0) -- (c0);
\end{tikzpicture}
\end{center}
\end{minipage}
&
\begin{minipage}{0.215\textwidth}
\begin{center}
\begin{tikzpicture}
\coordinate (a0) at (0,0) ;
\coordinate (b0) at (60:1) ;
\coordinate (c0) at (120:1) ;
\coordinate (b1) at (60:2) ;
\coordinate (c1) at (120:2) ;
\draw [fill=black] (a0) circle (3pt) ;
\draw [fill=black] (b0) circle (3pt) ;
\draw [fill=black] (c0) circle (3pt) ;
\draw [fill=black] (b1) circle (3pt) ;
\draw [fill=black] (c1) circle (3pt) ;
\draw [thick] (c1) -- (c0) -- (b0) -- (a0) -- (c0);
\draw [thick] (b1) -- (b0);
\end{tikzpicture}
\end{center}
\end{minipage}
&
\begin{minipage}{0.215\textwidth}
\begin{center}
\begin{tikzpicture}
\coordinate (a0) at (90:0.57735026919) ;
\coordinate (b0) at (90+120:0.57735026919) ;
\coordinate (c0) at (90+240:0.57735026919) ;
\coordinate (a1) at (90:1.57735026919) ;
\coordinate (b1) at (90+120:1.57735026919) ;
\coordinate (c1) at (90+240:1.57735026919) ;
\draw [fill=black] (a0) circle (3pt) ;
\draw [fill=black] (b0) circle (3pt) ;
\draw [fill=black] (c0) circle (3pt) ;
\draw [fill=black] (a1) circle (3pt) ;
\draw [fill=black] (b1) circle (3pt) ;
\draw [fill=black] (c1) circle (3pt) ;
\draw [thick] (c1) -- (c0) -- (b0) -- (a0) -- (c0);
\draw [thick] (a1) -- (a0);
\draw [thick] (b1) -- (b0);
\end{tikzpicture}
\end{center}
\end{minipage}
&
\begin{minipage}{0.215\textwidth}
\begin{center}
\begin{tikzpicture}
\coordinate (a0) at (0.5,0) ;
\coordinate (b0) at (-0.5,0) ;
\coordinate (c0) at (0,0.86602540378) ;
\coordinate (c1) at (0,1.86602540378) ;
\coordinate (c2) at (0,2.86602540378) ;
\draw [fill=black] (a0) circle (3pt) ;
\draw [fill=black] (b0) circle (3pt) ;
\draw [fill=black] (c0) circle (3pt) ;
\draw [fill=black] (c1) circle (3pt) ;
\draw [fill=black] (c2) circle (3pt) ;
\draw [thick] (c2) -- (c1) -- (c0) -- (b0) -- (a0) -- (c0);
\end{tikzpicture}
\end{center}
\end{minipage}
\\
\\
$T_{0,0,1}=\mbox{paw}$ & $T_{0,1,1}=\mbox{bull}$ & $T_{1,1,1}=\mbox{net}$ & $T_{0,0,2}=\mbox{hammer}$
\end{tabular}
\end{center}
\caption{\label{fig:Thij-examples}Examples of graphs~$T_{h,i,j}$.}
\end{figure}
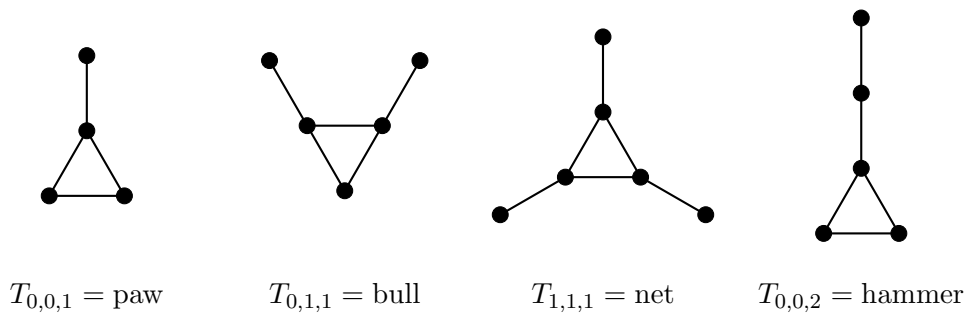

Let $G=(V,E)$ be a graph.
For an induced subgraph $F\ssi G$, the \defword{subgraph complementation} operation, which acts on~$G$ with respect to~$F$, replaces every edge in~$F$ by a non-edge, and vice versa.
If we apply this operation on~$G$ with respect to~$G$ itself, then we obtain the complement~$\overline{G}$ of~$G$.
For two disjoint vertex subsets~$S$ and~$T$ in~$G$, the \defword{bipartite complementation} operation, which acts on~$G$ with respect to~$S$ and~$T$, replaces every edge with one end-vertex in~$S$ and the other one in~$T$ by a non-edge and vice versa.
We note that applying a bipartite complementation is equivalent to applying a sequence of three consecutive subgraph complementations, namely on $G[S\cup T]$, $G[S]$ and~$G[T]$.

Let~${\cal G}$ be a graph class.
Denote the number of labelled graphs on~$n$ vertices in~${\cal G}$ by~$g_n$.
Then~${\cal G}$ is \defword{superfactorial} if there does not exist a constant~$c$ such that $g_n\leq n^{cn}$ for every~$n$.

Recall that a graph class is hereditary if it is closed under taking induced subgraphs.
It is not difficult to see that a graph class~${\cal G}$ is hereditary if and only if~${\cal G}$ can be characterized by a unique set~${\cal F}_{\cal G}$ of minimal forbidden induced subgraphs.
A hereditary graph class~${\cal G}$ is \defword{finitely defined} if~${\cal F}_{\cal G}$ is finite.
We note, however, that the set~${\cal F}_{\cal G}$ may have infinite size.
For example, if~${\cal G}$ is the class of bipartite graphs, then ${\cal F}_{\cal G}=\{C_3,C_5,C_7,\ldots\}$.
If~${\cal F}$ is a set of graphs, we say that a graph~$G$ is ${\cal F}$-free if~$G$ does not contain any graph in~${\cal F}$ as an induced subgraph.
In particular, this means that if a graph class~${\cal G}$ is hereditary, then~${\cal G}$ is exactly the class of ${\cal F}_{\cal G}$-free graphs.
If ${\cal F}=\{H_1,H_2,\ldots\}$ or $\{H_1,H_2,\ldots,H_p\}$ for some $p \geq 0$, we may also describe a graph~$G$ as being $(H_1,H_2,\ldots)$-free or $(H_1,H_2,\ldots,H_p)$-free, respectively, rather than ${\cal F}$-free;
recall that if~${\cal F}=\{H_1\}$ we may write $H_1$-free instead.

\begin{observation}\label{l-obs}
Let~${\cal H}$ and~${\cal H^*}$ be sets of graphs.
The class of ${\cal H}$-free graphs is contained in the class of ${\cal H^*}$-free graphs if and only if for every graph $H^* \in {\cal H^*}$, the set~${\cal H}$ contains an induced subgraph of~$H^*$.
\end{observation}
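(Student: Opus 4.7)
The plan is to prove both directions by contradiction, relying on the transitivity of the induced subgraph relation (which is immediate from the definition: if $A \ssi B$ and $B \ssi C$ then $A \ssi C$).

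For the forward direction, I would assume that the class of $\mathcal{H}$-free graphs is contained in the class of $\mathcal{H}^*$-free graphs, and fix an arbitrary $H^* \in \mathcal{H}^*$. Suppose for contradiction that $\mathcal{H}$ contains no induced subgraph of $H^*$; then $H^*$ is itself $\mathcal{H}$-free, so by the hypothesised containment $H^*$ must also be $\mathcal{H}^*$-free. But $H^*$ is trivially an induced subgraph of itself and $H^* \in \mathcal{H}^*$, giving the desired contradiction. Hence some member of $\mathcal{H}$ is an induced subgraph of $H^*$.

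For the backward direction, I would assume that every $H^* \in \mathcal{H}^*$ has some induced subgraph lying in $\mathcal{H}$, and take an arbitrary $\mathcal{H}$-free graph $G$. If $G$ were not $\mathcal{H}^*$-free, then some $H^* \in \mathcal{H}^*$ would be an induced subgraph of $G$; by hypothesis some $H \in \mathcal{H}$ would be an induced subgraph of $H^*$, and by transitivity $H$ would be an induced subgraph of $G$, contradicting that $G$ is $\mathcal{H}$-free.

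Neither direction presents a genuine obstacle: the argument is essentially a one-line contrapositive in each case, and the only ingredient beyond the definitions of ``$\mathcal{F}$-free'' and ``induced subgraph'' is the transitivity of $\ssi$. The main care needed is simply to keep the quantifier alternation straight (``for every $H^* \in \mathcal{H}^*$, there exists $H \in \mathcal{H}$ with $H \ssi H^*$'') and to note that the statement holds even in the trivial cases where $\mathcal{H}$ or $\mathcal{H}^*$ is empty, since then one side of each implication is vacuous.
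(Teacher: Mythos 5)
Your proof is correct, and since the paper states this as an observation without providing a proof, your argument is exactly the standard one the authors implicitly rely on: both directions follow from the definition of $\mathcal{F}$-freeness together with transitivity and reflexivity of the induced subgraph relation. Nothing is missing.
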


Suppose~${\cal H}$ and~${\cal H^*}$ are sets of graphs such that for every graph $H^* \in {\cal H^*}$, the set~${\cal H}$ contains an induced subgraph of~$H^*$.
Observation~\ref{l-obs} implies that any graph problem that is polynomial-time solvable for ${\cal H^*}$-free graphs is also polynomial-time solvable for ${\cal H}$-free graphs, and any graph problem that is \NP-complete for ${\cal H}$-free graphs is also \NP-complete for ${\cal H^*}$-free graphs.

We define the \defword{complement} of a hereditary graph class~${\cal G}$ as $\overline{{\cal G}}=\{\overline{G}\; |\; G\in {\cal G}\}$.
Then~${\cal G}$ is \defword{closed under complementation} if ${\cal G}=\overline{{\cal G}}$.
As~${\cal F}_{\cal G}$ is the unique minimal set of forbidden induced subgraphs for~${\cal G}$, we can make the following observation.

\begin{observation}\label{o-comp}
A hereditary graph class~${\cal G}$ is closed under complementation if and only if~${\cal F}_{\cal G}$ is closed under complementation.
\end{observation}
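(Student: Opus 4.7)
The plan is to exploit two elementary facts: (i) taking complements commutes with taking induced subgraphs, i.e.\ $\overline{G[S]} = \overline{G}[S]$, so $H \ssi G$ if and only if $\overline{H} \ssi \overline{G}$; and (ii) the minimality characterisation of $\mathcal{F}_{\mathcal{G}}$, namely that $F \in \mathcal{F}_{\mathcal{G}}$ exactly when $F \notin \mathcal{G}$ but every proper induced subgraph of $F$ lies in $\mathcal{G}$. Both implications then fall out quickly.

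For the direction $(\Leftarrow)$, assume $\mathcal{F}_{\mathcal{G}} = \overline{\mathcal{F}_{\mathcal{G}}}$ and pick $G \in \mathcal{G}$; I will show $\overline{G} \in \mathcal{G}$, which suffices by symmetry. If not, then $\overline{G}$ contains some $F \in \mathcal{F}_{\mathcal{G}}$ as an induced subgraph, and by (i) the graph $G$ contains $\overline{F}$ as an induced subgraph. Since $\mathcal{F}_{\mathcal{G}}$ is closed under complementation, $\overline{F} \in \mathcal{F}_{\mathcal{G}}$, contradicting $G \in \mathcal{G}$.

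For the direction $(\Rightarrow)$, assume $\mathcal{G} = \overline{\mathcal{G}}$ and take any $F \in \mathcal{F}_{\mathcal{G}}$; I will verify that $\overline{F}$ meets the minimality criterion for $\mathcal{F}_{\mathcal{G}}$, which forces $\overline{F} \in \mathcal{F}_{\mathcal{G}}$. First, $F \notin \mathcal{G}$ gives $\overline{F} \notin \overline{\mathcal{G}} = \mathcal{G}$. Next, for any proper induced subgraph $H \ssi \overline{F}$, fact (i) says $\overline{H}$ is a proper induced subgraph of $F$; by minimality of $F$ this forces $\overline{H} \in \mathcal{G}$, and then $H \in \overline{\mathcal{G}} = \mathcal{G}$. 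Hence $\overline{F}$ is a minimal forbidden induced subgraph for $\mathcal{G}$, so by uniqueness of $\mathcal{F}_{\mathcal{G}}$ we get $\overline{F} \in \mathcal{F}_{\mathcal{G}}$.

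There is no real obstacle here; the only thing to be careful about is not conflating ``$\mathcal{F}_{\mathcal{G}}$-free'' with the minimality property when arguing $(\Rightarrow)$ — one has to actually check both that $\overline{F}$ is forbidden and that all its proper induced subgraphs belong to $\mathcal{G}$, rather than merely observing that $\overline{\mathcal{F}_{\mathcal{G}}}$ is some (possibly non-minimal) forbidden set for $\mathcal{G}$.
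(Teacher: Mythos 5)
Your proof is correct and is precisely the argument the paper has in mind: the paper states this as an observation justified only by the remark that $\mathcal{F}_{\mathcal{G}}$ is the \emph{unique minimal} set of forbidden induced subgraphs, and your two directions (using that complementation commutes with taking induced subgraphs, and verifying the minimality criterion for $\overline{F}$) simply spell out that justification in full. Your closing caveat about checking minimality rather than just forbiddenness is exactly the point that makes the uniqueness of $\mathcal{F}_{\mathcal{G}}$ essential.
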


Let~$G$ be a graph.
The \defword{contraction} of an edge~$uv$ replaces~$u$ and~$v$ and their incident edges by a new vertex~$w$ and edges~$wy$ if and only if either~$uy$ or~$vy$ was an edge in~$G$ (without creating multiple edges or self-loops).
Let~$u$ be a vertex with exactly two neighbours $v,w$, which in addition are non-adjacent.
The \defword{vertex dissolution} of~$u$ removes~$u$, $uv$ and $uw$, and adds the edge~$vw$.
Note that vertex dissolution is a special type of edge contraction, and it is the reverse operation of an edge subdivision (recall that the latter operation replaces an edge~$uv$ by a new vertex~$w$ with edges~$uw$ and~$vw$).

Let~$G$ and~$H$ be graphs.
The graph~$H$ is a \defword{subgraph} of~$G$ if~$G$ can be modified into~$H$ by a sequence of vertex deletions and edge deletions.
We can define other containment relations using the graph operations defined above.
We say that~$G$ contains~$H$ as a \defword{minor} if~$G$ can be modified into~$H$ by a sequence of edge contractions, edge deletions and vertex deletions, as a \defword{topological minor} if~$G$ can be modified into~$H$ by a sequence of vertex dissolutions, edge deletions and vertex deletions, as an \defword{induced minor} if~$G$ can be modified into~$H$ by a sequence of edge contractions and vertex deletions, and as an
 \defword{induced topological minor} if~$G$ can be modified into~$H$ by a sequence of vertex dissolutions and vertex deletions.
Let $\{H_1,\ldots,H_p\}$ be a set of graphs.
If~$G$ does not contain any of the graphs $H_1,\ldots,H_p$ as a subgraph, then~$G$ is \defword{$(H_1,\ldots,H_p)$-subgraph-free}.
We define the terms \defword{$(H_1,\ldots,H_p)$-minor-free}, \defword{$(H_1,\ldots,H_p)$-topological-minor-free}, \defword{$(H_1,\ldots,H_p)$-induced-minor-free}, and \defword{$(H_1,\ldots,H_p)$-induced-topological-minor-free} analogously.
Note that graph classes defined by some set of forbidden subgraphs, minors, topological minors, induced minors, or induced topological minors are hereditary, as they are all closed under vertex deletion.

\begin{example}
A graph is \defword{planar} if it can be embedded in the plane in such a way that any two edges only intersect with each other at their end-vertices.
It is well known that the class of planar graphs can be characterized by a set of forbidden minors: Wagner's Theorem~\cite{Wa37} states that a graph is planar if and only if it is $(K_{3,3},K_5)$-minor-free.
\end{example}

We will also need the following folklore observation (see, for example,~\cite{GPR15}).

\begin{observation}\label{o-freefree}
For every $F\in {\cal S}$, a graph is $F$-subgraph-free if and only if it is $F$-minor-free.
\end{observation}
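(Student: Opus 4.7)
The plan is to prove both implications separately. The direction that $F$-minor-free implies $F$-subgraph-free is immediate from the definitions: the subgraph relation is a special case of the minor relation (deletion-only minor operations), so if $G$ contains $F$ as a subgraph then $G$ contains $F$ as a minor. The contrapositive gives exactly what is needed.

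For the converse, assume $G$ contains $F$ as a minor; I would show that $G$ in fact contains $F$ as a subgraph. The key observation is that every $F\in {\cal S}$ has maximum degree at most $3$: a path component has maximum degree at most $2$, and a subdivided-claw component $S_{h,i,j}$ has a single vertex of degree $3$ with all other vertices of degree at most $2$. I would then invoke the well-known folklore equivalence that, for every graph $H$ of maximum degree at most $3$, $H$ is a minor of $G$ if and only if $H$ is a topological minor of $G$ (see, e.g., Diestel's textbook). Applying this to $F$, we conclude that $G$ contains, as a subgraph, some subdivision $F'$ of $F$.

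It then remains to argue that every such subdivision $F'$ already contains $F$ itself as a subgraph; the claim will then follow by transitivity of the subgraph relation. I would check this component-wise. A subdivision of a path $P_t$ is a path $P_{t'}$ with $t'\geq t$, and $P_{t'}$ obviously contains $P_t$ as a subgraph. A subdivision of a subdivided claw $S_{h,i,j}$ is some $S_{h',i',j'}$ with $h'\geq h$, $i'\geq i$, $j'\geq j$, which contains $S_{h,i,j}$ as a subgraph by restricting each leg to its initial segment of the appropriate length at the unique degree-$3$ vertex. Taking the disjoint union over components completes the argument.

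The main obstacle is the step invoking the max-degree-$3$ equivalence of minor and topological minor, but since this is classical folklore (and since, for $F\in{\cal S}$, every branch set can in fact be chosen to be a path because $F$ is itself a forest) there is no real difficulty: either a direct citation or a short argument showing that each branch set of degree at most $3$ can be pruned to a tree with at most three leaves suffices.
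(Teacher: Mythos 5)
Your proof is correct. The paper itself gives no argument for this observation --- it is stated as folklore with a pointer to the literature --- so there is nothing to compare line by line; your write-up is a valid, self-contained justification. The non-trivial direction is handled exactly as one would expect: since every $F\in{\cal S}$ has maximum degree at most~$3$, a minor containment upgrades to a topological minor containment, i.e.\ $G$ contains a subdivision of~$F$ as a subgraph. The only step where the hypothesis $F\in{\cal S}$ is genuinely used is the last one, and you use it correctly: a subdivision of a path is a longer path, and a subdivision of $S_{h,i,j}$ is some $S_{h',i',j'}$ with each leg at least as long, so in both cases the original is recovered as a subgraph by truncating. It is worth being aware (though you need not add it) that this final step is exactly what fails for graphs outside~${\cal S}$ --- e.g.\ a subdivision of a cycle, or of a tree with two branch vertices, need not contain the original as a subgraph --- which is why the observation is restricted to~${\cal S}$.
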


A \defword{$k$-colouring} of a graph~$G$ is a mapping $c: V\to \{1,\ldots,k\}$ such that $c(u)\neq c(v)$ whenever~$u$ and~$v$ are adjacent vertices.
The \defword{chromatic number} of~$G$ is the smallest~$k$ such that~$G$ has a $k$-colouring.
The \defword{clique number} of~$G$ is the size of a largest clique of~$G$.

A graph~$G$ is \defword{perfect} if, for every $H\ssi G$, the chromatic number of~$H$ is equal to the clique number of~$H$.
The Strong Perfect Graph Theorem~\cite{CRST06} states that~$G$ is perfect if and only if~$G$ is $(C_5,C_7,C_9,\ldots)$-free and $(\overline{C_7},\overline{C_9},\ldots)$-free.
A graph~$G$ is \defword{chordal} if it is $(C_4,C_5,C_6,\ldots)$-free and \defword{weakly chordal} if it is $(C_5,C_6,C_7,\ldots)$-free and $(\overline{C_6},\overline{C_7},\ldots)$-free.
A graph~$G$ is a \defword{split graph} if it has a \defword{split partition}, that is, a partition of its vertex set into two (possibly empty) sets~$K$ and~$I$, where~$K$ is a clique and~$I$ is an independent set.
It is known that a graph is split if and only if it is $(C_4,C_5,2P_2)$-free~\cite{FH77}.
A graph~$G$ is a \defword{permutation graph} if line segments connecting two parallel lines can be associated to its vertices in such a way that two vertices of~$G$ are adjacent if and only if the two corresponding line segments intersect.
A graph~$G$ is a \defword{permutation split graph} if it is both permutation and split, and~$G$ is a \defword{permutation bipartite graph} if it is both permutation and bipartite.
A graph~$G$ is \defword{chordal bipartite} if it is $(C_3,C_5,C_6,C_7,\ldots)$-free.
A graph~$G$ is \defword{distance-hereditary} if the distance between any two vertices~$u$ and~$v$ in any connected induced subgraph of~$G$ is the same as the distance of~$u$ and~$v$ in~$G$.
Equivalently, a graph is distance-hereditary if and only if it is $(\domino,\gem,\house,C_5,C_6,C_7,\ldots)$-free~\cite{BM86}.
A graph is \defword{(unit) interval} if it has a representation in which each vertex~$u$ corresponds to an interval~$I_u$ (of unit length) of the line such that two vertices~$u$ and~$v$ are adjacent if and only if $I_u\cap I_v\neq \emptyset$.

We make the following observation.
A number of inclusions in Observation~\ref{o-inclusion} follow immediately from the definitions and the Strong Perfect Graph Theorem.
For the remaining inclusions we refer to~\cite{BLS99}.

\begin{observation}\label{o-inclusion}
The following statements hold:\\[-2em]
\begin{enumerate}
\item every split graph is chordal\\[-2em]
\item every (unit) interval graph is chordal\\[-2em]
\item every chordal graph is weakly chordal\\[-2em]
\item every (bipartite or split) permutation graph is weakly chordal\\[-2em]
\item every distance-hereditary graph is weakly chordal\\[-2em]
\item every weakly chordal graph is perfect\\[-2em]
\item every bipartite permutation graph is chordal bipartite, and\\[-2em]
\item every (chordal) bipartite graph is perfect.
\end{enumerate}
\end{observation}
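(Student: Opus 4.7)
The plan is to prove each of the eight inclusions in turn, grouping them by the level of effort needed. Most amount to comparing the forbidden-induced-subgraph characterisations already recorded in the preliminaries, invoking the Strong Perfect Graph Theorem when the target class is perfect, and giving a short direct argument only when the class on the left is defined geometrically.

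Items (1), (3), (6) and (8) should fall out of list comparison. For (1), since every $C_n$ with $n\ge 6$ has an induced $2P_2$, being $(C_4,C_5,2P_2)$-free already gives $(C_n)_{n\ge 4}$-freeness. For (3), I would check that $\overline{C_n}$ contains an induced $C_4$ for every $n\ge 6$ (e.g.\ the vertices at cyclic positions $0,3,1,4$), whence a $C_4$-free chordal graph is automatically antihole-free of length $\ge 6$. For (6) and (8), weakly chordal graphs and bipartite graphs each exclude all odd holes and all odd antiholes (using $\overline{C_5}=C_5$ and the fact that $\overline{C_{2k+1}}$ for $k\ge 3$ contains a triangle), so the Strong Perfect Graph Theorem applies; the chordal bipartite subcase of~(8) is immediate. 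The same reasoning as for~(1) also shows that split $\Rightarrow$ weakly chordal, which, combined with~(3), covers the split-permutation half of~(4) without ever using the permutation hypothesis.

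Items (2) and (5) each need one short direct argument. For (2), I would use the classical leftmost-right-endpoint trick: among the intervals realising a hypothetical induced $C_k$ with $k\ge 4$, pick the one whose right endpoint is smallest; its two cycle-neighbours must each contain that point and therefore overlap each other, contradicting non-adjacency in the induced cycle. For (5), the preliminaries already yield $(\domino,\gem,\house,C_5,C_6,\ldots)$-freeness, so only antiholes of length $\ge 6$ remain. I would verify directly that $\overline{C_6}$ (the triangular prism) contains an induced $\house$ and that, for each $k\ge 7$, $\overline{C_k}$ contains an induced $\gem$: take the cyclic-position vertex $0$ together with $\{2,3,4,5\}$; here $0$ is universal to this set while the induced graph on $\{2,3,4,5\}$ is the path $4$--$2$--$5$--$3$, as required.

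The main obstacle is~(7) together with the bipartite-permutation half of~(4). Bipartiteness already excludes odd cycles and every $\overline{C_k}$ with $k\ge 6$ (such antiholes contain triangles), so both statements collapse to the single claim that a bipartite permutation graph contains no induced $C_{2k}$ with $k\ge 3$. To prove this I would work with the line-segment representation from the definition of permutation graphs and show that an induced even cycle of length $\ge 6$ forces a nested/crossing pattern of segments between the two parallel lines that cannot be properly $2$-coloured and is therefore incompatible with bipartiteness. I expect this to require some case analysis on the cyclic order of endpoints on the two lines; if the argument becomes unwieldy, the cleanest alternative is to quote the standard forbidden-induced-subgraph characterisation of bipartite permutation graphs from~\cite{BLS99}, which settles both~(7) and the remaining half of~(4) simultaneously.
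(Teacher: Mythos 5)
Your proposal is correct in substance, but it does far more work than the paper, which disposes of this observation in a single sentence by appealing to the definitions, the Strong Perfect Graph Theorem and the textbook~\cite{BLS99}. Your explicit verifications all check out: the $2P_2$ inside $C_n$ for $n\geq 6$ gives~(1); the induced $C_4$ on cyclic positions $0,3,1,4$ of $\overline{C_n}$ gives~(3); the smallest-right-endpoint argument gives~(2); the induced $\house$ in $\overline{C_6}$ and the induced $\gem$ on $\{0\}\cup\{2,3,4,5\}$ in $\overline{C_k}$ for $k\geq 7$ give~(5); and excluding all odd holes and odd antiholes before invoking the Strong Perfect Graph Theorem gives~(6) and~(8). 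What your elaboration buys over the paper's citation is a self-contained proof of everything except one claim; what it costs is that the one remaining claim, for~(7) and the bipartite half of~(4), is exactly where your sketch is shakiest. The proposed mechanism there --- that an induced $C_{2k}$ with $k\geq 3$ forces a segment pattern that ``cannot be properly $2$-coloured'' --- cannot be right as stated, since $C_{2k}$ is bipartite; bipartiteness is not the obstruction. The true and stronger fact is that $C_k$ for $k\geq 5$ is not a permutation graph at all: permutation graphs are cocomparability graphs and hence AT-free, while $C_k$ with $k\geq 6$ contains the asteroidal triple formed by three pairwise far-apart vertices, and $C_5$ is not a comparability graph (and equals its own complement). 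Since the class of permutation graphs is closed under complementation, this single fact shows that \emph{every} permutation graph is weakly chordal, which matches \figurename~\ref{fig:class-containment}, subsumes both halves of~(4), and combined with your reduction of chordal bipartite to ``bipartite with no induced $C_{2k}$, $k\geq 3$'' also yields~(7). Your stated fallback of quoting~\cite{BLS99} is, of course, precisely what the paper does, so the proposal stands either way.
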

The containments listed in Observation~\ref{o-inclusion} (and those that follow from them by transitivity) are also displayed \figurename~\ref{fig:class-containment}.
It is not difficult to construct counterexamples for the other containments.
Indeed, for pairs of classes above for which we have listed the minimal forbidden induced subgraph characterizations, these characterizations immediately provide such counterexamples.

\begin{figure}
\begin{center}
\begin{tikzpicture}
\node at (0,10) (p) {perfect};
\node at (-3,8.5) (b) {bipartite};
\node at (0,8.5) (wc) {weakly-chordal};
\node at (-3,7) (pe) {permutation};
\node at (-6,7) (cb) {chordal bipartite};
\node at (0,7) (c) {chordal};
\node at (3,7) (dh) {distance-hereditary};
\node at (-6,5.5) (bp) {bipartite permutation};
\node at (-1.5,5.5) (s) {split};
\node at (0,5.5) (i) {interval};
\node at (0,4) (ui) {unit interval};
\node at (-3,4) (spe) {split permutation};
\draw [->] (pe) -- (spe);
\draw [->] (s) -- (spe);
\draw [->] (p) -- (b);
\draw [->] (b) -- (cb);
\draw [->] (cb) -- (bp);
\draw [->] (pe) -- (bp);
\draw [->] (wc) -- (pe);
\draw [->] (p) -- (wc);
\draw [->] (wc) -- (c);
\draw [->] (wc) -- (dh);
\draw [->] (c) -- (s);
\draw [->] (c) -- (i);
\draw [->] (i) -- (ui);
\end{tikzpicture}
\end{center}
\caption[The inclusion relations between well-known classes mentioned in the paper.]{\label{fig:class-containment}The inclusion relations between well-known classes mentioned in the paper.
An arrow from one class to another indicates that the first class contains the second.}
\end{figure}
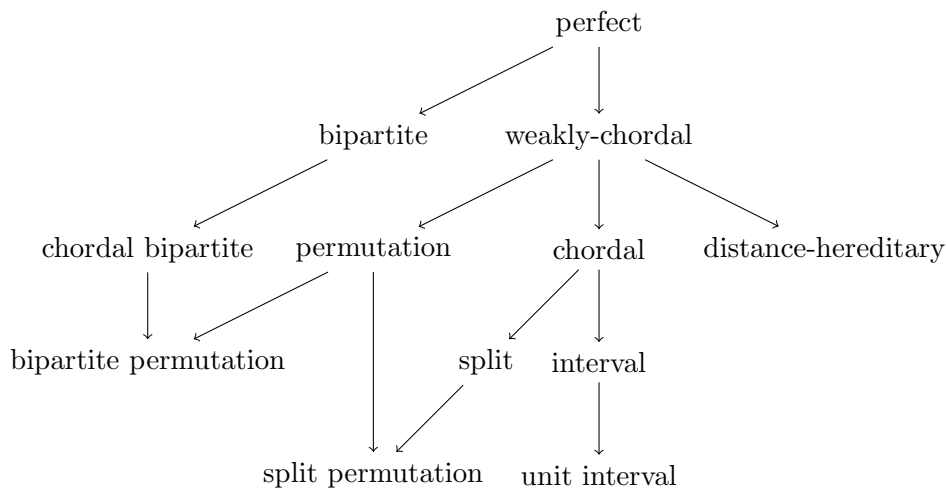

\medskip
We now introduce the notion of treewidth formally.
Recall from Section~\ref{s-intro} that treewidth expresses to what extent a graph is ``tree-like''.
A \defword{tree decomposition} of a graph~$G$ is a pair $(T,{\cal X})$ where~$T$ is a tree and ${\cal X}=\{X_{i} \mid i\in V(T)\}$ is a collection of subsets of~$V(G)$, such that the following three conditions hold: 
\begin{enumerate}[(i)]
\item $\bigcup_{i \in V(T)} X_{i} = V(G)$\\[-1.5em]
\item for every edge $xy \in E(G)$, there is an $i \in V(T)$ such that $x,y\in X_i$ and\\[-1.5em]
\item for every $x\in V(G)$, the set $\{ i\in V(T) \mid x \in X_{i} \}$ induces a connected subtree of~$T$.
\end{enumerate}
The \defword{width} of the tree decomposition $(T,{\cal X})$ is $\max\{|X_{i}| - 1 \;|\; i \in V(T)\}$, and the \defword{treewidth}~$\tw(G)$ of~$G$ is the minimum width over all tree decompositions of~$G$.
If~$T$ is a path, then $(X,T)$ is a \defword{path decomposition} of~$G$.
The \defword{path-width}~$\pw(G)$ of~$G$ is the minimum width over all path decompositions of~$G$.

A \defword{quasi order}~$\leq$ on a set~$X$ is a reflexive, transitive binary relation.
Two elements $x,y \in X$ in~$\leq$ are \defword{comparable} if $x \leq y$ or $y \leq x$; otherwise they are \defword{incomparable}.
A set of pairwise (in)comparable elements in~$\leq$ is called an \defword{(anti)-chain}.
A quasi-order~$\leq$ is a \defword{well-quasi-order} if every infinite sequence of elements $x_1,x_2,x_3,\ldots$ in~$X$ contains a pair $(x_i,x_j)$ with $x_i \leq x_j$ and $i<j$, or equivalently, if~$\leq$ has no infinite strictly decreasing sequence and no infinite anti-chain.
A \defword{partial order}~$\leq$ is a quasi-order which is anti-symmetric, that is, if $x \leq y$ and $y \leq x$ then $x=y$.
If we consider two graphs to be ``equal'' when they are isomorphic, then all quasi orders considered in this paper are in fact partial orders.
As such, throughout this paper ``quasi order'' can be interpreted as ``partial order''.

For an arbitrary set~$M$, we let~$M^*$ denote the set of finite sequences of elements of~$M$.
A quasi-order~$\leq$ on~$M$ defines a quasi-order~$\leq^*$ on~$M^*$ as follows: $(a_1,\ldots,a_m) \leq^* (b_1,\ldots,b_n)$ if and only if there is a sequence of integers $i_1,\ldots,i_m$ with $1 \leq i_1<\cdots<i_m \leq n$ such that $a_j \leq b_{i_j}$ for $j \in \{1,\ldots,m\}$.
We call~$\leq^*$ the {\em subsequence relation}.

The following lemma is well known and very useful when dealing with quasi-orders.

\begin{lemma}[Higman's Lemma~\cite{Higman52}]\label{lem:higman}
Let $(M,\leq)$ be a well-quasi-order.
Then \mbox{${(M^*, \leq^*)}$} is a well-quasi-order.
\end{lemma}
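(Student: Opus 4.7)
The plan is to argue by contradiction using the minimal bad sequence method of Nash-Williams. Suppose $(M^*, \leq^*)$ is not a well-quasi-order, so there exists a \emph{bad} infinite sequence in $M^*$, i.e., an infinite sequence containing no pair $(s_i, s_j)$ with $i<j$ and $s_i \leq^* s_j$. I would then construct a particular bad sequence $(s_k)_{k\geq 1}$ greedily and minimally: having chosen $s_1,\ldots,s_{k-1}$ as an initial segment of some bad sequence, pick $s_k$ to be an element of minimum possible length in $M^*$ such that $(s_1,\ldots,s_k)$ still extends to some infinite bad sequence. This is legitimate because the lengths of finite sequences are natural numbers and hence well-ordered, and at each step a bad continuation is guaranteed to exist by construction.

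Every $s_k$ must be non-empty, since the empty sequence is $\leq^*$-below every element of $M^*$. Write $s_k = a_k \cdot t_k$, where $a_k \in M$ is the first entry of $s_k$ and $t_k \in M^*$ is the remainder, so $|t_k| < |s_k|$. Now apply the hypothesis that $(M,\leq)$ is a well-quasi-order to the sequence $(a_k)_{k\geq 1}$: it admits an infinite non-decreasing subsequence $a_{n_1} \leq a_{n_2} \leq \cdots$, a standard consequence of the absence of both infinite antichains and infinite strictly decreasing sequences.

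Next consider the \emph{spliced} infinite sequence $s_1,s_2,\ldots,s_{n_1-1},t_{n_1},t_{n_2},t_{n_3},\ldots$. Since $|t_{n_1}| < |s_{n_1}|$, the minimality of $s_{n_1}$ forces this spliced sequence \emph{not} to be bad, so it contains a pair of comparable terms. Such a pair cannot lie entirely within $s_1,\ldots,s_{n_1-1}$, since those are terms of a bad sequence. If it were $s_i \leq^* t_{n_j}$ for some $i < n_1$, then $t_{n_j}$ is a suffix of $s_{n_j}$ so $s_i \leq^* s_{n_j}$, contradicting badness. The only remaining possibility is $t_{n_i} \leq^* t_{n_j}$ for some $i<j$; but prepending $a_{n_i} \leq a_{n_j}$ to both sides gives $s_{n_i} \leq^* s_{n_j}$, again contradicting badness. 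This contradiction establishes that $(M^*,\leq^*)$ is a well-quasi-order.

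The main conceptual obstacle is justifying that the minimal bad sequence is well-defined: one must ensure that at every stage $k$ the set of candidate $s_k$ (those with a bad continuation) is non-empty, which follows inductively from the non-emptiness at the previous stage. A secondary technical point is the extraction of an infinite non-decreasing subsequence from $(a_k)$, which I would state as a quick lemma derived directly from the definition of well-quasi-order given earlier in the paper. Everything else is bookkeeping with the three case distinctions in the splice.
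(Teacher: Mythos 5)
Your proof is correct: it is the standard Nash-Williams minimal bad sequence argument for Higman's Lemma, and every step (the well-definedness of the minimal bad sequence, the non-emptiness of each $s_k$, the extraction of a non-decreasing subsequence of first elements, and the three-way case analysis on the spliced sequence) is handled properly. Note, however, that the paper does not prove this statement at all --- it imports it as a classical result with a citation to Higman --- so there is no in-paper argument to compare against; your write-up would serve as a self-contained proof where the paper simply defers to the literature.
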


\section{Clique-Width}\label{s-operations}

In this section we give a number of basic results on clique-width.
We begin by giving a formal definition.\footnote{The term \emph{clique-width} and the definition in essentially same form we give here were introduced by Courcelle and Olariu~\cite{CO00} based on operations and related decompositions from Courcelle, Engelfriet and Rozenberg~\cite{CER93}; see also~\cite{CE12}.
Although we consider only undirected graphs, the definitions of~\cite{CO00} also covered the case of directed graphs.
Other equivalent width parameters have also been studied for directed graphs.
For example, Kant\'e and Rao~\cite{KR13} considered the rank-width of directed graphs.}
The \defword{clique-width} of a graph~$G$, denoted by~$\cw(G)$, is the minimum number of labels needed to construct~$G$ using the following four operations:
\begin{enumerate}
\item Create a new graph with a single vertex~$v$ with label~$i$.
(This operation is written~$i(v)$.)\\[-2em]
\item\label{operation:disjoint-union}Take the disjoint union of two labelled graphs~$G_1$ and~$G_2$ (written~$G_1\oplus\nobreak G_2$).\\[-2em]
\item Add an edge between every vertex with label~$i$ and every vertex with label~$j$, $i\neq j$ (written~$\eta_{i,j}$).\\[-2em]
\item Relabel every vertex with label~$i$ to have label~$j$ (written~$\rho_{i\rightarrow j}$).
\end{enumerate}

We say that a construction of a graph~$G$ with the four operations is a \defword{$k$-expression} if it uses at most~$k$ labels.
Thus the clique-width of~$G$ is the minimum~$k$ for which~$G$ has a $k$-expression.
We refer to~\cite{CHMPR15,HMPR16,HS15} for a number of characterizations of clique-width and to~\cite{Ka18} for a compact representation of graphs of clique-width~$k$.

\begin{example}\label{e-p4}
We first note that $\cw(P_1)=1$ and $\cw(P_2)=\cw(P_3)=2$.
Now consider a path on four vertices $v_1, v_2, v_3, v_4$, in that order.
Then this path can be constructed using the four operations (using only three labels) as follows:
$$\eta_{3,2}(3(v_4)\oplus \rho_{3\rightarrow 2}(\rho_{2\rightarrow 1}(\eta_{3,2}(3(v_3)\oplus \eta_{2,1}(2(v_2)\oplus 1(v_1)))))).$$
Note that at the end of this construction, only~$v_4$ has label~$3$.
It is easy to see that a construction using only two labels is not possible.
Hence, we deduce that $\cw(P_4)=3$.
This construction can readily be generalized to longer paths: for $n \geq 5$ let~$E$ be a $3$-expression for the path~$P_{n-1}$ on vertices $v_1,\ldots,v_{n-1}$, with only the vertex~$v_{n-1}$ having label~$3$, then $\eta_{3,2}(3(v_n)\oplus \rho_{3\rightarrow 2}(\rho_{2\rightarrow 1}(E)))$ is a $3$-expression for the path~$P_n$ on vertices $v_1,\ldots,v_n$, with only the vertex~$v_n$ having label~$3$.
Therefore $\cw(P_n)=3$ for all $n\geq 4$. 
Moreover, by changing the construction to give the first vertex~$v_1$ on a path~$P_n$ $(n\geq 3)$ a unique fourth label, we can connect it to the last constructed vertex~$v_n$ of~$P_n$ (the only vertex with label~$3$) via an edge-adding operation to obtain~$C_n$.
Hence, we find that $\cw(C_n)\leq 4$ for every $n\geq 3$.
In fact $\cw(C_n)=4$ holds for every $n\geq 7$~\cite{MR99}.
\end{example}

A class of graphs~${\cal G}$ has \defword{bounded} clique-width if there is a constant~$c$ such that the clique-width of every graph in~${\cal G}$ is at most~$c$.
If such a constant~$c$ does not exist, we say that the clique-width of~${\cal G}$ is \defword{unbounded}.
A hereditary graph class~${\cal G}$ is a \defword{minimal} class of unbounded clique-width if it has unbounded clique-width and every proper hereditary subclass of~${\cal G}$ has bounded clique-width.

The following two observations, which are both well known and readily seen, give two graph classes of small clique-width. 
In particular, Proposition~\ref{p-atmost2} follows from Example~\ref{e-p4} after observing that a graph of maximum degree at most~$2$ is the disjoint union of paths and cycles.
For more examples of graph classes of small width, see, for instance,~\cite{BDLM05,BELL06}.

\begin{prop}\label{p-forest}
Every forest has clique-width at most~$3$.
\end{prop}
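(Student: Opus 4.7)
The plan is to show first that every tree has a $3$-expression of a particular controlled form, and then assemble forests by disjoint union. Since a forest is the disjoint union of its connected components (each a tree) and the disjoint union operation does not increase the number of labels used, the forest case reduces immediately to the tree case once we have a compatible form for each tree.

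The tree case I would prove by induction on the number of vertices, strengthening the hypothesis as follows: for every tree $T$ with a chosen root $r$, there is a $3$-expression building $T$ in which, at the end of the construction, the root $r$ carries label $1$ and every other vertex carries label $2$. The base case $|V(T)|=1$ is just the expression $1(r)$. For the inductive step, let $r$ have children $c_1,\ldots,c_k$ with subtrees $T_1,\ldots,T_k$ rooted at $c_1,\ldots,c_k$. By the inductive hypothesis each $T_i$ admits a $3$-expression $E_i$ leaving $c_i$ with label $1$ and all other vertices of $T_i$ with label $2$. I would then process the children one by one, starting from the single-vertex expression $1(r)$. To attach $T_i$: apply $\rho_{1\rightarrow 3}$ to $E_i$ so that $c_i$ becomes the unique vertex with label $3$ while its descendants keep label $2$; take the disjoint union with the partial construction (which has $r$ labelled $1$ and all previously attached vertices labelled $2$); apply $\eta_{1,3}$, which adds precisely the single edge $rc_i$ since $r$ is the unique vertex of label $1$ and $c_i$ the unique vertex of label $3$; finally apply $\rho_{3\rightarrow 2}$ to return $c_i$ to label $2$. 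After all children are processed, $r$ has label $1$, every other vertex has label $2$, and the construction has used only the labels $\{1,2,3\}$, as required.

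For a forest $F$ with tree components $T_1,\ldots,T_m$, I would take the $3$-expression from the induction for each $T_j$, then apply $\rho_{1\rightarrow 2}$ so that all vertices of $T_j$ have label $2$, and finally take iterated disjoint unions. This produces a $3$-expression for $F$, so $\cw(F)\leq 3$, as claimed.

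The argument is essentially routine; there is no real obstacle. The only care needed is in the inductive step, where one must check that at the moment of applying $\eta_{1,3}$ the only vertices with labels $1$ and $3$ are exactly $r$ and $c_i$, so that no spurious edges are introduced. The bookkeeping above ensures this: after each child is attached and before the next child is processed, the partial construction has $r$ as the sole vertex with label $1$ and no vertex with label $3$, while the incoming subtree contributes $c_i$ as its sole vertex with label $3$ and no vertex with label $1$.
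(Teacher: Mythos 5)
Your proof is correct and takes essentially the same approach as the paper: an induction on rooted trees showing that each tree admits a $3$-expression in which the root is the unique vertex carrying a distinguished label, followed by disjoint union to handle forests. The only (immaterial) difference is that you attach the subtrees of the root one at a time with repeated $\eta_{1,3}$ operations, whereas the paper relabels all subtrees at once and joins the root to all of its children with a single edge-adding operation.
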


\begin{proof}
Let~$T$ be a tree with a root vertex~$v$.
We claim that there is a $3$-expression which creates~$T$ such that, in the resulting labelled tree, only~$v$ has label~$3$.
We prove this by induction on~$|V(T)|$.
Clearly this holds when $|V(T)|=1$.
Otherwise, let $v_1,\ldots,v_k$ be the children of~$v$ and let $T_1,\ldots,T_k$ be the subtrees of~$T$ rooted at $v_1,\ldots,v_k$, respectively.
By the induction hypothesis, for each~$i$ there is a $3$-expression which creates~$T_i$ such that, in the resulting labelled tree, only~$v_i$ has label~$3$.
We take the disjoint union~$\oplus$ of these expressions and let~$E$ be the resulting $3$-expression.
Then $\eta_{3,2}(3(v)\oplus \rho_{3\rightarrow 2}(\rho_{2\rightarrow 1}(E)))$ is a $3$-expression which creates~$T$ such that, in the resulting labelled tree, only~$v$ has label~$3$.
Therefore for every tree~$T$, there is a $3$-expression that constructs~$T$.
Since a forest is a disjoint union of trees, we can then use the~$\oplus$ operation to extend this to a $3$-expression for any forest.
The proposition follows.
\end{proof}

\begin{prop}\label{p-atmost2}
Every graph of maximum degree at most~$2$ has clique-width at most~$4$.
\end{prop}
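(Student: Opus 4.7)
The plan is to reduce the proposition to Example~\ref{e-p4} via the standard structural decomposition. A connected graph with maximum degree at most~$2$ is either a single vertex, a path, or a cycle: starting from any vertex and walking along incident edges, the degree bound forces the walk to either terminate (giving a path) or return to its starting point (giving a cycle). Hence an arbitrary graph of maximum degree at most~$2$ is a disjoint union of graphs from $\{P_n : n\geq 1\}\cup\{C_n : n\geq 3\}$.

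By Example~\ref{e-p4} we already have $\cw(P_n)\leq 3$ for every $n\geq 1$ and $\cw(C_n)\leq 4$ for every $n\geq 3$. The remaining step is the observation that taking disjoint unions does not increase clique-width: if $E_1,\ldots,E_m$ are $k$-expressions for graphs $G_1,\ldots,G_m$, then $E_1\oplus E_2\oplus\cdots\oplus E_m$ is a $k$-expression for $G_1+G_2+\cdots+G_m$, because after the $\oplus$ operations we perform no further edge-adding steps and therefore do not care that vertices in different components may share labels.

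Applying this with $k=4$ to the path and cycle components of $G$ yields $\cw(G)\leq 4$. There is no real obstacle here: the only point requiring any care is to note that the $4$-expressions for the different components can be concatenated with $\oplus$ \emph{without} needing compatible labels across components, precisely because all inter-component non-edges are already correct and no $\eta_{i,j}$ operation is subsequently applied.
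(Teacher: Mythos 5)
Your proof is correct and matches the paper's argument: the paper likewise derives Proposition~\ref{p-atmost2} from Example~\ref{e-p4} by observing that a graph of maximum degree at most~$2$ is a disjoint union of paths and cycles. Your additional remark that disjoint unions do not increase clique-width is a standard fact that the paper leaves implicit.
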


Recall that for general graphs, the complexity of computing the clique-width of a graph was open for a number of years, until Fellows, Rosamund, Rotics and Szeider~\cite{FRRS09} proved that this is \NP-hard.
However, Proposition~\ref{p-forest} implies that we can determine the clique-width of a forest~$F$ in polynomial time: 
if~$F$ contains an induced~$P_4$, then $\cw(F)=3$; if~$F$ is $P_4$-free but has an edge, then $\cw(F)=2$; and if $F=sP_1$ for some $s\geq 1$, then $\cw(F)=1$.

In contrast to Proposition~\ref{p-atmost2}, graphs of maximum degree at most~$3$ may have arbitrarily large clique-width.
An example of this is a \defword{wall} of arbitrary height, which can be thought of as a hexagonal grid.
We do not formally define the wall, but instead we refer to \figurename~\ref{fig:walls}, in which three examples of walls of different heights are depicted; see, for example,~\cite{Ch15} for a formal definition.
Note that walls of height at least~$2$ have maximum degree~$3$.
The following result is well known; see for example~\cite{KLM09}.

\begin{theorem}\label{t-unbounded}
The class of walls has unbounded clique-width.
\end{theorem}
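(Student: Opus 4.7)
The plan is to show, for every constant $k$, that walls of sufficient height have clique-width exceeding $k$, via a pigeonhole/cut argument on a hypothetical $k$-expression. Suppose for contradiction that every wall $W_n$ has clique-width at most $k$, and fix a $k$-expression tree~$\tau$ constructing~$W_n$. Associated to every internal node of~$\tau$ is the labelled induced subgraph produced so far; the sizes of these subgraphs decrease from $|V(W_n)|$ at the root to~$1$ at the leaves, so there is some node producing an induced subgraph~$H$ whose vertex set~$A \subseteq V(W_n)$ satisfies $|V(W_n)|/3 \leq |A| \leq 2|V(W_n)|/3$. Let $B = V(W_n)\setminus A$.

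The first key step is the standard \emph{twin lemma} for $k$-expressions: at the moment~$H$ is produced, its vertices carry at most~$k$ distinct labels, and since every operation applied subsequently in~$\tau$ acts uniformly on each label class, any two vertices of~$A$ carrying the same label at that moment must have identical neighbourhoods in~$B$ in the final graph~$W_n$. Equivalently, the bipartite adjacency matrix $M \in \{0,1\}^{A \times B}$ of the edges of~$W_n$ between~$A$ and~$B$ has at most~$k$ distinct rows (and, by the symmetric argument applied to~$B$, at most~$k$ distinct columns, though we will only need the first half). So a $k$-expression forces every balanced bipartition of~$V(W_n)$ arising from the expression tree to have this ``low-complexity'' structure.

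The final step, and the main obstacle, is the wall-specific combinatorial argument: for every constant~$k$, one can choose~$n$ large enough that \emph{no} partition $(A,B)$ of $V(W_n)$ with $|A|,|B| \geq |V(W_n)|/3$ can yield a bipartite matrix~$M$ with only~$k$ distinct rows. The cleanest route is via rank-width: one shows that walls have rank-width tending to infinity by exhibiting, for any balanced $(A,B)$, a large set of vertices in~$A$ with pairwise distinct (hence linearly independent over~$\mathbb{F}_2$) neighbourhoods in~$B$; this can be extracted from the grid-like planar structure of the wall, using the fact that any balanced vertex-partition of~$W_n$ cuts $\Omega(n)$ edges along a ``fault line'' that crosses~$\Omega(n)$ bricks transversally, and those crossing vertices have pairwise distinct adjacencies. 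Once $\rw(W_n)\to\infty$ is established, the inequality $\rw(G)\leq\cw(G)$ noted in the introduction yields $\cw(W_n)\to\infty$ immediately.

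An alternative route, avoiding rank-width, is to reduce directly to the theorem of Golumbic and Rotics that the $n\times n$ grid has clique-width $n+1$: a wall of height~$n$ contains an $\Omega(n)\times\Omega(n)$ grid as an induced topological minor obtained by dissolving every second degree-$2$ vertex, and this combined with a structural argument that clique-width cannot drop by too much under this particular operation in the wall gives the desired lower bound. Either way, the essential difficulty is the same: quantifying that walls are ``globally tangled enough'' that no balanced cut has low bipartite complexity, which is where the planar $3$-regular grid-like geometry of the wall must be exploited.
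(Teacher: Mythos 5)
The paper itself offers no proof of this theorem: it is stated as well known with a citation to~\cite{KLM09}. The shortest derivation from results already in the survey is that walls have maximum degree~$3$ and contain arbitrarily large grid minors, hence unbounded treewidth, so Corollary~\ref{c-cwtw} immediately gives unbounded clique-width. Your first route is instead a from-first-principles argument, and its skeleton is sound: the existence of a node of the expression tree whose subexpression covers between one and two thirds of the vertices, and the ``twin lemma'' that vertices of~$A$ carrying equal labels at that node receive identical neighbourhoods in~$B=V(W_n)\setminus A$, are both standard and correct, and together they reduce the theorem to showing that every balanced vertex bipartition of a large wall induces more than~$k$ distinct neighbourhood profiles across the cut.

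That last step is where your write-up has genuine gaps. First, the parenthetical ``pairwise distinct (hence linearly independent over $\mathrm{GF}(2)$)'' is false --- the vectors $(1,0)$, $(0,1)$, $(1,1)$ are pairwise distinct and dependent --- and the detour through rank-width is unnecessary anyway, since the twin lemma already bounds the number of \emph{distinct} rows of~$M$ by~$k$. Second, the claim that the vertices crossed by the ``fault line'' have pairwise distinct adjacencies in~$B$ is false as stated: two vertices of~$A$ whose unique neighbour in~$B$ is the same vertex have identical profiles. What you actually need, and do not supply, is (i) an isoperimetric bound --- every partition with $|A|,|B|\geq |V(W_n)|/3$ cuts $\Omega(n)$ edges, provable by the usual argument that either $\Omega(n)$ of the disjoint horizontal row-paths meet both sides, or else $\Omega(n)$ rows lie entirely in~$A$ and $\Omega(n)$ entirely in~$B$, in which case each of the $\Omega(n)$ disjoint top-to-bottom paths contains a cut edge --- together with (ii) the observation that, because the wall has maximum degree~$3$, each vertex of~$A$ meets at most~$3$ cut edges and each nonempty profile is shared by at most~$3$ vertices of~$A$ (any two sharers are common neighbours of some $w\in B$ of degree at most~$3$); hence $\Omega(n)$ cut edges force $\Omega(n)$ distinct rows, which contradicts the bound~$k$ for large~$n$. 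Third, your alternative route is broken: the $n\times n$ grid has vertices of degree~$4$, so it is not a topological minor (induced or otherwise) of the degree-$3$ wall, and dissolving degree-$2$ vertices can never create it; the grid is only a \emph{minor} of the wall, and exploiting that correctly leads you back to the treewidth argument via Corollary~\ref{c-cwtw} rather than to the Golumbic--Rotics bound.
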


\begin{figure}
\begin{center}
\begin{minipage}{0.2\textwidth}
\centering
\begin{tikzpicture}[scale=0.45, every node/.style={scale=0.4}]
\GraphInit[vstyle=Simple]
\SetVertexSimple[MinSize=6pt]
\Vertex[x=1,y=0]{v10}
\Vertex[x=2,y=0]{v20}
\Vertex[x=3,y=0]{v30}
\Vertex[x=4,y=0]{v40}
\Vertex[x=5,y=0]{v50}

\Vertex[x=0,y=1]{v01}
\Vertex[x=1,y=1]{v11}
\Vertex[x=2,y=1]{v21}
\Vertex[x=3,y=1]{v31}
\Vertex[x=4,y=1]{v41}
\Vertex[x=5,y=1]{v51}

\Vertex[x=0,y=2]{v02}
\Vertex[x=1,y=2]{v12}
\Vertex[x=2,y=2]{v22}
\Vertex[x=3,y=2]{v32}
\Vertex[x=4,y=2]{v42}

\Edges(    v10,v20,v30,v40,v50)
\Edges(v01,v11,v21,v31,v41,v51)
\Edges(v02,v12,v22,v32,v42)

\Edge(v01)(v02)

\Edge(v10)(v11)

\Edge(v21)(v22)

\Edge(v30)(v31)

\Edge(v41)(v42)

\Edge(v50)(v51)

\end{tikzpicture}
\end{minipage}
\begin{minipage}{0.3\textwidth}
\centering
\begin{tikzpicture}[scale=0.45, every node/.style={scale=0.4}]
\GraphInit[vstyle=Simple]
\SetVertexSimple[MinSize=6pt]
\Vertex[x=1,y=0]{v10}
\Vertex[x=2,y=0]{v20}
\Vertex[x=3,y=0]{v30}
\Vertex[x=4,y=0]{v40}
\Vertex[x=5,y=0]{v50}
\Vertex[x=6,y=0]{v60}
\Vertex[x=7,y=0]{v70}

\Vertex[x=0,y=1]{v01}
\Vertex[x=1,y=1]{v11}
\Vertex[x=2,y=1]{v21}
\Vertex[x=3,y=1]{v31}
\Vertex[x=4,y=1]{v41}
\Vertex[x=5,y=1]{v51}
\Vertex[x=6,y=1]{v61}
\Vertex[x=7,y=1]{v71}

\Vertex[x=0,y=2]{v02}
\Vertex[x=1,y=2]{v12}
\Vertex[x=2,y=2]{v22}
\Vertex[x=3,y=2]{v32}
\Vertex[x=4,y=2]{v42}
\Vertex[x=5,y=2]{v52}
\Vertex[x=6,y=2]{v62}
\Vertex[x=7,y=2]{v72}

\Vertex[x=1,y=3]{v13}
\Vertex[x=2,y=3]{v23}
\Vertex[x=3,y=3]{v33}
\Vertex[x=4,y=3]{v43}
\Vertex[x=5,y=3]{v53}
\Vertex[x=6,y=3]{v63}
\Vertex[x=7,y=3]{v73}

\Edges(    v10,v20,v30,v40,v50,v60,v70)
\Edges(v01,v11,v21,v31,v41,v51,v61,v71)
\Edges(v02,v12,v22,v32,v42,v52,v62,v72)
\Edges(    v13,v23,v33,v43,v53,v63,v73)

\Edge(v01)(v02)

\Edge(v10)(v11)
\Edge(v12)(v13)

\Edge(v21)(v22)

\Edge(v30)(v31)
\Edge(v32)(v33)

\Edge(v41)(v42)

\Edge(v50)(v51)
\Edge(v52)(v53)

\Edge(v61)(v62)

\Edge(v70)(v71)
\Edge(v72)(v73)
\end{tikzpicture}
\end{minipage}
\begin{minipage}{0.35\textwidth}
\centering
\begin{tikzpicture}[scale=0.45, every node/.style={scale=0.4}]
\GraphInit[vstyle=Simple]
\SetVertexSimple[MinSize=6pt]
\Vertex[x=1,y=0]{v10}
\Vertex[x=2,y=0]{v20}
\Vertex[x=3,y=0]{v30}
\Vertex[x=4,y=0]{v40}
\Vertex[x=5,y=0]{v50}
\Vertex[x=6,y=0]{v60}
\Vertex[x=7,y=0]{v70}
\Vertex[x=8,y=0]{v80}
\Vertex[x=9,y=0]{v90}

\Vertex[x=0,y=1]{v01}
\Vertex[x=1,y=1]{v11}
\Vertex[x=2,y=1]{v21}
\Vertex[x=3,y=1]{v31}
\Vertex[x=4,y=1]{v41}
\Vertex[x=5,y=1]{v51}
\Vertex[x=6,y=1]{v61}
\Vertex[x=7,y=1]{v71}
\Vertex[x=8,y=1]{v81}
\Vertex[x=9,y=1]{v91}

\Vertex[x=0,y=2]{v02}
\Vertex[x=1,y=2]{v12}
\Vertex[x=2,y=2]{v22}
\Vertex[x=3,y=2]{v32}
\Vertex[x=4,y=2]{v42}
\Vertex[x=5,y=2]{v52}
\Vertex[x=6,y=2]{v62}
\Vertex[x=7,y=2]{v72}
\Vertex[x=8,y=2]{v82}
\Vertex[x=9,y=2]{v92}

\Vertex[x=0,y=3]{v03}
\Vertex[x=1,y=3]{v13}
\Vertex[x=2,y=3]{v23}
\Vertex[x=3,y=3]{v33}
\Vertex[x=4,y=3]{v43}
\Vertex[x=5,y=3]{v53}
\Vertex[x=6,y=3]{v63}
\Vertex[x=7,y=3]{v73}
\Vertex[x=8,y=3]{v83}
\Vertex[x=9,y=3]{v93}

\Vertex[x=0,y=4]{v04}
\Vertex[x=1,y=4]{v14}
\Vertex[x=2,y=4]{v24}
\Vertex[x=3,y=4]{v34}
\Vertex[x=4,y=4]{v44}
\Vertex[x=5,y=4]{v54}
\Vertex[x=6,y=4]{v64}
\Vertex[x=7,y=4]{v74}
\Vertex[x=8,y=4]{v84}

\Edges(    v10,v20,v30,v40,v50,v60,v70,v80,v90)
\Edges(v01,v11,v21,v31,v41,v51,v61,v71,v81,v91)
\Edges(v02,v12,v22,v32,v42,v52,v62,v72,v82,v92)
\Edges(v03,v13,v23,v33,v43,v53,v63,v73,v83,v93)
\Edges(v04,v14,v24,v34,v44,v54,v64,v74,v84)

\Edge(v01)(v02)
\Edge(v03)(v04)

\Edge(v10)(v11)
\Edge(v12)(v13)

\Edge(v21)(v22)
\Edge(v23)(v24)

\Edge(v30)(v31)
\Edge(v32)(v33)

\Edge(v41)(v42)
\Edge(v43)(v44)

\Edge(v50)(v51)
\Edge(v52)(v53)

\Edge(v61)(v62)
\Edge(v63)(v64)

\Edge(v70)(v71)
\Edge(v72)(v73)

\Edge(v81)(v82)
\Edge(v83)(v84)

\Edge(v90)(v91)
\Edge(v92)(v93)
\end{tikzpicture}
\end{minipage}
\caption{Walls of height $2$, $3$ and~$4$, respectively.}\label{fig:walls}
\end{center}
\end{figure}
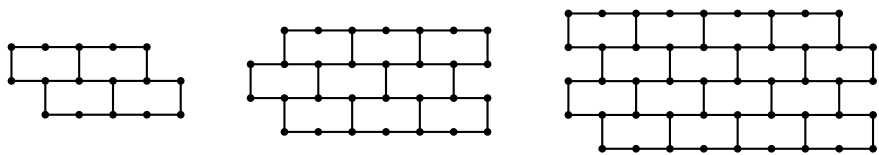

As mentioned, clique-width is more general than treewidth.
Courcelle and Olariu~\cite{CO00} proved that $\cw(G)\leq 4\cdot 2^{\tw(G)-1}+1$ for every graph~$G$ (see \cite{GM03} for an alternative proof).
Corneil and Rotics~\cite{CR05} improved this bound by showing that $\cw(G)\leq 3\cdot 2^{\tw(G)-1}$ for every graph~$G$.
They also proved that for every~$k$, there is a graph $G$ with $\tw(G)= k$ and $\cw(G)\geq 2^{\left\lfloor\frac{\tw(G)}{2}\right\rfloor-1}$.
The following result shows that for restricted graph classes the two parameters may be equivalent (see~\cite{Co18,Co18b} for graph classes for which treewidth and clique-width are even linearly related).

\begin{sloppypar}
\begin{theorem}[\cite{GW00}]\label{t-cwtw}
For $t\geq 1$, every class of $K_{t,t}$-subgraph-free graphs of bounded clique-width has bounded treewidth.
\end{theorem}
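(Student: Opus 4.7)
The plan is to build a tree decomposition of $G$ directly from a $k$-expression realising $\cw(G)\le k$, showing that its width is bounded in terms of $k$ and $t$ alone. First I would fix a $k$-expression $\xi$ for $G$ and let $T$ be its parse tree, whose leaves are single-vertex creations $i(v)$ and whose internal nodes carry the operations $\oplus$, $\eta_{i,j}$ and $\rho_{i\to j}$. For each node $u$ of $T$, let $G_u$ denote the labelled graph built by the sub-expression rooted at $u$, and let $L_i(u)$ be its $i$-labelled vertex set.

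The key structural observation is the following: at every $\eta_{i,j}$ node $u$ with child $u'$ in $T$, we must have $\min\{|L_i(u')|,|L_j(u')|\}<t$. Indeed, $\eta_{i,j}$ joins every vertex of $L_i(u')$ to every vertex of $L_j(u')$, so if both of these sets had size at least $t$, then $G_u$, and hence $G$, would contain $K_{t,t}$ as a subgraph, contradicting the hypothesis.

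Using this observation, I would construct a tree decomposition $(T,\{X_u\}_{u\in V(T)})$ of $G$. At each node $u$, the bag $X_u$ consists of: all of $L_i(u)$ whenever $|L_i(u)|<t$; and for each large class $L_i(u)$ with $|L_i(u)|\ge t$, a distinguished set of at most $t-1$ \emph{representative} vertices of $L_i(u)$. This yields $|X_u|\le k(t-1)$, so the width would be at most $k(t-1)-1$.

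The main obstacle, and where I expect the real work to lie, is selecting the representatives so that the two axioms of a tree decomposition hold. Coverage of every edge follows fairly directly from the structural observation: edges are introduced only at $\eta_{i,j}$ nodes, where the smaller of the two classes (of size less than $t$) lies entirely in $X_u$, and the representatives of the larger class can be chosen at $u$ to include all current endpoints of the newly created edges. The connectivity axiom, that the bags containing a fixed vertex $v$ form a connected subtree of $T$, is more delicate: at a $\oplus$-node two previously small classes may merge into a large class and push earlier representatives out of the representative set, and at a $\rho_{i\to j}$-node two classes fuse similarly. A standard fix is to subdivide $T$ slightly, inserting auxiliary nodes whose bag is the union of the old and the new representative sets on each side, so that any vertex which ceases to be a representative still appears in bags along the whole portion of $T$ where it is relevant. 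Once this bookkeeping is verified, the resulting tree decomposition gives $\tw(G)\le f(k,t)$ for an explicit function $f$ depending only on $k$ and $t$, establishing the theorem.
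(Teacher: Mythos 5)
Your opening observation --- that at every $\eta_{i,j}$ node at least one of the two label classes involved has fewer than $t$ vertices, since otherwise $G$ would contain $K_{t,t}$ as a subgraph --- is correct, and it is indeed the starting point of the argument in~\cite{GW00} (the survey itself only cites that result and gives no proof). The gap is in your treatment of edge coverage, which is where the real content of the theorem lies. When $\eta_{i,j}$ is applied at a node $u'$ with $|L_i(u')|\ge t>|L_j(u')|$, \emph{every} vertex of the large class $L_i(u')$ becomes adjacent to every vertex of $L_j(u')$, so the set of endpoints of newly created edges on the large side is all of $L_i(u')$; it is therefore impossible to choose $t-1$ ``representatives'' of $L_i(u')$ that ``include all current endpoints of the newly created edges''. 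With your bags, a vertex $v\in L_i(u')$ that is not a representative never shares a bag with its new neighbour $w\in L_j(u')$: the only bags guaranteed to contain $v$ are those at nodes below the point where $v$'s label class first reached size $t$, and $w$ need not even have been created in the sub-expression rooted at those nodes.

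The missing idea is a \emph{second} application of $K_{t,t}$-freeness, in the opposite direction: if $|L_i(u)|\ge t$, then the set $F_i(u)$ of \emph{common future neighbours} of $L_i(u)$ --- the vertices joined to (all of) $L_i(u)$ by $\eta$ operations performed at $u$ or at ancestors of $u$ --- has size at most $t-1$, since otherwise $L_i(u)\cup F_i(u)$ would again yield a $K_{t,t}$ subgraph. (This set is well defined because all vertices of a label class carry the same label, and hence acquire exactly the same new neighbours, at every ancestor node.) What must be added to the bag for a large class is therefore not a selection of representatives of that class but the at most $t-1$ vertices of $F_i(u)$, propagated down the expression tree to every node at which a large subclass of the eventual class is present. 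The edge $vw$ is then covered at the node where $v$'s class first becomes large: its bag contains $w$ as a future common neighbour of that large class, and it can be made to contain $v$ by also including the (still small) classes of the children of each node. This gives bags of size $O(kt)$, and the connectivity axiom can be verified along the lines you sketch. As written, however, your construction fails to cover the edges created between a large and a small class, and the proposed repair cannot work.
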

\end{sloppypar}

\begin{cor}\label{c-cwtw}
A class of graphs of bounded maximum degree has bounded clique-width if and only if it has bounded treewidth.
\end{cor}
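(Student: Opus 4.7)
The plan is to prove each direction of the equivalence separately, leveraging the two preceding results.

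For the ``if'' direction, I would not even need the bounded maximum degree hypothesis: the bound $\cw(G) \leq 3 \cdot 2^{\tw(G)-1}$ of Corneil and Rotics, quoted just before Theorem~\ref{t-cwtw}, immediately shows that any class of bounded treewidth has bounded clique-width. So this direction is a one-line invocation.

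For the ``only if'' direction, I would combine a bounded-degree argument with Theorem~\ref{t-cwtw}. Suppose $\mathcal{G}$ is a class of graphs whose maximum degree is bounded by some constant $\Delta$. Then every graph $G \in \mathcal{G}$ is $K_{\Delta+1,\Delta+1}$-subgraph-free, because any vertex belonging to one side of a $K_{\Delta+1,\Delta+1}$-subgraph would need at least $\Delta+1$ neighbours (one for each vertex on the opposite side), contradicting the degree bound. Applying Theorem~\ref{t-cwtw} with $t = \Delta+1$ to the subclass $\mathcal{G}$, which is simultaneously $K_{\Delta+1,\Delta+1}$-subgraph-free and, by hypothesis, of bounded clique-width, yields that $\mathcal{G}$ has bounded treewidth.

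There is no real obstacle here: the work has been done by the two cited results, and the only content is the observation that bounded maximum degree forces the absence of a large complete bipartite subgraph. I would keep the write-up to a short paragraph per direction.
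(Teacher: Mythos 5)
Your proposal is correct and is exactly the derivation the paper intends (it leaves the corollary unproved, as an immediate consequence of the Corneil--Rotics bound $\cw(G)\leq 3\cdot 2^{\tw(G)-1}$ for one direction and of Theorem~\ref{t-cwtw} for the other). The observation that maximum degree at most $\Delta$ forces $K_{\Delta+1,\Delta+1}$-subgraph-freeness is precisely the missing link the paper relies on.
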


Gurski and Wanke gave another connection between treewidth and clique-width.

\begin{theorem}[\cite{GW07}]\label{t-gw1}
A class of graphs~${\cal G}$ has bounded treewidth if and only if the class of line graphs of graphs in~${\cal G}$ has bounded clique-width.
\end{theorem}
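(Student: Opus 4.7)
The plan is to prove the two directions separately; the forward direction is a straightforward dynamic-programming construction, while the reverse direction is the delicate one and combines the excluded grid/wall theorem with Corollary~\ref{c-cwtw}.

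For the forward direction, suppose $G\in\mathcal{G}$ has $\tw(G)\le k$, and fix a nice tree decomposition $(T,\{X_i\})$ of $G$ of width $k$. The vertices of $L(G)$ correspond to edges of $G$, and my plan is to build a clique-width expression for $L(G)$ bottom-up over $T$. I would think of the bags as having $k+1$ fixed ``slots'' occupied by the current bag vertices, and I would label each already-constructed vertex of $L(G)$ by the set of slots that its two endpoints still occupy: a subset of size at most $2$ from a set of $k+1$ slots, so $1+(k+1)+\binom{k+1}{2}$ labels. At an introduce-edge node for $e=uv$, I create a new vertex with label $\{\mathrm{slot}(u),\mathrm{slot}(v)\}$, combine with the current labelled graph via $\oplus$, and apply $\eta$-operations to connect it to every existing vertex whose label shares a slot with $\{\mathrm{slot}(u),\mathrm{slot}(v)\}$; these are precisely the previously-constructed edges sharing an endpoint with $e$. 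At a forget node for $u$, I use $\rho$-operations to strip $\mathrm{slot}(u)$ from every label. Join nodes are handled with a constant-factor blowup in labels (e.g.\ by using ``left'' and ``right'' copies of each label to enable $\eta$ between same-labelled vertices from the two branches, then merging back); the number of labels remains bounded by a function of $k$.

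For the reverse direction, suppose the class of line graphs has clique-width bounded by some constant $c$ and, for contradiction, $\mathcal{G}$ has unbounded treewidth. By the excluded grid theorem, for every $h$ there is $G_h\in\mathcal{G}$ containing $W_h$ as a minor; since walls are subcubic, minor- and topological-minor containment coincide for them, so some subdivision $W_h^*$ of $W_h$ occurs as a subgraph of $G_h$. Since adjacency in a line graph depends only on shared endpoints in the underlying graph, $L(W_h^*)$ is an \emph{induced} subgraph of $L(G_h)$, so $\cw(L(W_h^*))\le c$. Because $W_h^*$ has maximum degree at most $3$, $L(W_h^*)$ has maximum degree at most $4$, so Corollary~\ref{c-cwtw} yields a uniform bound $\tw(L(W_h^*))\le g(c)$ for some function $g$. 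On the other hand, $W_h$ is itself a minor of $L(W_h^*)$: for each edge $uv$ of $W_h$, the edges of the subdivision path of $uv$ in $W_h^*$ form a path $Q_{uv}$ in $L(W_h^*)$; these paths are vertex-disjoint and exhaust $V(L(W_h^*))$, and contracting each $Q_{uv}$ to a single vertex produces exactly $W_h$, since two contracted vertices are adjacent in the resulting graph iff the corresponding subdivision paths share a vertex in $W_h^*$ iff the original edges of $W_h$ share an endpoint. Thus $\tw(L(W_h^*))\ge\tw(W_h)\to\infty$, contradicting the uniform bound.

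The main obstacle is the reverse direction, and in particular verifying that the contraction step really reproduces $W_h$ regardless of how its edges were subdivided. The key point is that internal vertices of subdivision paths have degree $2$ in $W_h^*$ and therefore belong to only one such path, so two subdivision paths share a vertex of $W_h^*$ precisely when the corresponding edges of $W_h$ share an endpoint. Once this is in hand, the rest of the reverse direction is a clean composition of the excluded grid theorem, induced-subgraph monotonicity of clique-width, and Corollary~\ref{c-cwtw}; the forward direction is conceptually routine once one is careful about join nodes.
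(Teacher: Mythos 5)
The paper states this theorem as a citation to Gurski and Wanke and does not reproduce a proof, so I can only assess your argument on its own terms. Your forward direction is sound: labelling each already-built vertex of $L(G)$ by the set of slots its surviving endpoints occupy in the current bag is exactly the right invariant, only the newly introduced edge can carry a two-slot label equal to its own (so the $\eta$-operations are legitimate), and at a join node any endpoint shared by edges from the two branches must lie in the join bag by the connectivity condition of tree decompositions, so ``labels share a slot'' correctly characterises adjacency across the join.

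The gap is in the final step of the reverse direction. Contracting, for each edge $uv$ of $W_h$, the path $Q_{uv}$ of $L(W_h^*)$ formed by the edges of the subdivision path of $uv$ does \emph{not} produce $W_h$: the contracted graph has one vertex per \emph{edge} of $W_h$, and by your own adjacency analysis two such vertices are adjacent exactly when the corresponding edges of $W_h$ share an endpoint. In other words you have shown that $L(W_h)$, not $W_h$, is a minor of $L(W_h^*)$ (note the vertex counts do not even match, since $|E(W_h)|\neq |V(W_h)|$). Consequently the inequality $\tw(L(W_h^*))\geq \tw(W_h)$ is unjustified as written; what follows is only $\tw(L(W_h^*))\geq \tw(L(W_h))$. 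The argument is salvageable, but it needs one more ingredient: that line graphs of walls have unbounded treewidth. This can be obtained either from the general bound $\tw(H)\leq 2\tw(L(H))+1$ (replace each bag of a tree decomposition of $L(H)$ by the set of endpoints of its edges; the connectivity condition for a vertex $v$ of $H$ holds because the subtrees associated with the edges incident to $v$ pairwise intersect and hence, by the Helly property for subtrees, share a node), or from the observation that $H$ is a minor of $L(H)$ whenever every component of $H$ contains a cycle (orient the edges so that every vertex has in-degree at least one and take as the branch set of $v$ the edges oriented into $v$; these sets are non-empty, disjoint cliques of $L(H)$, and the branch sets of adjacent vertices are joined). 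Either fix yields $\tw(L(W_h))\rightarrow\infty$ and restores your contradiction.
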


As mentioned in Section~\ref{s-intro}, boundedness of clique-width has been determined for many hereditary graph classes.
However, using the definition of clique-width directly to prove that a certain hereditary graph class~${\cal G}$ has bounded clique-width is often difficult.
An alternative way to show that a hereditary graph class~${\cal G}$ has bounded clique-width is to prove that for infinitely many values of~$n$, the number of labelled graphs in~${\cal G}$ on~$n$ vertices is at most the Bell number~$B_n$~\cite{ALR09}, but this has limited applicability.
The following {\tt BCW Method} is more commonly used:

\medskip
\noindent \rule{\textwidth}{0.1mm}\\
{\tt Bounding Clique-Width (BCW Method)}\\[-1.5em]
\begin{enumerate}
\item If possible, consider only graphs in~${\cal G}$ that have some suitable property~$\pi$.\\[-1.5em]
\item Take a graph class~${\cal G}'$ for which it is known that its clique-width is bounded.\\[-1.5em]
\item For every graph~$G\in {\cal G}$ (possibly with property~$\pi$), reduce~$G$ to a graph in~${\cal G}'$ by using a constant number of graph operations that do not change the clique-width of~$G$ by ``too much''.\\[-2em]
\end{enumerate}
\noindent \rule{\textwidth}{0.1mm}

\medskip
\noindent
Note that the subclass of graphs in~${\cal G}$ that have some property~$\pi$ in Step~1 need not be hereditary.
For example, it is known~\cite{BL02,LR04} that we may choose the property~$\pi$ to be that of being $2$-connected and that we can delete some constant number~$k$ of vertices from a graph without affecting the clique-width by more than some bounded amount.
Then we could try to prove that~${\cal G}$ has bounded clique-width by showing that for every $2$-connected graph in~${\cal G}$, we can delete no more than~$k$ vertices to obtain a graph in some class~${\cal G'}$ that we know to have bounded clique-width.
We give some concrete examples of this method in the next section.

The power of the method depends on both the graph property~$\pi$ in Step~1 and the graph operations that we are allowed to use in Step~3.
In particular we will use graph operations to modify a graph~$G$ of some class~${\cal G}$ into the disjoint union of some graphs that have a simpler structure than~$G$ itself.
As a result, we can then deal with these simpler graphs separately.
This approach is particularly useful if ${\cal G}$ is hereditary: if the simpler graphs are induced subgraphs of the original graph~$G$, then we can still make use of earlier deduced properties for~${\cal G}$ when dealing with the simpler induced subgraphs of $G$.
Before giving important examples of these operations and properties, we first formalize our approach.

Let $k\geq 0$ be a constant and let~$\gamma$ be some graph operation.
We say that a graph class~${\cal G'}$ is {\em $(k,\gamma)$-obtained} from a graph class~${\cal G}$ if the following two conditions hold:
\begin{enumerate}
\item\label{i-cond1}every graph in~${\cal G'}$ can be obtained from a graph in~${\cal G}$ by performing~$\gamma$ at most~$k$ times, and\\[-2em]
\item\label{i-cond2}for every graph $G\in {\cal G}$ there exists at least one graph in~${\cal G'}$ that can be obtained from~$G$ by performing~$\gamma$ at most~$k$ times (note that~${\cal G}$ is not necessarily a subclass of~${\cal G}'$).
\end{enumerate}

A graph operation~$\gamma$ \defword{preserves} boundedness of clique-width if, for every finite constant~$k$ and every graph class~${\cal G}$, every graph class~${\cal G}'$ that is $(k,\gamma)$-obtained from~${\cal G}$ has bounded clique-width if and only if~${\cal G}$ has bounded clique-width.
We note that Condition~\ref{i-cond1} is necessary for this definition to be meaningful; without this condition the class of all graphs (which has unbounded clique-width) would be $(k,\gamma)$-obtained from every other graph class.
Similarly, we also need Condition~\ref{i-cond2}, as otherwise every graph class would be $(k,\gamma)$-obtained from the class of all graphs.
If $k=\infty$ is allowed, then~$\gamma$ preserves boundedness of clique-width \defword{ad infinitum}.
Similarly, a graph property~$\pi$ \defword{preserves} boundedness of clique-width if, for every graph class~${\cal G}$, the subclass of~${\cal G}$ with property~$\pi$ has bounded clique-width if and only if~${\cal G}$ has bounded clique-width.
If necessary, we may restrict these definitions to only be valid for some specific types of graph classes.

We refer to the survey of Gurski~\cite{Gu17} for a detailed overview of graph operations that preserve boundedness of clique-width and for bounds that tell us more precisely by how much the clique-width can change when applying various operations.\footnote{We note that some of these graph operations may exponentially increase the upper bound of the clique-width.}
Here, we only state the most important graph operations, together with two well-known properties that preserve boundedness of clique-width.

\paragraph{Facts about clique-width:}
\begin{enumerate}[\bf F{a}ct 1.]

\item \label{fact:del-vert}Vertex deletion preserves boundedness of clique-width~\cite{LR04}.\\[-1em]

\item \label{fact:comp}Subgraph complementation preserves boundedness of clique-width~\cite{KLM09}.\\[-1em]

\item \label{fact:bip}Bipartite complementation preserves boundedness of clique-width~\cite{KLM09}.\\[-1em]

\item \label{fact:prime}Being prime preserves boundedness of clique-width for hereditary graph classes~\cite{CO00}.\\[-1em]

\item \label{fact:2-conn}Being $2$-connected preserves boundedness of clique-width for hereditary graph classes~\cite{BL02,LR04}.\\[-1em]

\item \label{fact:subdiv}Edge subdivision preserves boundedness of clique-width ad infinitum for graph classes of bounded maximum degree~\cite{KLM09}.

\end{enumerate}

We note that Fact~\ref{fact:bip} follows from Fact~\ref{fact:comp}, as bipartite complementations can be mimicked by three subgraph complementations.
Moreover, an edge deletion is a special case of subgraph complementation, whereas an edge contraction is a vertex deletion and a bipartite complementation.
Finally, recall that an edge subdivision is the reverse operation of a vertex dissolution, which can be seen as a type of edge contraction.
Hence, from Facts~\ref{fact:del-vert}--\ref{fact:bip} it follows that edge deletion, edge contraction and edge subdivision each preserve boundedness of clique-width.

Vertex deletions, edge deletions and edge contractions do not preserve boundedness of clique-width ad infinitum: one can take any graph class of unbounded clique-width and apply one of these operations until one obtains the empty graph or an edgeless graph. 
Hence, Facts~\ref{fact:del-vert}--\ref{fact:bip} do not preserve boundedness of clique-width ad infinitum.
This holds even for graphs of maximum degree at most~$3$, as the class of walls and their induced subgraphs has unbounded clique-width by Theorem~\ref{t-unbounded}.

In contrast, Fact~\ref{fact:subdiv} says that edge subdivisions applied on graphs of bounded maximum degree do preserve boundedness of clique-width ad infinitum.
We note that Fact~\ref{fact:subdiv} follows from Corollary~\ref{c-cwtw} and the fact that an edge subdivision does not change the treewidth of a graph (see, for example,~\cite{LR06}).
However, the condition on the maximum degree is necessary for the ``only if'' direction of Fact~\ref{fact:subdiv}.
Otherwise, as discussed in~\cite{DP16}, one could start with a clique~$K$ on at least two vertices (which has clique-width~$2$) and then apply an edge subdivision on an edge~$uv$ in~$K$ if and only if~$uv$ is not an edge in some graph~$G$ of arbitrarily large clique-width with $|V(G)|=|V(K)|$.
This yields a graph~$G'$ that contains~$G$ as an induced subgraph, implying that $\cw(G')\geq \cw(G)$, which is arbitrarily larger than $\cw(K)=2$.

As an aside, note that edge contractions do not increase the clique-width of graphs of bounded maximum degree either.
We can apply Corollary~\ref{c-cwtw} again after observing from the definition of treewidth that edge contractions do not increase treewidth.
However, the condition on the maximum degree is necessary here as well; a (non-trivial) counterexample is given by Courcelle~\cite{Co14}, who proved that the class of graphs that are obtained by edge contractions from the class of graphs of clique-width~$3$ has unbounded clique-width.

For the {\tt BCW Method}, operations that preserve boundedness of clique-width may be combined, but these operations may not always be used in combination with some property~$\pi$ that preserves boundedness of clique-width.
This is because applying a graph operation may result in a graph that does not have property~$\pi$.
Moreover, it is not always clear whether two or more properties that preserve boundedness of clique-width may be unified into one property.
For instance, every non-empty class of $2$-connected graphs is not hereditary and every class of prime graphs containing a graph on more than two vertices is not hereditary.
As such, it is unknown whether Facts~\ref{fact:prime} and~\ref{fact:2-conn}, which may only be applied on hereditary graph classes, can be combined.
That is, the following problem is open.

\begin{oproblem}
Let~${\cal G}$ be a hereditary class of graphs and let~${\cal F}$ be the class of $2$-connected prime graphs in~${\cal G}$.
If~${\cal F}$ has bounded clique-width, does this imply that~${\cal G}$ has bounded clique-width?
\end{oproblem}

To prove that a graph class~${\cal G}$ has unbounded clique-width, a similar method to the {\tt BCW Method} can be used.

\medskip
\noindent \rule{\textwidth}{0.1mm}\\
{\tt Unbounding Clique-Width (UCW Method)}\\[-1.5em]
\begin{enumerate}
\item Take a graph class~${\cal G}'$ known to have unbounded clique-width.\\[-1.5em]
\item For every graph~$G'\in {\cal G}'$, reduce~$G'$ to a graph in~${\cal G}$ by using a constant number of graph operations that do not change the clique-width of~$G'$ by ``too much''.\\[-2em]
\end{enumerate}
\noindent \rule{\textwidth}{0.1mm}

\medskip
By Theorem~\ref{t-unbounded}, we can consider the class of walls as a starting point for the graph class~${\cal G}'$.
A \defword{$k$-subdivided wall} is a graph obtained from a wall after subdividing each edge exactly~$k$ times for some constant $k\geq 0$.
Combining Fact~\ref{fact:subdiv} with Theorem~\ref{t-unbounded} and the observation that walls of height at least~$2$ have maximum degree~$3$ leads to the following result.

\begin{cor}[\cite{LR06}]\label{cor:walls}
For any constant $k\geq 0$, the class of $k$-subdivided walls has unbounded clique-width.
\end{cor}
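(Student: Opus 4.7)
The plan is to combine Theorem~\ref{t-unbounded} with Fact~\ref{fact:subdiv} (edge subdivision preserves boundedness of clique-width ad infinitum on graph classes of bounded maximum degree). First I would dispose of the trivial case $k=0$: a $0$-subdivided wall is just a wall, so the result follows immediately from Theorem~\ref{t-unbounded}. So assume $k\geq 1$.

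Next I would let ${\cal W}$ denote the class of walls of height at least~$2$. By Theorem~\ref{t-unbounded}, ${\cal W}$ has unbounded clique-width: deleting the finitely many walls of height less than~$2$ from the class of all walls does not affect unboundedness. Inspection of \figurename~\ref{fig:walls} shows that every vertex of such a wall has degree at most~$3$, so ${\cal W}$ has bounded maximum degree, putting us in the setting where Fact~\ref{fact:subdiv} applies.

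Now let ${\cal W}_k$ be the class of $k$-subdivided walls. Every $G'\in {\cal W}_k$ arises from some $G\in {\cal W}$ by performing an edge subdivision $k\cdot|E(G)|$ times (once for each edge of~$G$, repeated $k$ times), and conversely every $G\in {\cal W}$ produces a graph of ${\cal W}_k$ via such subdivisions. Hence ${\cal W}_k$ is $(\infty,\gamma)$-obtained from ${\cal W}$, where $\gamma$ is edge subdivision; it is precisely because $|E(G)|$ grows without bound on ${\cal W}$ that we need the ``ad infinitum'' strengthening of Fact~\ref{fact:subdiv}, not just the bounded-many-operations version. A quick sanity check: edge subdivision preserves maximum degree, since the endpoints of the subdivided edge retain their degrees and the new vertex has degree exactly~$2$; therefore every graph in ${\cal W}_k$ also has maximum degree at most~$3$, so both ${\cal W}$ and ${\cal W}_k$ sit in the bounded-maximum-degree regime where Fact~\ref{fact:subdiv} is valid.

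Applying Fact~\ref{fact:subdiv} then yields that ${\cal W}_k$ has bounded clique-width if and only if ${\cal W}$ does; since ${\cal W}$ does not, neither does ${\cal W}_k$. I do not anticipate any real obstacle, as the statement is essentially a one-line combination of the cited results. The only point requiring attention is verifying that the bounded-maximum-degree hypothesis of Fact~\ref{fact:subdiv} holds both before and after subdivision, which is immediate from the preservation of degree under subdivision.
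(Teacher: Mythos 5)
Your proposal is correct and follows exactly the paper's argument: the paper derives Corollary~\ref{cor:walls} by combining Fact~\ref{fact:subdiv} with Theorem~\ref{t-unbounded} and the observation that walls of height at least~$2$ have maximum degree~$3$. Your additional checks (the $k=0$ case, that subdivision preserves maximum degree, and that the ``ad infinitum'' strengthening is what is really needed since the number of subdivisions grows with the wall) are sound elaborations of the same route rather than a different one.
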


Corollary~\ref{cor:walls} has proven to be very useful.
For instance, it can be used to obtain the following result (recall that~${\cal S}$ is the class of graphs each connected component of which is either a subdivided claw or a path).

\begin{cor}[\cite{LR06}]\label{cor:s}
Let $\{H_1,\ldots,H_p\}$ be a finite set of graphs.
If $H_i\notin {\cal S}$ for all $i \in \{1,\ldots,p\}$, then the class of $(H_1,\ldots,H_p)$-free graphs has unbounded~clique-width.
\end{cor}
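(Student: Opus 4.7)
The plan is to apply Corollary~\ref{cor:walls}: it suffices to exhibit a constant $k \geq 0$ such that every $k$-subdivided wall is $(H_1,\ldots,H_p)$-free, since then the class of $k$-subdivided walls is an infinite subfamily of the class of $(H_1,\ldots,H_p)$-free graphs whose clique-width is unbounded. For each $i$, the assumption $H_i \notin {\cal S}$ yields a connected component $C_i$ of $H_i$ that is neither a path nor a subdivided claw; since $C_i \ssi H_i$, it suffices to forbid $C_i$ as an induced subgraph of the $k$-subdivided wall $W_k$.

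Any such $C_i$ falls into at least one of three cases, which I would handle separately. \emph{Case~(a):} $C_i$ has a vertex of degree at least~$4$. Then $C_i \not\ssi W_k$ for every~$k$, since every $W_k$ has maximum degree~$3$. \emph{Case~(b):} $C_i$ contains a cycle, hence an induced cycle of some length $c_i \geq 3$. The wall has girth~$6$, so $W_k$ has girth $6(k+1)$; since an induced subgraph cannot contain an induced cycle shorter than the ambient girth, $C_i \not\ssi W_k$ whenever $6(k+1) > c_i$. \emph{Case~(c):} $C_i$ is a tree with at least two vertices of degree~$\geq 3$ at some mutual distance $d_i$. Here I would use the two structural facts that in $W_k$ (for $k \geq 1$) only original (un-subdivided) wall vertices can have degree~$\geq 3$ in any induced subgraph, because subdivision vertices have only two neighbours in $W_k$, and that distinct original wall vertices are at $W_k$-distance at least $k+1$. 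Combining this with the trivial bound that induced-subgraph distance is at least ambient distance, embedding $C_i$ would force $d_i \geq k+1$, so $C_i \not\ssi W_k$ whenever $k \geq d_i$.

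Taking $k$ to be the maximum of all of the thresholds produced above, over all $i \in \{1,\ldots,p\}$, then gives a single constant $k$ that works simultaneously for every $H_i$. The class of $k$-subdivided walls for this~$k$ is therefore a subclass of $(H_1,\ldots,H_p)$-free graphs, and by Corollary~\ref{cor:walls} it has unbounded clique-width, as required.

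The only step requiring any care is case~(c); the two supporting observations---that a degree-$\geq 3$ vertex of any induced subgraph of $W_k$ must be an original wall vertex, and that any two such vertices are at $W_k$-distance at least $k+1$---are both direct consequences of the $k$-subdivision construction, so I do not foresee any genuine obstacle beyond setting up the case distinction cleanly.
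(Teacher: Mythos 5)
Your proposal is correct and follows exactly the route the paper intends: the paper states this corollary as a consequence of Corollary~\ref{cor:walls} (citing~\cite{LR06} without reproducing the argument), and your case analysis --- degree at least~$4$, girth, and the distance between two branch vertices in a suitably subdivided wall --- is the standard way that derivation is carried out. The exhaustiveness of the three cases for a connected graph outside~${\cal S}$ and each of the three exclusion arguments check out, so there is no gap.
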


As a side note, we remark that ``limit classes'' of hereditary graph classes of unbounded clique-width may have bounded clique-width.
For instance, the class of $(C_k,\ldots,C_\ell)$-subgraph-free graphs has unbounded clique-width for any two integers $k\geq 3$ and $\ell\geq k$ due to Corollary~\ref{cor:walls}.
However, for every $k\geq 3$, the class of $(C_k,C_{k+1},\ldots)$-subgraph-free graphs has bounded clique-width~\cite{LM13}.
We refer to~\cite{LM13} for more details on limit classes.

Corollary~\ref{cor:walls} is further generalized by the following theorem.

\begin{theorem}[\cite{DP16}]\label{thm:generalunbounded}
For $m\geq 0$ and $n >\nobreak m+\nobreak 1$ the clique-width of a graph~$G$ is at least $\lfloor\frac{n-1}{m+1}\rfloor+\nobreak 1$ if~$V(G)$ has a partition into sets $V_{i,j} \; (i,j \in \{0,\ldots,n\})$ with the following properties:
\begin{enumerate}
\item \label{prop:v_i0-small}$|V_{i,0}| \leq 1$ for all $i\geq 1$\\[-2em]
\item \label{prop:v_0j-small}$|V_{0,j}| \leq 1$ for~all~$j\geq 1$\\[-2em]
\item \label{prop:v_ij-nonempty}$|V_{i,j}|\geq 1$ for all $i,j\geq 1$\\[-2em]
\item \label{prop:row-connected}$G[\cup^n_{j=0}V_{i,j}]$ is connected for all $i\geq 1$\\[-2em]
\item \label{prop:column-connected}$G[\cup^n_{i=0}V_{i,j}]$ is connected for all $j\geq 1$\\[-2em]
\item \label{prop:v_k0-nbrs}for $i,j,k\geq 1$, if a vertex of~$V_{k,0}$ is adjacent to a vertex of~$V_{i,j}$ then $i \leq k$\\[-2em]
\item \label{prop:v_0k-nbrs}for $i,j,k\geq 1$, if a vertex of~$V_{0,k}$ is adjacent to a vertex of~$V_{i,j}$ then $j \leq k$, and\\[-2em]
\item \label{prop:v_ij-nbrs}for $i,j,k,\ell\geq 1$, if a vertex of~$V_{i,j}$ is adjacent to a vertex of~$V_{k,\ell}$ then $|k-i|\leq m$ and $|\ell-j| \leq m$.
\end{enumerate}
\end{theorem}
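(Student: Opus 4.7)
The plan is to adapt the standard parse-tree lower-bound technique for clique-width (as used for walls in Corollary~\ref{cor:walls}) to the grid-like skeleton guaranteed by the hypotheses. Suppose for contradiction that $\cw(G)\leq k:=\lfloor(n-1)/(m+1)\rfloor$, and fix a $k$-expression building $G$ with parse tree $T$. For each node $u\in T$, let $S_u$ be the set of vertices introduced in the subtree rooted at $u$. A routine induction over the operations $\oplus$, $\eta_{i,j}$, $\rho_{i\to j}$ shows that any two vertices of $S_u$ carrying the same label at $u$ must have identical neighbourhoods in $V(G)\setminus S_u$; consequently it suffices to locate a node $u$ and $k+1$ vertices of $S_u$ whose neighbourhoods in $V(G)\setminus S_u$ are pairwise distinct.

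Since $k(m+1)+1\leq n$, we choose indices $s_0<s_1<\cdots<s_k$ in $\{1,\ldots,n\}$ with $s_{t+1}-s_t\geq m+1$, for instance $s_t=1+t(m+1)$. Set $R_t:=\bigcup_{j=1}^n V_{s_t,j}$. By property~\ref{prop:v_ij-nbrs} combined with the spacing, the rows $R_t$ are pairwise non-adjacent in $G$; by property~\ref{prop:row-connected}, the enlarged row $R_t\cup V_{s_t,0}$ is connected, where $V_{s_t,0}$ contains at most one vertex by property~\ref{prop:v_i0-small}. An analogous choice yields $k+1$ pairwise non-adjacent columns $C_0,\ldots,C_k$, each internally connected up to one boundary vertex; by property~\ref{prop:v_ij-nonempty}, every intersection $R_a\cap C_b$ is non-empty.

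The heart of the proof is to exhibit a node $u\in T$ such that the cut $(S_u,V(G)\setminus S_u)$ splits every row simultaneously, meaning $\emptyset\neq S_u\cap R_t\neq R_t$ for all $t\in\{0,\ldots,k\}$. Given such $u$, the connectedness of $R_t\cup V_{s_t,0}$ supplies an edge $v_tw_t\in E(G)$ with $v_t\in S_u$ and $w_t\in V(G)\setminus S_u$ lying in $R_t\cup V_{s_t,0}$, and the monotonicity of boundary adjacencies in property~\ref{prop:v_k0-nbrs} lets us arrange $w_t\in R_t$ (i.e.\ not the boundary vertex). Then for $t'\neq t$, the pairwise non-adjacency of rows forces $v_{t'}\not\sim w_t$, so the vertices $v_0,\ldots,v_k\in S_u$ have pairwise distinct neighbourhoods in $V(G)\setminus S_u$, yielding $k+1$ distinct labels and contradicting $\cw(G)\leq k$.

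The main obstacle is locating the balanced node $u$, since \emph{a priori} a parse tree may separate the rows one at a time, with no single cut splitting all $k+1$ of them. The standard remedy is to descend $T$ starting from the root, at each $\oplus$-node $V(G_{u_1})\sqcup V(G_{u_2})$ moving into whichever side retains the richer portion of the $(k+1)\times(k+1)$ intersection pattern $\{R_a\cap C_b\}$: if some row were entirely pushed to one side of a cut, a connected column meeting that row would have to span the cut, and the descent could then continue in a direction that preserves row splits. Formalising this descent carefully — together with disposing of the handful of stray boundary edges through $V_{i,0}$ and $V_{0,j}$ using the monotonicity in properties~\ref{prop:v_k0-nbrs} and~\ref{prop:v_0k-nbrs} — is the technically most delicate part of the argument.
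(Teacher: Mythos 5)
Your overall framework --- the labelling lemma, the choice of $k+1$ rows pairwise at distance more than $m$ so that property~\ref{prop:v_ij-nbrs} makes them mutually non-adjacent, and the use of row-connectivity to extract crossing edges --- is the right family of ideas, but the step you yourself flag as ``the heart of the proof'' contains a genuine gap: a node $u$ whose vertex cut $(S_u,V(G)\setminus S_u)$ splits all $k+1$ rows simultaneously need not exist. A $k$-expression may introduce the vertices row by row and create all edges only near the root, in which case at every node of the parse tree at most one row is split; and the symmetric column-based descent you sketch fails for the same reason (consider an expression that first completes row~$1$, then column~$1$, then row~$m+2$, and so on --- at no node are all selected rows split, and at the first node whose graph contains a full column that very column is unsplit). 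The ``descend towards the richer side'' remedy cannot recover a row that lies wholly inside the abandoned operand of an $\oplus$, so the argument as structured cannot be completed.

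The proof in~\cite{DP16} avoids this by working at a single node $x$, chosen as a \emph{lowest} node whose constructed graph $G_x$ contains an entire column together with all of that column's edges, and by using a strictly stronger form of your labelling lemma: two vertices of $V(G_x)$ carrying the same label agree on \emph{every} edge created above $x$, not merely on their neighbours outside $V(G_x)$. With this, each selected row yields a ``pending'' edge at $x$ --- an edge of $G$ inside the row with at least one endpoint in $V(G_x)$ that is absent from $E(G_x)$ --- whether the row is split by the vertex cut, split between the two operands of the $\oplus$ at $x$, or wholly inside $V(G_x)$ with some of its edges not yet created; these pending edges, together with property~\ref{prop:v_ij-nbrs}, force $k+1$ distinct labels. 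Your treatment of the boundary sets is also too quick: if the only crossing edge of a row has the unique vertex of $V_{s_t,0}$ as its outside endpoint, you cannot simply ``arrange'' $w_t\in R_t$, and property~\ref{prop:v_k0-nbrs} only forbids adjacency in one direction (namely towards rows with larger index), so the pairwise-distinctness of neighbourhoods needs an explicit case analysis there.
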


Many other constructions of graphs of large clique-width follow from Theorem~\ref{thm:generalunbounded} using the {\tt UCW Method} (possibly by applying Facts~\ref{fact:del-vert}--\ref{fact:bip}).
For instance, this is the case for square grids~\cite{MR99}, whose exact clique-width was determined by Golumbic and Rotics~\cite{GR00}.
This is also the case for the constructions of Brandst{\"a}dt, Engelfriet, Le and Lozin~\cite{BELL06}, Lozin and Volz~\cite{LV08}, Korpelainen, Lozin and Mayhill~\cite{KLM14} and Kwon, Pilipczuk and Siebertz~\cite{KPS17} for proving that the classes of $K_4$-free co-chordal graphs, $2P_3$-free bipartite graphs, split permutation graphs and twisted chain graphs, respectively, have unbounded clique-width.

Constructions of graphs of arbitrarily large clique-width not covered by Theorem~\ref{thm:generalunbounded} can be found in~\cite{GR00} and~\cite{BL03}, which prove that unit interval graphs and bipartite permutation graphs, respectively, have unbounded clique-width.
We discuss these results in more detail in the next section, but we note the following.

First, the classes of split permutation graphs (and the analogous bipartite class of bichain graphs)~\cite{ABLS}, unit interval graphs~\cite{Lo11} and bipartite permutation graphs~\cite{Lo11} are even minimal hereditary graph classes of unbounded clique-width.
Collins, Foniok, Korpelainen, Lozin and Zamaraev~\cite{CFKLZ} proved that the number of minimal hereditary graphs of unbounded clique-width is infinite.
Second, for classes, such as split graphs, bipartite graphs, co-bipartite graphs and $(K_{1,3},2K_2)$-free graphs, unboundedness of clique-width also follows from the fact that these classes are superfactorial~\cite{BL02} and an application of the following result.

\begin{theorem}[\cite{BL02}]\label{t-super}
Every superfactorial graph class has unbounded clique-width.
\end{theorem}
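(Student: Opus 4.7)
The plan is to prove the contrapositive: if $\cal G$ has bounded clique-width, then there is a constant $c$ such that the number $g_n$ of labelled graphs on $n$ vertices in $\cal G$ satisfies $g_n \leq n^{cn}$ for all $n$. Thus it suffices, for each fixed $k$, to bound the number $f_k(n)$ of labelled graphs on $n$ vertices of clique-width at most $k$ by an expression of the form $n^{c(k)\,n}$.

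To bound $f_k(n)$, I would associate to each labelled graph $G$ of clique-width at most $k$ a $k$-expression in a suitable normal form, and then count the number of such normal-form expressions. A $k$-expression can be viewed as a rooted ordered binary tree $T$ whose $n$ leaves are labelled by the vertices of $G$ and by an initial label from $\{1,\dots,k\}$, and whose internal nodes are the $\oplus$ operations, with a (possibly empty) sequence of unary operations $\eta_{i,j}$ and $\rho_{i \to j}$ attached to each node. The key observation for normalization is that, at any point during the evaluation, there are at most $k$ labels present, so at each internal node one may, without loss of generality, perform first at most $\binom{k}{2}$ distinct edge-adding operations (since $\eta_{i,j}$ is idempotent) and then at most $k$ relabelling operations in a canonical order. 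Hence each internal node needs at most $O(k^{2})$ unary operations, each chosen from a set of size $O(k^{2})$.

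With this normal form, counting is routine. The number of rooted ordered binary trees on $n$ unlabelled leaves is the Catalan number $C_{n-1} \leq 4^{n}$; the number of ways to assign the $n$ vertices of $G$ to leaves is $n!$; the number of initial-label assignments is $k^{n}$; and the number of choices of the unary-operation sequence at each of the $n-1$ internal nodes is bounded by some constant $d(k)$. Multiplying these factors and using $n! \leq n^{n}$ gives
\[
f_k(n) \;\leq\; n!\cdot C_{n-1}\cdot k^{n}\cdot d(k)^{\,n-1} \;\leq\; n^{c(k)\,n}
\]
for a suitable constant $c(k)$ depending only on $k$. If $\cal G$ has clique-width at most $k$, then $g_n \leq f_k(n) \leq n^{c(k)\,n}$ for all $n$, contradicting the assumption that $\cal G$ is superfactorial.

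The main obstacle is the normal-form step: one must argue carefully that an arbitrary $k$-expression can be rewritten so that each internal $\oplus$ node carries only a bounded number of unary operations. Once this is in place, the counting is elementary. An alternative, purely combinatorial variant would avoid parse-tree normalization by showing directly that the number of distinct graphs obtainable from a single binary-tree skeleton is at most $C^{n}$ for some $C=C(k)$, using that the "state" at each node is captured by the partition of its vertex set into at most $k$ label classes together with the adjacencies between classes; but either route produces the same conclusion $f_k(n) \leq n^{c(k)\,n}$, which is what is needed.
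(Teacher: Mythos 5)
Your proposal is correct and is essentially the argument of the cited source: the paper itself states this result without proof, attributing it to Boliac and Lozin, whose proof likewise proceeds by showing that the class of graphs of clique-width at most $k$ is ``at most factorial'' via counting normalized $k$-expressions (a binary tree skeleton, a leaf assignment contributing the $n!$ factor, and a bounded number of choices of $\eta$/$\rho$ data per node). The normal-form step you flag as the main obstacle does go through, since each $\eta_{i,j}$ can be pulled ahead of the relabellings by expanding it over the preimage labels, leaving at most $2^{\binom{k}{2}}\cdot k^k$ possible net effects per node.
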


\section{Results on Clique-Width for Hereditary Graph Classes}\label{s-hereditary}

In this section we survey known results on (un)boundedness of clique-width for hereditary graph classes in a systematic way.\footnote{The Information System on Graph Classes and their Inclusions~\cite{isgci} also keeps a
record of many graph classes for which boundedness or unboundedness of clique-width is known.} The proofs of these results often use the {\tt BCW Method} or {\tt UCW Method}.
As mentioned earlier, many well-studied graph classes are hereditary.
From the point of view of clique-width, these are also natural classes to consider, as the definition of clique-width implies that if a graph~$G$ contains a graph~$H$ as an induced subgraph, then $\cw(H) \leq \cw(G)$.

Recall that a graph class~${\cal G}$ is hereditary if and only if it can be characterized by a (possibly infinite) set of forbidden induced subgraphs~${\cal F}_{\cal G}$.
We start by giving a dichotomy for the case when~${\cal F}_{\cal G}$ consists of a single graph~$H$.
This result is folklore: observe that~$P_4$ has clique-width~$3$ and see~\cite{CO00} for a proof that $P_4$-free graphs have clique-width at most~$2$ and~\cite{DP16} for a proof of the other claims of Theorem~\ref{thm:single}.

\begin{theorem}\label{thm:single}
Let~$H$ be a graph.
The class of $H$-free graphs has bounded clique-width if and only if~$H$ is an induced subgraph of~$P_4$.
Furthermore, a graph has clique-width at most~$2$ if and only if it is $P_4$-free. 
\end{theorem}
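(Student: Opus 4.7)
The plan is to prove the two directions of the main biconditional and then derive the ``furthermore'' clause by combining with the fact $\cw(P_4) = 3$ from Example~\ref{e-p4}.

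For the easy direction, suppose $H \ssi P_4$. Then every $H$-free graph is automatically $P_4$-free, because a $P_4$ in $G$ would contain $H$ and contradict $H$-freeness. So it suffices to show that every $P_4$-free graph has clique-width at most~$2$. I would prove by induction on $|V(G)|$ the stronger statement that each $P_4$-free graph $G$ admits a $2$-expression whose resulting labelled graph has every vertex labelled~$1$. The base case $|V(G)|=1$ uses the expression $1(v)$. For the inductive step, invoke the cograph structure theorem: every $P_4$-free graph on at least two vertices is either disconnected or has disconnected complement. In the disconnected case, write $G=G_1\oplus G_2$ and combine the inductively produced $2$-expressions by $\oplus$. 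In the co-disconnected case, $G$ is the complete join of two smaller $P_4$-free graphs $G_1$ and $G_2$; taking inductive $2$-expressions $E_1$, $E_2$ (all vertices labelled~$1$), the expression $\rho_{2\to1}(\eta_{1,2}(\rho_{1\to2}(E_1)\oplus E_2))$ produces $G$ with all vertices again labelled~$1$.

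For the reverse direction I exhibit, for each $H$ with $H \not\ssi P_4$, a known class of unbounded clique-width that is $H$-free. Three cases, according to the structure of $H$, suffice. First, if $H\notin\mathcal{S}$, apply Corollary~\ref{cor:s} to the singleton family $\{H\}$. Second, if $H\in\mathcal{S}$ has a component that is a subdivided claw $S_{h,i,j}$, then the degree-$3$ centre of that component together with its three (pairwise non-adjacent, since $H$ is a forest here) neighbours induces a $K_{1,3}$ in $H$, so the $(K_{1,3},2K_2)$-free graphs---noted in the excerpt to be superfactorial, hence of unbounded clique-width by Theorem~\ref{t-super}---are $H$-free. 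Third, $H$ is a linear forest with $H\not\ssi P_4$; a short case analysis shows $H$ must contain $3P_1$ or $2K_2$ as an induced subgraph. Indeed, two or more non-trivial components, or a single component $P_t$ with $t\geq 5$, give $2K_2$; while $H = sP_1$ with $s\geq 3$, or $H = sP_1 + P_t$ with $t\in\{2,3,4\}$ and $s$ large enough for $H\not\ssi P_4$, each yield an independent set of size $3$. When $3P_1 \ssi H$ use co-bipartite graphs (independence number at most~$2$, superfactorial); when $2K_2 \ssi H$ use split graphs (which are $(2K_2,C_4,C_5)$-free and superfactorial).

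The ``furthermore'' clause then follows because clique-width is monotone under induced subgraphs and $\cw(P_4) = 3$: if $\cw(G)\leq 2$ then $G$ cannot contain $P_4$, and conversely the inductive argument above gives $\cw(G) \leq 2$ whenever $G$ is $P_4$-free. The main obstacle is purely the casework in the reverse direction; none of the individual cases is delicate, but one must make sure that the three cases partition the family of graphs $H\not\ssi P_4$ and that each case's witness class is correctly identified from the catalogue of results already listed in the excerpt.
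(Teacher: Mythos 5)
Your proof is correct. Note that the paper itself does not prove Theorem~\ref{thm:single}: it declares the result folklore and cites~\cite{CO00} for the bound $\cw(G)\leq 2$ on $P_4$-free graphs and~\cite{DP16} for the remaining claims. What you have written is a clean self-contained reconstruction of that folklore argument, assembled entirely from ingredients the survey already supplies. The upper bound via the cograph decomposition (disjoint union or join of smaller cographs) with the invariant that all vertices end up with label~$1$ is the standard proof, and your join expression $\rho_{2\to1}(\eta_{1,2}(\rho_{1\to2}(E_1)\oplus E_2))$ is verified correctly. For unboundedness your trichotomy is exhaustive: $H\notin{\cal S}$ is handled by Corollary~\ref{cor:s}; if $H\in{\cal S}$ has a subdivided-claw component then $K_{1,3}\ssi H$ and the superfactorial class of $(K_{1,3},2K_2)$-free graphs (Theorem~\ref{t-super}) is $H$-free; and your case analysis showing that every linear forest not contained in $P_4$ contains $3P_1$ or $2P_2$ is complete, with co-bipartite and split graphs serving as correct $H$-free witnesses of unbounded clique-width via Observation~\ref{l-obs}. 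The ``furthermore'' clause follows as you say from $\cw(P_4)=3$ (Example~\ref{e-p4}) and monotonicity of clique-width under induced subgraphs. This is exactly the argument one would extract from the cited references, so there is nothing to fault beyond the fact that the survey chose citation over proof.
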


Note that by Theorem~\ref{thm:single} we can test whether a graph~$G$ has clique-width at most~$2$ in polynomial time by checking whether~$G$ is $P_4$-free.
We recall that deciding whether a graph has clique-width at most~$c$ is known to be polynomial-time solvable for $c=3$~\cite{CHLRR12}, but open for $c\geq 4$. 

As discussed in Section~\ref{s-intro}, an important reason for studying boundedness of clique-width for special graph classes is to obtain more classes of graphs for which a wide range of classical \NP-complete problems become polynomial-time solvable.
Theorem~\ref{thm:single} shows that this cannot be done for (most) classes of $H$-free graphs.
In order to find more graph classes of bounded clique-width, we can follow several approaches that try to extend Theorem~\ref{thm:single}.

To give an example, Vanherpe~\cite{Va04} considered the class of partner-limited graphs, which were introduced by Roussel, Rusu and Thuillier in~\cite{RRT99}.
A vertex~$u$ in a graph~$G$ is a \defword{partner} of an induced subgraph~$H$ isomorphic to~$P_4$ of~$G$ if $V(H)\cup \{u\}$ induces at least two~$P_4$s in~$G$.
A graph~$G$ is said to be \defword{partner-limited} if every induced~$P_4$ has at most two partners.
Vanherpe proved that the clique-width of partner-limited graphs is at most~$4$.
This result generalized a corresponding result of Courcelle, Makowsky and Rotics~\cite{CMR00} for \defword{$P_4$-tidy} graphs, which are graphs in which every induced~$P_4$ has at most one partner.

To give another example, Makowsky and Rotics~\cite{MR99} considered the classes of $(q,t)$-graphs, which were introduced by Babel and Olariu in~\cite{BO98}.
For two integers~$q$ and~$t$, a graph is a \defword{$(q,t)$-graph} if every subset of~$q$ vertices induces a subgraph that has at most~$t$ distinct induced~$P_4$s.
Note that $P_4$-free graphs are the $(4,0)$-graphs, whereas $(5,1)$-graphs are also known as 
\defword{$P_4$-sparse} graphs; note that the latter class of graphs is a subclass of the class of $P_4$-tidy graphs.
Makowsky and Rotics proved the following result.

\begin{theorem}[\cite{MR99}]\label{t-mr99}
Let $q\geq 4$ and $t\geq 0$. Then the class of $(q,t)$-graphs has bounded clique-width if 

\begin{itemize}
\item $q\leq 6$ and $t\leq q-4$, or \\[-2em]
\item $q\geq 7$ and $t\leq q-3$
\end{itemize}
and it has unbounded clique-width if
\begin{itemize}
\item $q\leq 6$ and $t\geq q-3$ \\[-2em]
\item $q=7$ and $t\geq q-2$, or \\[-2em]
\item $q\geq 8$ and $t\geq q-1$.
\end{itemize}
\end{theorem}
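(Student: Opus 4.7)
\medskip

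\noindent\textbf{Proof proposal.} I would split the theorem into its \emph{boundedness} and \emph{unboundedness} halves, treating them with the two complementary strategies described in Section~\ref{s-operations} (the \texttt{BCW} and \texttt{UCW} methods).

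For the boundedness cases (that is, $q\leq 6,\ t\leq q-4$ and $q\geq 7,\ t\leq q-3$), the plan is to exploit Fact~\ref{fact:prime}: it suffices to bound the clique-width of the \emph{prime} graphs in each class. The intuition is that a prime graph on many vertices must contain many ``spread out'' induced $P_4$s, because primeness rules out non-trivial modules and every pair of twin-like vertices would have to be separated by some $P_4$. More precisely, I would appeal to the standard fact (used already for $P_4$-sparse and $P_4$-tidy graphs in~\cite{CMR00}) that a prime graph on $n\geq 5$ vertices contains $\Omega(n)$ induced $P_4$s, and in fact enough of them to force more than $t$ distinct $P_4$s inside some $q$-vertex induced subgraph once $n$ exceeds a constant depending on $q$ and $t$. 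Hence the prime $(q,t)$-graphs in the stated ranges have bounded size, so trivially bounded clique-width, and Fact~\ref{fact:prime} then transfers the bound to the whole hereditary class. The delicate step here is to verify that the \emph{exact} thresholds $t\leq q-4$ (for $q\leq 6$) and $t\leq q-3$ (for $q\geq 7$) coincide with the transition where this counting argument begins to fail; this is the case-split that Babel--Olariu carry out in~\cite{BO98} and which I would cite rather than redo.

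For the unboundedness cases, I would use the \texttt{UCW Method} with the class of walls (Theorem~\ref{t-unbounded}) or suitable $k$-subdivisions of walls (Corollary~\ref{cor:walls}) as the starting point. The plan is: for each threshold pair $(q,t)$ in the unbounded regime, exhibit an infinite family $\{G_n\}$ of graphs with $\cw(G_n)\to\infty$ such that every $q$-vertex induced subgraph of $G_n$ contains at most $t$ induced $P_4$s. A good candidate for $(6,3)$, $(7,5)$, and $(q,q-1)$ with $q\geq 8$ is to take walls (or subdivided walls), possibly with a few local modifications, and to count induced $P_4$s in every $q$-subset by using the fact that walls have maximum degree $3$ and girth at least $6$, so that the subgraphs induced by any $q$ vertices are sparse and can contain only a bounded number of $P_4$s depending on $q$. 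I would verify, by a careful combinatorial bookkeeping, that the worst-case count for $q$ vertices in a (possibly subdivided) wall matches the stated threshold $t$.

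The main obstacle, and the reason this is a theorem rather than a corollary, is the tightness at the threshold: the counting bounds both in the \texttt{BCW} direction (ruling out large prime $(q,t)$-graphs) and in the \texttt{UCW} direction (verifying that a carefully chosen wall-like family lies in the right $(q,t)$-class) must agree on the precise value of $t$ as a function of $q$. In particular, I expect the jump at $q=7$ (where the allowed $t$ is $q-3$ on both sides but the unbounded side starts at $t=q-2$ instead of $t=q-3$) to require an ad~hoc construction distinct from the generic wall family; here a direct reduction from an already-known class of unbounded clique-width such as split permutation graphs or bipartite permutation graphs (via Facts~\ref{fact:del-vert}--\ref{fact:bip}) may be the cleanest route.
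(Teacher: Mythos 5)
The paper does not prove Theorem~\ref{t-mr99}; it only cites it from~\cite{MR99}, so I can only judge your proposal on its own merits. Both halves of your plan have genuine gaps.

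For the boundedness half, your key claim --- that prime $(q,t)$-graphs in the bounded regime have bounded size --- is false. Take $q=5$, $t=1=q-4$: the $(5,1)$-graphs are exactly the $P_4$-sparse graphs, and the thin spiders (a clique $k_1,\ldots,k_n$, an independent set $s_1,\ldots,s_n$, with $s_i$ adjacent only to $k_i$) are prime $(5,1)$-graphs of arbitrary size. This also kills the counting step you rely on: a thin spider contains $\binom{n}{2}$ induced $P_4$s (namely $s_ik_ik_js_j$), yet no $5$ of its vertices induce more than one $P_4$, so ``many $P_4$s globally'' does not localise to ``more than $t$ induced $P_4$s in some $q$-set''. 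The actual route (via Babel--Olariu's $p$-connected-component structure theory) is not to show that large prime pieces do not exist, but to show that they have a restricted form (spiders, separable $p$-connected components, or pieces with fewer than $q$ vertices) whose clique-width one bounds directly; Fact~\ref{fact:prime} then does the rest. So the structure theorem is doing essential work that your size argument cannot replace.

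For the unboundedness half, walls and their subdivisions simply do not lie in the relevant classes at the threshold. A wall contains an induced $C_6$ (a hexagonal face), which has $6$ induced $P_4$s, and an induced $S_{1,2,2}$, which has $4$ induced $P_4$s; hence a wall is not a $(6,3)$-graph, which is precisely the first unbounded case for $q=6$. A $k$-subdivided wall still contains an induced $S_{1,2,2}$ around any branch vertex, so subdividing does not help. Your \texttt{UCW} reduction therefore starts from a family outside the target class, and ``a few local modifications'' would have to destroy every offending $6$-vertex configuration while preserving unbounded clique-width --- that is the whole difficulty, not a footnote. The unboundedness proofs in~\cite{MR99} use constructions built to satisfy the $(q,t)$ constraint from the outset (and, as you correctly suspect for $q=7$, separate ad hoc families at the exceptional thresholds); reducing from a known unbounded class such as split permutation graphs only works if one first verifies that the class, or the constructed family within it, actually satisfies the $(q,t)$ condition, which you have not done for any case.
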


Theorem~\ref{t-mr99} covers all cases except where $q\geq 8$ and $t=q-2$.
Makowsky and Rotics~\cite{MR99} therefore posed the following open problem (see also~\cite{KLM09}).

\begin{oproblem}
Is the clique-width of $(q,q-2)$-graphs bounded if $q\geq 8$? 
\end{oproblem}

Below we list five other systematic approaches, which we discuss in detail in the remainder of this section.
First, we can try to replace ``$H$-free graphs'' by ``$H$-free graphs in some hereditary graph class~${\cal X}$'' in Theorem~\ref{thm:single}.
We discuss this line of research in Section~\ref{s-x}.

Second, we may try to determine boundedness of clique-width of hereditary graph classes~${\cal G}$ for which~${\cal F}_{\cal G}$ is small.
However, even the classification for $(H_1,H_2)$-free graphs is not straightforward and is still incomplete.
We discuss the state-of-the-art for $(H_1,H_2)$-free graphs in Section~\ref{s-2}.
There, we also explain how results in Section~\ref{s-x} are helpful for proving results for $(H_1,H_2)$-free graphs.\footnote{We emphasize that the underlying research goal is not to start classifying the case of three forbidden induced subgraphs~$H_1$, $H_2$ and~$H_3$ after the classification for two graphs~$H_1$ and~$H_2$ has been completed.
Instead the aim is to develop new techniques through a systematic study, by looking at hereditary graph classes from different angles in order to increase our understanding of clique-width.}

Third, we may try to determine boundedness of clique-width for hereditary graph classes~${\cal G}$ for which~${\cal F}_{\cal G}$ only contains graphs of small size.
For instance, Brandst\"adt, Dragan, Le and Mosca~\cite{BDLM05} classified boundedness of clique-width for those hereditary graph classes for which~${\cal F}_{\cal G}$ consists of $1$-vertex extensions of~$P_4$.
We discuss their result, together with other results in this direction, in Section~\ref{s-four}.

Fourth, we observe that~$P_4$ is self-complementary.
As such we can try to extend Theorem~\ref{thm:single} to graph classes closed under complementation.
Determining boundedness of clique-width for such graph classes is also natural to consider due to Fact~\ref{fact:comp}.
We present the current state-of-the-art in this direction in Section~\ref{s-complement}.

Fifth, we may consider hereditary graph classes that can be described not only in terms of forbidden induced subgraphs but also using some other forbidden subgraph containment.
For instance, we can consider hereditary graph classes characterized by some set~${\cal F}$ of forbidden minors.
We survey the known results in this direction in Section~\ref{s-containment}.

\subsection{Considering $\mathbfit{H}$-Free Graphs Contained in Some Hereditary Graph Class}\label{s-x}

Theorem~\ref{thm:single} shows that the class of $H$-free graphs has bounded clique-width only if~$H$ is an induced subgraph of~$P_4$.
In this section we survey the effect on boundedness of clique-width of restricting the class of $H$-free graphs to just those graphs that belong to some hereditary graph class~${\cal X}$.
Initially we do not want to make the hereditary graph class~${\cal X}$, in which we look for these $H$-free graphs, too narrow.
However, if we let~${\cal X}$ be too large, the classification might remain the same as the one for general $H$-free graphs in Theorem~\ref{thm:single}.
This is the case if we let~${\cal X}$ be the class of perfect graphs, or even the class of weakly chordal graphs, which form a proper subclass of perfect graphs by Observation~\ref{o-inclusion}.

\begin{theorem}[\cite{BDHP17}]\label{t-weakly-chordal}
Let~$H$ be a graph.
The class of $H$-free weakly chordal graphs has bounded clique-width if and only if~$H$ is an induced subgraph of~$P_4$.
\end{theorem}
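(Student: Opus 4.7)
The plan is to prove both directions separately, with the forward direction being immediate from Theorem~\ref{thm:single} and the backward direction proceeding by a case analysis on the structure of~$H$. For the \emph{if} direction, if $H \ssi P_4$ then every $H$-free graph is automatically $P_4$-free (otherwise an induced $P_4$ would contain an induced copy of~$H$), so by Theorem~\ref{thm:single} the whole class has clique-width at most~$2$, and this bound is preserved by intersection with the weakly chordal graphs.

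For the \emph{only if} direction, I would exhibit, in each case, a known subclass of unbounded clique-width contained in the $H$-free weakly chordal graphs, drawing from the reservoir consisting of bipartite permutation graphs, unit interval graphs, split permutation graphs, and (via Fact~\ref{fact:comp}) the complements of any of these. Split on whether $H$ contains an induced cycle. If so, let $k$ be the length of a shortest one. When $k \geq 5$, every weakly chordal graph is already $H$-free, so the class in question equals the class of weakly chordal graphs, which contains the bipartite permutation graphs by Observation~\ref{o-inclusion}. When $k = 3$, we have $C_3 \ssi H$, hence $C_3$-free $\subseteq H$-free; a short check collapses the $C_3$-free weakly chordal graphs to precisely the chordal bipartite graphs (bipartiteness comes from forbidding $C_3$ together with the odd holes of length $\geq 5$ already excluded by weakly chordal, while the anti-holes $\overline{C_k}$ for $k \geq 6$ are automatic once $C_3$ is forbidden, since each $\overline{C_k}$ contains a triangle), and these contain the bipartite permutation graphs. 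When $k = 4$ we have $C_4 \ssi H$, and the $C_4$-free weakly chordal graphs contain every split graph---being $(2P_2,C_4,C_5)$-free and chordal, and hence weakly chordal---and in particular the split permutation graphs.

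If instead $H$ is a forest with $H \not\ssi P_4$, a short structural check shows that $H$ must contain at least one of $3P_1$, $2P_2$ or $K_{1,3}$ as an induced subgraph: a vertex of degree $\geq 3$ produces $K_{1,3}$; otherwise $H$ is a linear forest, and if some component has $\geq 5$ vertices then $3P_1 \ssi P_5 \ssi H$, if $H$ has exactly two components they contribute either $2P_2$ or $3P_1$, and if $H$ has three or more components then $3P_1$ appears directly. Accordingly, if $K_{1,3} \ssi H$ then the unit interval graphs (claw-free and chordal) lie in the $H$-free weakly chordal class; if $2P_2 \ssi H$ then the split permutation graphs do; and if $3P_1 \ssi H$ then the $3P_1$-free weakly chordal graphs coincide with the complements of the $C_3$-free weakly chordal graphs, since weakly chordal is closed under complementation by Observation~\ref{o-comp} (its forbidden set is self-complementary), so by the previous paragraph they form the class of complements of chordal bipartite graphs and therefore have unbounded clique-width by Fact~\ref{fact:comp}.

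The main obstacle is less a conceptual hurdle than careful bookkeeping: the argument hinges on having a suitable reservoir of classes of unbounded clique-width and on the clean identification that $C_3$-free weakly chordal coincides with chordal bipartite; once these ingredients are in place, the case analysis is routine.
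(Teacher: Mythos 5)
Your proof is correct and is essentially the standard argument (the survey itself only cites~\cite{BDHP17} for this theorem, but its proof of Theorem~\ref{t-hfreepermutation} follows the same template): reduce to the cases $C_3$, $C_4$, $C_k$ ($k\geq 5$), $K_{1,3}$, $2P_2$, $3P_1$ and exhibit, respectively, chordal bipartite, split, all weakly chordal, unit interval, split permutation, and co-(chordal bipartite) graphs as $H$-free weakly chordal subclasses of unbounded clique-width. All the ingredients you invoke (Observations~\ref{o-comp} and~\ref{o-inclusion}, Theorems~\ref{t-unitinterval}--\ref{t-bipper}, Fact~\ref{fact:comp}) are available and used as intended.
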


If we restrict~${\cal X}$ further, then there are several potential classes of graphs to consider, such as chordal graphs, permutation graphs and distance-hereditary graphs (see also \figurename~\ref{fig:class-containment}).
However, distance-hereditary graphs are known to have clique-width at most~$3$~\cite{GR00} (and hence their clique-width can be computed in polynomial time using the algorithm of~\cite{CHLRR12}).
On the other hand, the classes of chordal graphs and permutation graphs have unbounded clique-width.
This follows from combining Observation~\ref{o-inclusion} with one of the following three theorems.

\begin{theorem}[\cite{GR00}]\label{t-unitinterval}
The class of unit interval graphs has unbounded clique-width.
\end{theorem}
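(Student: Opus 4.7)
The plan is to apply the UCW method by exhibiting a family $(G_k)_{k\geq 1}$ of unit interval graphs with $\cw(G_k) \to \infty$. The candidate family is powers of paths: for each $k\geq 1$, let $G_k := P_N^k$ with $N = N(k) \geq 3k$, so $V(G_k) = \{v_1,\ldots,v_N\}$ and $v_iv_j$ is an edge iff $1 \leq |i-j| \leq k$. Each $G_k$ is a unit interval graph, as witnessed by the representation that sends $v_i$ to the unit interval $[i/(k+1),\, i/(k+1)+1]$: two such intervals meet iff their indices differ by at most $k$, which matches the adjacency in $G_k$.

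To bound $\cw(G_k)$ from below, I would use the inequality $\rw(G) \leq \cw(G)$ (from the two-sided bound $\rw(G)\leq\cw(G)\leq 2^{\rw(G)+1}-1$ recalled in the introduction) and show that $\rw(G_k)$ grows with~$k$. Consider any ``horizontal'' bipartition $(A,B) = (\{v_1,\ldots,v_m\},\, \{v_{m+1},\ldots,v_N\})$ of the path with $k\leq m\leq N-k$. The bipartite adjacency matrix $M_{A,B}$ of $G_k$ vanishes outside the $k\times k$ sub-block indexed by the pairs $(v_{m-i+1},v_{m+j})$ for $(i,j)\in\{1,\ldots,k\}^2$, and within that block the $(i,j)$-entry equals $1$ iff $i+j\leq k+1$. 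This anti-diagonal all-ones matrix has rank exactly $k$ over $GF(2)$. A pigeonhole argument over any branch decomposition of $G_k$ then produces at least one edge of the decomposition whose induced bipartition corresponds to such a path-cut, giving $\rw(G_k) = \Omega(k)$ and hence $\cw(G_k)\to\infty$.

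The main obstacle lies in the pigeonhole step: one must argue that \emph{every} branch decomposition exposes a bipartition with high bipartite rank, rather than just that some cut does. The standard route observes that any balanced vertex partition of a long path must split the linear order in at least one place where both sides contain $\Omega(k)$ consecutive boundary vertices, which is exactly what is needed to make the triangular rank-$k$ block appear in $M_{A,B}$. An alternative tack would be to invoke Theorem~\ref{thm:generalunbounded} directly and engineer a two-dimensional partition of $V(G_k)$, but this is obstructed by the fact that every $k+1$ consecutive vertices of $G_k$ form a clique: any natural row-based partition places an entire clique inside a single row, forcing the locality parameter~$m$ of the theorem to grow with $k$ and wiping out the lower bound. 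Circumventing this within the framework of Theorem~\ref{thm:generalunbounded} would require a more elaborate construction (for instance, using intervals placed at carefully chosen non-integer positions to break up row cliques), which the rank-width route sidesteps entirely—consistent with the remark in the excerpt that \cite{GR00} proves unboundedness here via a construction \emph{not} covered by Theorem~\ref{thm:generalunbounded}.
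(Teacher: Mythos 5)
Your construction is, in fact, exactly the one used in the cited source: the graph $G_n$ of Golumbic and Rotics is the $n$-th power of a path on $n^2$ vertices (they present it as an $n\times n$ array of vertices, each adjacent to the $n$ vertices preceding it and the $n$ vertices following it in the reading order). So the family is right, as is your verification that such powers of paths are unit interval graphs (modulo an off-by-one: with spacing $1/(k+1)$ and closed unit intervals, indices at distance $k+1$ still touch, so you get $P_N^{k+1}$; shrink the intervals slightly). Your computation of the cut-rank of the ``horizontal'' cuts is also correct, and it shows that the \emph{linear} rank-width of $P_N^k$ is exactly $k$.

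The gap is the step you yourself flag, and it is where the entire content of the theorem lives. To lower-bound rank-width you must show that \emph{every} edge-induced bipartition of \emph{every} branch decomposition has large cut-rank; the standard reduction is to balanced bipartitions $(A,B)$ with $|A|,|B|\geq N/3$, but such a bipartition need not be a horizontal cut, and the structural lemma you invoke to handle the general case --- ``any balanced vertex partition of a long path must split the linear order in at least one place where both sides contain $\Omega(k)$ consecutive boundary vertices'' --- is false. The partition of $\{v_1,\dots,v_N\}$ into odd- and even-indexed vertices is balanced, yet nowhere does either side contain even two consecutive vertices, so the anti-triangular block never appears in the form you describe. (That particular cut happens to have large rank anyway, via a banded submatrix whose rows are shifted intervals, which shows the asserted lemma is not merely unproven but is also not the right statement to aim for.) What is actually needed is a lower bound, over all balanced bipartitions, on the rank (or even just the number of distinct rows) of $M_{A,B}$; Golumbic and Rotics achieve the analogous step by working directly with $k$-expressions and counting, at the topmost disjoint-union operation, vertices that must carry pairwise distinct labels because they have pairwise distinct neighbourhoods on the other side. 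Their argument pigeonholes over the $n$ rows of the $n\times n$ array, which is precisely why they take $N=n^2$; your proposal leaves $N(k)$ unspecified beyond $N\geq 3k$, and it is not clear that the statement even holds for $N=3k$. Until the ``every balanced cut has large rank'' lemma is stated correctly and proved, the proposal establishes only that the linear rank-width (hence linear clique-width) of these graphs is unbounded, not that their clique-width is.
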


\begin{theorem}[\cite{KLM14}]\label{t-splitpermutation}
The class of split permutation graphs has unbounded clique-width.
\end{theorem}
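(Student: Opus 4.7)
The plan is to apply the UCW Method: exhibit an explicit infinite family of split permutation graphs whose clique-width is unbounded, and then verify unboundedness by way of Theorem~\ref{thm:generalunbounded}. The natural ``starting point'' in the UCW spirit is a grid-like family, but the permutation property severely constrains which bipartite adjacencies between the clique side and the independent side of a split graph are realizable. So the first step is to identify a rich enough adjacency pattern that is still realizable by line segments between two parallel lines.

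Concretely, for each $n$ I would define a graph $G_n$ on vertex set $K \cup I$, where $K=\{k_{p,q} : 1 \le p,q \le n\}$ is a clique and $I=\{s_{p,q}: 1\le p,q \le n\}$ is independent, with a ``two-dimensional staircase'' bipartite adjacency: $k_{p,q}$ is adjacent to $s_{r,t}$ iff $p \le r$ and $q \le t$ (the exact inequalities can be adjusted). Split is immediate from the partition. To see permutation, I would write down a line-segment representation by ordering the clique vertices along one line in lexicographic order of $(p,q)$ and ordering the independent set vertices along the second line in reverse lexicographic order, and check that the ``$\le$ in both coordinates'' relation translates exactly into the required segment-crossing pattern.

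The second step is to show that the resulting graphs have unbounded clique-width. Here I would invoke Theorem~\ref{thm:generalunbounded} with $m=0$: take the natural partition $V_{p,q}=\{k_{p,q},s_{p,q}\}$ (with boundary sets $V_{p,0}$, $V_{0,q}$ as single auxiliary vertices added to the construction, e.g.\ a pendant vertex attached to each row/column to supply the connectivity demanded by conditions~\ref{prop:row-connected} and~\ref{prop:column-connected}), and verify the eight conditions in turn. The adjacency rule was chosen precisely so that the only edges between blocks $V_{p,q}$ and $V_{r,t}$ are those forced by the clique on the $k$-side and the staircase inclusion on the bipartite side, giving the locality required by condition~\ref{prop:v_ij-nbrs}. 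Then Theorem~\ref{thm:generalunbounded} yields $\cw(G_n) \ge n$, as desired. If adding the boundary rows and columns breaks the permutation property, I would instead apply Facts~\ref{fact:del-vert}--\ref{fact:bip}: delete the $O(n)$ auxiliary vertices and/or perform a bounded number of (bipartite) complementations to pass from $G_n$ to a graph in a form covered by Theorem~\ref{thm:generalunbounded}, which changes the clique-width by only a bounded factor and therefore preserves unboundedness.

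The main obstacle is simultaneously satisfying two conflicting demands: the graph must be a permutation graph (which forces a total-order/monotone structure on the bipartite adjacencies between $K$ and $I$) and yet must encode a two-dimensional ``grid'' with enough independence between rows and columns to trigger the hypotheses of Theorem~\ref{thm:generalunbounded}. The staircase adjacency above seems to be the tightest compromise, because the product order on $\{1,\ldots,n\}^2$ is the canonical two-dimensional structure compatible with a single linear ordering of segments. Verifying that the representation is genuinely a line-segment representation, and that the eight conditions of Theorem~\ref{thm:generalunbounded} all hold simultaneously for the chosen partition, is where the bulk of the technical work will lie.
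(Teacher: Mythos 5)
The paper does not actually prove this theorem; it cites~\cite{KLM14} and remarks only that the construction there fits the {\tt UCW Method} via Theorem~\ref{thm:generalunbounded}, so your high-level plan is the right one. However, your specific construction fails at the first step: the graphs $G_n$ you define are not permutation graphs once $n\geq 3$. Recall that a permutation graph is exactly the incomparability graph of a two-dimensional poset: each vertex $v$ receives a point $(x_v,y_v)$ recording its positions on the two parallel lines (you cannot place the clique on one line and the independent set on the other --- every vertex needs a point on both), and two vertices are adjacent if and only if their points are incomparable in the coordinatewise order. In a split permutation graph the points of the independent set $I$ are pairwise comparable and hence form a chain, and for any clique vertex $k$ the non-neighbours of $k$ in $I$ are a prefix of this chain (the points below $k$) together with a suffix (the points above $k$); consequently $N(k)\cap I$ is a \emph{contiguous interval} of one fixed linear order on $I$, the same order for every $k\in K$. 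Your adjacency rule forces $N(k_{p,q})\cap I=\{s_{r,t}: r\geq p,\ t\geq q\}$, so every principal up-set of the $n\times n$ grid poset would have to be an interval of a single linear order on $[n]^2$. This is impossible for $n\geq 3$: the ``column'' $C=N(k_{1,n})\cap I$ and the ``row'' $R=N(k_{n,1})\cap I$ are intervals of size $n$ meeting only in $s_{n,n}$, so $C\cup R$ occupies $2n-1$ consecutive positions with $s_{n,n}$ in the middle; the interval $N(k_{n-1,n-1})\cap I$ must then contain $s_{n-1,n}$, $s_{n,n}$ and $s_{n,n-1}$ while excluding $s_{n-2,n}$ and $s_{n,n-2}$, which pins it to exactly the three positions around $s_{n,n}$ and leaves no room for its fourth element $s_{n-1,n-1}$. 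The same obstruction defeats any variant of the ``$\leq$ in both coordinates'' rule, so this is not a detail to be checked later but the point where the construction collapses.

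This is precisely the tension you flagged at the end of your proposal, and it is why the argument of~\cite{KLM14} is genuinely delicate: the bipartite adjacency between $K$ and $I$ in a split permutation graph is constrained to interval (staircase-like) neighbourhoods with respect to linear orders of $K$ and $I$, and the grid structure required by Theorem~\ref{thm:generalunbounded} has to emerge from the interaction of such staircases rather than being imposed directly as a product order. A secondary issue is your fallback step: deleting $O(n)$ auxiliary vertices or performing $O(n)$ bipartite complementations is not licensed by Facts~\ref{fact:del-vert}--\ref{fact:bip}, which preserve boundedness only under a \emph{constant} number of operations; since Theorem~\ref{thm:generalunbounded} yields a lower bound that is only linear in $n$, a super-constant number of such operations could wipe it out.
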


\begin{sloppypar}
\begin{theorem}[\cite{BL03}]\label{t-bipper}
The class of bipartite permutation graphs has unbounded clique-width.
\end{theorem}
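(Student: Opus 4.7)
The plan is to apply the UCW Method from Section~\ref{s-operations}: exhibit an explicit family $\{G_n\}_{n \geq 1}$ of bipartite permutation graphs with $\cw(G_n)\to\infty$, then invoke Theorem~\ref{thm:generalunbounded} to certify the unboundedness. The key structural fact I would use is the ``strong ordering'' characterization of bipartite permutation graphs: a bipartite graph with parts $A$ and $B$ is a permutation graph if and only if $A$ and $B$ can be linearly ordered as $a_1 < \cdots < a_p$ and $b_1 < \cdots < b_q$ so that whenever $i < i'$, $j > j'$, and both $a_i b_j$ and $a_{i'} b_{j'}$ are edges, the ``completing'' edges $a_i b_{j'}$ and $a_{i'} b_j$ are also present. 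This is the combinatorial constraint our construction must respect.

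Concretely, I would build $G_n$ on $\Theta(n^2)$ vertices as a layered ``staircase of staircases'': partition the vertex set into blocks $V_{i,j}$ indexed by $(i,j) \in \{0,\ldots,n\}^2$ with $|V_{i,j}|$ bounded by an absolute constant, alternate blocks between the two sides of the bipartition in a way consistent with a strong ordering (so that edges only arise between blocks whose indices differ by at most some fixed $m$ in each coordinate), and add ladder-like edges so that each row $\bigcup_j V_{i,j}$ and each column $\bigcup_i V_{i,j}$ is connected. I would then verify:
\begin{enumerate}
\item that $G_n$ is bipartite permutation, by writing down the strong ordering of the two sides induced by the lexicographic order on $(i,j)$ and checking the no-crossing rule blockwise;
\item that the partition $\{V_{i,j}\}$ satisfies the eight conditions of Theorem~\ref{thm:generalunbounded} (in particular, locality of adjacencies and connectivity of rows and columns);
\item applying Theorem~\ref{thm:generalunbounded} to conclude $\cw(G_n)\geq \lfloor (n-1)/(m+1)\rfloor +1 \to \infty$.
\end{enumerate}

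The main obstacle is reconciling two genuinely opposing requirements. Theorem~\ref{thm:generalunbounded} wants a truly two-dimensional grid-like adjacency pattern between the blocks, while the strong ordering severely restricts edges to a ``non-crossing'' staircase pattern on both sides of the bipartition; most naive grid-like bipartite constructions (e.g.\ ladders, cylinders, subdivided grids) either contain a forbidden asteroidal triple and fail to be permutation, or collapse to chain-graph-like structures that have clique-width at most~$3$ and so are useless for the UCW Method. The delicate part of the proof is engineering block sizes and inter-block adjacencies carefully enough that \emph{both} the ordering check and the conditions of Theorem~\ref{thm:generalunbounded} hold simultaneously; as a fallback, one can combine such a construction with edge subdivisions inside each block and appeal to Fact~\ref{fact:subdiv} (since the maximum degree can be kept bounded) in order to kill odd cycles and preserve bipartiteness without collapsing the clique-width.
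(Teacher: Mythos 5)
There is a genuine gap here: the step on which the whole theorem rests is precisely the one you leave unresolved. Your plan is to manufacture a family of bipartite permutation graphs satisfying the eight hypotheses of Theorem~\ref{thm:generalunbounded}, and you correctly identify the obstruction yourself --- a strong ordering forces the neighbourhood of each vertex to be an interval of the opposite side, with both interval endpoints monotone, so any two ``crossing'' edges must be completed to a $C_4$; this is in direct tension with condition~8 of Theorem~\ref{thm:generalunbounded}, which demands adjacency between blocks $V_{i,j}$ and $V_{k,\ell}$ only when \emph{both} $|k-i|\leq m$ and $|\ell-j|\leq m$. Saying that ``the delicate part of the proof is engineering block sizes and inter-block adjacencies carefully enough that both hold simultaneously'' is not a proof of that step, and the survey explicitly warns that this route is not how the result is obtained: Section~\ref{s-operations} states that the constructions of~\cite{GR00} and~\cite{BL03} (unit interval graphs and bipartite permutation graphs) are exactly the two known families of large clique-width \emph{not} covered by Theorem~\ref{thm:generalunbounded}. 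The cited proof of~\cite{BL03} instead exhibits an explicit staircase-like bipartite permutation graph on an $n\times n$ array of vertices and lower-bounds its clique-width by a direct analysis of $k$-expressions, rather than by invoking the grid-partition lemma.

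Your fallback is also broken. Edge subdivision does not preserve membership in the class of bipartite permutation graphs: as noted in Section~\ref{s-x}, subdividing the edges of $K_{1,3}$ yields $S_{2,2,2}$, and every bipartite permutation graph is $S_{2,2,2}$-free. So subdividing edges ``inside each block'' and appealing to Fact~\ref{fact:subdiv} may well preserve unbounded clique-width, but it will in general eject you from the class you are trying to say something about; moreover, there are no odd cycles to ``kill'' in a graph that is already bipartite. To repair the argument you would either need to produce the explicit partition and verify all eight conditions together with the strong-ordering check (which, given the tension above, is unlikely to succeed), or abandon Theorem~\ref{thm:generalunbounded} and give a direct clique-width lower bound for a concrete family of bipartite permutation graphs, as~\cite{BL03} does.
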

\end{sloppypar}

The case when~${\cal X}$ is the class of chordal graphs has received particular attention, as we now discuss.
Brandst{\"a}dt, Engelfriet, Le and Lozin~\cite{BELL06} proved that the class of $4P_1$-free chordal graphs has unbounded clique-width.
However, there are many graphs~$H$ besides~$P_4$ for which the class of $H$-free chordal graphs has bounded clique-width.
A result of~\cite{CR05} implies that $K_r$-free chordal graphs have bounded clique-width for every integer~$r\geq 1$.
Brandst{\"a}dt, Le and Mosca~\cite{BLM04} showed that $(P_1+\nobreak P_4)$-free chordal graphs have clique-width at most~$8$ and that $\overline{P_1+P_4}$-free chordal graphs are distance-hereditary graphs and thus have clique-width at most~$3$.
Brandst{\"a}dt, Dabrowski, Huang and Paulusma~\cite{BDHP17} proved that bull-free chordal graphs have clique-width at most~$3$, improving a known bound of~$8$~\cite{Le03}.
The same authors also proved that $\overline{S_{1,1,2}}$-free chordal graphs have clique-width at most~$4$, and that the classes of $\overline{K_{1,3}+\nobreak 2P_1}$-free chordal graphs, $(P_1+\nobreak \overline{P_1+P_3})$-free chordal graphs and $(P_1+\nobreak \overline{2P_1+P_2})$-free chordal graphs each have bounded clique-width.

Combining all the above results~\cite{BDHP17,BELL06,BLM04,CR05,GR00,MR99} leads to the following summary for $H$-free chordal graphs; see \figurename~\ref{fig:open-chordal} for definitions of the graphs~$F_1$ and~$F_2$ and \figurename~\ref{fig:chordal-bounded} for pictures of all (maximal) graphs~$H$ for which the class of $H$-free chordal graphs is known to have bounded clique-width.

\begin{theorem}[\cite{BDHP17}]\label{thm:chordal-classification}
Let~$H$ be a graph with $H\notin \{F_1,F_2\}$.
The class of $H$-free chordal graphs has bounded clique-width if and only if:
\begin{enumerate}[(i)]
\item $H=K_r$ for some $r\geq 1$\\[-2em]
\item $H\ssi \bull$\\[-2em]
\item $H\ssi P_1+\nobreak P_4$\\[-2em]
\item $H\ssi \gem$\\[-2em]
\item $H\ssi \overline{K_{1,3}+\nobreak 2P_1}$\\[-2em]
\item $H\ssi P_1+\nobreak\overline{P_1+P_3}$\\[-2em]
\item $H\ssi P_1+\nobreak\overline{2P_1+P_2}$, or\\[-2em]
\item $H\ssi \overline{S_{1,1,2}}$.
\end{enumerate}
\end{theorem}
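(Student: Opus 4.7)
The plan is to prove the two directions separately, leveraging the results and constructions summarized in the paragraphs immediately preceding the theorem.

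\emph{If direction.} This is a compilation step. For each of (i)--(viii), I would invoke the corresponding boundedness result cited in the paragraph above: (i) is a consequence of the Corneil--Rotics bound $\cw(G)\leq 3\cdot 2^{\tw(G)-1}$ applied via Theorem~\ref{t-cwtw} to the fact that $K_r$-free chordal graphs have bounded treewidth; (ii) is the bull-free chordal result of~\cite{BDHP17}; (iii) and (iv) are the $(P_1+P_4)$-free and $\gem$-free chordal results of~\cite{BLM04}; and (v)--(viii) are the further results of~\cite{BDHP17}. Since $H\ssi H'$ implies that every $H$-free graph is $H'$-free, each of these bounds propagates to every $H$ admitted by the corresponding condition.

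\emph{Only-if direction.} This is the substantive part, carried out by a case analysis on $H$. The strategy is, for each $H$ not matching any of (i)--(viii) and distinct from $F_1$ and $F_2$, to exhibit a hereditary subclass of chordal graphs that has unbounded clique-width and consists entirely of $H$-free graphs. Three base subclasses are available as starting points: the class of $4P_1$-free chordal graphs, which has unbounded clique-width by~\cite{BELL06}; the class of unit interval graphs, which is chordal by Observation~\ref{o-inclusion} and unbounded by Theorem~\ref{t-unitinterval}; and the class of split permutation graphs, which is chordal since split graphs are chordal by Observation~\ref{o-inclusion} and unbounded by Theorem~\ref{t-splitpermutation}. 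Observation~\ref{l-obs} tells us exactly when the class of $H$-free chordal graphs contains one of these base classes: precisely when $H$ is not an induced subgraph of any member of that base class.

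The case split then proceeds as follows. If $H$ contains $4P_1$ as an induced subgraph we are done via the $4P_1$-free chordal class. Otherwise $H$ has independence number at most three, which severely constrains its structure. One then inspects $H$ connected-component by connected-component: if $H$ is not an induced subgraph of any unit interval graph, or if $H$ is not an induced subgraph of any split permutation graph, then the corresponding base class settles the case. The finitely many candidates that survive all three tests but still fail (i)--(viii) must be treated individually, either by observing after closer inspection that $H$ is in fact an induced subgraph of one of the good graphs in (i)--(viii), or by producing a dedicated chordal, $H$-free family of unbounded clique-width via the {\tt UCW Method} and Theorem~\ref{thm:generalunbounded}.

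The hard part will be this final sweep of borderline candidates. The explicit exclusion of $F_1$ and $F_2$ from the statement signals that the residual analysis is delicate, since these two are precisely the graphs for which no such embedding or construction is presently known to succeed. For every other $H$ failing (i)--(viii), one needs either a clever identification of $H$ as an induced subgraph of a good graph or a purpose-built family of chordal graphs engineered to grow in clique-width while avoiding $H$; assembling these last constructions, and verifying that together with the three base classes they cover all remaining $H\neq F_1,F_2$, is where the bulk of the technical effort lies.
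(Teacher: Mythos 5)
Your plan matches the paper's own treatment almost exactly: the survey establishes this theorem purely by compiling the cited boundedness results for cases (i)--(viii) and the unboundedness of the $4P_1$-free chordal, unit interval and split (permutation) subclasses of chordal graphs, deferring the delicate residual case analysis (and the extra purpose-built constructions it requires, since graphs such as $K_5+P_1$ survive all three base-class tests yet must still be shown unbounded) to~\cite{BDHP17}, which is precisely the structure you describe. One small slip: Theorem~\ref{t-cwtw} goes in the wrong direction for case (i) --- what you want is the Courcelle--Olariu/Corneil--Rotics inequality $\cw(G)\leq 3\cdot 2^{\tw(G)-1}$ applied to the fact that $K_r$-free chordal graphs have treewidth at most $r-2$ --- but this does not affect the overall argument.
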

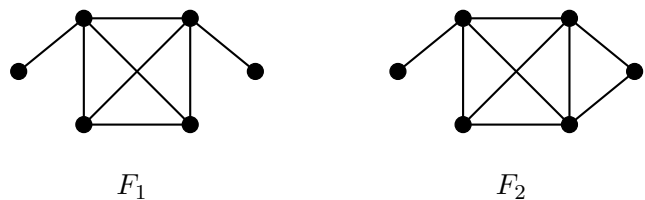
\begin{figure}
\begin{center}
\begin{tabular}{ccc}
\begin{minipage}{0.33\textwidth}
\centering
\scalebox{1}{
{\begin{tikzpicture}[scale=1,rotate=135]
\GraphInit[vstyle=Simple]
\SetVertexSimple[MinSize=6pt]
\Vertices{circle}{a,b,c,d}
\Vertex[a=45,d=1.57313218497]{e}
\Vertex[a=225,d=1.57313218497]{f}
\Edges(a,b,c,d,a,c)
\Edges(b,d)
\Edges(a,e)
\Edges(f,d)
\end{tikzpicture}}}
\end{minipage}
&
\begin{minipage}{0.33\textwidth}
\centering
\scalebox{1}{
{\begin{tikzpicture}[scale=1,rotate=135]
\GraphInit[vstyle=Simple]
\SetVertexSimple[MinSize=6pt]
\Vertices{circle}{a,b,c,d}
\Vertex[a=45,d=1.57313218497]{e}
\Vertex[a=225,d=1.57313218497]{f}
\Edges(a,b,c,d,a,c)
\Edges(b,d)
\Edges(a,e)
\Edges(c,f,d)
\end{tikzpicture}}}
\end{minipage}\\
&\\
$F_1$ & $F_2$ \\
\end{tabular}
\end{center}
\caption{The two graphs~$H$ for which the boundedness of clique-width of the class of $H$-free chordal graphs is open.}
\label{fig:open-chordal}
\end{figure}
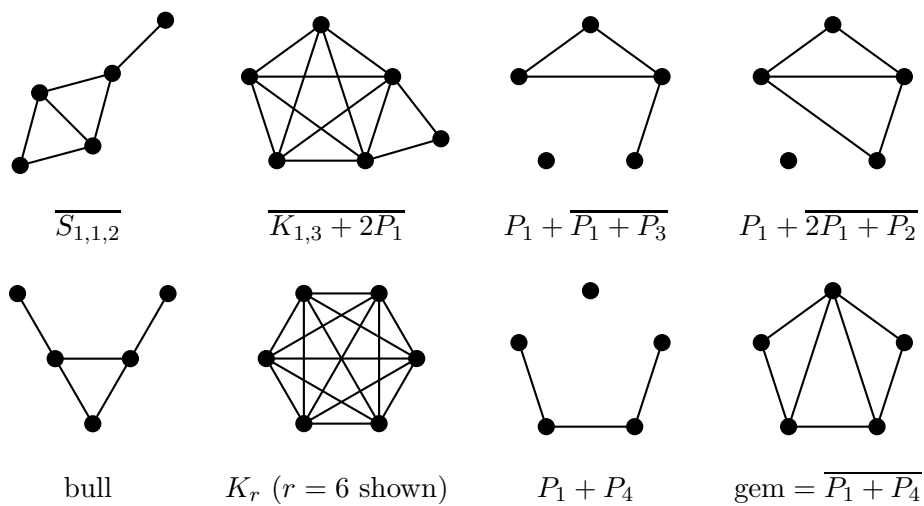
\begin{figure}
\begin{center}
\begin{tabular}{cccc}
\begin{minipage}{0.20\textwidth}
\centering
\scalebox{1}{
{\begin{tikzpicture}[scale=1,rotate=45]
\GraphInit[vstyle=Simple]
\SetVertexSimple[MinSize=6pt]
\Vertex[x=0,y=0]{a}
\Vertex[a=30,d=1]{b}
\Vertex[a=-30,d=1]{e}
\Vertex[x=1.73205080757,y=0]{d}
\Vertex[x=2.73205080757,y=0]{c}
\Edges(e,a,b,e,d,b)
\Edges(c,d)
\end{tikzpicture}}}
\end{minipage}
&
\begin{minipage}{0.20\textwidth}
\centering
\scalebox{1}{
{\begin{tikzpicture}[scale=1,rotate=162]
\GraphInit[vstyle=Simple]
\SetVertexSimple[MinSize=6pt]
\Vertices{circle}{a,b,c,d,e}
\Vertex[a=180,d=1.67504239778]{f}
\Edges(a,b,c,d,e,a)
\Edges(a,c,e,b,d,a)
\Edges(c,f,d)
\end{tikzpicture}}}
\end{minipage}
&
\begin{minipage}{0.20\textwidth}
\centering
\scalebox{1}{
{\begin{tikzpicture}[scale=1,rotate=90]
\GraphInit[vstyle=Simple]
\SetVertexSimple[MinSize=6pt]
\Vertices{circle}{a,b,c,d,e}
\Edges(e,a,b,e,d)
\end{tikzpicture}}}
\end{minipage}
&
\begin{minipage}{0.20\textwidth}
\centering
\scalebox{1}{
{\begin{tikzpicture}[scale=1,rotate=90]
\GraphInit[vstyle=Simple]
\SetVertexSimple[MinSize=6pt]
\Vertices{circle}{a,b,c,d,e}
\Edges(e,a,b,e,d,b)
\end{tikzpicture}}}
\end{minipage}\\
& & &\\
$\overline{S_{1,1,2}}$ & $\overline{K_{1,3}+\nobreak 2P_1}$ & $P_1+\nobreak\overline{P_1+P_3}$ & $P_1+\nobreak\overline{2P_1+P_2}$\\
& & &\\
\begin{minipage}{0.20\textwidth}
\centering
\scalebox{1}{
{\begin{tikzpicture}[scale=1,rotate=90]
\GraphInit[vstyle=Simple]
\SetVertexSimple[MinSize=6pt]
\Vertex[x=0,y=0]{a}
\Vertex[a=30,d=1]{b}
\Vertex[a=30,d=2]{c}
\Vertex[a=-30,d=1]{d}
\Vertex[a=-30,d=2]{e}
\Edges(c,b,a,d,e)
\Edges(b,d)
\end{tikzpicture}}}
\end{minipage}
&
\begin{minipage}{0.20\textwidth}
\centering
\scalebox{1}{
{\begin{tikzpicture}[scale=1,rotate=0]
\GraphInit[vstyle=Simple]
\SetVertexSimple[MinSize=6pt]
\Vertices{circle}{a,b,c,d,e,f}
\Edges(a,b,c,d,e,a)
\Edges(a,c,e,b,d,a)
\Edges(a,f,b)
\Edges(c,f,d)
\Edges(f,e)
\end{tikzpicture}}}
\end{minipage}
&
\begin{minipage}{0.20\textwidth}
\centering
\scalebox{1}{
{\begin{tikzpicture}[scale=1,rotate=90]
\GraphInit[vstyle=Simple]
\SetVertexSimple[MinSize=6pt]
\Vertices{circle}{a,b,c,d,e}
\Edges(b,c,d,e)
\end{tikzpicture}}}
\end{minipage}
&
\begin{minipage}{0.20\textwidth}
\centering
\scalebox{1}{
{\begin{tikzpicture}[scale=1,rotate=90]
\GraphInit[vstyle=Simple]
\SetVertexSimple[MinSize=6pt]
\Vertices{circle}{a,b,c,d,e}
\Edges(a,b,c,d,e,a)
\Edges(c,a,d)
\end{tikzpicture}}}
\end{minipage}\\
& & &\\
bull & $K_r$~($r=\nobreak 6$~shown) & $P_1+\nobreak P_4$ & $\gem=\overline{P_1+P_4}$\\
\end{tabular}
\end{center}
\caption{The graphs~$H$ listed in Theorem~\ref{thm:chordal-classification}, for which the class of $H$-free chordal graphs has bounded clique-width.}\label{fig:chordal-bounded}
\end{figure}

As can be seen from its statement, Theorem~\ref{thm:chordal-classification} leaves only two cases open, namely~$F_1$ and~$F_2$; see also~\cite{BDHP17}.

\begin{oproblem}\label{oprob:chordal}
Determine whether the class of $H$-free chordal graphs has bounded or unbounded clique-width when $H=F_1$ or $H=F_2$.
\end{oproblem}

Recall that split graphs are chordal by Observation~\ref{o-inclusion} and have been shown to have unbounded clique-width~\cite{MR99} (this also follows from Theorem~\ref{t-splitpermutation}).
We now let~${\cal X}$ be the class of split graphs, that is, we consider classes of $H$-free split graphs, and find graphs~$H$ for which the class of $H$-free split graphs has bounded clique-width.
We first note that as the class of split graphs is the class of $(C_4,C_5,2P_2)$-free graphs~\cite{FH77}, the complement of a split graph is also a split graph by Observation~\ref{o-comp}.
By Fact~\ref{fact:comp} this implies the following observation, which we discuss in more depth in Section~\ref{s-complement}.

\begin{observation}\label{o-splito}
For a graph~$H$, the class of $H$-free split graphs has bounded clique-width if and only the class of $\overline{H}$-free split graphs has bounded clique-width.
\end{observation}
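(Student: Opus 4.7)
The plan is to leverage the fact that the class of split graphs is self-complementary together with Fact~\ref{fact:comp} on subgraph complementation. The argument is essentially a two-line reduction, so the main work is simply assembling the right ingredients already stated in the paper.

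First I would verify that the class of split graphs is closed under complementation. Since split graphs are precisely the $(C_4, C_5, 2P_2)$-free graphs (as quoted in Section~\ref{s-pre} from~\cite{FH77}), and since $\overline{C_4} = 2P_2$, $\overline{2P_2} = C_4$, and $\overline{C_5} = C_5$, the set $\{C_4, C_5, 2P_2\}$ is closed under complementation. By Observation~\ref{o-comp}, the class of split graphs is therefore closed under complementation.

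Next I would observe the standard fact that a graph $G$ contains $H$ as an induced subgraph if and only if $\overline{G}$ contains $\overline{H}$ as an induced subgraph. Combined with the previous step, this means that the map $G \mapsto \overline{G}$ is a bijection between the class $\mathcal{G}_H$ of $H$-free split graphs and the class $\mathcal{G}_{\overline{H}}$ of $\overline{H}$-free split graphs.

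Finally, I would apply Fact~\ref{fact:comp}: subgraph complementation preserves boundedness of clique-width, and taking the complement of the whole graph is simply a subgraph complementation of $G$ with respect to $G$ itself (so this is a single application of the operation). Concretely, $\mathcal{G}_{\overline{H}}$ is $(1, \text{subgraph complementation})$-obtained from $\mathcal{G}_H$, and vice versa, so by Fact~\ref{fact:comp} the clique-width of $\mathcal{G}_H$ is bounded if and only if the clique-width of $\mathcal{G}_{\overline{H}}$ is bounded, which is the desired equivalence.

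There is essentially no obstacle here; the only subtlety is remembering to invoke Observation~\ref{o-comp} to justify that split graphs are closed under complementation, rather than checking by hand that every split graph has a split complement from its $(K, I)$-partition (which would also work: simply swap the roles of $K$ and $I$).
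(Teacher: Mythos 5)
Your proof is correct and follows essentially the same route as the paper: the paper likewise notes that split graphs are closed under complementation (via the $(C_4,C_5,2P_2)$-free characterization and Observation~\ref{o-comp}) and then invokes Fact~\ref{fact:comp} on subgraph complementation. Your additional remark that complementation gives a bijection between the $H$-free and $\overline{H}$-free split graphs just makes explicit what the paper leaves implicit.
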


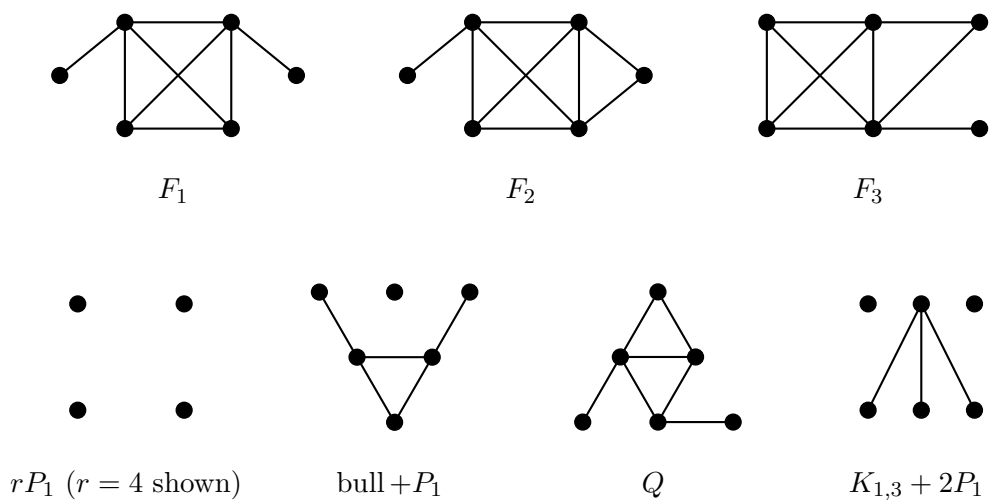
\begin{figure}
\begin{center}
\begin{tabular}{ccc}
\begin{minipage}{0.3\textwidth}
\centering
\scalebox{1}{
{\begin{tikzpicture}[scale=1,rotate=135]
\GraphInit[vstyle=Simple]
\SetVertexSimple[MinSize=6pt]
\Vertices{circle}{a,b,c,d}
\Vertex[a=45,d=1.57313218497]{e}
\Vertex[a=225,d=1.57313218497]{f}
\Edges(a,b,c,d,a,c)
\Edges(b,d)
\Edges(a,e)
\Edges(f,d)
\end{tikzpicture}}}
\end{minipage}
&
\begin{minipage}{0.3\textwidth}
\centering
\scalebox{1}{
{\begin{tikzpicture}[scale=1,rotate=135]
\GraphInit[vstyle=Simple]
\SetVertexSimple[MinSize=6pt]
\Vertices{circle}{a,b,c,d}
\Vertex[a=45,d=1.57313218497]{e}
\Vertex[a=225,d=1.57313218497]{f}
\Edges(a,b,c,d,a,c)
\Edges(b,d)
\Edges(a,e)
\Edges(c,f,d)
\end{tikzpicture}}}
\end{minipage}
&
\begin{minipage}{0.3\textwidth}
\centering
\scalebox{1}{
{\begin{tikzpicture}[xscale=-1,rotate=135]
\GraphInit[vstyle=Simple]
\SetVertexSimple[MinSize=6pt]
\Vertices{circle}{a,b,c,d}
\Vertex[x=1,y=2]{y}
\Vertex[x=2,y=1]{z}
\Edges(a,b,c,d,a,c)
\Edges(b,d)
\Edges(a,z,b,y)
\end{tikzpicture}}}
\end{minipage}
\\
& & \\
$F_1$ & $F_2$ & $F_3$\\
\\
\\
\end{tabular}
\begin{tabular}{cccc}
\begin{minipage}{0.22\textwidth}
\centering
\scalebox{1}{
{\begin{tikzpicture}[scale=1,rotate=135]
\GraphInit[vstyle=Simple]
\SetVertexSimple[MinSize=6pt]
\Vertices{circle}{a,b,c,d}
\end{tikzpicture}}}
\end{minipage}
&
\begin{minipage}{0.22\textwidth}
\centering
\scalebox{1}{
{\begin{tikzpicture}[scale=1,rotate=90]
\GraphInit[vstyle=Simple]
\SetVertexSimple[MinSize=6pt]
\Vertex[x=0,y=0]{a}
\Vertex[a=30,d=1]{b}
\Vertex[a=30,d=2]{c}
\Vertex[a=-30,d=1]{d}
\Vertex[a=-30,d=2]{e}
\Vertex[x=1.73205080757,y=0]{x}
\Edges(c,b,a,d,e)
\Edges(b,d)
\end{tikzpicture}}}
\end{minipage}
&
\begin{minipage}{0.22\textwidth}
\centering
\scalebox{1}{
{\begin{tikzpicture}[scale=1,rotate=30]
\GraphInit[vstyle=Simple]
\SetVertexSimple[MinSize=6pt]
\Vertex[a=0,d=0.57735026919]{a}
\Vertex[a=120,d=0.57735026919]{b}
\Vertex[a=240,d=0.57735026919]{c}
\Vertex[a=60,d=1.15470053838]{d}
\Vertex[a=180,d=1.15470053838]{e}
\Vertex[a=300,d=1.15470053838]{f}
\Edges(a,b,c,a,d,b,e)
\Edge(c)(f)
\end{tikzpicture}}}
\end{minipage}
&
\begin{minipage}{0.22\textwidth}
\centering
\scalebox{1}{
{\begin{tikzpicture}[scale=1]
\GraphInit[vstyle=Simple]
\SetVertexSimple[MinSize=6pt]
\Vertex[x=0,y=1.4142135623]{a}
\Vertex[x=-0.70710678118,y=0]{b}
\Vertex[x=0,y=0]{c}
\Vertex[x=0.70710678118,y=0]{d}
\Vertex[x=-0.70710678118,y=1.4142135623]{e}
\Vertex[x=0.70710678118,y=1.4142135623]{f}
\Edges(c,a,b)
\Edges(a,d)
\end{tikzpicture}}}
\end{minipage}\\
\\
$rP_1$~($r=\nobreak 4$~shown) & $\bull+\nobreak P_1$ & $Q$ & $K_{1,3}+\nobreak 2P_1$
\end{tabular}
\end{center}
\caption{The graphs~$H$ from Theorem~\ref{thm:split-classification} for which the classes of $H$-free split graphs and $\overline{H}$-free split graphs have bounded clique-width.}
\label{fig:bounded-split}
\end{figure}

Brandst{\"a}dt, Dabrowski, Huang and Paulusma considered $H$-free split graphs in~\cite{BDHP16}.
They considered the two cases $H=F_1$ and $H=F_2$ that are open for $H$-free chordal graphs (Open Problem~\ref{oprob:chordal}) and proved that the classes of $F_1$-free split graphs and $F_2$-free split graphs have bounded clique-width.
They showed the same result for $(\bull+\nobreak P_1)$-free split graphs, $Q$-free split graphs, $(K_{1,3}+\nobreak 2P_1)$-free split graphs and $F_3$-free split graphs; see \figurename~\ref{fig:bounded-split} for a description of each of these graphs.
They also proved that for every integer~$r \geq 1$, the clique-width of $rP_1$-free split graphs is at most~$r+\nobreak 1$.
Moreover, they showed the following: if~$H$ is a graph with at least one edge and at least one non-edge that is not an induced subgraph of a graph in $\{F_4,\overline{F_4},F_5,\overline{F_5}\}$ (see \figurename~\ref{fig:open-split}), then the class of $H$-free split graphs has unbounded clique-width.
Note that both~$F_4$ and~$F_5$ have seven vertices.
The $6$-vertex induced subgraphs of~$F_4$ are: $\bull +\nobreak P_1, \overline{F_1}, \overline{F_3}$ and~$K_{1,3}+\nobreak 2P_1$.
The $6$-vertex induced subgraphs of~$F_5$ are: $\bull +\nobreak P_1, F_1,F_2,\overline{F_2},F_3,\overline{F_3}$ and~$Q$.
The above results lead to the following theorem.

\begin{theorem}[\cite{BDHP16}]\label{thm:split-classification}
Let~$H$ be a graph not in $\{F_4,\overline{F_4},F_5,\overline{F_5}\}$.
The class of $H$-free split graphs has bounded clique-width if and only if:
\begin{enumerate}[(i)]
\item $H=rP_1$ for some $r\geq 1$\\[-2em]
\item $H=K_r$ for some $r\ge 1$, or\\[-2em]
\item $H$ is an induced subgraph of a graph in $\{F_4,\overline{F_4},F_5,\overline{F_5}\}$.
\end{enumerate}
\end{theorem}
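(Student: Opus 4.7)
The plan is to prove both directions of the equivalence by assembling the helper results summarised in the paragraphs preceding Theorem~\ref{thm:split-classification}, since essentially all of the technical work is done by the lemmas quoted there.

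For the sufficient direction, I would treat cases (i)--(iii) separately. Case~(i), $H=rP_1$: the quoted bound $\cw(G)\le r+1$ for $rP_1$-free split graphs applies directly. Case~(ii), $H=K_r$: since $\overline{K_r}=rP_1$ and the class of split graphs is self-complementary (Observation~\ref{o-comp} applied to the $(C_4,C_5,2P_2)$-free characterisation), Observation~\ref{o-splito} reduces this to case~(i). Case~(iii), $H$ is a proper induced subgraph of some $F\in\{F_4,\overline{F_4},F_5,\overline{F_5}\}$: since $H\ssi H'$ implies that the class of $H$-free split graphs is a subclass of the class of $H'$-free split graphs, it suffices to handle the maximal such $H$, namely the six-vertex induced subgraphs of the four seven-vertex graphs. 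These are enumerated in the excerpt and, up to complementation, form the list $F_1,F_2,F_3,\bull+P_1,Q,K_{1,3}+2P_1$; each of the twelve graphs obtained by optionally complementing one of these is covered by a quoted boundedness result for $H$-free split graphs, and Observation~\ref{o-splito} halves the work by pairing each graph with its complement.

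For the necessary direction, suppose $H\notin\{F_4,\overline{F_4},F_5,\overline{F_5}\}$ satisfies none of (i), (ii), or (iii). Since $H\ne rP_1$, the graph $H$ has at least one edge, and since $H\ne K_r$ it has at least one non-edge. Hence $H$ is a graph with at least one edge and one non-edge that is not an induced subgraph of any graph in $\{F_4,\overline{F_4},F_5,\overline{F_5}\}$, and the unboundedness statement quoted just before Theorem~\ref{thm:split-classification} immediately gives that the class of $H$-free split graphs has unbounded clique-width.

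The heavy lifting therefore sits inside the cited lemmas rather than in the assembly above. The boundedness lemmas for the twelve six-vertex graphs all follow the \texttt{BCW Method}: one restricts to prime split graphs via Fact~\ref{fact:prime}, uses subgraph and bipartite complementations (Facts~\ref{fact:comp}--\ref{fact:bip}) to simplify the structure of edges between the clique and independent set of a split partition, and then deletes a bounded number of vertices (Fact~\ref{fact:del-vert}) to land in a class of known bounded clique-width. The corresponding unboundedness result starts from a class already known to be unbounded, such as split permutation graphs (Theorem~\ref{t-splitpermutation}) or a superfactorial subclass (Theorem~\ref{t-super}), and embeds it inside the $H$-free split graphs for any $H$ outside the permitted list. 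The main obstacle is not in the plan but in verifying that the twelve enumerated six-vertex induced subgraphs of $F_4,\overline{F_4},F_5,\overline{F_5}$ really are exhaustive (so that case~(iii) is complete) and that the unboundedness dichotomy covers every remaining $H$ with both an edge and a non-edge; both are finite checks but require careful bookkeeping.
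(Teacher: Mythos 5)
Your proposal is correct and follows essentially the same route as the paper, which derives the theorem by assembling exactly the ingredients you cite: the $rP_1$-free bound, Observation~\ref{o-splito} for complements, the boundedness results for $F_1$, $F_2$, $F_3$, $\bull+\nobreak P_1$, $Q$ and $K_{1,3}+\nobreak 2P_1$ covering the six-vertex induced subgraphs of $F_4,\overline{F_4},F_5,\overline{F_5}$, and the quoted unboundedness statement for every $H$ with an edge and a non-edge outside that family. The only caveat is the one you already flag: the substance lives in the cited lemmas of~\cite{BDHP16}, not in the assembly.
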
 

Theorem~\ref{thm:split-classification}, combined with Observation~\ref{o-splito}, leaves two open cases: $F_4$ (or equivalently~$\overline{F_4}$) and~$F_5$ (or equivalently~$\overline{F_5}$); see also~\cite{BDHP16}.

\begin{sloppypar}
\begin{oproblem}\label{oprob:split}
Determine whether the class of $H$-free split graphs has bounded or unbounded clique-width when $H=F_4$ or $H=F_5$.
\end{oproblem}
\end{sloppypar}

\begin{figure}
\begin{center}
\begin{tabular}{cc}
\begin{minipage}{0.33\textwidth}
\centering
\scalebox{1}{
{\begin{tikzpicture}[scale=1]
\GraphInit[vstyle=Simple]
\SetVertexSimple[MinSize=6pt]
\Vertex[x=0,y=0]{a}
\Vertex[a=60,d=1]{b}
\Vertex[a=-60,d=1]{c}
\Vertex[a=0,d=1]{d}
\Vertex[a=0,d=2]{x}
\Vertex[a=0,d=-1]{y}
\Vertex[a=-25,d=1.25]{z}
\Edges(y,a,b,d,c,a,d,x)
\end{tikzpicture}}}
\end{minipage}
&
\begin{minipage}{0.33\textwidth}
\centering
\scalebox{1}{
{\begin{tikzpicture}[scale=1,rotate=135]
\GraphInit[vstyle=Simple]
\SetVertexSimple[MinSize=6pt]
\Vertices{circle}{a,b,c,d}
\Vertex[a=225,d=1.57313218497]{f}
\Vertex[x=1,y=2]{y}
\Vertex[x=2,y=1]{z}
\Edges(a,b,c,d,a,c)
\Edges(b,d)
\Edges(a,z,b,y)
\Edges(f,d)
\end{tikzpicture}}}
\end{minipage}\\
& \\
$F_4$ & $F_5$
\end{tabular}
\end{center}
\caption{\label{fig:open-split} The (only) two graphs for which it is not known whether or not the classes of $H$-free split graphs and $\overline{H}$-free split graphs have bounded clique-width.}
\end{figure}
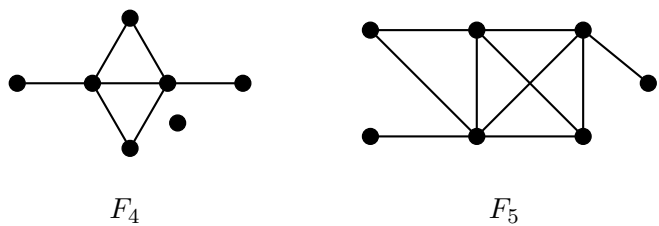

Note that a split graph with split partition $(K,I)$ can be changed into a bipartite graph with bipartition classes~$K$ and~$I$ by applying a subgraph complementation on~$K$.
Hence, due to Fact~\ref{fact:comp}, there is a close relationship between boundedness of clique-width for subclasses of split graphs and for subclasses of bipartite graphs.
As such, it is natural to also consider the class of bipartite graphs as our class~${\cal X}$.
We note that the relationship between split graphs and bipartite graphs involves some subtleties as a split graph can have two non-isomorphic split partitions and a (disconnected) bipartite graph may have more than one bipartition (see~\cite{BDHP16} for a precise explanation).
Nevertheless, results on boundedness of clique-width for $H$-free bipartite graphs, which we discuss below, have proved useful in proving Theorem~\ref{thm:split-classification}.

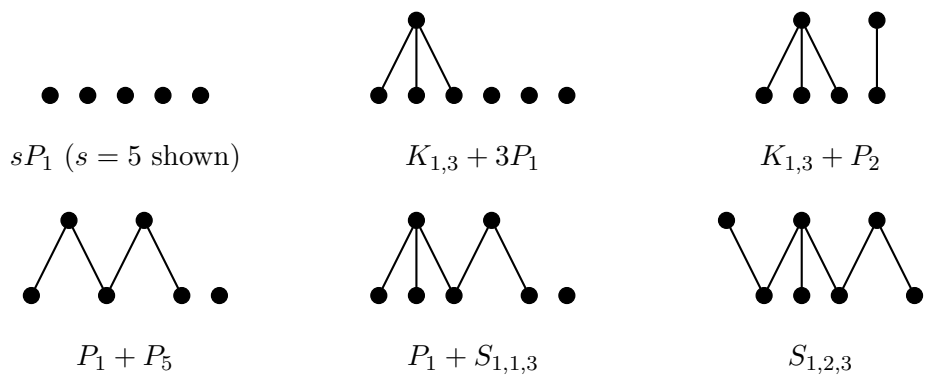
\begin{figure}
\begin{center}
\begin{tabular}{ccc}
\begin{minipage}{0.3\textwidth}
\centering
\begin{tikzpicture}[scale=1]
\GraphInit[vstyle=Simple]
\SetVertexSimple[MinSize=6pt]
\Vertex[x=0,y=0]{x0}
\Vertex[x=0.5,y=0]{x1}
\Vertex[x=1,y=0]{x2}
\Vertex[x=1.5,y=0]{x3}
\Vertex[x=2,y=0]{x4}
\SetVertexSimple[MinSize=6pt,LineColor=white,FillColor=white]
\Vertex[x=0,y=1]{blank}
\end{tikzpicture}
\end{minipage}
&
\begin{minipage}{0.3\textwidth}
\centering
\begin{tikzpicture}[scale=1]
\GraphInit[vstyle=Simple]
\SetVertexSimple[MinSize=6pt]
\Vertex[x=0.5,y=0]{x0}
\Vertex[x=1,y=0]{x1}
\Vertex[x=1.5,y=0]{x2}
\Vertex[x=2,y=0]{x3}
\Vertex[x=2.5,y=0]{x4}
\Vertex[x=3,y=0]{x5}
\Vertex[x=1,y=1]{z}
\Edge(x0)(z)
\Edge(x1)(z)
\Edge(x2)(z)
\end{tikzpicture}
\end{minipage}
&
\begin{minipage}{0.3\textwidth}
\centering
\begin{tikzpicture}[scale=1]
\GraphInit[vstyle=Simple]
\SetVertexSimple[MinSize=6pt]
\Vertex[x=0.5,y=0]{x0}
\Vertex[x=1,y=0]{x1}
\Vertex[x=1.5,y=0]{x2}
\Vertex[x=2,y=0]{x3}
\Vertex[x=1,y=1]{z}
\Vertex[x=2,y=1]{y}
\Edge(x0)(z)
\Edge(x1)(z)
\Edge(x2)(z)
\Edge(x3)(y)
\end{tikzpicture}
\end{minipage}\\\\
$sP_1$ ($s=5$ shown) & $K_{1,3}+\nobreak 3P_1$ & $K_{1,3}+\nobreak P_2$\\
\\
\begin{minipage}{0.3\textwidth}
\centering
\begin{tikzpicture}[scale=1]
\GraphInit[vstyle=Simple]
\SetVertexSimple[MinSize=6pt]
\Vertex[x=0,y=0]{x0}
\Vertex[x=1,y=0]{x2}
\Vertex[x=2,y=0]{x4}
\Vertex[x=2.5,y=0]{x5}
\Vertex[x=0.5,y=1]{x1}
\Vertex[x=1.5,y=1]{x3}
\Edges(x0,x1,x2,x3,x4)
\end{tikzpicture}
\end{minipage}
&
\begin{minipage}{0.3\textwidth}
\centering
\begin{tikzpicture}[scale=1]
\GraphInit[vstyle=Simple]
\SetVertexSimple[MinSize=6pt]
\Vertex[x=0.5,y=0]{x0}
\Vertex[x=1,y=0]{x1}
\Vertex[x=1.5,y=0]{x2}
\Vertex[x=2.5,y=0]{x4}
\Vertex[x=3,y=0]{x5}
\Vertex[x=1,y=1]{z}
\Vertex[x=2,y=1]{x3}
\Edge(x0)(z)
\Edge(x1)(z)
\Edge(x2)(z)
\Edges(x2,x3,x4)
\end{tikzpicture}
\end{minipage}
&
\begin{minipage}{0.3\textwidth}
\centering
\begin{tikzpicture}[scale=1]
\GraphInit[vstyle=Simple]
\SetVertexSimple[MinSize=6pt]
\Vertex[x=0.5,y=0]{x0}
\Vertex[x=1,y=0]{x1}
\Vertex[x=1.5,y=0]{x2}
\Vertex[x=2.5,y=0]{x4}
\Vertex[x=1,y=1]{z}
\Vertex[x=2,y=1]{x3}
\Vertex[x=0,y=1]{x5}
\Edges(x5,x0,z)
\Edge(x1)(z)
\Edge(x2)(z)
\Edges(x2,x3,x4)
\end{tikzpicture}
\end{minipage}\\\\
$P_1+\nobreak P_5$ & $P_1+\nobreak S_{1,1,3}$ & $S_{1,2,3}$
\end{tabular}
\caption{\label{fig:bip}The graphs~$H$ for which the class of $H$-free bipartite graphs has bounded clique-width.}
\end{center}
\end{figure}

Lozin~\cite{Lo02} proved that the clique-width of $S_{1,2,3}$-free bipartite graphs is at most~$5$.
He previously proved this bound in~\cite{Lo00} for $(\sun_4,S_{1,2,3})$-free bipartite graphs where $\sun_4$ is the graph obtained from a $4$-vertex cycle on vertices $u_1,\ldots,u_4$ by adding four new vertices $v_1,\ldots,v_4$ with edges $u_iv_i$ for $i\in\{1,\ldots,4\}$.
Fouquet, Giakoumakis and Vanherpe~\cite{FGV99} proved that $(P_7,S_{1,2,3})$-free bipartite graphs have clique-width at most~$4$.

Lozin and Volz~\cite{LV08} used the above results to continue the study of~\cite{LR06} into boundedness of clique-width of $H$-free bipartite graphs.
They fully classified the boundedness of clique-width for a variant of $H$-free bipartite graphs called strongly $H^\ell$-free graphs, where~$H$ is forbidden with respect to a specified bipartition given by some labelling~$\ell$ (which is unique if~$H$ is connected).
Dabrowski and Paulusma~\cite{DP14} proved a similar (but different) dichotomy for a relaxation of this variant called weakly $H^\ell$-free graphs, which is the variant used for proving some of the cases in Theorem~\ref{thm:split-classification}.
We refer to~\cite{DP14} for an explanation of strongly and weakly $H^\ell$-free bipartite graphs.
Using the above results Dabrowski and Paulusma~\cite{DP14} also gave a full classification for $H$-free bipartite graphs,
that is, with~$H$ forbidden as an induced subgraph, as before; see also \figurename~\ref{fig:bip}.

\begin{theorem}[\cite{DP14}]\label{thm:bipartite}
Let~$H$ be a graph.
The class of $H$-free bipartite graphs has bounded clique-width if and only~if:
\begin{enumerate}[(i)]
\item $H=sP_1$ for some $s\geq 1$\\[-2em]
\item $H \ssi K_{1,3}+\nobreak 3P_1$\\[-2em]
\item $H \ssi K_{1,3}+\nobreak P_2$\\[-2em]
\item $H \ssi P_1+\nobreak S_{1,1,3}$, or\\[-2em]
\item $H \ssi S_{1,2,3}$.
\end{enumerate}
\end{theorem}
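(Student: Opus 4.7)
The plan is to prove both directions of the dichotomy.

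\emph{Bounded direction.} Case (i) is immediate: each colour class of an $sP_1$-free bipartite graph is an independent set of size at most $s-1$, so such graphs have at most $2(s-1)$ vertices, and thus bounded clique-width. Case (v) is exactly Lozin's result~\cite{Lo02} that $S_{1,2,3}$-free bipartite graphs have clique-width at most $5$. The cases (ii), (iii) and (iv) I would attack with the {\tt BCW Method}: by Facts~\ref{fact:prime} and~\ref{fact:2-conn} it suffices to consider prime $2$-connected members of each class, and then, using structural arguments specific to the forbidden induced subgraph, reduce to a known bounded-clique-width class via a bounded number of vertex deletions (Fact~\ref{fact:del-vert}) and bipartite complementations (Fact~\ref{fact:bip}). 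Since each $H$ in (ii)--(iv) is connected or has an essentially unique bipartition, the natural target classes are the weakly $H^\ell$-free bipartite classes whose clique-width was bounded in~\cite{LV08} (and further analyzed in~\cite{DP14}), together with the $(P_7,S_{1,2,3})$-free bipartite graphs of~\cite{FGV99}.

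\emph{Unbounded direction.} Suppose $H$ is not covered by any of (i)--(v). I would split on the structure of $H$. (a) If $H$ is not bipartite, then every bipartite graph is trivially $H$-free, and the class of all bipartite graphs has unbounded clique-width by Theorem~\ref{t-super} (since bipartite graphs are superfactorial). (b) If $H$ is bipartite and contains an induced cycle $C_{2\ell}$, then choosing $k$ large enough so that the $k$-subdivided walls have no induced cycle of length at most $2\ell$, these walls are bipartite, $H$-free, and of unbounded clique-width by Corollary~\ref{cor:walls}. (c) If $H$ is a forest not appearing as an induced subgraph of any of the five listed graphs, then a case analysis on the structure of $H$ is required, and for each such $H$ a family of $H$-free bipartite graphs of unbounded clique-width must be exhibited.

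For case (c), the {\tt UCW Method} is applied using the known pool of bipartite constructions. If $H$ has a vertex of degree at least~$4$, then $H$ contains $K_{1,4}$, and the $2P_3$-free bipartite graphs of Lozin--Volz~\cite{LV08} together with the bipartite permutation graphs of Theorem~\ref{t-bipper} furnish $K_{1,4}$-free witnesses of unbounded clique-width. If $H$ has maximum degree~$3$ but strictly contains $S_{1,2,3}$ (for instance $S_{1,2,4}$ or $S_{1,3,3}$), or has too many non-trivial connected components to be covered by (iii), then suitable variants of the bichain/bipartite-permutation constructions from~\cite{LR06,LV08} are invoked, possibly combined with Facts~\ref{fact:del-vert}--\ref{fact:bip} to massage them into the exact required form.

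\emph{Main obstacle.} The hardest part is the exhaustive forest case analysis in (c). Matching the positive/negative boundary precisely---so that, for example, enlarging $S_{1,2,3}$ to $S_{1,2,4}$, or adding a second isolated vertex to $P_1+S_{1,1,3}$, already forces unbounded clique-width---requires either careful reuse of existing constructions or bespoke new ones, each tailored so as to remain bipartite while avoiding exactly the prescribed induced subgraph. The challenge is combinatorial bookkeeping rather than a single deep step, but it accounts for the bulk of the work.
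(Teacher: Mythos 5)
Your high-level architecture does match the route by which this theorem was actually established (the bounded side rests on Lozin's $S_{1,2,3}$ bound and on the strongly/weakly $H^\ell$-free machinery of~\cite{LV08} and~\cite{DP14}; the unbounded side on subdivided walls and known bipartite constructions), and your treatment of case~(i), case~(v), non-bipartite~$H$ and bipartite~$H$ containing a cycle is correct. The problem is that everything that makes the theorem non-trivial is left as a placeholder, so there is a genuine gap rather than a proof. For the bounded direction, cases (ii)--(iv) do \emph{not} follow from~\cite{Lo02}, \cite{FGV99} or the strongly-$H^\ell$-free classification of~\cite{LV08}: the bounds for $(K_{1,3}+3P_1)$-free, $(K_{1,3}+P_2)$-free and $(P_1+S_{1,1,3})$-free bipartite graphs required new structural decompositions, and ``reduce to a known class via structural arguments specific to $H$'' names exactly the missing content. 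Moreover, for disconnected~$H$ such as $K_{1,3}+3P_1$ the passage between ``$H$-free'' and ``weakly $H^\ell$-free'' is itself delicate (a disconnected bipartite graph has several bipartitions), which is the opposite of the ``essentially unique bipartition'' you rely on.

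For the unbounded direction there is both an inefficiency and a concrete error. Since $k$-subdivided walls are bipartite, Corollary~\ref{cor:s} already gives unboundedness of $H$-free bipartite graphs for every $H\notin{\cal S}$; this subsumes your cases (a), (b), the degree-$\geq 4$ subcase, and every forest with a component having two branch vertices, and it reduces case~(c) to forests in~${\cal S}$ (disjoint unions of paths and subdivided claws) not covered by (i)--(v). Your proposed $K_{1,4}$ witnesses are wrong as stated: the class of bipartite permutation graphs is not $K_{1,4}$-free (every star is a bipartite permutation graph), and neither is the class of $2P_3$-free bipartite graphs; the correct witness here is simply the walls, which have maximum degree~$3$. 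Finally, the residual forests in~${\cal S}$ --- for instance $2P_3$, $S_{1,1,4}$, $S_{2,2,2}$, $2P_1+S_{1,1,3}$ and $K_{1,3}+P_1+P_2$, together with everything containing them --- must each be matched to a concrete unbounded bipartite family (the $2P_3$-free bipartite graphs of~\cite{LV08}, the bipartite permutation graphs of Theorem~\ref{t-bipper}, etc.); identifying this finite list and verifying that every uncovered~$H$ contains one of its members is the bulk of the theorem, and your case enumeration (``degree $\geq 4$'', ``strictly contains $S_{1,2,3}$'', ``too many components'') neither exhausts it nor supplies the required witnesses.
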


We refer to~\cite{BGM18} for some specific bounds on the clique-width of subclasses of $H$-free split graphs, bipartite graphs and co-bipartite graphs obtained from a decomposition property of $1$-Sperner hypergraphs.

We continue our discussion on finding suitable graph classes~${\cal X}$ for which the classification of boundedness of the clique-width of its $H$-free subclasses differs from the (general) classification for $H$-free graphs in Theorem~\ref{thm:single}. 
Theorem~\ref{t-unitinterval} states that the class of unit interval graphs has unbounded clique-width.
Unit interval graphs are contained in the class of interval graphs, which are contained in the class of chordal graphs by Observation~\ref{o-inclusion}.
Hence, as well as narrowing the class of chordal graphs to split graphs, it is also natural to consider unit interval graphs and interval graphs to be the class~${\cal X}$.
We recall that the class of unit interval graphs is a minimal hereditary graph class of unbounded clique-width~\cite{Lo11}.
Hence the clique-width of $H$-free unit interval graphs is bounded if and only if~$H$ is a unit interval graph.
We refer to~\cite{MR15} for bounds on the clique-width of certain subclasses of unit interval graphs and pose the following open problem.

\begin{oproblem}\label{o-interval}
Determine for which graphs~$H$ the class of $H$-free interval graphs has bounded clique-width.
\end{oproblem}

As mentioned earlier, instead of chordal graphs we can consider other subclasses of weakly chordal graphs as our class~${\cal X}$, such as permutation graphs (the containment follows from Observation~\ref{o-inclusion}).
Recall that even the classes of split permutation graphs and bipartite permutation graphs have unbounded clique-width, as stated in Theorems~\ref{t-splitpermutation} and~\ref{t-bipper}, respectively.
Hence, we could also take each of these three graph classes as the class~${\cal X}$.
However, we recall that the classes of split permutation graphs~\cite{ABLS} and bipartite permutation graphs~\cite{Lo11} are minimal hereditary graph classes of unbounded clique-width.
Hence, the clique-width of $H$-free split permutation graphs is bounded if and only if~$H$ is a split permutation graph, and similarly, the clique-width of $H$-free bipartite permutation graphs is bounded if and only if~$H$ is a bipartite permutation graph.
Recall that Theorem~\ref{thm:single} states that the class of $H$-free graphs has bounded clique-width if and only if~$H$ is an induced subgraph of~$P_4$ and that Theorem~\ref{t-weakly-chordal} states that the same classification holds if we restrict to $H$-free weakly chordal graphs.
Brignall and Vatter proved that the same classification also holds if we further restrict to $H$-free permutation graphs.

\begin{theorem}[\cite{BV19}]\label{t-hfreepermutation}
Let~$H$ be a graph.
The class of $H$-free permutation graphs has bounded clique-width if and only if~$H$ is an induced subgraph of~$P_4$.
\end{theorem}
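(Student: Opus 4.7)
The plan is to handle the two directions separately and bootstrap entirely from results already collected in the survey. For the forward direction, assume $H\ssi P_4$. Then by Observation~\ref{l-obs} every $H$-free graph is $P_4$-free, and Theorem~\ref{thm:single} says every $P_4$-free graph has clique-width at most~$2$, so every $H$-free permutation graph has clique-width at most~$2$.

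For the reverse direction, suppose $H$ is not an induced subgraph of~$P_4$. I would exhibit inside the $H$-free permutation graphs one of the unbounded subclasses already supplied by the survey: bipartite permutation graphs (Theorem~\ref{t-bipper}), co-bipartite permutation graphs (obtained from the bipartite permutation graphs by a single subgraph complementation applied to the whole graph, hence unbounded by Fact~\ref{fact:comp}), or split permutation graphs (Theorem~\ref{t-splitpermutation}). These three classes have short forbidden-induced-subgraph certificates: bipartite graphs are $K_3$-free; co-bipartite graphs are $3P_1$-free (since $\overline{K_3}=3P_1$); and split graphs are $(C_4,C_5,2P_2)$-free. The central claim driving the proof is then: every graph~$H$ that is not an induced subgraph of~$P_4$ contains at least one of $K_3$, $3P_1$, $C_4$, $2P_2$, or $C_5$ as an induced subgraph. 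Once this claim is established, Observation~\ref{l-obs} immediately matches $H$ with one of the three unbounded subclasses above.

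To prove the claim, I would run a short Ramsey-style case analysis. Because $R(3,3)=6$, simultaneous $K_3$-freeness and $3P_1$-freeness forces $|V(H)|\leq 5$. On exactly five vertices, $\omega(H)\leq 2$ and $\alpha(H)\leq 2$ together pin $H$ down to $C_5$, which is excluded. For $|V(H)|\leq 4$, a brief enumeration shows that the only $4$-vertex graphs with $\omega\leq 2$ and $\alpha\leq 2$ are $P_4$, $C_4$, and $2P_2$, of which only $P_4\ssi P_4$ survives the exclusion of $C_4$ and $2P_2$; and on three or fewer vertices the surviving graphs are $P_3$, $P_1+P_2$, and their induced subgraphs, all of which already embed in~$P_4$.

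The main obstacle is essentially the combinatorial task of choosing the pivot set $\{K_3,3P_1,C_4,2P_2,C_5\}$, so that it simultaneously matches the known forbidden induced subgraphs of the three unbounded subclasses of permutation graphs and also provides a Ramsey-type covering of every graph outside the induced-subgraph closure of~$P_4$. Once this pivot set is identified, both the case analysis and the appeal to Observation~\ref{l-obs} are routine.
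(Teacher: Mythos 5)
Your proposal is correct and follows essentially the same route as the paper: both arguments rest on the Ramsey bound $R(3,3)=6$ to reduce to the graphs $K_3$, $3P_1$, $C_4$, $2P_2$, $C_5$ and then invoke the unboundedness of bipartite permutation graphs, their complements, and split permutation graphs. The only cosmetic difference is that the paper disposes of $C_5$ by observing it is not a permutation graph (so forbidding it changes nothing), whereas you absorb it into the split-graph case via the characterization of split graphs as $(C_4,C_5,2P_2)$-free; both are valid.
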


\begin{proof}
Let~$H$ be a graph and note that if~$H$ is not a permutation graph, then the class of $H$-free permutation graphs equals the class of permutation graphs, which has unbounded clique-width by Theorem~\ref{thm:single}.
We may therefore assume that~$H$ is a permutation graph.
If~$H$ is an induced subgraph of~$P_4$ then the class of $H$-free permutation graphs is a subclass of the class of $P_4$-free graphs and in this case Theorem~\ref{thm:single} completes the proof.

The class of $C_3$-free permutation graphs is equal to the class of bipartite permutation graphs, which has unbounded clique-width by Theorem~\ref{t-bipper}.
Since the class of permutation graphs is closed under complementation (in the definition of permutation graphs, reverse the order of intersections of the line segments with one of the parallel lines), Fact~\ref{fact:comp} implies that $3P_1$-free permutation graphs also have unbounded clique-width.
It therefore remains to consider the case when~$H$ is a $(C_3,3P_1)$-free graph that is not an induced subgraph of~$P_4$.

It is easy to verify that the only $(C_3,3P_1)$-free graph on more than four vertices is~$C_5$.
Since~$C_5$ is not a permutation graph, we may assume that~$H$ has at most four vertices.
By inspection, the only $(C_3,3P_1)$-free graphs~$H$ on at most four vertices that are not induced subgraphs of~$P_4$ are~$C_4$ and~$2P_2$.
As~$C_5$ is not a permutation graph, the class of $(C_4,2P_2)$-free permutation graphs is equal to the class of split permutation graphs, which has unbounded clique-width by Theorem~\ref{t-splitpermutation}.
Therefore the class of $H$-free permutation graphs has unbounded clique-width if $H \in \{C_4,2P_2\}$.
This completes the proof.
\end{proof}

Recall from Observation~\ref{o-inclusion} that bipartite permutation graphs are chordal bipartite,
and that by Theorem~\ref{t-bipper} the class of bipartite permutation graphs has unbounded clique-width.
From these two facts it follows that the class of chordal bipartite graphs has unbounded clique-width.
In contrast, Lozin and Rautenbach~\cite{LR04b} proved that $K_{1,t}^+$-free chordal bipartite graphs have bounded clique-width (recall that~$K_{1,t}^+$ is the graph obtained from the star~$K_{1,t}$ by subdividing one of its edges).
Subdividing all three edges of the claw~$K_{1,3}$ yields the graph~$S_{2,2,2}$.
As every bipartite permutation graph is $S_{2,2,2}$-free chordal bipartite, the class of $S_{2,2,2}$-free chordal bipartite graphs has unbounded clique-width, again due to Theorem~\ref{t-bipper}. 

The above discussion leads to the following open problems.
Let~$E_t$ denote the graph obtained from the star~$K_{1,t+1}$ after subdividing exactly two of its edges.
Kami\'nski, Lozin and Milani\v{c}~\cite{KLM09} asked the question: for which~$t$, does the class of $E_t$-free chordal bipartite graphs have bounded clique-width?
For $t\leq 2$, the class of $E_t$-free graphs has bounded clique-width by Theorem~\ref{thm:bipartite}, as $E_2=S_{1,2,2}$.
Hence $t=3$ is the first open case.
By taking the class of chordal bipartite graphs as the class~${\cal X}$, we can pose a more general open problem.

\begin{oproblem}\label{o-chordalbipartite}
Determine for which graphs~$H$ the class of $H$-free chordal bipartite graphs has bounded clique-width.
\end{oproblem}

Boliac and Lozin~\cite{BL02} proved that for a graph~$H$, the class of $H$-free claw-free graphs has bounded clique-width if and only if $H\ssi P_4$, $H\ssi \paw$ or $H\ssi K_3+\nobreak P_1$ (see also the more general Theorem~\ref{thm:classification2} in Section~\ref{s-2}).
Line graphs form a subclass of the class of claw-free graphs.
Gurski and Wanke~\cite{GW07b} proved that if a line graph has a vertex whose non-neighbours induce a subgraph of clique-width~$k$, then it has clique-width at most $8k+4$, which  would imply, for instance, that $(P_1+\nobreak P_4)$-free line graphs have clique-width at most~$18$ (they then improved this bound to~$14$).
In fact we can show the following classification for the boundedness of clique-width of $(H_1,\ldots,H_p)$-free line graphs.
Recall that~${\cal S}$ is the class of graphs every connected component of which is either a subdivided claw or a path on at least one vertex, whereas~${\cal T}$ consists of all line graphs of graphs in~${\cal S}$.

\begin{sloppypar}
\begin{theorem}\label{t-line}
Let $\{H_1,\ldots,H_p\}$ be a finite set of graphs.
Then the class of $(H_1,\ldots,H_p)$-free line graphs has bounded clique-width if and only if $H_i\in {\cal T}$ for some $i \in \{1,\ldots,p\}$.
\end{theorem}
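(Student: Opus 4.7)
My plan is to use Theorem~\ref{t-gw1} to translate the statement about clique-width of line graphs into one about treewidth of their preimages, and to handle each direction by converting induced-subgraph conditions on $L(G)$ to subgraph conditions on $G$.

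For the ``if'' direction, suppose $H_i = L(F)$ with $F \in {\cal S}$. It suffices to show that $H_i$-free line graphs have bounded clique-width. The key observation is that if $F$ is a subgraph of $G$ via an injective vertex map, then the induced map on edges embeds $L(F)$ as an \emph{induced} subgraph of $L(G)$: injectivity prevents non-adjacent edges of $F$ from becoming adjacent in the image. Contrapositively, $L(G)$ being $H_i$-free forces $G$ to be $F$-subgraph-free. Since $F \in {\cal S}$, Observation~\ref{o-freefree} upgrades this to $F$-minor-freeness, and since $F$ is planar the classical excluded-planar-minor theorem gives that $G$ has bounded treewidth. Theorem~\ref{t-gw1} then concludes that $L(G)$ has bounded clique-width.

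For the ``only if'' direction, suppose no $H_i$ lies in ${\cal T}$. I would exhibit a family of $(H_1,\ldots,H_p)$-free line graphs of unbounded clique-width, namely $\{L(W_k^h)\}_{h \ge 2}$, where $W_k^h$ is the $k$-subdivided wall of height $h$ and $k$ is a fixed sufficiently large constant depending on $H_1,\ldots,H_p$. Since walls have unbounded clique-width (Theorem~\ref{t-unbounded}) and maximum degree $3$, they have unbounded treewidth by Corollary~\ref{c-cwtw}; edge subdivision preserves treewidth, so $\{W_k^h\}_h$ still has unbounded treewidth, and Theorem~\ref{t-gw1} yields unbounded clique-width of the line graphs. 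Since induced subgraphs of line graphs are line graphs, $H_i$-freeness of $L(W_k^h)$ is automatic when $H_i$ is not itself a line graph. When $H_i$ is a line graph, any containment $H_i \ssi L(W_k^h)$ would force $W_k^h$ to contain a subgraph $R$ with $L(R) \cong H_i$; the assumption $H_i \notin {\cal T}$ implies no such $R$ lies in ${\cal S}$, and only finitely many such $R$ need be considered (since each has exactly $|V(H_i)|$ edges).

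The technical heart is to verify that for $k$ larger than the maximum $|V(R)|$ over this finite family, $W_k^h$ excludes every such $R$ as a subgraph. Each connected component $R_0$ of $R$ that is not in ${\cal S}$ falls into one of three cases: (a)~$R_0$ contains a cycle, (b)~$R_0$ has a vertex of degree at least~$4$, or (c)~$R_0$ is a tree with at least two vertices of degree~$3$. Case (b) is immediate since $W_k^h$ has maximum degree~$3$; case (a) follows because the girth of $W_k^h$ is at least $6(k+1)$, which exceeds $|V(R_0)|$. I expect case (c) to be the main obstacle to write out carefully: any subgraph embedding of $R_0$ must send its degree-$3$ vertices to distinct degree-$3$ vertices of $W_k^h$, but these are pairwise at graph-distance at least $k+1$ in $W_k^h$ (only the ``original'' wall vertices have degree~$3$, and consecutive originals are separated by paths of length $k+1$), which exceeds the diameter of $R_0$ and hence rules out the embedding. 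Once all three cases are established, both directions follow uniformly from Theorem~\ref{t-gw1}.
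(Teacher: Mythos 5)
Your proof is correct and follows essentially the same route as the paper: both directions are translated via Theorem~\ref{t-gw1}, with the ``if'' direction using Observation~\ref{o-freefree} and the excluded-planar-minor theorem of Bienstock, Robertson, Seymour and Thomas, and the ``only if'' direction using $k$-subdivided walls. The only differences are cosmetic: you work with all subgraphs $R$ of the wall satisfying $L(R)\cong H_i$ rather than invoking Whitney's uniqueness theorem for the preimage of each connected component, and you spell out (correctly, via the girth/degree/branch-vertex-distance case analysis) the fact that a connected graph outside ${\cal S}$ is excluded as a subgraph from sufficiently subdivided walls, which the paper asserts without proof.
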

\end{sloppypar}

\begin{proof}
First suppose that $H_i\in {\cal T}$ for some $i \in \{1,\ldots,p\}$.
By definition of~${\cal T}$, it follows that~$H_i$ is the line graph of some graph $F \in {\cal S}$.
Because~$F$ is in~${\cal S}$, forbidding~$F$ as a (not necessarily induced) subgraph of~$G$ is the same as forbidding~$F$ as a minor by Observation~\ref{o-freefree}.
Moreover, $F$ is planar.
By a result of Bienstock, Robertson, Seymour and Thomas~\cite{BRST91}, every graph that does not contain some fixed planar graph as a minor has bounded path-width.
Hence, the class of $F$-subgraph-free graphs has bounded path-width and consequently, bounded treewidth.
Then, by Theorem~\ref{t-gw1}, the class of $H_i$-free graphs, and thus the class of $(H_1,\ldots,H_p)$-free graphs, has bounded clique-width.

Now suppose that $H_i\notin {\cal T}$ for every $i \in \{1,\ldots,p\}$.
Then every~$H_i$ has a connected component $H'_i \notin {\cal T}$.
We may assume without loss of generality that each~$H_i$ is a line graph (otherwise forbidding it does not affect the class defined; 
if no~$H_i$ is a line graph, then the class of $(H_1,\ldots,H_p)$-free line graphs is the class of all line graphs, which has unbounded clique-width~\cite{BL02}).
Since every $H_i' \notin {\cal T}$, every~$H_i'$ is not isomorphic to $K_3$.
Hence, for every~$H_i'$ there exists a unique graph~$F_i'$ such that~$H_i'$ is the line graph of~$F_i'$ (see, for example,~\cite{Ha69}).
Since $H'_i \notin {\cal T}$, it follows that $F'_i\notin {\cal S}$, which means that there exists a positive integer~$k_i$, such that the class of $F_i'$-subgraph-free graphs contains the class of $k_i$-subdivided walls.
We let $k=\max\{k_i\; |\; 1\leq i\leq p\}$.
Then the class of $(F_1',\ldots,F_p')$-subgraph-free graphs contains the class of $k$-subdivided walls.
As the class of $k$-subdivided walls has unbounded clique-width by Corollary~\ref{cor:walls}, it follows that the class of $(F_1',\ldots,F_p')$-subgraph-free graphs has unbounded clique-width
and hence unbounded treewidth~\cite{CO00}.
Then, by Theorem~\ref{t-gw1}, the class of $(H_1',\ldots,H_p')$-free line graphs has unbounded clique-width.
Since the class of $(H_1,\ldots,H_p)$-free line graphs contains the class of $(H_1',\ldots,H_p')$-free line graphs, it follows that the class of $(H_1,\ldots,H_p)$-free line graphs also has unbounded clique-width.
\end{proof}

\subsection{Forbidding A Small Number of Graphs}\label{s-2}

As discussed, even the case when only two induced subgraphs~$H_1$ and~$H_2$ are forbidden has not yet been fully classified, and there are only partial results for the cases where three or four induced subgraphs are forbidden.
Besides the class of $(C_4,C_5,2P_2)$-free graphs (split graphs)~\cite{MR99}, it is, for example, known that the classes of $(C_4,\allowbreak K_{1,3},\allowbreak K_4,\allowbreak \diamondgraph)$-free graphs~\cite{BL02,BELL06} and $(3P_2,\allowbreak P_2+\nobreak P_4,\allowbreak P_6,\allowbreak\gem)$-free graphs have unbounded clique-width~\cite{DP16}.
Recall that the gem is the graph~$\overline{P_1+P_4}$ (see \figurename~\ref{fig:particular-graphs}) and that the hammer is the graph~$T_{0,0,2}$ (see \figurename~\ref{fig:Thij-examples}).
It is known that the clique-width of $(\hammer,\gem,S_{1,1,2})$-free graphs is at most~$7$~\cite{BLV03}.
However, unlike the case for two forbidden induced subgraphs, no large-scale systematic study has been initiated for finitely defined hereditary graphs classes with more than two forbidden induced subgraphs;
in Sections~\ref{s-four} and~\ref{s-complement}, respectively, we discuss two studies~\cite{BDLM05,BDJLPZ17} with partial results in this direction.
In this section, we focus only on $(H_1,H_2)$-free graphs.

Despite the classification for $H$-free graphs (Theorem~\ref{thm:single}) and many existing results for (un)boundedness of clique-width for $(H_1,H_2)$-free graphs~\cite{BDHP16,BELL06,BKM06,BL02,BLM04,BLM04b,BM02,DGP14,DHP0,DLRR12} over the years, the number of open cases $(H_1,H_2)$ was only recently proven to be finite, in~\cite{DP16}.
This was done by combining the existing known results together with a number of new results for $(H_1,H_2)$-free graphs, and led to a classification that left~13 non-equivalent open cases.\footnote{\label{footnote:equivalence}Given four graphs $H_1,H_2,H_3,H_4$, the classes of $(H_1,H_2)$-free graphs and $(H_3,H_4)$-free graphs are said to be equivalent if the unordered pair $H_3,H_4$ can be obtained from the unordered pair $H_1,H_2$ by some combination of the operations: (i) complementing both graphs in the pair, and (ii) if one of the graphs in the pair is~$3P_1$, replacing it with $P_1+\nobreak P_3$ or vice versa.
If two classes are equivalent, then one of them has bounded clique-width if and only if the other one does~\cite{DP16}.}
This number has been reduced to five non-equivalent open cases by four later papers~\cite{BDJLPZ17,BDJP18,DDP17,DLP17}, and the current state-of-the-art is as follows (recall that~${\cal S}$ is the class of graphs each connected component of which is either a subdivided claw or a path and see also \figurenames~\ref{fig:particular-graphs}, \ref{fig:Thij-examples} and~\ref{fig:bip} in which a number of the graphs mentioned below are displayed).

\begin{theorem}[\cite{BDJP18}]\label{thm:classification2}
Let~${\cal G}$ be a class of graphs defined by two forbidden induced subgraphs.
Then:
\begin{enumerate}
\item ${\cal G}$ has bounded clique-width if it is equivalent to a class of $(H_1,H_2)$-free graphs such that one of the following holds:
\begin{enumerate}[(i)]
\item \label{thm:classification2:bdd:P4} $H_1$ or $H_2 \ssi P_4$\\[-1.5em]
\item \label{thm:classification2:bdd:ramsey} $H_1=K_s$ and $H_2=tP_1$ for some $s,t\geq 1$\\[-1.5em]
\item \label{thm:classification2:bdd:P_1+P_3} $H_1 \ssi \paw$ and $H_2 \ssi K_{1,3}+\nobreak 3P_1,\; K_{1,3}+\nobreak P_2,\;\allowbreak P_1+\nobreak P_2+\nobreak P_3,\;\allowbreak P_1+\nobreak P_5,\;\allowbreak P_1+\nobreak S_{1,1,2},\;\allowbreak P_2+\nobreak P_4,\;\allowbreak P_6,\; \allowbreak S_{1,1,3}$ or~$S_{1,2,2}$\\[-1.5em]
\item \label{thm:classification2:bdd:2P_1+P_2} $H_1 \ssi \diamondgraph$ and $H_2\ssi P_1+\nobreak 2P_2,\; 3P_1+\nobreak P_2$ or~$P_2+\nobreak P_3$\\[-1.5em]
\item \label{thm:classification2:bdd:P_1+P_4} $H_1 \ssi \gem$ and $H_2 \ssi P_1+\nobreak P_4$ or~$P_5$\\[-1.5em]
\item \label{thm:classification2:bdd:K_13} $H_1\ssi K_3+\nobreak P_1$ and $H_2 \ssi K_{1,3}$, or\\[-1.5em]
\item \label{thm:classification2:bdd:2P1_P3} $H_1\ssi \overline{2P_1+\nobreak P_3}$ and $H_2\ssi 2P_1+\nobreak P_3$.
\end{enumerate}
\item ${\cal G}$ has unbounded clique-width if it is equivalent to a class of $(H_1,H_2)$-free graphs such that one of the following holds:
\begin{enumerate}[(i)]
\item \label{thm:classification2:unbdd:not-in-S} $H_1\not\in {\cal S}$ and $H_2 \not \in {\cal S}$\\[-1.5em]
\item \label{thm:classification2:unbdd:not-in-co-S} $H_1\notin \overline{{\cal S}}$ and $H_2 \not \in \overline{{\cal S}}$\\[-1.5em]
\item \label{thm:classification2:unbdd:K_13or2P_2} $H_1 \si K_3+\nobreak P_1$ or~$C_4$ and $H_2 \si 4P_1$ or~$2P_2$\\[-1.5em]
\item \label{thm:classification2:unbdd:2P_1+P_2} $H_1 \si \diamondgraph$ and $H_2 \si K_{1,3},\; 5P_1,\; P_2+\nobreak P_4$ or~$P_6$\\[-1.5em]
\item \label{thm:classification2:unbdd:3P_1} $H_1 \si K_3$ and $H_2 \si 2P_1+\nobreak 2P_2,\; 2P_1+\nobreak P_4,\; 4P_1+\nobreak P_2,\; 3P_2$ or~$2P_3$\\[-1.5em]
\item \label{thm:classification2:unbdd:4P_1} $H_1 \si K_4$ and $H_2 \si P_1 +\nobreak P_4$ or~$3P_1+\nobreak P_2$, or\\[-1.5em]
\item \label{thm:classification2:unbdd:gem} $H_1 \si \gem$ and $H_2 \si P_1+\nobreak 2P_2$.
\end{enumerate}
\end{enumerate}
\end{theorem}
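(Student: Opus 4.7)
The statement is a compendium of many results established in a sequence of papers, so my proof plan is to assemble it rather than derive it monolithically. The overall scheme is to treat each entry on each list separately, applying the \texttt{BCW Method} for the bounded items and the \texttt{UCW Method} for the unbounded items, and to exploit the equivalence relation (swapping by complementation via Fact~\ref{fact:comp}, and exchanging $3P_1$ with $P_1+P_3$) throughout so that proving a representative of each equivalence class suffices.

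For the bounded half, case (i) is immediate from Theorem~\ref{thm:single}. Case (ii) is a pure Ramsey argument: any $(K_s, tP_1)$-free graph has at most $R(s,t)-1$ vertices, so the whole class is finite and has clique-width bounded by its largest member. For the remaining cases (iii)--(vii) the plan is to invoke the \texttt{BCW Method} with appropriate target classes~$\mathcal{G}'$ already known to be of bounded clique-width: for example, reducing $(\paw,H_2)$-free graphs to the $H_2$-free bipartite graphs of Theorem~\ref{thm:bipartite} via the structure theorem for paw-free graphs (each connected paw-free graph is triangle-free or complete multipartite), and reducing $(\diamondgraph,H_2)$-free graphs or $(\gem,H_2)$-free graphs to manageable prime/atom cases using Facts~\ref{fact:prime} and~\ref{fact:2-conn}, after which Facts~\ref{fact:del-vert}--\ref{fact:bip} are applied to shave off a bounded number of vertices or apply bipartite complementations; case (vi) follows from the previously cited claw-free classification of Boliac and Lozin, and case (vii) is addressed by the recent work on $(\overline{2P_1+P_3}, 2P_1+P_3)$-free graphs.

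For the unbounded half, cases (i) and (ii) are exactly Corollary~\ref{cor:s} (and its complement by Fact~\ref{fact:comp}). Each remaining case (iii)--(vii) is handled by the \texttt{UCW Method}, displaying, inside the class, a known hard subclass: the walls and their $k$-subdivisions of Corollary~\ref{cor:walls}, the hard constructions of Theorem~\ref{thm:generalunbounded}, superfactoriality via Theorem~\ref{t-super} (which directly covers the classes containing all split, bipartite, or co-bipartite graphs, e.g.\ ($K_3+P_1$, $4P_1$) which produces split graphs), or the minimal classes of unbounded clique-width (unit interval, bipartite permutation, split permutation, twisted chain, bichain). Each such hard subclass must be verified to live in the given $(H_1,H_2)$-free class, which is a routine forbidden-induced-subgraph check. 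Where the natural hard class does not immediately fit inside $(H_1, H_2)$-free graphs, a bounded number of vertex deletions, subgraph complementations or bipartite complementations (Facts~\ref{fact:del-vert}--\ref{fact:bip}) can be applied to transform the hard subclass into one that does.

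The main obstacle is not any single case but the bookkeeping: ensuring that the 14 listed subcases, together with the equivalences generated by complementation and the $3P_1 \leftrightarrow P_1+P_3$ swap, exhaust everything except the small set of open cases (five non-equivalent pairs by~\cite{BDJP18}). Concretely, the hardest individual arguments tend to be the newer ones -- cases like $(\paw, S_{1,1,3})$-free or $(\paw, S_{1,2,2})$-free graphs in part~1(iii), and the bounded cases $(\diamondgraph, P_2+P_3)$-free and $(\gem, P_5)$-free from \cite{DDP17,DLP17,BDJP18} -- where the BCW reduction requires a detailed prime/atom case analysis of the structure of the graphs after forbidding both $H_1$ and $H_2$, and where the target class~$\mathcal{G}'$ often must itself be built up by several preceding lemmas. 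I would therefore tackle these cases last, after all the ``easy'' reductions and the straightforward UCW embeddings have pared the problem down.
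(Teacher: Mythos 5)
The paper does not actually prove this theorem: it is a survey compendium quoted from~\cite{BDJP18} and the long line of papers behind it, and your plan to assemble it case-by-case via the {\tt BCW}/{\tt UCW} methods, Facts~\ref{fact:del-vert}--\ref{fact:subdiv}, the equivalence moves (complementation and the $3P_1\leftrightarrow P_1+P_3$ swap), Corollary~\ref{cor:s}, Theorem~\ref{thm:generalunbounded} and the known minimal/superfactorial hard classes is exactly the methodology the paper describes for how these results were obtained. As a roadmap it is sound and I have nothing structurally different to compare it against.

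Two concrete slips are worth flagging. First, the reduction of $(\paw,H_2)$-free graphs via Olariu's structure theorem lands you in \emph{triangle-free} graphs (plus complete multipartite pieces), not bipartite graphs, so you cannot simply cite Theorem~\ref{thm:bipartite}: the passage from the $H_2$-free bipartite classification to the $(K_3,H_2)$-free classification is precisely where the hardest work lies, each of the cases $H_2\in\{P_1+P_2+P_3,\,P_1+P_5,\,P_1+S_{1,1,2},\,P_2+P_4,\,P_6,\,S_{1,1,3},\,S_{1,2,2}\}$ required a separate argument, and the paper stresses (Open Problem~\ref{oprob:twographs}.\ref{oprob:twographs:3P_1} and the surrounding discussion, together with Propositions~\ref{p-noC5-partial-b} and~\ref{p-noC5-partial2}) that it is still open whether the two classifications coincide in general. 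You do acknowledge at the end that these are the hard cases, but as written the reduction step would fail. Second, your example for part~2\ref{thm:classification2:unbdd:K_13or2P_2} is wrong: split graphs are \emph{not} contained in the class of $(K_3+P_1,4P_1)$-free graphs (a split graph easily contains both $K_3+P_1$ and $4P_1$); the standard witness for that pair is instead the complement route via the $(C_4,K_{1,3},K_4,\diamondgraph)$-free construction of~\cite{BL02,BELL06}, while split graphs serve as the witness for $(C_4,2P_2)$.
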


\begin{example}
As an example of how results from Section~\ref{s-x} were useful in proving Theorem~\ref{thm:classification2}, consider the case when $(H_1,H_2)=(K_4,2P_1+\nobreak P_3)$.
In~\cite{BDHP17}, it was shown that $(K_4,2P_1+\nobreak P_3)$-free graphs have bounded clique-width.
This was proven as follows.
First, Theorem~\ref{thm:chordal-classification} was applied to solve the case when the given $(K_4,2P_1+\nobreak P_3)$-free graph~$G$ is chordal.
If~$G$ is not chordal, then~$G$ must contain a cycle~$C$ of length at least~$4$.
As~$G$ is $(2P_1+\nobreak P_3)$-free, $C$ can have length at most~$7$.
This leads to a case distinction depending on the length of~$C$.
In each case, the set of vertices of~$G$ not on~$C$ is partitioned according to the intersection of their set of neighbours with~$C$.
This partition is then analysed and the facts from Section~\ref{s-operations} are used to modify~$G$ into a graph belonging to a class known to have bounded clique-width.
\end{example}

As mentioned earlier, Theorem~\ref{thm:classification2} does not cover five (non-equivalent) cases; see also~\cite{BDJP18}.

\begin{oproblem}\label{oprob:twographs}
Does the class of $(H_1,H_2)$-free graphs have bounded or unbounded clique-width when:
\begin{enumerate}[(i)]
\renewcommand{\theenumi}{(\roman{enumi})}
\renewcommand{\labelenumi}{(\roman{enumi})}
\item\label{oprob:twographs:3P_1}$H_1=K_3$ and $H_2 \in \{P_1+\nobreak S_{1,1,3},\allowbreak S_{1,2,3}\}$\\[-1em]
\item\label{oprob:twographs:2P_1+P_2}$H_1=\diamondgraph$ and $H_2 \in \{P_1+\nobreak P_2+\nobreak P_3,\allowbreak P_1+\nobreak P_5\}$
\item\label{oprob:twographs:P_1+P_4}$H_1=\gem$ and $H_2=P_2+\nobreak P_3$.
\end{enumerate}
\end{oproblem}

As discussed in~\cite{DLP17}, it would be interesting to find out if $H$-free bipartite graphs and $H$-free triangle-free graphs have the same classification with respect to the boundedness of their clique-width.
It follows from Theorems~\ref{thm:bipartite} and~\ref{thm:classification2} that the evidence so far is affirmative.
Nevertheless, Open Problem~\ref{oprob:twographs}.\ref{oprob:twographs:3P_1} shows that two remaining cases still need to be solved, namely 
$H=P_1+\nobreak S_{1,1,2}$ and $H=S_{1,2,3}$.

We will prove two partial results for the two cases in Open Problem~\ref{oprob:twographs}.\ref{oprob:twographs:3P_1}.
These results also illustrate some of the previously discussed techniques.
Namely, we show that the class of prime $(K_3,C_5,S_{1,2,3})$-free graphs has bounded clique-width (Proposition~\ref{p-noC5-partial-b}) and that the class of $(K_3,C_5,P_1+\nobreak S_{1,1,3})$-free graphs has bounded clique-width (Proposition~\ref{p-noC5-partial2}).
Combining Propositions~\ref{p-noC5-partial-b} and~\ref{p-noC5-partial2} with Fact~\ref{fact:prime} implies that in both cases of Open Problem~\ref{oprob:twographs}.\ref{oprob:twographs:3P_1} we need only consider prime graphs that contain~$C_5$ as an induced subgraph.

For Proposition~\ref{p-noC5-partial-b} we need the following lemma, which follows from~\cite[Lemma~8]{DDP17}.\footnote{\cite[Lemma~8]{DDP17} is about $(K_3,C_5,S_{1,2,3})$-free graphs without \defword{false twins}, that is, without pairs of non-adjacent vertices which have the same set of neighbours. Prime graphs have no false twins by definition.} Proposition~\ref{p-noC5-partial2} is a new result.

\begin{lemma}[\cite{DDP17}]\label{lem:noC5-partial}
If~$G$ is a prime $(K_3,C_5,S_{1,2,3})$-free graph, then~$G$ is either bipartite or a cycle.
\end{lemma}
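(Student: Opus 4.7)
The plan is to prove the contrapositive: a prime $(K_3,C_5,S_{1,2,3})$-free graph $G$ that is \emph{not} bipartite must be an odd cycle. Since $G$ is non-bipartite and $(K_3,C_5)$-free, its shortest odd cycle $C=c_0c_1\cdots c_{L-1}c_0$ has $L\ge 7$; any chord would produce a shorter odd cycle, so $C$ is induced. Primality on $\ge 3$ vertices forces connectedness (any component of size $\ge 2$ would be a non-trivial module), so assuming $G\neq C$ yields some $v\in V(G)\setminus V(C)$ with a neighbor on $C$. The key first step is to constrain $N_C(v)$ sharply. If $|N_C(v)|=1$, say $N_C(v)=\{c_0\}$, then $\{v,c_{-2},c_{-1},c_0,c_1,c_2,c_3\}$ induces an $S_{1,2,3}$ with center $c_0$ and legs $v$, $c_{-1}c_{-2}$, $c_1c_2c_3$---a contradiction. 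If $|N_C(v)|\ge 2$, the arc lengths $d_1,\dots,d_t$ between cyclically consecutive $C$-neighbors of $v$ satisfy $\sum d_s=L$, each $d_s\ge 2$ (by $K_3$-freeness), and each $d_s\ne 3$ (since the induced cycle $vc_{i_s}\cdots c_{i_{s+1}}$ of length $d_s+2$ would otherwise be an induced $C_5$); moreover, minimality of $L$ forces any odd $d_s$ to satisfy $d_s\ge L-2$. Since $L$ is odd some $d_s$ is odd, forcing $t=2$ and $\{d_1,d_2\}=\{L-2,2\}$; after rotation, $N_C(v)=\{c_0,c_2\}$.

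The second step uses primality through false twins. The cycle vertex $c_1$ also has $C$-neighborhood $\{c_0,c_2\}$ and is non-adjacent to $v$ (else $vc_0c_1$ is a triangle), so if $N_G(v)=N_G(c_1)$ then $\{v,c_1\}$ is a non-trivial false-twin module, contradicting primality. I therefore aim to show that no vertex $u$ distinguishes $v$ from $c_1$. For $u\in V(C)\setminus\{c_1\}$, adjacency to $v$ and $c_1$ is automatically symmetric, so $u\notin V(C)$. If $u\sim c_1$ and $u\not\sim v$, applying the first step to $u$ gives $N_C(u)\in\{\{c_1\},\{c_{-1},c_1\},\{c_1,c_3\}\}$; the first yields $S_{1,2,3}$ immediately, while in each of the other two the set $\{c_2,v,c_1,u,c_3,c_4,c_5\}$ or its $c_0\leftrightarrow c_2$ mirror induces $S_{1,2,3}$ centered at $c_2$ (or $c_0$) with legs $v$, $c_1u$, and $c_3c_4c_5$. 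Symmetrically, if $u\sim v$ and $u\not\sim c_1$, triangle-freeness forces $u\not\sim c_0,c_2$, so $N_C(u)$ avoids $\{c_0,c_1,c_2\}$; when $|N_C(u)|\le 1$, the set $\{c_0,c_{-1},c_{-2},c_{-3},c_1,v,u\}$ (for $N_C(u)=\emptyset$) or the $S_{1,2,3}$ from the single-neighbor case applied to $u$ delivers the contradiction; when $N_C(u)=\{c_a,c_{a+2}\}$ with $a\notin\{-2,-1,0,1,2\}$, either $\{u,c_a,c_{a+2},c_{a+3},v,c_0,c_1\}$ induces $S_{1,2,3}$ centered at $u$ with legs $c_a$, $c_{a+2}c_{a+3}$, $vc_0c_1$, or one of $c_a,c_{a+2}$ lies at $C$-distance $2$ from $c_0$ or $c_2$, producing an induced $C_5$ of the form $vuc_jc_?c_kv$ with $c_j\in\{c_a,c_{a+2}\}$ and $c_k\in\{c_0,c_2\}$.

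The principal technical obstacle lies in this last subcase: one must verify that for every $L\ge 7$ and every admissible $a$, at least one of the described $C_5$ or $S_{1,2,3}$ constructions yields an induced copy, using either the $c_0\leftrightarrow c_2$ mirror of the cycle or swapping the roles of $c_a$ and $c_{a+2}$ between the length-$1$ and length-$2$ legs to avoid stray cycle chords. The tight cases $L=7$ and $L=8$ in particular must be checked by hand---for instance, when $L=7$ and $a\in\{3,4\}$ the $S_{1,2,3}$ template fails but a direct induced $C_5$ such as $v\,u\,c_5\,c_6\,c_0\,v$ (for $a=3$) or $v\,u\,c_4\,c_3\,c_2\,v$ (for $a=4$) is available. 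Once this verification is complete, every candidate witness produces a forbidden induced subgraph, so $N_G(v)=N_G(c_1)$, the non-trivial false-twin module $\{v,c_1\}$ contradicts primality, the assumption $G\neq C$ fails, and $G$ is the induced odd cycle $C$, as claimed.
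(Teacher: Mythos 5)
The paper does not actually prove this lemma: it imports it from \cite{DDP17} (Lemma~8 there), with a footnote explaining only that the cited result is stated for graphs without false twins and that prime graphs have none. So there is no in-paper argument to compare against line by line; your proposal is a self-contained proof, and its engine --- exhibiting a false-twin pair $\{v,c_1\}$ to contradict primality --- is exactly the mechanism the footnote points to. The two pillars of your argument are sound. The attachment lemma is correct: for a vertex $v$ off the shortest odd cycle $C$ (which is induced of odd length $L\ge 7$), a single neighbour yields $S_{1,2,3}$ directly, and for two or more neighbours the parity count $\sum d_s=L$ with $d_s\ge 2$, $d_s\ne 3$, and odd $d_s\ge L-2$ forces $N_C(v)=\{c_i,c_{i+2}\}$. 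The module check is also exhaustive: vertices of $C$ cannot distinguish $v$ from $c_1$, and your subcases A and B cover all outside distinguishers. I verified that the final subcase does close as you assert: for $L\ge 9$ the two templates with legs $\{c_a;\,c_{a+2}c_{a+3};\,vc_0c_1\}$ and $\{c_{a+2};\,c_ac_{a-1};\,vc_2c_1\}$ jointly handle every admissible $a$ (the first fails only for $a\in\{-3,-4\}$, the second only for $a\in\{3,4\}$, and these sets are disjoint modulo $L$ once $L\ge 9$), while your two explicit $C_5$'s dispose of $a\in\{3,4\}$ when $L=7$.

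Three small points to tighten. First, in subcase A the set $\{c_2,v,c_1,u,c_3,c_4,c_5\}$ as written contains the chord $uc_3$ precisely when $N_C(u)=\{c_1,c_3\}$; the mirror centred at $c_0$ with leg $c_{-1}c_{-2}c_{-3}$ is \emph{required} there (and the unmirrored set is required for $N_C(u)=\{c_{-1},c_1\}$), so the pairing should be stated rather than left as ``or''. Second, $L$ is odd, so the ``tight case $L=8$'' cannot occur. Third, the general-$L$ verification of the last subcase is asserted rather than written out; since it is a finite check over residues $a\bmod L$ and it does succeed, this is a presentational gap rather than a mathematical one, but it is the one place a reader must redo work to be convinced.
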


\begin{sloppypar}
\begin{prop}\label{p-noC5-partial-b}
The class of prime $(K_3,C_5,S_{1,2,3})$-free graphs has bounded clique-width.
\end{prop}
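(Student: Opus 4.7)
The plan is to reduce the statement to two cases supplied by Lemma~\ref{lem:noC5-partial}: every prime $(K_3,C_5,S_{1,2,3})$-free graph is either bipartite or a cycle. Once we have this dichotomy, each of the two subclasses is already known to have bounded clique-width from results quoted earlier in the survey, so no further structural analysis is required.

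First, I would invoke Lemma~\ref{lem:noC5-partial} on an arbitrary prime $(K_3,C_5,S_{1,2,3})$-free graph $G$ to conclude that $G$ is bipartite or isomorphic to some cycle $C_n$. In the cycle case, since $G$ must avoid $K_3=C_3$ and $C_5$, the only possibilities are $n=4$ or $n\geq 6$; by Example~\ref{e-p4} we have $\cw(C_n)\leq 4$ for all $n\geq 3$, so the clique-width is uniformly bounded on this subcase. In the bipartite case, $G$ is an $S_{1,2,3}$-free bipartite graph, and Theorem~\ref{thm:bipartite} (case (v)) directly yields that the class of $S_{1,2,3}$-free bipartite graphs has bounded clique-width. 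Combining the two bounds gives a uniform bound on $\cw(G)$ over all prime $(K_3,C_5,S_{1,2,3})$-free graphs.

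There is essentially no obstacle in this argument once Lemma~\ref{lem:noC5-partial} is in hand; the real structural work has been absorbed into the lemma (which itself uses the absence of false twins in prime graphs, together with $C_5$-freeness, to rule out non-bipartite non-cycle configurations). The proof is simply a two-line case analysis invoking Example~\ref{e-p4} and Theorem~\ref{thm:bipartite}.
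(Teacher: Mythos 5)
Your proposal is correct and follows essentially the same route as the paper: apply Lemma~\ref{lem:noC5-partial} to split into the bipartite case (handled by Theorem~\ref{thm:bipartite}) and the cycle case (the paper cites Proposition~\ref{p-atmost2} rather than Example~\ref{e-p4} directly, but this is the same bound). No gaps.
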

\end{sloppypar}
\begin{proof}
If a $(K_3,C_5,S_{1,2,3})$-free graph is bipartite, then it is an $S_{1,2,3}$-free bipartite graph and we are done by Theorem~\ref{thm:bipartite}.
If it is a cycle then it has maximum degree~$2$, and we are done by Proposition~\ref{p-atmost2}.
By Lemma~\ref{lem:noC5-partial} this completes the proof.
\end{proof}

\begin{prop}\label{p-noC5-partial2}
The class of $(K_3,C_5,P_1+\nobreak S_{1,1,3})$-free graphs has bounded clique-width.
\end{prop}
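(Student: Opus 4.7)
The plan is to apply Fact~\ref{fact:prime} to reduce to prime graphs and then split the analysis according to whether $G$ is bipartite. Let $G$ be a prime $(K_3, C_5, P_1+S_{1,1,3})$-free graph. If $G$ is bipartite, then $G$ is $(P_1+S_{1,1,3})$-free bipartite, and Theorem~\ref{thm:bipartite}(iv) immediately gives bounded clique-width.

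Suppose $G$ is not bipartite. Since $G$ is $(K_3,C_5)$-free, it contains an induced odd cycle of length at least~$7$; let $C=v_1v_2\cdots v_{2k+1}v_1$ be a shortest such cycle. For every $u\in V(G)\setminus V(C)$, triangle-freeness forbids two consecutive vertices of $C$ in $N(u)\cap V(C)$, while $C_5$-freeness forbids two vertices of $C$ at shortest cyclic distance~$3$; hence each $N(u)\cap V(C)$ consists only of vertices pairwise at cyclic distance~$2$, of which there are a constant number of types. I then aim to prove that if $|V(C)|\geq 9$, then any vertex $u$ with $N(u)\cap V(C)\neq\emptyset$ produces an induced $P_1+S_{1,1,3}$: pick a neighbor $v_i$ of $u$ on $C$ and construct an induced $S_{1,1,3}$ centered at $v_i$ whose length-$3$ leg lies along a long arc of $C$ that avoids any further $C$-neighbors of $u$; then exhibit an isolated companion vertex on the opposite arc, which is long enough to contain a vertex non-adjacent to the chosen six and non-adjacent to $u$. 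Since $G$ is prime and hence connected, this forces $V(G)=V(C)$ whenever $|V(C)|\geq 9$, so $G$ is an odd cycle and has clique-width at most~$4$.

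The remaining case is $|V(C)|=7$. Here I enumerate the admissible neighborhoods $N(u)\cap V(C)$ directly (only a constant number of types survive the forbidden-subgraph constraints). Using $(P_1+S_{1,1,3})$-freeness I aim to show that $V(G)\setminus V(C)$ partitions into a bounded number of modules, each of which is either of bounded size or induces a $(P_1+S_{1,1,3})$-free bipartite graph, and so has bounded clique-width by Theorem~\ref{thm:bipartite}(iv). Combined with Facts~\ref{fact:del-vert}--\ref{fact:bip}, which allow deletion of the constant-size cycle $C$ and a constant number of subgraph/bipartite complementations at bounded cost to clique-width, this yields bounded clique-width for $G$.

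The main obstacle is the case $|V(C)|=7$. The $|V(C)|\geq 9$ argument is essentially mechanical because both arcs of $C$ are long enough to furnish the $S_{1,1,3}$ leg and the isolated companion vertex. When $|V(C)|=7$ the arc-based construction breaks down and several attachment patterns survive the $K_3$-, $C_5$- and $P_1+S_{1,1,3}$-free constraints simultaneously; the technical heart is to organize the vertices outside $V(C)$, grouped by their attachment type, into a module structure that is bipartite or bounded in size so that the preceding tools can be applied.
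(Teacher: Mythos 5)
Your overall strategy is the same as the paper's: dispose of the bipartite case via Theorem~\ref{thm:bipartite}, take a shortest induced odd cycle~$C$ (of length at least~$7$ by $(K_3,C_5)$-freeness), observe that any vertex outside~$C$ with at least two neighbours on~$C$ has exactly two, at cyclic distance~$2$, and use a $P_1+\nobreak S_{1,1,3}$ to kill the case $|V(C)|\geq 9$ (the paper exhibits $G[v_7,v_2,v,v_1,v_3,v_4,v_5]$ for a vertex~$v$ attached at~$v_2$, which is exactly your ``long leg plus isolated companion on the opposite arc'' construction). Those parts are correct, and your use of Fact~\ref{fact:prime} in place of the paper's reduction to connected components is legitimate, if slightly stronger than needed.

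However, the case $|V(C)|=7$ is not an afterthought to be ``organized'' --- it is where essentially all of the work lies, and you have explicitly left it as a plan rather than a proof. Concretely, what must be established (and what the paper proves via five claims) is the following. Partition $V(G)\setminus V(C)$ into the sets~$V_i$ (vertices adjacent to exactly $v_{i-1}$ and~$v_{i+1}$), $W_i$ (vertices whose unique neighbour on~$C$ is~$v_i$) and~$U$ (vertices with no neighbour on~$C$). One then shows: each~$V_i$ and~$W_i$ is independent (triangle-freeness); every vertex of every $V_i\cup W_i$ is complete to~$U$ and $|U|\leq 1$; $|W_i|\leq 1$ for each~$i$; if $v_iv_j\in E(C)$ and at least one of $V_i,V_j$ has size at least~$2$ then~$V_i$ is complete to~$V_j$; and~$V_i$ is anticomplete to~$V_j$ whenever $v_iv_j\notin E(C)$ (using both $K_3$-freeness and the minimality of~$C$). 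Each of these requires exhibiting a specific induced $P_1+\nobreak S_{1,1,3}$, and the ``large versus small'' distinction in the fourth claim matters: completeness between $V_i$ and $V_{i+1}$ is only forced when one of them is large, which is why the paper first deletes the at most $15$ vertices lying in small sets (Fact~\ref{fact:del-vert}) before concluding that each remaining $V_i'\cup\{v_i\}$ is an independent module and finishing with seven bipartite complementations (Fact~\ref{fact:bip}). Your sketch gestures at a ``module structure that is bipartite or bounded in size'' but does not prove any of these adjacency claims, does not address the vertices in~$U$ at all, and does not explain why the quotient structure on the modules is bounded. As it stands the proposal is an accurate outline of the right proof with its central technical content missing.
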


\begin{proof}
Let~$G$ be a $(K_3,C_5,P_1+\nobreak S_{1,1,3})$-free graph.
Since the clique-width of a graph equals the maximum of the clique-width of its components, we may assume that~$G$ is connected.
We may assume that~$G$ is not bipartite, otherwise it is a $(P_1+\nobreak S_{1,1,3})$-free bipartite graph, in which case it has bounded clique-width by Theorem~\ref{thm:bipartite}.
As~$G$ is $(C_3,C_5)$-free (since $C_3=K_3$), it contains an induced odd cycle~$C$ on~$k$ vertices, say $v_1,v_2, \ldots, v_k$ in that order, where $k \geq 7$.
We may assume without loss of generality that~$C$ is an odd cycle of minimum length in~$G$.

If $V(G)=V(C)$, then~$G$ has maximum degree~$2$ and we can use Proposition~\ref{p-atmost2}.
From now on we assume that~$G$ contains at least one vertex not on~$C$.
Suppose that there is a vertex~$v$ that is adjacent to at least two vertices of~$C$.
As~$C$ has minimal length and~$G$ is $(C_3,C_5)$-free, $v$ must be adjacent to precisely two vertices of~$C$, which must be at distance~$2$ from each other on~$C$.

For $i\in \{1,\ldots,k\}$, let~$V_i$ be the set of vertices outside~$C$ that are adjacent to~$v_{i-1}$ and~$v_{i+1}$ (subscripts on vertices and vertex sets are interpreted modulo~$k$ throughout the proof), and let~$W_i$ be the set of vertices whose unique neighbour in~$C$ is~$v_i$.
Finally, let~$U$ be the set of vertices that have no neighbour in~$C$.
Thus every vertex in~$G$ is in~$C$, $U$ or in some set~$V_i$ or~$W_i$ for some $i \in \{1,\ldots,k\}$.
Moreover, as~$G$ is connected, there must be at least one set of the form~$V_i$ or~$W_i$ that is non-empty.
We may assume without loss of generality that there is a vertex $v \in V_1 \cup W_2$.
If $k \geq 9$ then $G[v_7,v_2,v,v_1,v_3,v_4,v_5]$ is a $P_1+\nobreak S_{1,1,3}$, a contradiction.
We conclude that $k=7$.

We now prove five claims, the first of which follows immediately from the fact that~$G$ is $K_3$-free.

\clmnonewline{\label{ViWi-indep}For $i \in \{1,\ldots,7\}$, $V_i$ and~$W_i$ are independent sets.}

\clm{\label{clm:U-small}For every $i \in \{1,\ldots,7\}$, $V_i$ and~$W_i$ are complete to~$U$, and $|U| \leq 1$.}
Suppose, for contradiction, that a vertex $x \in V_1 \cup W_2$ is non-adjacent to~$y \in U$.
Then $G[y,v_2,x,v_1,v_3,v_4,v_5]$ is a $P_1+\nobreak S_{1,1,3}$, a contradiction.
By symmetry, this proves the first part of the claim.
Now suppose that~$U$ contains at least two vertices~$y$ and~$y'$.
Then $v \in V_1 \cup W_2$ is adjacent to both~$y$ and~$y'$.
Since~$G$ is $K_3$-free, it follows that~$y$ and~$y'$ are not adjacent.
Then $G[v_6,v,y,y',v_2,v_3,v_4]$ is a $P_1+\nobreak S_{1,1,3}$, a contradiction.
This proves the second part of the claim.

\clm{\label{clm:Wi-small}For $i \in \{1,\ldots,7\}$, $|W_i| \leq 1$.}
Suppose that $x,y \in W_1$.
By Claim~\ref{ViWi-indep}, we find that~$x$ is non-adjacent to~$y$.
Then $G[v_6,v_1,x,y,v_2,v_3,v_4]$ is a $P_1+\nobreak S_{1,1,3}$, a contradiction.
The claim follows by symmetry.

\medskip
\noindent
A set of vertices is \defword{large} if it contains at least two vertices and \defword{small} otherwise.

\begin{sloppypar}
\clm{\label{clm:Vi-Vi+2-comp}For $i,j \in \{1,\ldots,7\}$, if~$v_i$ is adjacent to~$v_j$ and at least one of~$V_i$ and~$V_j$ is large, then~$V_i$ is complete to~$V_j$.}
Suppose that there are vertices $x,x' \in V_2$ and $y \in V_3$ such that~$y$ is non-adjacent to~$x'$.
By Claim~\ref{ViWi-indep}, $x$ is non-adjacent to~$x'$.
Then $G[x',y,x,v_2,v_4,v_5,v_6]$ or $G[y,v_1,x,x',v_7,v_6,v_5]$ is a $P_1+\nobreak S_{1,1,3}$ if~$y$ is adjacent or non-adjacent to~$x$, respectively, a contradiction.
The claim follows by symmetry.
\end{sloppypar}

\clm{\label{clm:Vi-Vi+k-anti}For distinct $i,j \in \{1,\ldots,7\}$, if a vertex of~$V_i$ has a neighbour in~$V_j$, then~$v_i$ is adjacent to~$v_j$.}
Since~$G$ is $K_3$-free, for every~$i$ the set~$V_i$ is anti-complete to the set~$V_{i+2}$.
Moreover, if~$i$ and~$j$ are such that the vertices~$v_i$ and~$v_j$ are at distance more than~$2$ on the cycle, then~$V_i$ and~$V_j$ must be anti-complete, as otherwise there would be a smaller odd cycle than~$C$ in~$G$, contradicting the minimality of~$k$.
This proves Claim~\ref{clm:Vi-Vi+k-anti}.

\medskip
\noindent
Let~$G'$ be the graph obtained from~$G$ by deleting all vertices in small sets~$V_i$, $W_i$ or~$U$ (note that in doing this we delete at most $7+\nobreak 7+\nobreak 1=15$ vertices).
By Fact~\ref{fact:del-vert}, it is sufficient to show that~$G'$ has bounded clique-width.
Let~$V_i'$ be~$V_i$ if~$V_i$ is large and~$\emptyset$ otherwise.
By Claims~\ref{clm:U-small} and~\ref{clm:Wi-small}, $G'$ only contains vertices in~$C$ and the sets~$V_i'$.
By Claim~\ref{ViWi-indep}, each set~$V_i'$ is independent.
Furthermore, by Claim~\ref{clm:Vi-Vi+2-comp}, if~$v_i$ and~$v_j$ are adjacent vertices of~$C$ then~$V_i'$ is complete to~$V_j'$.
By Claim~\ref{clm:Vi-Vi+k-anti}, for all other choices of~$i$ and~$j$, the set~$V_i'$ is anti-complete to~$V_j'$.
This implies that for every $i \in \{1,\ldots,7\}$, the set $V_i' \cup \{v_i\}$ is a module that is an independent set.
We apply seven bipartite complementations, namely between~$V_i$ and~$V_{i+1}$ for $i\in\{1,\ldots,7\}$.
This yields an edgeless graph, which has clique-width~$1$.
By Fact~\ref{fact:bip}, it follows that~$G'$ has bounded clique-width.
Hence~$G$ has bounded clique-width.
This completes the proof.
\end{proof}

\subsection{Forbidding Small Induced Subgraphs}\label{s-four}

Theorem~\ref{thm:single} states that a class of $H$-free graphs has bounded clique-width if and only if~$H$ is an induced subgraph of~$P_4$.
As discussed, one way to obtain more graph classes of bounded clique-width is to extend~$P_4$ by one extra vertex, but then we need to forbid at least one other graph as an induced subgraph besides this \mbox{$1$-vertex} extension of~$P_4$.
In this context, Brandst\"adt and Mosca~\cite{BM03} classified the boundedness of clique-width for ${\cal H}$-free graphs, where~${\cal H}$ is a subset of the set of $P_4$-sparse graphs with five vertices.
Brandst{\"a}dt, Ho\`ang and Le~\cite{BHL03} proved that $(\bull,S_{1,1,2},\overline{S_{1,1,2}}$)-free graphs have bounded clique-width.
Brandst\"adt, Dragan, Le and Mosca proved the following more general dichotomy containing the results of~\cite{BHL03,BM03}; see also~\figurename~\ref{fig:P4-ext}.

\begin{theorem}[\cite{BDLM05}]\label{thm:1-vert-ext-of-P4}
Let~${\cal H}$ be a set of $1$-vertex extensions of~$P_4$.
The class of ${\cal H}$-free graphs has bounded clique-width if and only if~${\cal H}$ is not a subset of any of the following sets:
\begin{enumerate}[(i)]
\item $\{P_1+\nobreak P_4, P_5, S_{1,1,2}, \overline{\banner},C_5,\overline{S_{1,1,2}}\}$, \\[-1.9em]

\item $\{\overline{P_1+\nobreak P_4}, \overline{P_5}, S_{1,1,2}, \banner,C_5,\overline{S_{1,1,2}}\}$, \\[-1.9em]

\item $\{P_1+\nobreak P_4, P_5, S_{1,1,2}, \overline{\banner}, \banner, C_5,\bull\}$, \\[-1.9em]

\item $\{\overline{P_1+\nobreak P_4}, \overline{P_5}, \overline{S_{1,1,2}}, \overline{\banner}, \banner, C_5,\bull\}$ or \\[-1.9em]

\item $\{P_5, \overline{\banner}, \banner, C_5, \overline{P_5}\}$.
\end{enumerate}
\end{theorem}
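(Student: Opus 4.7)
The plan is to exploit complementation symmetry first: the five listed sets pair up as (i)~$\leftrightarrow$~(ii) and (iii)~$\leftrightarrow$~(iv) under taking the complement of every element, while (v) is closed under complementation. Since the ten $1$-vertex extensions of $P_4$ come in four complementary pairs together with two self-complementary graphs ($C_5$ and the $\bull$), Fact~\ref{fact:comp} implies that the ``only if'' direction reduces to producing hard examples for sets~(i), (iii), and~(v), and the ``if'' direction reduces to checking minimal hitting sets for $\{$(i)$,\ldots,$(v)$\}$ up to simultaneous complementation. Note also that $C_5$ lies in every one of the five listed sets, so forbidding $C_5$ never helps to escape a listed set, and we may therefore assume $C_5\notin{\cal H}$ when identifying minimal hitting sets.

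For the unboundedness direction, for each of sets~(i), (iii), and~(v), I would exhibit a hereditary class of unbounded clique-width contained in the corresponding ${\cal H}$-free graphs. Set~(v) is the easiest: a split graph is $(C_4,C_5,2P_2)$-free by~\cite{FH77}, and one checks directly that each of $P_5$, $\overline{P_5}$, $\banner$, and $\overline{\banner}$ contains either a $2P_2$ or a $C_4$, so none of them is a split graph. Hence every split graph is $(P_5,\overline{P_5},\banner,\overline{\banner},C_5)$-free, and by Theorem~\ref{t-super} split graphs are superfactorial and therefore of unbounded clique-width. For sets~(i) and~(iii) split graphs do not suffice, because $S_{1,1,2}$ and (for~(iii)) the $\bull$ are themselves split; instead I would construct tailor-made families using Theorem~\ref{thm:generalunbounded} on partitioned vertex sets whose density pattern is chosen so that all additional forbidden $5$-vertex graphs are avoided, or alternatively use Corollary~\ref{cor:walls} with a mild modification of the subdivided walls (or their complements, via Fact~\ref{fact:comp}) in order to rule out the triangle-containing or $C_4$-containing members of the set.

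For the boundedness direction, the hypothesis that~${\cal H}$ is not a subset of any listed set forces~${\cal H}$ to hit each of the five sets. Up to the complementation symmetry only a small number of minimal~${\cal H}$ have to be considered; the smallest are $\{P_1+P_4,\overline{P_1+P_4}\}$ and $\{P_1+P_4,\overline{P_5}\}$, together with a handful of minimal sets of size three or four. For each minimal~${\cal H}$ the strategy is the BCW Method: by Fact~\ref{fact:prime} it suffices to bound the clique-width of prime ${\cal H}$-free graphs, and the forbidden $1$-vertex extensions of $P_4$ severely restrict how any induced $P_4$ in such a prime graph can be extended by a fifth vertex. This forces a structural decomposition of the prime graph (typically modular or via homogeneous pairs of sets) whose pieces can be reduced, by a bounded number of vertex deletions and (bipartite) complementations (Facts~\ref{fact:del-vert}--\ref{fact:bip}), to graphs belonging to a class of already-known bounded clique-width---such as $P_4$-free graphs, distance-hereditary graphs, $P_4$-sparse graphs, or the ${\cal H}$-free bipartite or chordal classes covered by Theorems~\ref{thm:bipartite} and~\ref{thm:chordal-classification}.

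The hardest step, and the technical core of the proof, will be the boundedness analysis for those minimal hitting sets whose forbidden graphs include $\banner$, $\overline{\banner}$, $\bull$, or $\gem$: here the extensions of a $P_4$ by a fifth vertex forming a triangle or a $C_4$ are only partially excluded, and so the structure theorem for prime ${\cal H}$-free graphs is delicate. In these cases one must classify prime ${\cal H}$-free graphs by their $P_4$-connectivity structure and show that the resulting homogeneous ``quotient'' lies in a $P_4$-sparse-like bounded-clique-width class, with only a bounded-size ``correction'' needed to recover the original graph.
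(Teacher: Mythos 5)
First, note that the survey does not prove Theorem~\ref{thm:1-vert-ext-of-P4}: it is quoted from Brandst\"adt, Dragan, Le and Mosca~\cite{BDLM05}, so there is no in-paper proof to compare against. Judged on its own, your proposal gets the combinatorial skeleton right: the five sets do pair up under complementation as (i)$\leftrightarrow$(ii), (iii)$\leftrightarrow$(iv), with (v) self-complementary; $C_5$ lies in all five sets; the minimal two-element hitting sets are, up to complementation, $\{P_1+P_4,\gem\}$ and $\{P_1+P_4,\overline{P_5}\}$ (both already covered by Theorem~\ref{thm:classification2}); and the case of set~(v) is correctly dispatched, since $P_5$, $\overline{P_5}$, $\banner$ and $\overline{\banner}$ each contain an induced $2P_2$ or $C_4$, so split graphs are $S_{(v)}$-free and have unbounded clique-width.

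The proposal is nevertheless a plan rather than a proof, and the two missing pieces are exactly the substance of the theorem. For the ``only if'' direction you still owe concrete unbounded witnesses inside the classes defined by sets (i) and (iii). This cannot be waved through: every off-the-shelf unbounded class appearing in this survey fails here (split graphs contain induced $P_1+P_4$, chairs and bulls; unit interval, bipartite permutation and wall-derived classes contain induced $P_5$ and $S_{1,1,2}$), so one genuinely has to design a family in which every one-vertex extension of every induced $P_4$ is restricted to $\{\gem,\house,\banner,\bull\}$ (for (i)) or $\{\gem,\house,\overline{S_{1,1,2}}\}$ (for (iii)) and then verify the hypotheses of Theorem~\ref{thm:generalunbounded}; saying ``I would construct tailor-made families or modify the walls'' leaves the hardest verification undone. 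For the ``if'' direction, the reduction to minimal hitting sets is sound, but the boundedness proofs for the hitting sets of size three and four --- which you yourself flag as ``the technical core'' --- are entirely deferred: no structure theorem for prime ${\cal H}$-free graphs is stated or proved, and no explicit reduction to a class of known bounded clique-width is carried out for any of these cases. Until those two components are supplied, the argument establishes only the easy fragments of the dichotomy.
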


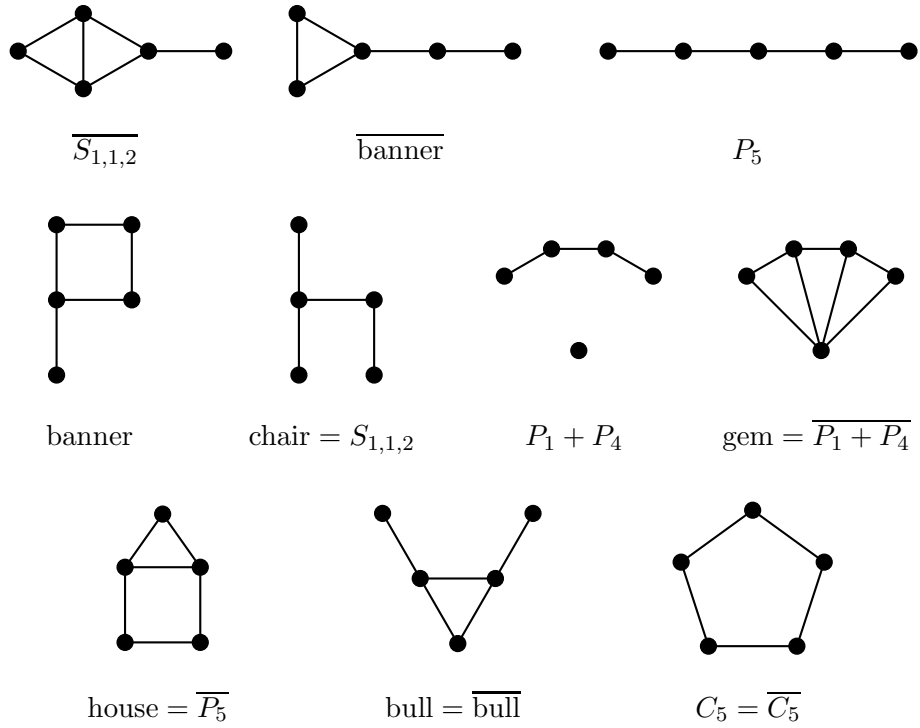
\begin{figure}
\centering
\begin{tabular}{ccc}
\begin{minipage}{0.2\textwidth}
\centering
\scalebox{1}{
{\begin{tikzpicture}[scale=1]
\GraphInit[vstyle=Simple]
\SetVertexSimple[MinSize=6pt]
\Vertex[x=0,y=0.5]{a}
\Vertex[x=0,y=-0.5]{b}
\Vertex[x=-0.86602540378,y=0]{c}
\Vertex[x=0.86602540378,y=0]{d}
\Vertex[x=1.86602540378,y=0]{e}
\Edges(a,b,c,a,d,b)
\Edges(d,e)
\end{tikzpicture}}}
\end{minipage}
&
\begin{minipage}{0.3\textwidth}
\centering
\scalebox{1}{
{\begin{tikzpicture}[scale=1]
\GraphInit[vstyle=Simple]
\SetVertexSimple[MinSize=6pt]
\Vertex[x=0,y=0]{a0}
\Vertex[a=0,d=1]{a1}
\Vertex[a=0,d=2]{a2}
\Vertex[a=-150,d=1]{b}
\Vertex[a=150,d=1]{c}
\Edges(a2,a1,a0,b,c,a0)
\end{tikzpicture}}}
\end{minipage}
&
\begin{minipage}{0.3\textwidth}
\centering
\scalebox{1}{
{\begin{tikzpicture}[scale=1]
\GraphInit[vstyle=Simple]
\SetVertexSimple[MinSize=6pt]
\Vertex[x=0,y=0]{a}
\Vertex[x=1,y=0]{b}
\Vertex[x=2,y=0]{c}
\Vertex[x=3,y=0]{d}
\Vertex[x=4,y=0]{e}
\Edges(a,b,c,d,e)
\end{tikzpicture}}}
\end{minipage}
\\\\
$\overline{S_{1,1,2}}$ & $\overline{\banner}$ & $P_5$\\
\end{tabular}

\vspace*{1.5em}
\begin{tabular}{cccc}
\begin{minipage}{0.2\textwidth}
\centering
\scalebox{1}{
{\begin{tikzpicture}[scale=1]
\GraphInit[vstyle=Simple]
\SetVertexSimple[MinSize=6pt]
\Vertex[x=0,y=0]{x}
\Vertex[x=0,y=1]{y}
\Vertex[x=0,y=2]{z}
\Vertex[x=1,y=1]{a}
\Vertex[x=1,y=2]{b}
\Edges(x,y,z)
\Edges(y,a,b,z)
\end{tikzpicture}}}
\end{minipage}
&
\begin{minipage}{0.2\textwidth}
\centering
\scalebox{1}{
{\begin{tikzpicture}[scale=1]
\GraphInit[vstyle=Simple]
\SetVertexSimple[MinSize=6pt]
\Vertex[x=0,y=0]{x}
\Vertex[x=0,y=1]{y}
\Vertex[x=0,y=2]{z}
\Vertex[x=1,y=0]{a}
\Vertex[x=1,y=1]{b}
\Edges(x,y,z)
\Edges(a,b,y)
\end{tikzpicture}}}
\end{minipage}
&
\begin{minipage}{0.2\textwidth}
\centering
\scalebox{1}{
{\begin{tikzpicture}[scale=1.4,rotate=90]
\GraphInit[vstyle=Simple]
\SetVertexSimple[MinSize=6pt]
\Vertex[x=0,y=0]{x}
\Vertex[a=-45,d=1]{a}
\Vertex[a=-15,d=1]{b}
\Vertex[a=15,d=1]{c}
\Vertex[a=45,d=1]{d}
\Edges(a,b,c,d)
\end{tikzpicture}}}
\end{minipage}
&
\begin{minipage}{0.2\textwidth}
\centering
\scalebox{1}{
{\begin{tikzpicture}[scale=1.4,rotate=90]
\GraphInit[vstyle=Simple]
\SetVertexSimple[MinSize=6pt]
\Vertex[x=0,y=0]{x}
\Vertex[a=-45,d=1]{a}
\Vertex[a=-15,d=1]{b}
\Vertex[a=15,d=1]{c}
\Vertex[a=45,d=1]{d}
\Edges(a,x,b)
\Edges(c,x,d)
\Edges(a,b,c,d)
\end{tikzpicture}}}
\end{minipage}\\\\
$\banner$ & $\chair=S_{1,1,2}$ & $P_1+\nobreak P_4$ & $\gem=\overline{P_1+P_4}$
\end{tabular}

\vspace*{1.5em}
\begin{tabular}{ccc}
\begin{minipage}{0.25\textwidth}
\centering
\scalebox{1}{
{\begin{tikzpicture}[scale=1]
\GraphInit[vstyle=Simple]
\SetVertexSimple[MinSize=6pt]
\Vertex[x=0.5,y=0.70710678118]{x}
\Vertex[x=0,y=0]{a}
\Vertex[x=1,y=0]{b}
\Vertex[x=0,y=-1]{c}
\Vertex[x=1,y=-1]{d}
\Edges(a,x,b)
\Edges(a,b,d,c,a)
\end{tikzpicture}}}
\end{minipage}
&
\begin{minipage}{0.25\textwidth}
\centering
\scalebox{1}{
{\begin{tikzpicture}[scale=1,rotate=90]
\GraphInit[vstyle=Simple]
\SetVertexSimple[MinSize=6pt]
\Vertex[x=0,y=0]{a}
\Vertex[a=-30,d=1]{b}
\Vertex[a=-30,d=2]{c}
\Vertex[a=30,d=1]{d}
\Vertex[a=30,d=2]{e}
\Edges(e,d,a,b,c)
\Edges(b,d)
\end{tikzpicture}}}
\end{minipage}
&
\begin{minipage}{0.25\textwidth}
\centering
\scalebox{1}{
{\begin{tikzpicture}[scale=1,rotate=90]
\GraphInit[vstyle=Simple]
\SetVertexSimple[MinSize=6pt]
\Vertices{circle}{a,b,c,d,e}
\Edges(a,b,c,d,e,a)
\end{tikzpicture}}}
\end{minipage}
\\\\
$\house=\overline{P_5}$ & $\bull=\overline{\bull}$ & $C_5=\overline{C_5}$
\end{tabular}
\caption{\label{fig:P4-ext}The $1$-vertex extensions of~$P_4$.}
\end{figure}

\begin{figure}
\begin{center}
\begin{tabular}{ccccc}
\begin{minipage}{0.16\textwidth}
\centering
\scalebox{1.0}{
{\begin{tikzpicture}[scale=1,rotate=45]
\GraphInit[vstyle=Simple]
\SetVertexSimple[MinSize=6pt]
\Vertices{circle}{a,b,c,d}
\Edges(a,b,c,d,a,c)
\Edges(d,b)
\end{tikzpicture}}}
\end{minipage}
&
\begin{minipage}{0.16\textwidth}
\centering
\scalebox{1.0}{
{\begin{tikzpicture}[scale=1,rotate=45]
\GraphInit[vstyle=Simple]
\SetVertexSimple[MinSize=6pt]
\Vertices{circle}{a,b,c,d}
\Edges(a,b,c,d,a,c)
\end{tikzpicture}}}
\end{minipage}
&
\begin{minipage}{0.16\textwidth}
\centering
\scalebox{1.0}{
{\begin{tikzpicture}[scale=1,rotate=45]
\GraphInit[vstyle=Simple]
\SetVertexSimple[MinSize=6pt]
\Vertices{circle}{a,b,c,d}
\Edges(a,b,c,d,a)
\end{tikzpicture}}}
\end{minipage}
&
\begin{minipage}{0.16\textwidth}
\centering
\scalebox{1.0}{
{\begin{tikzpicture}[scale=1,rotate=45]
\GraphInit[vstyle=Simple]
\SetVertexSimple[MinSize=6pt]
\Vertices{circle}{a,b,c,d}
\Edges(a,b,c,a)
\Edges(d,a)
\end{tikzpicture}}}
\end{minipage}
&
\begin{minipage}{0.16\textwidth}
\centering
\scalebox{1.0}{
{\begin{tikzpicture}[scale=1,rotate=45]
\GraphInit[vstyle=Simple]
\SetVertexSimple[MinSize=6pt]
\Vertices{circle}{a,b,c,d}
\Edges(c,b,d)
\Edges(a,b)
\end{tikzpicture}}}
\end{minipage}\\
\\
$K_4=$ & $\diamondgraph=$ & $C_4=$ & $\paw=$ & $\claw=K_{1,3}=$\\
$\overline{4P_1}$ & $\overline{2P_1+P_2}$ & $\overline{2P_2}$ & $\overline{P_1+P_3}$ & $\overline{K_3+P_1}$\\
\\
\begin{minipage}{0.16\textwidth}
\centering
\scalebox{1.0}{
{\begin{tikzpicture}[scale=1,rotate=45]
\GraphInit[vstyle=Simple]
\SetVertexSimple[MinSize=6pt]
\Vertices{circle}{a,b,c,d}
\end{tikzpicture}}}
\end{minipage}
&
\begin{minipage}{0.16\textwidth}
\centering
\scalebox{1.0}{
{\begin{tikzpicture}[scale=1,rotate=45]
\GraphInit[vstyle=Simple]
\SetVertexSimple[MinSize=6pt]
\Vertices{circle}{a,b,c,d}
\Edges(c,d)
\end{tikzpicture}}}
\end{minipage}
&
\begin{minipage}{0.16\textwidth}
\centering
\scalebox{1.0}{
{\begin{tikzpicture}[scale=1,rotate=45]
\GraphInit[vstyle=Simple]
\SetVertexSimple[MinSize=6pt]
\Vertices{circle}{a,b,c,d}
\Edges(a,b)
\Edges(c,d)
\end{tikzpicture}}}
\end{minipage}
&
\begin{minipage}{0.16\textwidth}
\centering
\scalebox{1.0}{
{\begin{tikzpicture}[scale=1,rotate=45]
\GraphInit[vstyle=Simple]
\SetVertexSimple[MinSize=6pt]
\Vertices{circle}{a,b,c,d}
\Edges(a,b,c)
\end{tikzpicture}}}
\end{minipage}
&
\begin{minipage}{0.16\textwidth}
\centering
\scalebox{1.0}{
{\begin{tikzpicture}[scale=1,rotate=45]
\GraphInit[vstyle=Simple]
\SetVertexSimple[MinSize=6pt]
\Vertices{circle}{a,b,c,d}
\Edges(a,b,c,a)
\end{tikzpicture}}}
\end{minipage}\\
\\
$4P_1=$ & $2P_1+\nobreak P_2=$ & $2P_2=$ & $P_1+\nobreak P_3=$ & $K_3+\nobreak P_1=$\\
$\overline{K_4}$ & $\overline{\diamondgraph}$ & $\overline{C_4}$ & $\overline{\paw}$ & $\overline{\claw}$\\
\\
\end{tabular}
\scalebox{1.0}{
{\begin{tikzpicture}[scale=1,rotate=90]
\GraphInit[vstyle=Simple]
\SetVertexSimple[MinSize=6pt]
\Vertex[x=0,y=0]{a}
\Vertex[x=0,y=1.41421356237]{b}
\Vertex[x=0,y=1.41421356237*2]{c}
\Vertex[x=0,y=1.41421356237*3]{d}
\Edges(a,b,c,d)
\end{tikzpicture}}}
\\
$P_4=\overline{P_4}$
\end{center}
\caption{\label{fig:4-vert}The graphs on four vertices.}
\end{figure}
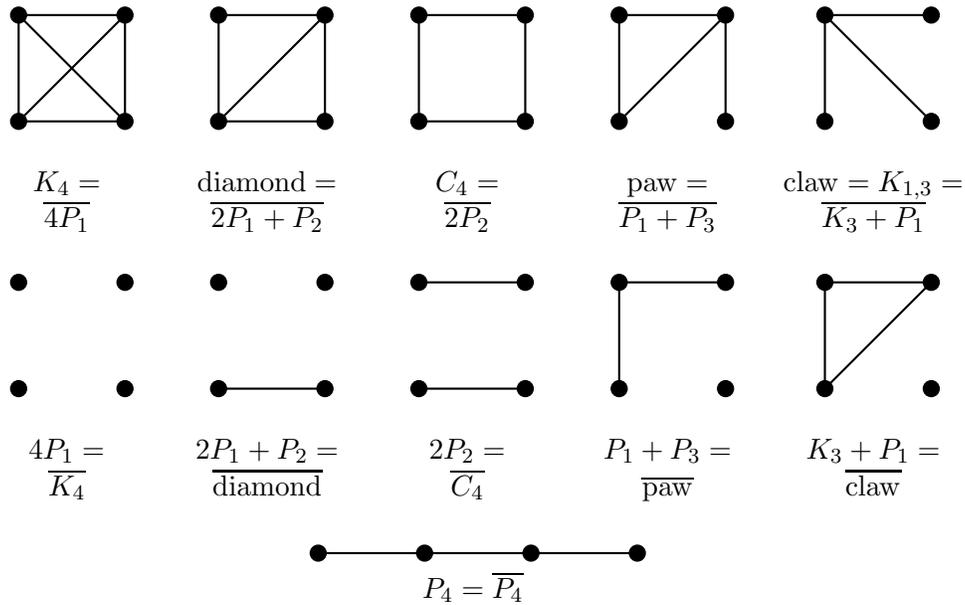

Brandst{\"a}dt, Engelfriet, Le and Lozin~\cite{BELL06} considered all sets~${\cal H}$ of graphs on at most four vertices and determined for which such sets~${\cal H}$ the class of ${\cal H}$-free graphs has bounded clique-width.
They proved the following dichotomy for sets ${\cal H}$ of \mbox{$4$-vertex} graphs and showed that all cases involving at least one graph with fewer than four vertices follow from known cases (see also Theorems~\ref{thm:single} and~\ref{thm:classification2}); the graphs in Theorem~\ref{t-bell} are displayed in \figurename~\ref{fig:4-vert}.

\begin{theorem}[\cite{BELL06}]\label{t-bell}
Let~${\cal H}$ be a set of $4$-vertex graphs.
The class of ${\cal H}$-free graphs has bounded clique-width if and only if~${\cal H}$ is \emph{not} a subset of any of the following sets:
\begin{enumerate}[(i)]
\item $\{C_4,2P_2\}$\\[-2em]
\item $\{K_4,2P_2\}$\\[-2em]
\item $\{C_4,4P_1\}$\\[-2em]
\item $\{K_4,\diamondgraph,C_4,\claw\}$\\[-2em]
\item $\{4P_1,2P_1+\nobreak P_2,2P_2,K_3+\nobreak P_1\}$\\[-2em]
\item $\{K_4,\diamondgraph,C_4,\paw,K_3+\nobreak P_1\}$, or\\[-2em]
\item $\{4P_1,2P_1+\nobreak P_2,2P_2,P_1+\nobreak P_3,\claw\}$.
\end{enumerate}
\end{theorem}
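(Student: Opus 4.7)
The plan is to separate the iff into an unboundedness direction and a boundedness direction.

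For the \textit{unboundedness direction}, if $\mathcal{H}\subseteq\mathcal{H}^{*}$ where $\mathcal{H}^{*}$ is one of the seven listed sets, then every $\mathcal{H}^{*}$-free graph is $\mathcal{H}$-free, so the class of $\mathcal{H}$-free graphs contains the class of $\mathcal{H}^{*}$-free graphs. Thus it suffices to prove that each of the seven sets $\mathcal{H}^{*}$ already has unbounded clique-width. Using Fact~\ref{fact:comp}, the seven sets pair up under complementation: (i) is self-complementary, and (ii)/(iii), (iv)/(v), (vi)/(vii) are complementary pairs, so only four explicit proofs are needed. Set~(i) $\{C_4,2P_2\}$ is immediate because split graphs, characterised as $(C_4,C_5,2P_2)$-free~\cite{FH77}, form a subclass of $(C_4,2P_2)$-free graphs and have unbounded clique-width by Theorem~\ref{t-splitpermutation} combined with Observation~\ref{o-inclusion}. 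For set~(vi) $\{K_4,\diamondgraph,C_4,\paw,K_3+\nobreak P_1\}$ a short case analysis shows that any triangle together with a fifth vertex produces one of the forbidden induced subgraphs ($K_3+\nobreak P_1$, $\paw$, $\diamondgraph$ or $K_4$ depending on how many triangle vertices the fifth vertex sees); hence any graph in the class on at least four vertices is triangle-free, and combined with $C_4$-freeness this forces girth at least~$5$. The walls themselves have girth~$6$ and lie in this class, so Theorem~\ref{t-unbounded} gives unboundedness. Set~(iv) $\{K_4,\diamondgraph,C_4,\claw\}$ is handled via line graphs of subcubic triangle-free $C_4$-free graphs (such as walls): these line graphs are $(\claw,K_4,\diamondgraph,C_4)$-free by a direct edge-counting argument, and Theorem~\ref{t-gw1} converts unbounded treewidth of walls into unbounded clique-width.

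The genuinely novel work is set~(ii) $\{K_4,2P_2\}$, because none of the previously identified unbounded families (split, bipartite permutation, unit interval, line graphs of triangle-free graphs) lies inside this class, and Theorem~\ref{thm:classification2} does not cover it either. For this case I would apply the UCW method: starting from a family of walls or $k$-subdivided walls, perform a constant number of bipartite complementations and vertex deletions (Facts~\ref{fact:del-vert},~\ref{fact:bip}) to move the graphs into the $(K_4,2P_2)$-free class, and then verify via Theorem~\ref{thm:generalunbounded} that the resulting partitioned graphs satisfy its eight conditions, which forces the clique-width to tend to infinity.

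For the \textit{boundedness direction}, assume $\mathcal{H}$ is not a subset of any of the seven sets. If $P_4\in\mathcal{H}$, Theorem~\ref{thm:single} already yields clique-width at most~$2$, so we may assume $\mathcal{H}$ consists only of graphs drawn from the ten non-$P_4$ graphs on four vertices, which organise into five complementary pairs. The hypothesis that $\mathcal{H}$ lies in none of the seven sets translates into combinatorial restrictions on which graphs from each pair belong to $\mathcal{H}$; enumerating the \emph{minimal} such $\mathcal{H}$ reduces the proof to a finite case analysis, halved by Fact~\ref{fact:comp}. Many cases reduce immediately to Theorem~\ref{thm:classification2}: for instance, $\{\claw,K_3+\nobreak P_1\}\subseteq\mathcal{H}$ invokes its item~(vi); if $\mathcal{H}$ contains both $K_4$ and $4P_1$ the class is finite by Ramsey's theorem (since $R(4,4)$ is finite), which implies bounded clique-width trivially. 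The residual cases are attacked by the BCW method via Facts~\ref{fact:del-vert}--\ref{fact:bip}, typically reducing to forests (Proposition~\ref{p-forest}) or to graphs of maximum degree at most~$2$ (Proposition~\ref{p-atmost2}).

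The main obstacle is the unboundedness proof for set~(ii), where standard off-the-shelf unbounded classes do not directly apply; one must design an explicit wall-derived construction that simultaneously avoids $K_4$ and $2P_2$, and a careful verification of the eight conditions of Theorem~\ref{thm:generalunbounded} on the resulting partition is the combinatorial heart of the argument.
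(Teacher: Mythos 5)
The survey does not prove Theorem~\ref{t-bell}; it only quotes it from~\cite{BELL06}, so your proposal has to stand on its own. Its architecture is sound: the reduction of the unboundedness direction to the seven maximal sets, the pairing of those sets under complementation via Fact~\ref{fact:comp}, and your arguments for sets (i), (iv) and (vi) are all correct. But you misplace the difficulty, and the place where the real work sits contains a gap.

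On the unboundedness side, the case $\{K_4,2P_2\}$ is not ``genuinely novel''. The class of $(K_4,2P_2)$-free graphs contains the $K_4$-free co-chordal graphs (equivalently, its complement class contains the $4P_1$-free chordal graphs), whose unbounded clique-width is stated explicitly in Sections~\ref{s-operations} and~\ref{s-x}; alternatively, $(K_4,2P_2)$ is equivalent under complementation to $(C_4,4P_1)$, which is covered by case~2(iii) of Theorem~\ref{thm:classification2}. Your substitute plan --- a constant number of bipartite complementations and vertex deletions applied to walls so as to land in the $(K_4,2P_2)$-free class --- is only a sketch, and it is doubtful as stated: $2P_2$-freeness is a global density condition, and the known constructions instead build dense graphs directly and verify the eight conditions of Theorem~\ref{thm:generalunbounded}, rather than perturbing walls.

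The genuine gap is in the boundedness direction. You assume that every admissible ${\cal H}$ either contains a pair covered by the bounded part of Theorem~\ref{thm:classification2} or is a leftover that reduces to forests or to graphs of maximum degree at most~$2$. This fails: there are sets ${\cal H}$ that are contained in none of the seven listed sets and yet have the property that \emph{every} two-element subset of ${\cal H}$ is contained in one of them, so that every pair of graphs in ${\cal H}$ defines a class of unbounded clique-width. A concrete example is ${\cal H}=\{K_4,C_4,2P_2\}$: the pairs $\{K_4,C_4\}$, $\{K_4,2P_2\}$ and $\{C_4,2P_2\}$ lie in sets (iv), (ii) and (i) respectively, so no pairwise reduction is available, yet the theorem asserts boundedness. (Here one must argue directly, e.g.\ that $(C_4,2P_2)$-free graphs are pseudo-split, so the $K_4$-free ones become edgeless after deleting a bounded number of vertices, and then apply Fact~\ref{fact:del-vert}.) Further such minimal triples include $\{C_4,2P_2,\claw\}$ and $\{C_4,4P_1,\claw\}$ and their complements. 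These triples, not the $\{K_4,2P_2\}$ case, are the substantive content of the theorem, and your proposal neither identifies them nor indicates how they would be handled.
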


\subsection{Considering Hereditary Graph Classes Closed Under Complementation}\label{s-complement}

Recall that subgraph complementation preserves boundedness of clique-width by Fact~\ref{fact:comp}.
It is therefore natural to consider hereditary classes of graphs~${\cal G}$ that are closed under complementation.
In this section we survey the known results for these graph classes.
Recall that by Observation~\ref{o-comp} a hereditary graph class~${\cal G}$ is closed under complementation if and only if ${\cal H}={\cal F}_{\cal G}$ is closed under complementation.
We start by considering the cases where~$|{\cal H}|$ is small.

The only two non-empty self-complementary induced subgraphs of~$P_4$ are~$P_1$ and~$P_4$.
Hence, from Theorem~\ref{thm:single} it follows that the only self-complementary graphs~$H$ for which the class of $H$-free graphs has bounded clique-width are $H=P_1$ and $H=P_4$.
This result settles the $|{\cal H}|=1$ case and was generalized as follows.

\begin{theorem}[\cite{BDJLPZ17}]\label{thm:selfcomp-1}
For any set~${\cal H}$ of non-empty self-complementary graphs, the class of ${\cal H}$-free graphs has bounded clique-width if and only if either $P_1 \in {\cal H}$ or $P_4 \in {\cal H}$.
\end{theorem}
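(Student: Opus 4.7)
For the \emph{if} direction, if $P_1 \in {\cal H}$ then, since every non-empty graph contains an induced $P_1$, the class of ${\cal H}$-free graphs consists only of the zero-vertex graph and so has bounded clique-width. If $P_4 \in {\cal H}$ then the class of ${\cal H}$-free graphs is a subclass of the class of $P_4$-free graphs, which by Theorem~\ref{thm:single} has clique-width at most~$2$.

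For the \emph{only if} direction, suppose that $P_1 \notin {\cal H}$ and $P_4 \notin {\cal H}$. The strategy is to show that every wall of height at least~$2$ is ${\cal H}$-free; combined with Theorem~\ref{t-unbounded}, this will give that the class of ${\cal H}$-free graphs has unbounded clique-width. Recall that such walls have maximum degree~$3$ and, being hexagonal grids, contain no cycle of length less than~$6$. The engine of the argument is a size-based classification of self-complementary graphs. A self-complementary graph $H$ on $n$ vertices has exactly $\binom{n}{2}/2$ edges, forcing $n \equiv 0$ or $1 \pmod 4$; in particular there are no self-complementary graphs on $2$, $3$, $6$ or $7$ vertices. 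Moreover, any isomorphism $H \to \overline{H}$ maps a minimum-degree vertex of $H$ to a maximum-degree vertex of $\overline{H}$, whence $\Delta(H) = n - 1 - \delta(H) \geq n - 1 - \Delta(H)$, giving $\Delta(H) \geq (n-1)/2$, and in particular $\Delta(H) \geq 4$ whenever $n \geq 8$. The only self-complementary graphs on at most four vertices are $P_1$ and $P_4$, and any self-complementary graph on five vertices is connected with exactly five edges, hence contains a cycle of length at most~$5$.

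Now fix any $H \in {\cal H}$; by assumption $|V(H)| = 5$ or $|V(H)| \geq 8$. If $|V(H)| \geq 8$ then $\Delta(H) \geq 4$, and since any induced subgraph of a wall inherits the maximum-degree bound~$3$, $H$ is not an induced subgraph of any wall. If $|V(H)| = 5$ then $H$ contains a cycle of length at most~$5$ as a subgraph, but no wall contains any cycle shorter than~$6$ even as a (non-induced) subgraph, so again $H$ does not embed into a wall. Thus walls of height at least~$2$ are ${\cal H}$-free, which completes the proof. The only step requiring real care is the structural classification—especially pinning down the self-complementary graphs on at most five vertices—after which everything is a routine combination of the edge-count congruence, the inequality $\Delta \geq (n-1)/2$, and the basic degree and girth properties of walls.
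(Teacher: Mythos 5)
Your proof is correct. The paper itself does not prove this theorem (it is quoted from~\cite{BDJLPZ17}); the survey only derives the singleton case $|{\cal H}|=1$ directly from Theorem~\ref{thm:single}, observing that $P_1$ and $P_4$ are the only non-empty self-complementary induced subgraphs of $P_4$. Your argument supplies exactly what that observation does not: for a general (possibly infinite) set ${\cal H}$, unboundedness of each individual class of $H$-free graphs does not automatically give unboundedness of their intersection, so one needs a single family of unbounded clique-width avoiding every member of ${\cal H}$ simultaneously. Choosing walls and splitting self-complementary graphs by order --- $n\equiv 0,1 \pmod 4$, hence $n=5$ (ruled out by girth~$6$, since such a graph has $5$ edges on $5$ vertices and so contains a cycle of length at most~$5$) or $n\geq 8$ (ruled out by $\Delta(H)\geq (n-1)/2\geq 4$ versus maximum degree~$3$) --- is a clean instance of the paper's {\tt UCW Method} via Theorem~\ref{t-unbounded}, and all the structural claims check out. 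One wording slip: an isomorphism $H\to\overline{H}$ maps a minimum-degree vertex of $H$ to a \emph{minimum}-degree vertex of $\overline{H}$; the fact you actually need is $\delta(H)=\delta(\overline{H})=n-1-\Delta(H)$, which is what your displayed inequality $\Delta(H)=n-1-\delta(H)\geq n-1-\Delta(H)$ uses, so the conclusion stands.
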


We now discuss the $|{\cal H}|=2$ case.
By Theorem~\ref{thm:selfcomp-1}, it remains to consider the case when ${\cal H} =\{H_1,H_2\}$ with $H_2=\overline{H_1}$ and~$H_1$ is not self-complementary.
This leads to the following classification, which also follows from Theorem~\ref{thm:classification2}.
The graphs in this classification are displayed in \figurename~\ref{fig:H-co-H-bdd-cw}.

\begin{theorem}[\cite{BDJLPZ17}]\label{thm:selfcomp-2}
For a graph~$H$, the class of $(H,\overline{H})$-free graphs has bounded clique-width if and only if~$H$ or~$\overline{H}$ is an induced subgraph of~$K_{1,3}$, $P_1+\nobreak P_4$, $2P_1+\nobreak P_3$ or~$sP_1$ for some $s\geq 1$.
\end{theorem}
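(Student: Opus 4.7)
My plan is to prove both directions by reducing to Theorem~\ref{thm:classification2}. By Fact~\ref{fact:comp}, subgraph complementation preserves boundedness of clique-width, so the class of $(H,\overline H)$-free graphs is invariant under swapping $H$ with~$\overline H$; this lets me argue without loss of generality about~$H$ throughout.

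For the \emph{if} direction, without loss of generality $H$ (not $\overline H$) is an induced subgraph of one of the four listed graphs. I match each case with a bounded case of Theorem~\ref{thm:classification2}(1): $H \ssi K_{1,3}$ yields $\overline H \ssi K_3+\nobreak P_1$, covered by part~(vi); $H \ssi P_1+\nobreak P_4$ yields $\overline H \ssi \gem$, covered by part~(v); $H \ssi 2P_1+\nobreak P_3$ yields $\overline H \ssi \overline{2P_1+\nobreak P_3}$, covered by part~(vii); and $H \ssi sP_1$ yields $\overline H \ssi K_s$, covered by part~(ii).

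For the \emph{only if} direction, assume $(H,\overline H)$-free graphs have bounded clique-width; I show $H$ or $\overline H$ embeds into one of the four listed graphs. Theorem~\ref{thm:classification2}(2)(i) forces at least one of $H, \overline H$ to lie in~${\cal S}$; without loss of generality $H \in {\cal S}$, so $H$ is a disjoint union of paths and subdivided claws, and in particular is triangle- and cycle-free. Next, the assignment $(H_1,H_2)=(\overline H,H)$ in part~(iii) shows that $H \si 2P_2$ alone would trigger it (since $\overline H \si C_4$ is equivalent to $H \si 2P_2$). Hence $H$ is $2P_2$-free, which forces at most one component of $H$ to contain an edge; write $H = aP_1 + F$ with $F$ either empty, a path $P_t$ ($t \geq 2$), or a subdivided claw $S_{i,j,k}$.

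A case analysis on $F$ completes the proof. If $F$ is empty, $H = aP_1 \ssi sP_1$. If $F = P_t$ with $t \geq 5$, then $F \si 2P_2$ (via $\{v_1,v_2,v_4,v_5\}$), a contradiction. For $F = P_t$ with $t \in \{2,3,4\}$, cases (iv)--(vi) of Theorem~\ref{thm:classification2}(2) bound $a$ (specifically $a \leq 2$ when $t \in \{2,3\}$ and $a \leq 1$ when $t = 4$), and every resulting $H$ is an induced subgraph of $K_{1,3}$, $P_1+\nobreak P_4$, or $2P_1+\nobreak P_3$. Finally, if $F = S_{i,j,k} \neq K_{1,3}$ (so some arm has length at least~$2$), then $F \si S_{1,1,2}$, and $S_{1,1,2}$ contains both $K_{1,3}$ (the centre plus its three neighbours) and $2P_1+\nobreak P_2$ (the three leaves plus the degree-$2$ interior vertex), triggering case~(iv), a contradiction; hence $F = K_{1,3}$, and $a \geq 1$ would give $H \si K_{1,3}$ and $H \si 4P_1$ simultaneously, triggering case~(iii); so $a = 0$ and $H = K_{1,3}$. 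The main obstacle is the bookkeeping in the $F = P_t$ subcase: each implication of the form ``$a$ too large implies that some part of Theorem~\ref{thm:classification2}(2) triggers'' is routine, but one must correctly identify the threshold value of $a$ in each situation by computing which induced subgraphs (for example $3P_1+\nobreak P_2$, $5P_1$, or $2P_1+\nobreak P_4$) appear in $aP_1+\nobreak P_t$ to ensure no loopholes remain.
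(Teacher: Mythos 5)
Your proposal is correct and follows exactly the route the paper indicates: the survey states that Theorem~\ref{thm:selfcomp-2} ``also follows from Theorem~\ref{thm:classification2}'', and your argument is a careful working-out of that derivation, with the if-direction matched to cases (ii), (v), (vi), (vii) of part~1 and the only-if direction reduced via ${\cal S}$-membership and $2P_2$-freeness to a finite case analysis that is correctly discharged by parts 2(iii)--(vi). I checked the threshold values of $a$ in the $F=P_t$ subcases and the $S_{1,1,2}$ and $K_{1,3}$ subcases, and all of them are handled correctly.
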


\begin{figure}
\begin{center}
\begin{tabular}{cccc}
\begin{minipage}{0.2\textwidth}
\centering
\scalebox{1.0}{
{\begin{tikzpicture}[scale=1,rotate=90]
\GraphInit[vstyle=Simple]
\SetVertexSimple[MinSize=6pt]
\Vertices{circle}{a,b,c}
\Vertex[x=0,y=0]{d}
\Edges(a,d,b)
\Edges(c,d)
\end{tikzpicture}}}
\end{minipage}
&
\begin{minipage}{0.2\textwidth}
\centering
\scalebox{1.0}{
{\begin{tikzpicture}[scale=1,rotate=90]
\GraphInit[vstyle=Simple]
\SetVertexSimple[MinSize=6pt]
\Vertices{circle}{a,b,c}
\Vertex[x=0,y=0]{d}
\Edges(a,b,c,a)
\end{tikzpicture}}}
\end{minipage}
&
\begin{minipage}{0.2\textwidth}
\centering
\scalebox{1.0}{
{\begin{tikzpicture}[scale=1,rotate=90]
\GraphInit[vstyle=Simple]
\SetVertexSimple[MinSize=6pt]
\Vertices{circle}{a,b,c,d,e}
\Edges(b,c,d,e)
\end{tikzpicture}}}
\end{minipage}
&
\begin{minipage}{0.2\textwidth}
\centering
\scalebox{1.0}{
{\begin{tikzpicture}[scale=1,rotate=90]
\GraphInit[vstyle=Simple]
\SetVertexSimple[MinSize=6pt]
\Vertices{circle}{a,b,c,d,e}
\Edges(b,c,d,e)
\Edges(b,a,c)
\Edges(d,a,e)
\end{tikzpicture}}}
\end{minipage}\\
\\
$\claw=K_{1,3}$ &
$\overline{K_{1,3}}=K_3+\nobreak P_1$ &
$P_1+\nobreak P_4$ &
$\gem=\overline{P_1+P_4}$\\
\\
\begin{minipage}{0.2\textwidth}
\centering
\scalebox{1.0}{
{\begin{tikzpicture}[scale=1,rotate=90]
\GraphInit[vstyle=Simple]
\SetVertexSimple[MinSize=6pt]
\Vertices{circle}{a,b,c,d,e}
\Edges(e,a,b)
\end{tikzpicture}}}
\end{minipage}
&
\begin{minipage}{0.2\textwidth}
\centering
\scalebox{1.0}{
{\begin{tikzpicture}[scale=1,rotate=90]
\GraphInit[vstyle=Simple]
\SetVertexSimple[MinSize=6pt]
\Vertices{circle}{a,b,c,d,e}
\Edges(e,a,b)
\Edges(b,c,d,e,b)
\Edges(b,d)
\Edges(c,e)
\end{tikzpicture}}}
\end{minipage}
&
\begin{minipage}{0.2\textwidth}
\centering
\scalebox{1.0}{
{\begin{tikzpicture}[scale=1,rotate=90]
\GraphInit[vstyle=Simple]
\SetVertexSimple[MinSize=6pt]
\Vertices{circle}{a,b,c,d,e}
\end{tikzpicture}}}
\end{minipage}
&
\begin{minipage}{0.2\textwidth}
\centering
\scalebox{1.0}{
{\begin{tikzpicture}[scale=1,rotate=90]
\GraphInit[vstyle=Simple]
\SetVertexSimple[MinSize=6pt]
\Vertices{circle}{a,b,c,d,e}
\Edges(a,b,c,d,e,a,c,e,b,d,a)
\end{tikzpicture}}}
\end{minipage}\\
\\
$2P_1+\nobreak P_3$ &
$\overline{2P_1+\nobreak P_3}$ &
$sP_1$~($s=\nobreak 5$~shown)&
$\overline{sP_1}$~($s=\nobreak 5$~shown)\\
\end{tabular}
\end{center}
\caption{\label{fig:H-co-H-bdd-cw}Graphs~$H$ for which the clique-width of $(H,\overline{H})$-free graphs is bounded.}
\end{figure}
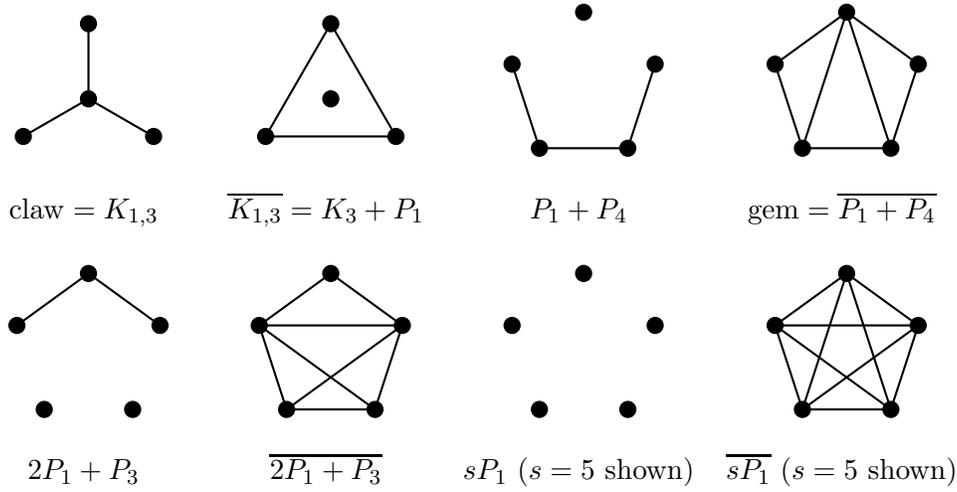

As we will see, the $|{\cal H}|=3$ case has not yet been fully settled.
Up to permutations of the graphs $H_1$, $H_2$,~$H_3$, a class of $(H_1,H_2,H_3)$-free graphs is closed under complementation if and only if~$H_i$ is self-complementary for all $i \in \{1,2,3\}$, or $H_1=\overline{H_2}$ and~$H_3$ is self-complementary (note that we may assume that~${\cal H}$ is minimal).
By Theorem~\ref{thm:selfcomp-1}, we only need to consider the second case.
By Theorem~\ref{thm:single}, we may exclude the case when $H_3=P_1$ or $H_3=P_4$.
The next two smallest self-complementary graphs~$H_3$ are the~$C_5$ and the bull.

Blanch\'e, Dabrowski, Johnson, Lozin, Paulusma and Zamaraev~\cite{BDJLPZ17} proved that the classification of boundedness of clique-width for $(H,\overline{H},C_5)$-free graphs coincides with the one of Theorem~\ref{thm:selfcomp-2}.
This raised the question of whether the same is true for other sets of self-complementary graphs ${\cal F}\neq \{C_5\}$.
However, the bull is self-complementary, and if~${\cal F}$ contains the bull, then the answer is negative, which can be seen as follows.
By Theorem~\ref{thm:selfcomp-2}, both the class of $(S_{1,1,2},\overline{S_{1,1,2}})$-free graphs and the class of $(2P_2,C_4)$-free graphs have unbounded clique-width.
In contrast, by Theorem~\ref{thm:1-vert-ext-of-P4}, both the class of $(S_{1,1,2},\overline{S_{1,1,2}},\mbox{bull})$-free graphs and even the class of $(P_5,\overline{P_5},\mbox{bull})$-free graphs have bounded clique-width.
However, as shown in the next theorem, the bull turned out to be the {\em only} exception if we exclude the ``trivial'' cases $H_3=P_1$ and $H_3=P_4$, which are the only non-empty self-complementary graphs on fewer than five vertices.

\begin{theorem}[\cite{BDJLPZ17}]\label{thm:selfcomp-3}
Let~${\cal F}$ be a set of self-complementary graphs on at least five vertices not equal to the bull.
For a graph~$H$, the class of $(\{H,\overline{H}\} \cup {\cal F})$-free graphs has bounded clique-width if and only if~$H$ or~$\overline{H}$ is an induced subgraph of~$K_{1,3}$, $P_1+\nobreak P_4$, $2P_1+\nobreak P_3$ or~$sP_1$ for some $s\geq 1$.
\end{theorem}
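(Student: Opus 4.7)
The plan is to treat the two directions separately, with the hard work concentrated on a Ramsey-style structural observation about self-complementary graphs of order at least five.

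For the ``if'' direction, suppose $H$ or $\overline{H}$ is an induced subgraph of one of $K_{1,3}$, $P_1+\nobreak P_4$, $2P_1+\nobreak P_3$, or $sP_1$ for some $s\geq 1$. Then the class of $(H,\overline{H})$-free graphs has bounded clique-width by Theorem~\ref{thm:selfcomp-2}, and the class of $(\{H,\overline{H}\}\cup {\cal F})$-free graphs is contained in it, so the conclusion is immediate from the monotonicity $\cw(H)\leq \cw(G)$ whenever $H\ssi G$.

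For the ``only if'' direction, the starting observation is the following. Any self-complementary graph $F$ has $|V(F)|\equiv 0$ or $1\pmod{4}$; the only self-complementary graphs on $5$ vertices are $C_5$ and the bull; and since $R(3,3)=6$, every self-complementary graph on at least $6$ (hence at least $8$) vertices must contain both an induced $K_3$ and an induced $3P_1$ (apply Ramsey's theorem to $F$, then to $\overline{F}\cong F$). Therefore, as ${\cal F}$ contains no bull, every $F\in{\cal F}$ is either $C_5$ or contains a triangle, so \emph{every bipartite graph is automatically ${\cal F}$-free}. Dually, since ${\cal F}$ is closed under complementation (each member being self-complementary up to isomorphism), every co-bipartite graph is also ${\cal F}$-free.

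The plan is therefore to revisit the proof of the hard direction of Theorem~\ref{thm:selfcomp-2} and show that, for each bad $H$ (that is, each $H$ such that neither $H$ nor $\overline{H}$ is an induced subgraph of $K_{1,3}$, $P_1+\nobreak P_4$, $2P_1+\nobreak P_3$, or $sP_1$), the witnessing infinite family of $(H,\overline{H})$-free graphs of unbounded clique-width can be chosen to be either bipartite or co-bipartite. In many cases one can simply invoke existing constructions: the bipartite witnesses from Theorems~\ref{t-bipper} and~\ref{thm:bipartite}, the $k$-subdivided walls from Corollary~\ref{cor:walls} (which are themselves bipartite), and the constructions underlying the classification in Theorem~\ref{thm:classification2} all apply when $H$ (respectively $\overline{H}$) is not bipartite, because any bipartite (respectively co-bipartite) graph is automatically $H$-free. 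One would tabulate the bad pairs $(H,\overline{H})$ exactly as in the proof of Theorem~\ref{thm:selfcomp-2} and, in each row, verify that one of the two sides ($H$ or $\overline{H}$) is absent from the bipartite/co-bipartite family being used.

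The principal obstacle is the small number of bad pairs $(H,\overline{H})$ for which neither $H$ nor $\overline{H}$ is bipartite and neither is co-bipartite, so that bipartite and co-bipartite witnesses are not automatically $H$-free. In these cases the natural witnesses (for instance split permutation graphs via Theorem~\ref{t-splitpermutation}, or $K_4$-free co-chordal graphs from~\cite{BELL06}) are neither bipartite nor co-bipartite, and one must show separately that they can be arranged to avoid every self-complementary graph in~${\cal F}$. The first step is to appeal to the already-established $(H,\overline{H},C_5)$-free version of Theorem~\ref{thm:selfcomp-2} from~\cite{BDJLPZ17}, which disposes of the $C_5$ obstruction; for the remaining members of ${\cal F}$ (all of order at least $8$ and each containing a triangle), the construction must be adjusted to keep the triangle structure controlled, typically by ensuring that the triangles that do appear lie in a small ``core'' whose induced subgraphs have bounded order, thereby preventing any self-complementary graph of order $\geq 8$ from appearing as an induced subgraph. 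This case analysis, mirroring the proof of Theorem~\ref{thm:selfcomp-2} with triangle-limited variants of each construction, is the main technical step of the proof.
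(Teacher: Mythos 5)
The survey does not prove Theorem~\ref{thm:selfcomp-3}; it only cites~\cite{BDJLPZ17}, so your argument has to stand on its own. Your ``if'' direction is fine, and your structural observations are correct and genuinely useful: a self-complementary graph has $n\equiv 0$ or $1 \pmod 4$ vertices, the only self-complementary graphs on five vertices are $C_5$ and the bull, and since $R(3,3)=6$ every member of~${\cal F}$ on at least eight vertices contains both an induced $K_3$ and an induced $3P_1$, so every bipartite graph and every co-bipartite graph is ${\cal F}$-free. This cleanly settles every bad~$H$ for which neither~$H$ nor~$\overline{H}$ is bipartite: the whole class of bipartite graphs then lies inside the class of $(\{H,\overline{H}\}\cup{\cal F})$-free graphs and has unbounded clique-width.

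The gap is in the remaining cases, which you have moreover identified exactly backwards. You say the obstacle occurs when ``neither $H$ nor $\overline{H}$ is bipartite and neither is co-bipartite'', but in that situation the bipartite witness is automatically $(H,\overline{H})$-free and there is nothing left to prove. The hard cases are precisely those where $H$ or~$\overline{H}$ \emph{is} bipartite (equivalently, where one of them is co-bipartite, since $H$ is bipartite if and only if $\overline{H}$ is co-bipartite); then neither a bipartite nor a co-bipartite witness is automatically $(H,\overline{H})$-free. These cases are numerous rather than exceptional: up to complementation they include $2P_2$, $P_5$, $S_{1,1,2}$, the banner, $K_{1,3}+P_1$, $P_2+P_3$, $P_6$, $2P_3$, $3P_2$, and more. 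For them your proposal offers only the intention to ``keep the triangle structure controlled'', which is where the actual content of the theorem lies and which is not routine. Concretely, for $H=2P_2$ the natural witnesses are split graphs of unbounded clique-width, and self-complementary split graphs on eight vertices exist (take $V_1\cup V_2\cup V_3\cup V_4$ with $|V_i|=2$, cliques on $V_2$ and~$V_3$, independent sets on $V_1$ and~$V_4$, and complete joins $V_1$--$V_2$, $V_2$--$V_3$, $V_3$--$V_4$), so such a graph could be a member of~${\cal F}$ and the witness family is not automatically ${\cal F}$-free. One must verify, construction by construction, that the specific families of unbounded clique-width used for Theorem~\ref{thm:selfcomp-2} (for example the split permutation graphs of~\cite{KLM14}) avoid every self-complementary graph on at least eight vertices, or replace them with families that do; that verification is the heart of the proof in~\cite{BDJLPZ17} and is absent from your proposal.
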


By Theorems~\ref{thm:selfcomp-1} and~\ref{thm:selfcomp-3} the case $|{\cal H}|=3$ is settled except when $H_1=\overline{H_2}$ and~$H_3$ is the bull; see also~\cite{BDJLPZ17}.

\begin{oproblem}\label{o-bull}
For which graphs~$H$ does the class of $(H,\overline{H},\bull)$-free graphs have bounded clique-width?
\end{oproblem}

In light of Theorem~\ref{thm:selfcomp-3}, Open Problem~\ref{o-bull} can also be extended to sets~${\cal F}$ of self-complementary graphs containing the bull.

\subsection{Forbidding with Respect to Other Graph Containment Relations}\label{s-containment}

In this section we survey results on (un)boundedness of clique-width for hereditary graph classes that can alternatively be characterized by some other graph containment relation.
In particular, when we forbid a finite collection of either subgraphs, minors or topological minors, it is possible to completely characterize those graph classes that have bounded clique-width.

\begin{theorem}[\cite{DP16,KLM09}]\label{t-finite}
Let $\{H_1,\ldots,H_p\}$ be a finite set of graphs.
Then the following statements hold:\\[-1.5em]
\begin{enumerate}[(i)]
\renewcommand{\theenumi}{(\roman{enumi})}
\renewcommand{\labelenumi}{(\roman{enumi})}
\item\label{t-finite-subgraph}The class of $(H_1,\ldots,H_p)$-subgraph-free graphs has bounded clique-width if and only if $H_i\in {\cal S}$ for some $i \in \{1,\ldots,p\}$.\\[-1.5em]
\item\label{t-finite-minor}The class of $(H_1,\ldots,H_p)$-minor-free graphs has bounded clique-width if and only if~$H_i$ is planar for some $i \in \{1,\ldots,p\}$.\\[-1.5em]
\item\label{t-finite-top minor}The class of $(H_1,\ldots,H_p)$-topological-minor-free graphs has bounded clique-width if and only if~$H_i$ is planar and has maximum degree at most~$3$ for some $i \in \{1,\ldots,p\}$.
\end{enumerate}
\end{theorem}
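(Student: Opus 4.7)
The plan is to prove each of parts~(i)--(iii) as a separate dichotomy, sharing several tools. The main ingredients are: the theorem of Bienstock, Robertson, Seymour and Thomas~\cite{BRST91} that any class excluding a fixed planar graph as a minor has bounded path-width (already used in the proof of Theorem~\ref{t-line}), Corollary~\ref{cor:walls} that $k$-subdivided walls have unbounded clique-width, and the standard equivalence that a graph~$H$ of maximum degree at most~$3$ is a topological minor of~$G$ if and only if~$H$ is a minor of~$G$. In each ``if'' direction I will use the first tool; in each ``only if'' direction I will exhibit a subclass of unbounded clique-width, either walls, planar graphs, or subdivided walls.

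For part~(i), the ``if'' direction is immediate: if some~$H_i \in {\cal S}$, then~$H_i$ is planar, and Observation~\ref{o-freefree} identifies $H_i$-subgraph-freeness with $H_i$-minor-freeness. Applying~\cite{BRST91} gives bounded tree-width (in fact bounded path-width), which implies bounded clique-width. For the ``only if'' direction, suppose no~$H_i$ lies in~${\cal S}$. Since~${\cal S}$ consists exactly of the disjoint unions of paths and subdivided claws, each~$H_i$ either contains a cycle or has a vertex of degree at least~$4$. In either case there is an integer~$k_i$ such that no $k_i$-subdivided wall contains~$H_i$ as a subgraph: walls have maximum degree~$3$, so a vertex of degree at least~$4$ forbids~$H_i$ outright, and otherwise taking~$k_i$ large enough drives the girth of the subdivided wall beyond the length of the longest cycle in~$H_i$. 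Setting $k=\max_i k_i$, the class of $k$-subdivided walls is contained in the class of $(H_1,\ldots,H_p)$-subgraph-free graphs, and Corollary~\ref{cor:walls} gives unbounded clique-width.

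Part~(ii) follows quickly. If some~$H_i$ is planar, then~\cite{BRST91} gives bounded clique-width for the $H_i$-minor-free class, hence also for the (smaller) $(H_1,\ldots,H_p)$-minor-free class. Conversely, if no~$H_i$ is planar, then every planar graph avoids each~$H_i$ as a minor (since minors of planar graphs are planar), so the class contains all planar graphs and in particular all walls, and hence has unbounded clique-width by Theorem~\ref{t-unbounded}. For part~(iii), the ``if'' direction uses the minor/topological-minor equivalence: if some~$H_i$ is planar with maximum degree at most~$3$, then being $H_i$-topological-minor-free coincides with being $H_i$-minor-free, and we invoke part~(ii). For the ``only if'' direction, assume no~$H_i$ is simultaneously planar and of maximum degree at most~$3$. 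Any topological minor of a planar, maximum-degree-$3$ graph is itself planar with maximum degree at most~$3$ (topological minors preserve planarity, and the degrees of branch vertices in a subdivision are dominated by their degrees in the ambient graph), so no such graph contains any~$H_i$ as a topological minor. Thus all walls of height at least~$2$ belong to the class, and Theorem~\ref{t-unbounded} again gives unbounded clique-width.

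The only genuinely technical point is the girth/degree estimate used in the ``only if'' direction of part~(i), which requires some care in analysing the structure of $k$-subdivided walls but is otherwise standard. The minor/topological-minor equivalence for graphs of maximum degree at most~$3$ is classical; a clean reference (or the one-line argument that in a subdivision of a graph of maximum degree at most~$3$ the only vertices available for dissolution are the subdivision vertices themselves) suffices.
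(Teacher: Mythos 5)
The paper states Theorem~\ref{t-finite} without proof (it is cited to~\cite{DP16,KLM09}), but your overall strategy is exactly the one the survey uses elsewhere (e.g.\ in the proof of Theorem~\ref{t-line}): \cite{BRST91} plus Observation~\ref{o-freefree} for the bounded direction, and subdivided walls via Corollary~\ref{cor:walls} or Theorem~\ref{t-unbounded} for the unbounded direction. Parts~(ii) and~(iii) of your argument are correct as written.

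There is, however, a genuine gap in the ``only if'' direction of part~(i). You claim that if $H_i\notin{\cal S}$ then $H_i$ ``either contains a cycle or has a vertex of degree at least~$4$''. This is false: the class ${\cal S}$ contains only paths and trees with \emph{exactly one} branch vertex (of degree~$3$), so a tree of maximum degree~$3$ with two or more degree-$3$ vertices --- for instance the $6$-vertex tree obtained from two adjacent degree-$3$ vertices each given two pendant edges, i.e.\ the graph $H_1$ of Example~\ref{e-prime2con} --- lies outside ${\cal S}$ yet is acyclic and has maximum degree~$3$. For such an $H_i$ neither of your two exclusion arguments applies: the girth of a subdivided wall is irrelevant to an acyclic graph, and the wall's maximum degree~$3$ does not forbid it (indeed the unsubdivided wall \emph{does} contain this tree as a subgraph). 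The case analysis therefore misses a whole family of graphs, and the claimed containment of $k$-subdivided walls in the $(H_1,\ldots,H_p)$-subgraph-free class is not established. The gap is fixable: if a component of $H_i$ is a tree of maximum degree~$3$ with two branch vertices $u,v$ at distance~$d$, then in any subgraph embedding into a graph $G$ both must map to vertices of degree at least~$3$ in $G$ at distance at most~$d$, whereas in a $k$-subdivided wall any two degree-$3$ vertices are at distance at least $k+1$; so taking $k_i>d$ excludes $H_i$ from all $k$-subdivided walls with $k\geq k_i$. You need to add this third case (and note that all three exclusion criteria are monotone in $k$, so taking $k=\max_i k_i$ is legitimate) before the ``only if'' direction of part~(i) is complete.
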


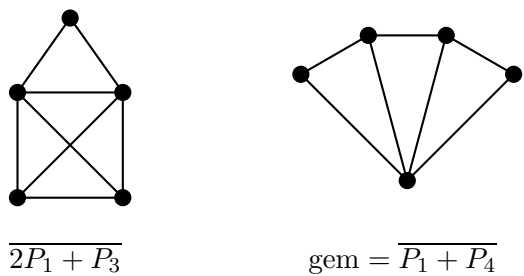
\begin{figure}
\centering
\begin{tabular}{cc}
\begin{minipage}{0.29\textwidth}
\centering
\scalebox{1}{
{\begin{tikzpicture}[scale=1.4]
\GraphInit[vstyle=Simple]
\SetVertexSimple[MinSize=6pt]
\Vertex[x=0.5,y=0.70710678118]{x}
\Vertex[x=0,y=0]{a}
\Vertex[x=1,y=0]{b}
\Vertex[x=0,y=-1]{c}
\Vertex[x=1,y=-1]{d}
\Edges(d,a,x,b,c)
\Edges(a,b,d,c,a)
\end{tikzpicture}}}
\end{minipage}
&
\begin{minipage}{0.29\textwidth}
\centering
\scalebox{1}{
{\begin{tikzpicture}[scale=2,rotate=90]
\GraphInit[vstyle=Simple]
\SetVertexSimple[MinSize=6pt]
\Vertex[x=0,y=0]{x}
\Vertex[a=-45,d=1]{a}
\Vertex[a=-15,d=1]{b}
\Vertex[a=15,d=1]{c}
\Vertex[a=45,d=1]{d}
\Edges(a,x,b)
\Edges(c,x,d)
\Edges(a,b,c,d)
\end{tikzpicture}}}
\end{minipage}\\\\
$\overline{2P_1+P_3}$ & $\gem=\overline{P_1+P_4}$
\end{tabular}
\caption{\label{fig:gem-crossed-house}The graphs~$H$ such that the class of $H$-induced-minor-free graphs has bounded clique-width.}
\end{figure}

The graph classes in Theorem~\ref{t-finite} have in common that the corresponding containment relation allows edge deletions.
If edge deletions are not permitted, then the situation becomes less clear, as we already saw for the induced subgraph relation.
This is also true for the induced minor relation, for which only the following (non-trivial) result is known.
We refer to \figurename~\ref{fig:gem-crossed-house} for a picture of the graphs $\overline{2P_1+P_3}$ and $\overline{P_1+P_4}$ (recall that the latter graph is also known as the $\gem$).

\begin{theorem}[\cite{BOS15}]\label{thm:cw-ind-min}
Let~$H$ be a graph.
The class of $H$-induced-minor-free graphs has bounded clique-width if and only if $H \ssi \overline{2P_1+P_3}$ or $H \ssi \overline{P_1+P_4}$.
\end{theorem}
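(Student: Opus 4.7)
I will split the proof into the two implications.

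For the \emph{sufficiency} direction, observe first the monotonicity of induced-minor-free classes: if $H' \ssi H$ then the class of $H'$-induced-minor-free graphs is contained in the class of $H$-induced-minor-free graphs, because from an induced-minor copy of $H$ in $G$ one obtains an induced-minor copy of $H'$ by further vertex deletions. Hence it suffices to bound the clique-width of the classes corresponding to $H = \overline{2P_1+P_3}$ and $H = \overline{P_1+P_4}$. For each, I would apply the {\tt BCW Method}: restrict to prime graphs (Fact~\ref{fact:prime}), and use a bounded number of vertex deletions, subgraph complementations and bipartite complementations (Facts~\ref{fact:del-vert}--\ref{fact:bip}) to reduce members of the class to graphs of a class already known to have bounded clique-width, such as a subclass of the $P_4$-free graphs of Theorem~\ref{thm:single} or of the classes listed in Theorem~\ref{thm:classification2}. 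The structural heart of each argument is to exploit the forbidden $5$-vertex graph \emph{as an induced minor}: whenever $G$ contains an induced $P_4$, any vertex joined (directly, or via a contractible internally disjoint induced path) to all four of its vertices would already produce a $\gem$ as induced minor, and analogous statements hold for $\overline{2P_1+P_3}$. These local obstructions severely limit how prime members of the class can be built around induced $P_4$s or induced triangles.

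For the \emph{necessity} direction, I perform a case analysis on $H$ and, for each graph $H$ that is not an induced subgraph of $\overline{2P_1+P_3}$ or $\overline{P_1+P_4}$, exhibit a subclass of the class of $H$-induced-minor-free graphs known to have unbounded clique-width. A key preliminary is that certain hereditary classes are \emph{closed under induced minors} (e.g.\ chordal graphs, split graphs, disjoint unions of cliques, and graphs with $\alpha(G)\leq c$), so that membership in such a class automatically forbids as an induced minor any graph not already in the class. The toolbox of witness candidates is then: chordal graphs (containing unit interval graphs of unbounded clique-width by Theorem~\ref{t-unitinterval}), split graphs (containing split permutation graphs of unbounded clique-width by Theorem~\ref{t-splitpermutation}), co-bipartite graphs (of unbounded clique-width by Theorem~\ref{t-bipper} combined with Fact~\ref{fact:comp}), bipartite permutation graphs (Theorem~\ref{t-bipper}), and subgraph- or bipartite-complements of these (Facts~\ref{fact:comp}, \ref{fact:bip}). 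For each excluded $H$ (a finite list of small graphs, together with all graphs of at least six vertices or with connected components outside $\mathcal{S} \cup \overline{\mathcal{S}}$) one checks that at least one witness class from the toolbox consists of $H$-induced-minor-free graphs; for instance, chordal graphs are $C_4$-induced-minor-free, and graphs with independence number at most two are $4P_1$-induced-minor-free.

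I expect the main obstacle to be the structural analysis in the sufficiency direction, and especially for $\overline{P_1+P_4}$-induced-minor-free graphs. Forbidding a dense $5$-vertex graph as an induced \emph{minor} rather than an induced \emph{subgraph} is subtle, because contractions of long induced paths can create the forbidden configuration even when the graph itself is locally sparse. One must therefore catalogue all ``contractible configurations'' that could give rise to $\gem$ or $\overline{2P_1+P_3}$ and show that the remaining structure admits a decomposition compatible with clique-width. The correct structure theorem---roughly, that the prime graphs in each class are either of bounded size or are obtained from cographs or similarly simple graphs by a bounded number of controlled vertex deletions and complementations---is the technical heart of the proof.
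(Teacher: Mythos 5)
This survey does not prove Theorem~\ref{thm:cw-ind-min}; it only cites~\cite{BOS15}, so your proposal has to stand on its own, and as written it has two genuine gaps. The first is that your sufficiency direction is a plan rather than a proof: reducing to prime graphs and invoking Facts~\ref{fact:del-vert}--\ref{fact:bip} is the right framework, but the entire content of the boundedness result lies in the structure theorems for $\overline{P_1+P_4}$-induced-minor-free and $\overline{2P_1+P_3}$-induced-minor-free graphs, which you describe only as something that ``must be catalogued''. Nothing in your sketch rules out, for instance, that prime graphs in these classes are themselves a class of unbounded clique-width; you would need to actually exhibit the decomposition, and you correctly identify this as the hard part without supplying it.

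The second gap is concrete and fatal to your necessity argument as stated: your toolbox of witness classes does not cover $H=K_3+P_1$. This graph is not an induced subgraph of $\overline{2P_1+P_3}$ or $\overline{P_1+P_4}$ (their $4$-vertex induced subgraphs are only $P_4$, the paw, the diamond and $K_4$), and all of its proper induced minors \emph{are} induced subgraphs of those two graphs, so it cannot be delegated to another case and needs its own witness. But $K_3+P_1$ is simultaneously chordal, split, co-bipartite and of independence number~$2$, so every class you list (chordal, split, co-bipartite, graphs with $\alpha\leq c$) contains it and hence is \emph{not} $(K_3+P_1)$-induced-minor-free; bipartite permutation graphs fail too, since contracting an edge of an induced $C_4$ together with a distant vertex yields $K_3+P_1$. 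In fact a graph $G$ is $(K_3+P_1)$-induced-minor-free if and only if $G-N[v]$ is a forest for every vertex~$v$ (a $K_3$ induced minor is just a cycle), which shows how far from any of your standard witness classes the required construction must be. The same completeness worry applies more broadly: you never verify that your finite list of minimal obstructions, together with induced-minor monotonicity, exhausts all $H$ outside the two target graphs. The actual unboundedness arguments in~\cite{BOS15} rely on bespoke constructions (closely tied to the simple-path-encoding machinery mentioned in Section~\ref{s-gi}) rather than on the named classes in your toolbox.
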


With an eye on Theorem~\ref{thm:classification2}, Theorem~\ref{thm:cw-ind-min} leads to the following open problem.

\begin{oproblem}\label{o-im}
Determine for which pairs of graphs $(H_1,H_2)$ the class of $(H_1,H_2)$-induced-minor-free graphs has bounded clique-width.
\end{oproblem}

We end this section with two more open problems; we note that a class of $\mbox{$H$-contraction}$-free graphs need not be hereditary and that Open Problem~\ref{o-contraction} is trivial if we allow disconnected graphs, since edge contractions preserve the number of components in a graph.

\begin{sloppypar}
\begin{oproblem}\label{o-contraction}
Determine for which graphs~$H$ the class of connected $H$-contraction-free graphs has bounded clique-width.
\end{oproblem}
\end{sloppypar}

\begin{sloppypar}
\begin{oproblem}\label{o-itm}
Determine for which graphs~$H$ the class of $H$-induced-topological-minor-free graphs has bounded clique-width.
\end{oproblem}
\end{sloppypar}

\section{Algorithmic Consequences}\label{s-algo}

In this section we illustrate how bounding clique-width (or one of its equivalent parameters) can be used to find polynomial-time algorithms to solve problems on special graph classes, even when these problems are \NP-hard on general graphs.
In Section~\ref{s-meta-th} we discuss meta-theorems, and in Section~\ref{s-meta} we show how they can be used as part of a general strategy for solving problems.
In Section~\ref{s-atoms} we focus on atoms, which are often used as a specific ingredient for the general strategy.
Finally, in Sections~\ref{s-gc} and~\ref{s-gi} we look at two problems in particular, namely {\sc Colouring} and {\sc Graph Isomorphism}, respectively.
For other graph problems where boundedness of clique-width is used to classify their computational complexity on hereditary graph classes, see, for example,~\cite{BGM,CKL11}. 
We refer to~\cite{BGP13,BK17,FGLS10,FGLS14,GLSZ18} for parameterized complexity results on clique-width.

\subsection{Meta-Theorems}\label{s-meta-th}

We observed in Section~\ref{s-intro} that one of the advantages of showing that a graph class has bounded clique-width is that one can apply meta-theorems that say that any problem definable within certain constraints can be solved in polynomial time on the class.
We mentioned such a theorem concerning any problem that can be defined in MSO$_1$~\cite{CMR00}.
The result of~\cite{CMR00} has been extended by others to address graph problems that cannot be defined in MSO$_1$.
An important example of such a problem is the {\sc ${\cal F}$-Partition} problem, which asks, for a graph~$G$ and an integer~$k$, whether~$V(G)$ can be partitioned into (possibly empty) sets $V_1,\ldots,V_k$ such that every~$V_i$ induces a graph in~${\cal F}$.
In particular, if~${\cal F}$ consists of the edgeless graphs, then the {\sc ${\cal F}$-Partition} problem is equivalent to the {\sc Colouring} problem.

Espelage, Gurski and Wanke~\cite{EGW01} gave a general method to show that on graphs of bounded clique-width, {\sc ${\cal F}$-Partition} is polynomial-time solvable for a number of graph classes~${\cal F}$ including complete graphs, edgeless graphs, forests and triangles.
Their method can also be applied to other problems, such as {\sc Hamilton Cycle} (see also~\cite{Wa94} and see~\cite{BKK17} for a faster algorithm) and {\sc Cubic Subgraph}.
Later, Kobler and Rotics~\cite{KR03} proved that a variety of other \NP-complete graph partition problems (where either the set of vertices or the set of edges is partitioned) can be solved in polynomial time for graphs of bounded clique-width.
Again, their set of problems includes {\sc Colouring} 
(see~\cite{La18} for the fastest known algorithm, parameterized by clique-width, for finding a $k$-colouring if~$k$ is constant).
However, their work also captures other graph partition problems, such as {\sc List $k$-Colouring} and {\sc Edge-Dominating Set}.

\begin{sloppypar}
Gerber and Kobler~\cite{GK03} gave a framework of vertex partition problems with respect to a fixed interval degree constraint matrix.
They showed that these problems, which include {\sc Induced Bounded Degree Subgraph}, {\sc Induced $k$-Regular Subgraph}, $H$-{\sc Colouring} and $H$-{\sc Covering}, are all solvable in polynomial time on graphs of bounded clique-width.
In the same paper, they extended their framework to include more general problems, such as {\sc Satisfactory Graph Partitioning} and {\sc Majority Domination Number}.
Rao~\cite{Ra07} gave another family of vertex partitioning problems that can be solved in polynomial time for graphs of bounded clique-width.
Besides {\sc Colouring}, this family also includes {\sc Domatic Number}, {\sc Hamilton Cycle} and {\sc ${\cal F}$-Partition} where~${\cal F}$ consists of complete and edgeless graphs; perfect graphs; or $H$-free graphs, for an arbitrary fixed graph~$H$.
\end{sloppypar}

The algorithms in~\cite{CMR00,EGW01,GK03,GS15,KR03} all require a $c$-expression of the input graph~$G$ for some constant~$c$.
Recall that computing the clique-width of a graph is \NP-hard~\cite{FRRS09} (and that the complexity of deciding whether a graph has clique-width at most~$c$ is still open for every constant~$c\geq 4$).
Of course, this suggests we cannot hope to compute a $\cw(G)$-expression in polynomial time.
However, it is sufficient to use the algorithm of Seymour and Oum~\cite{OS06}, which returns a $c$-expression for some $c\leq 2^{3\cw(G)+2}-1$ in $O(n^9\log n)$ time, or the later improvements of Oum~\cite{Oum08} and Hlin\v{e}n\'y and Oum~\cite{HO08} that provide cubic-time algorithms which yield a $c$-expression for some $c\leq 8^{\cw(G)}-1$ and $c\leq 2^{\cw{G}+1}-1$, respectively.

We note that there exist problems that are polynomial-time solvable for graphs of clique-width~$c$, but \NP-complete for graphs of clique-width~$d$ for constants~$c$ and~$d$ with $c<d$.
For example, this holds for the {\sc Disjoint Paths} problem, which is linear-time solvable for graphs of clique-width at most~$2$, but \NP-complete for graphs of clique-width at most~$6$~\cite{GW06}.

\subsection{A General Strategy for Finding Algorithms}\label{s-meta}

Below we describe an approach that has often been used as a general strategy when we want to solve a problem~$\Pi$ on a graph class~${\cal G}$.
We suppose that there exists some meta-algorithm~{\tt A} that can be used to solve~$\Pi$ on classes of bounded clique-width.
We say that the graph class~${\cal G}$ is \defword{reducible} to some subclass ${\cal G'}\subseteq {\cal G}$ with respect to~$\Pi$ if the following holds: if~$\Pi$ can be solved in polynomial time on~${\cal G}'$, then~$\Pi$ can also be solved in polynomial time on~${\cal G}$.
We can now state the following general approach.

\medskip
\noindent \rule{\textwidth}{0.1mm}\\
{\tt Clique-Width Method}\\[-1.5em]
\begin{enumerate}[1a.]
\item [1.] Check if~${\cal G}$ has bounded clique-width (for instance, by using the {\tt BCW Method}).\\[-1.5em]
\item [2.] If so, then apply {\tt A}.
Otherwise choose between 3a and 3b.\\[-1.5em]
\item [3a.] Reduce~${\cal G}$ to some subclass~${\cal G}'$ of bounded clique-width and apply~{\tt A}.
\item [3b.] Partition~${\cal G}$ into two classes~${\cal G}_1$ and~${\cal G}_2$, such that~${\cal G}_1$ has bounded clique-width and is as large as possible.
Apply {\tt A} to solve~$\Pi$ on~${\cal G}_1$.
Use some problem-specific algorithm to solve~$\Pi$ on~${\cal G}_2$.\\[-2.5em]
\end{enumerate}
\noindent \rule{\textwidth}{0.1mm}

\medskip
\noindent
To give an example where Step~3a of this method is used, we can let~${\cal G}'$ be the class that consists of all atoms in~${\cal G}$.
Recall that a connected graph is an atom if it has no clique cut-set. 
Dirac~\cite{Di61} introduced the notion of a clique cut-set and proved that every chordal graph is either complete or has a clique cut-set.
As complete graphs have clique-width~$2$, this means that chordal graphs that are atoms have clique-width at most~$2$, whereas the class of chordal graphs has unbounded clique-width (see, for example, Theorem~\ref{thm:chordal-classification}). 
Over the years, decomposition into atoms has become a widely used tool for solving decision problems on hereditary graph classes.
For instance, a classical result of Tarjan~\cite{Tarjan85} implies that {\sc Colouring} and other problems, such as those of determining the size of a largest independent set ({\sc Independent Set}) or a largest clique ({\sc Clique}), are polynomial-time solvable on a hereditary graph class~${\cal G}$ if and only if they are polynomial-time solvable on the atoms of~${\cal G}$.
We will discuss atoms in more detail in Section~\ref{s-atoms}.

To give an example where Step~3b of this method is used, Fraser, Hamel, Ho\`ang, Holmes and LaMantia~\cite{FHHHL17} proved that {\sc Colouring} can be solved in polynomial time for $(C_4,C_5,4P_1)$-free graphs by proving that the non-perfect graphs from this class have bounded clique-width and by recalling that {\sc Colouring} can be solved in polynomial time on perfect graphs~\cite{GLS84}.

\subsection{Atoms}\label{s-atoms}
As mentioned, atoms are an important example for Step 3a in the {\tt Clique-Width Method}.
To determine new polynomial-time results for {\sc Colouring}, Gaspers, Huang and Paulusma~\cite{GHP18} investigated whether there exist graph classes of unbounded clique-width whose atoms have bounded clique-width.
They found that this is not the case for the classes of $H$-free graphs.
That is, the classification for $H$-free atoms coincides with the classification for $H$-free graphs in Theorem~\ref{thm:single}.

\begin{theorem}[\cite{GHP18}]\label{t-atoms}
Let~$H$ be a graph.
The class of $H$-free atoms has bounded clique-width if and only if~$H$ is an induced subgraph of~$P_4$.
\end{theorem}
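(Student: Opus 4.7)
The plan is as follows. First, the forward direction is immediate from Theorem~\ref{thm:single}: if $H$ is an induced subgraph of $P_4$, then every $H$-free graph is $P_4$-free and thus has clique-width at most~$2$, bounding the clique-width of $H$-free atoms in particular.

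For the reverse direction I would begin with a reduction step. By Observation~\ref{l-obs}, if $H' \ssi H$ then every $H'$-free graph is $H$-free, and this containment passes to the subclass of atoms; thus it suffices to establish unboundedness of clique-width of $H$-free atoms for each graph $H$ that is minimal (with respect to $\ssi$) among graphs not contained in $P_4$ as an induced subgraph. Since the induced subgraphs of $P_4$ are exactly $\emptyset, P_1, 2P_1, P_2, P_3, P_1+P_2$ and $P_4$, a direct case check on $|V(H)|$ shows that the minimal such graphs are $3P_1$, $K_3$, $2P_2$, $C_4$ and $C_5$.

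Next, I would deal with all five cases using a single family of witnesses: $k$-subdivided walls $W$ for $k \geq 2$, together with their complements $\overline{W}$. Because a $2$-subdivided wall has girth at least~$18$, it is $(K_3, C_4, C_5)$-free, and by Corollary~\ref{cor:walls} it has unbounded clique-width. Its complement $\overline{W}$ therefore has unbounded clique-width by Fact~\ref{fact:comp}, and is $(3P_1, 2P_2)$-free since $\overline{K_3} = 3P_1$ and $\overline{C_4} = 2P_2$. Thus all five minimal obstructions are covered, and what remains, and forms the heart of the proof, is to show that $W$ and $\overline{W}$ can be chosen to be atoms.

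The main obstacle is this last verification. For $W$ itself, bipartiteness implies cliques have size at most~$2$, so a clique cut-set is either a cut vertex or an adjacent pair of vertices whose deletion disconnects $W$; both are easily ruled out for sufficiently large walls. The case of $\overline{W}$ is more delicate: a clique cut-set in $\overline{W}$ corresponds to an independent set $K$ in $W$ such that $V(W) \setminus K$ splits into non-empty $A, B$ with $A$ complete to $B$ in $W$. The maximum-degree-$3$ constraint of $W$ forces $|A|,|B| \leq 3$; the subcase $|A| = |B| = 2$ would yield an induced $C_4$ in $W$ (bipartiteness of $W$ excludes edges within $A$ or within $B$), contradicting the girth bound; and the subcase $|A| = 1$ forces $|K| \geq |V(W)| - 4$, which exceeds the independence number of $W$ (about $|V(W)|/2$) for sufficiently large walls. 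A similar pigeonhole argument shows $\overline{W}$ is connected. If any residual boundary irregularities of the wall cause trouble, one can adjoin or remove a constant number of vertices without affecting either unboundedness of clique-width (Fact~\ref{fact:del-vert}) or the atom property, so the construction can be made to work in full generality.
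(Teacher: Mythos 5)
The survey does not actually prove Theorem~\ref{t-atoms}; it is quoted from~\cite{GHP18} without proof, so there is no in-text argument to compare against. Judged on its own, your proof is correct and is the natural one: the forward direction from Theorem~\ref{thm:single}; the reduction to the minimal graphs not contained in $P_4$ (your list $3P_1$, $K_3$, $2P_2$, $C_4$, $C_5$ is right); and, for the reverse direction, $2$-subdivided walls $W$ as $(K_3,C_4,C_5)$-free atoms and their complements $\overline{W}$ as $(3P_1,2P_2)$-free atoms, with Corollary~\ref{cor:walls} and Fact~\ref{fact:comp} supplying unboundedness of clique-width.

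Three points should be tightened. First, in the clique cut-set analysis for $\overline{W}$ you treat only $|A|=|B|=2$ and $|A|=1$, but the degree bound leaves all of $|A|,|B|\in\{2,3\}$ open; each of these contains the $2\times 2$ configuration and hence a $4$-cycle in $W$, so the same girth contradiction applies, but this should be said. Second, ruling out a two-vertex clique cut-set in $W$ itself is indeed easy but uses something concrete: for $k\geq 1$ every edge of a $k$-subdivided wall has a degree-two subdivision endpoint, and deleting both endpoints of such an edge leaves the graph connected because the underlying wall is $2$-connected and bridgeless; this one-line justification belongs in the proof. Third, your closing escape hatch---adjoining or removing a constant number of vertices to fix boundary irregularities---is risky as stated: such modifications must be checked to preserve $H$-freeness and the atom property as well as unboundedness of clique-width (and Fact~\ref{fact:del-vert} concerns deletion, not addition). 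Fortunately your main construction does not need this fallback, so the argument stands without it.
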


As split graphs are chordal by Observation~\ref{o-inclusion}, it follows that split atoms (split graphs that are atoms) are complete graphs, and thus have clique-width at most~$2$, whereas the class of general split graphs has unbounded clique-width~\cite{MR99}.
As the class of split graphs coincides with the class of $(C_4,C_5,2P_2)$-free graphs~\cite{FH77}, Gaspers, Huang and Paulusma~\cite{GHP18} asked whether there exists a class of $(H_1,H_2)$-free graphs of unbounded clique-width whose atoms form a class of bounded clique-width.
They proved that this is indeed the case by showing a constant bound on the clique-width of atoms in the class of $(C_4,P_6)$-free graphs, which form a superclass of split graphs (they used this to prove that {\sc Colouring} is polynomial-time solvable for $(C_4,P_6)$-free graphs).

\begin{theorem}[\cite{GHP18}]\label{t-atoms2}
Every $(C_4,P_6)$-free atom has clique-width at most~$18$.
\end{theorem}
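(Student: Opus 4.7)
The plan is to apply the \texttt{BCW Method} of Section~\ref{s-operations}. Let $G$ be a $(C_4,P_6)$-free atom. Since $G$ is an atom it is connected, and as a connected $P_6$-free graph it has diameter at most~$4$. The first step is to identify in $G$ a dominating subgraph $D\subseteq V(G)$ of constant size. This can be done by a standard argument for $P_6$-free graphs: running a breadth-first search from any vertex produces at most four nonempty layers, and the layer structure (combined with the $C_4$-free condition, which forces any two distinct vertices to share at most one common neighbour) can be exploited to extract a dominating induced path or clique $D$ with $|D|$ bounded by a small absolute constant.

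Given $D$, I would partition $V(G)\setminus D$ into the at most $2^{|D|}$ classes $V_S=\{v\in V(G)\setminus D:N(v)\cap D=S\}$ indexed by $S\subseteq D$. The heart of the proof is to establish, using all three hypotheses on $G$ (no induced $C_4$, no induced $P_6$, and no clique cut-set), the following local structure: each $G[V_S]$ belongs to a well-behaved subclass whose clique-width is bounded by a small constant, and each bipartite graph $G[V_S,V_{S'}]$ between two distinct classes is either complete, anti-complete, or of a very restricted form. The atom hypothesis is indispensable here: it is used to forbid the known ``spreading'' configurations --- induced subdivided walls, split permutation graphs, bipartite permutation graphs, unit interval graphs, and the other minimal hereditary classes of unbounded clique-width surveyed in Section~\ref{s-operations} --- which would otherwise be present inside $(C_4,P_6)$-free graphs in general.

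With this structural description in hand, the clique-width bound follows by a standard chain of reductions. First, delete the constantly many vertices of $D$ by Fact~\ref{fact:del-vert}; then apply a bounded number of subgraph and bipartite complementations (Facts~\ref{fact:comp} and~\ref{fact:bip}) to strip away the edges between distinct classes $V_S$ and $V_{S'}$, so that $G$ is reduced to the disjoint union of the induced subgraphs $G[V_S]$. Each $G[V_S]$ has clique-width bounded by a small explicit constant from the structural step (in many cases at most $2$ or~$3$, by Theorem~\ref{thm:single} or Proposition~\ref{p-atmost2}). The specific bound of~$18$ then emerges from a careful quantitative accounting of how the constant number of deletions and complementations affect clique-width.

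I expect the main obstacle to be precisely the local structural analysis: proving that $(C_4,P_6)$-freeness together with the atom hypothesis is strong enough to rule out all the known obstructions to bounded clique-width inside each class $V_S$ and in each bipartite graph between them. This will require a delicate case analysis splitting on the isomorphism type of the dominating set $D$ (a complete graph, a short induced path, an induced $C_5$, and so on), and within each case a verification that every minimal obstruction would either contain an induced $C_4$ or $P_6$, or produce a clique cut-set in $G$, contradicting the hypotheses.
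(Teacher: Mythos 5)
Your outline has the right general shape (find a dominating structure, partition the rest by neighbourhoods, analyse the parts, then reduce via Facts~\ref{fact:del-vert}--\ref{fact:bip}), but the two steps on which everything rests are either wrong or missing. First, your parenthetical claim that $C_4$-freeness ``forces any two distinct vertices to share at most one common neighbour'' is false: two non-adjacent vertices in a $C_4$-free graph may have arbitrarily many common neighbours provided those common neighbours form a clique (one only gets diamonds, not induced $C_4$s), and adjacent pairs are unconstrained. This removes the engine behind your BFS-layer argument, and indeed the assertion that every connected $P_6$-free (or $(C_4,P_6)$-free) graph has a dominating induced path or clique of \emph{constant} size is unsubstantiated; the known domination theorem for connected $P_6$-free graphs yields a dominating induced $C_6$ or a dominating complete bipartite subgraph, and the latter need not have bounded size. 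The proof in~\cite{GHP18} does not start from an arbitrary small dominating set: it uses Dirac's theorem~\cite{Di61} to dispose of the chordal case (a chordal atom is a complete graph), and otherwise works around a shortest induced cycle, which in a $(C_4,P_6)$-free graph must be a $C_3$, $C_5$ or $C_6$, carrying out a long case analysis of the neighbourhood partition relative to that cycle in the spirit of Proposition~\ref{p-noC5-partial2}.

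Second, the step you yourself identify as ``the heart of the proof'' is not carried out, and the substitute you propose cannot work even in principle: you suggest using the atom hypothesis to exclude the \emph{known} minimal hereditary classes of unbounded clique-width (walls, split permutation graphs, unit interval graphs, etc.), but there are infinitely many minimal hereditary classes of unbounded clique-width~\cite{CFKLZ}, so verifying that a class avoids any finite list of obstructions proves nothing about its clique-width. One must instead exhibit, for each case of the analysis, an explicit bounded-width decomposition of $G[V_S]$ and of the bipartite graphs between the parts, using $C_4$-freeness, $P_6$-freeness and the absence of clique cut-sets directly; this is where essentially all of the work in~\cite{GHP18} lies, and it is also the only place the constant~$18$ can come from --- your proposal gives no route to any explicit constant. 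As it stands the argument establishes neither the existence of the claimed dominating set nor the boundedness of the clique-width of the resulting pieces.
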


We are not aware of any other examples, which leads us to ask the following open problem (see also~\cite{GHP18}).

\begin{oproblem}\label{o-atoms}
Determine all pairs of graphs $H_1,H_2$ such that the class of $(H_1,H_2)$-free graphs has unbounded clique-width, but the class of $(H_1,H_2)$-free atoms has bounded clique-width.
\end{oproblem}

Recall from Open Problem~\ref{oprob:twographs} that there are still five non-equivalent pairs $H_1,H_2$ for which we do not know whether the clique-width of $(H_1,H_2)$-free graphs is bounded or unbounded.
Due to the algorithmic implications mentioned above, the following problem is therefore also of interest.

\begin{oproblem}\label{oprob:twographs2}
Does the class of $(H_1,H_2)$-free atoms have bounded clique-width when:
\begin{enumerate}[(i)]
\item \label{oprob:twographs:3P_1b} $H_1=K_3$ and $H_2 \in \{P_1+\nobreak S_{1,1,3},
\allowbreak S_{1,2,3}\}$\\[-1.5em]
\item\label{oprob:twographs:2P_1+P_2b} $H_1=\diamondgraph$ and $H_2 \in \{P_1+\nobreak P_2+\nobreak P_3,\allowbreak P_1+\nobreak P_5\}$\\[-1.5em]
\item \label{oprob:twographs:P_1+P_4b} $H_1=\gem$ and $H_2=P_2+\nobreak P_3$.
\end{enumerate}
\end{oproblem}

\subsection{Graph Colouring}\label{s-gc}

Kr\'al', Kratochv\'{\i}l, Tuza, and Woeginger completely classified the complexity of {\sc Colouring} for $H$-free graphs.

\begin{theorem}[\cite{KKTW01}]\label{t-dicho}
Let~$H$ be a graph.
If $H\ssi P_4$ or $H\ssi P_1+\nobreak P_3$, then {\sc Colouring} restricted to $H$-free graphs is polynomial-time solvable, otherwise it is \NP-complete.
\end{theorem}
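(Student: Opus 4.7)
The plan is to prove the two directions of the dichotomy separately.

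\textbf{Polynomial direction.} If $H \ssi P_4$, then every $H$-free graph is $P_4$-free, and Theorem~\ref{thm:single} gives that such graphs have clique-width at most $2$. Since \textsc{Colouring} is expressible in MSO$_1$, the Courcelle--Makowsky--Rotics meta-theorem discussed in Section~\ref{s-meta-th}, combined with a polynomial-time algorithm to compute a bounded $k$-expression, yields a polynomial-time algorithm. If $H \ssi P_1+P_3$, then clique-width is unbounded in general, so I would use a direct structural argument based on modular decomposition. If $G$ is disconnected, $\chi(G)$ equals the maximum of $\chi$ over components; if $\overline{G}$ is disconnected (so $G$ is a join), $\chi(G)$ equals the sum of $\chi$ over the factors; in both cases recurse on $(P_1+P_3)$-free pieces of smaller size. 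Otherwise both $G$ and $\overline{G}$ are connected, and because $\overline{P_1+P_3}=\paw$, the graph $\overline{G}$ is connected and paw-free. Olariu's classical structure theorem then forces $\overline{G}$ to be triangle-free or complete multipartite. In the first case $\alpha(G)\le 2$, so a colouring is a partition of $V(G)$ into edges and isolated vertices of $\overline{G}$, giving $\chi(G)=|V(G)|-\mu(\overline{G})$ where $\mu$ is the maximum matching, which is computable in polynomial time. In the second case $G$ is a disjoint union of cliques, and, as $G$ is connected, $G$ is a single clique.

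\textbf{Hardness direction.} Assume $H \not\ssi P_4$ and $H \not\ssi P_1+P_3$. The strategy is to find, for each such $H$, an induced subgraph $H' \ssi H$ for which \textsc{Colouring} on $H'$-free graphs is already known to be \NP-complete; then every $H'$-free graph is $H$-free, so \NP-hardness transfers upward. The anchoring \NP-completeness results I plan to invoke are: \textsc{Colouring} on triangle-free graphs (Maffray--Preissmann's \NP-hardness of $3$-\textsc{Colouring} on triangle-free graphs), covering all $H$ with $K_3 \ssi H$; \textsc{Edge Colouring} (Holyer), which translates to \NP-completeness of \textsc{Colouring} on line graphs and hence on $K_{1,3}$-free graphs, covering all $H$ with $K_{1,3} \ssi H$; and \NP-completeness of \textsc{Colouring} on graphs of girth at least $k$ for any fixed $k$ (Emden-Weinert--Hougardy--Kreuter), covering every $H$ that contains any induced cycle. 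A case split then reduces the problem: if $H$ has an induced cycle, invoke the girth result; if $H$ is a forest of maximum degree at least $3$, then $K_{1,3}\ssi H$; otherwise $H$ is a linear forest.

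\textbf{Linear-forest subcase.} A short combinatorial check shows that every linear forest $H$ that is not an induced subgraph of $P_4$ or $P_1+P_3$ must contain at least one of the four graphs $2P_2$, $P_5$, $4P_1$, or $2P_1+P_2$ as an induced subgraph. For each of these four borderline linear forests $H'$, \NP-hardness of \textsc{Colouring} on $H'$-free graphs is established by dedicated polynomial-time reductions from \textsc{$k$-Colouring} on general graphs, in which the reduction pads the input with cliques, independent sets, and universal vertices chosen so as to preserve the chromatic number while avoiding the creation of $H'$ as an induced subgraph.

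\textbf{Main obstacle.} The principal difficulty lies in the linear-forest subcase: first the combinatorial verification that induced subgraphs of $P_4$ and $P_1+P_3$ exhaust the "easy" linear forests, and second the design of the gadget reductions for the four borderline choices $H'\in\{2P_2,P_5,4P_1,2P_1+P_2\}$. It is precisely here that the constraint of producing an $H'$-free output is tightest, so the gadgets must be crafted with care so that the three requirements---polynomial-time computability, preservation of the chromatic number, and $H'$-freeness of the output---are simultaneously met.
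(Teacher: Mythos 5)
The survey does not prove this theorem --- it quotes it from~\cite{KKTW01} --- so I am judging your argument on its own terms. Your polynomial-time direction is correct and essentially complete: for $P_4$-free graphs the max/sum recursion over the modular decomposition already suffices (note that \textsc{Colouring} with~$k$ part of the input is not literally MSO$_1$-definable, so if you want to go via clique-width you should invoke the Kobler--Rotics extension mentioned in Section~\ref{s-meta-th} rather than the MSO$_1$ meta-theorem), and the Olariu-based analysis of $(P_1+\nobreak P_3)$-free graphs via the paw-free complement is sound. On the hardness side your architecture is the standard one and your case analysis is right: the minimal graphs that are induced subgraphs of neither~$P_4$ nor~$P_1+\nobreak P_3$ are $C_3,C_4,C_5$ (covered by the large-girth result), $K_{1,3}$ (covered by Holyer via line graphs), and the three linear forests $4P_1$, $2P_1+\nobreak P_2$ and~$2P_2$; your fourth graph~$P_5$ is redundant since it contains an induced~$2P_2$.

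The genuine gap is in the last step. A reduction that ``pads the input with cliques, independent sets, and universal vertices'' keeps the original instance~$G$ as an induced subgraph of the constructed graph~$G'$; since a general instance of $k$-\textsc{Colouring} already contains $4P_1$, $2P_1+\nobreak P_2$ and~$2P_2$ as induced subgraphs, and $H'$-freeness is inherited downwards, no such~$G'$ can be $H'$-free for any of the three target graphs~$H'$. So the technique you propose for precisely the cases you yourself flag as the main obstacle provably cannot succeed, and these three cases are where the real content of the hardness direction lies. The known proofs do not reduce from $k$-\textsc{Colouring} by padding: for~$4P_1$ one passes to the complement and shows that \textsc{Clique Cover} is \NP-complete on $K_4$-free graphs via \textsc{Partition Into Triangles}, and the $2P_1+\nobreak P_2$ and~$2P_2$ cases likewise require reductions from other \NP-complete problems in which the input is re-encoded (for instance by colour-class gadgets) rather than embedded as an induced subgraph.
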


For $(H_1,H_2)$-free graphs, the classification of {\sc Colouring} is open for many pairs of graphs~$H_1$,~$H_2$.
A summary of the known results can be found in~\cite{GJPS17}, but several other results have since appeared~\cite{BDJP16,CH,DP18,GHP18,KM18,KMP,ML17}; see~\cite{DP18} for further details.
In relation to boundedness of clique-width, the following is of importance.
There still exist ten classes of $(H_1,H_2)$-free graphs, for which {\sc Colouring} could potentially be solved in polynomial time by showing that their clique-width is bounded.
That is, for these classes, the complexity of {\sc Colouring} is not resolved, and it is not known whether the clique-width is bounded.
This list is obtained by updating the list of~\cite{DDP17}, which contains 13 cases, with the result of~\cite{BDJP16} for $(H_1,H_2)=(2P_1+\nobreak P_3,\overline{2P_1+P_3})$ and the results of~\cite{BDJP18} for $(H_1,H_2)=(\gem,P_1+\nobreak 2P_2)$ and $(H_1,H_2)=(P_1+\nobreak P_4,\overline{P_1+2P_2})$.

\begin{oproblem}\label{oprob:twographs3}
Can the {\sc Colouring} problem be solved in polynomial time on $(H_1,H_2)$-free graphs when:
\begin{enumerate}[(i)]
\item $H_1\in \{K_3,\paw\}$ and $H_2\in \{P_1+\nobreak S_{1,1,3},S_{1,2,3}\}$\\[-1.5em]
\item $H_1=2P_1+\nobreak P_2$ and $H_2 \in \{\overline{P_1+\nobreak P_2+\nobreak P_3}, \allowbreak \overline{P_1+\nobreak P_5}\}$\\[-1.5em]
\item $H_1=\diamondgraph$ and $H_2 \in \{P_1+\nobreak P_2+\nobreak P_3,\allowbreak P_1+\nobreak P_5\}$\\[-1.5em]
\item $H_1=P_1+\nobreak P_4$ and $H_2=\overline{P_2+P_3}$\\[-1.5em]
\item $H_1=\gem$ and $H_2=P_2+\nobreak P_3$.
\end{enumerate}
\end{oproblem}

\subsection{Graph Isomorphism}\label{s-gi}

Grohe and Schweitzer~\cite{GS15} proved that {\sc Graph Isomorphism} is polynomial-time solvable for graphs of bounded clique-width.
Hence, identifying graph classes of bounded clique-width is of importance for the {\sc Graph Isomorphism} problem.

The classification for the computational complexity of {\sc Graph Isomorphism} for $H$-free graphs can be found in a technical report of Booth and Colbourn~\cite{BC79}, who credited the result to an unpublished manuscript of Colbourn and Colbourn.
Another proof of this result appears in a paper of Kratsch and Schweitzer~\cite{KS12}.

\begin{theorem}[\cite{BC79}]\label{t-gip4}
Let~$H$ be a graph. 
If $H\ssi P_4$, then {\sc Graph Isomorphism} for $H$-free graphs can be solved in polynomial time, otherwise it is \GI-complete.
\end{theorem}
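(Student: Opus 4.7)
The proof naturally splits along the statement. For the ``if'' direction, when $H\ssi P_4$, the plan is very short: every $H$-free graph is $P_4$-free, so by Theorem~\ref{thm:single} the class has clique-width at most~$2$, and the Grohe--Schweitzer result~\cite{GS15} cited just above the theorem yields a polynomial-time isomorphism algorithm.

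For the ``only if'' direction, when $H\not\ssi P_4$, membership in~\GI\ is immediate from the definition, and the task is to exhibit a \GI-complete subclass contained in the $H$-free graphs. My plan is a Ramsey-guided case split. If $K_3\ssi H$, then every bipartite graph is $H$-free, so we are done by the classical \GI-completeness of bipartite graph isomorphism. Dually, if $3P_1\ssi H$, then every co-bipartite graph is $H$-free; since taking complements is an isomorphism-preserving reduction, co-bipartite isomorphism is \GI-complete as well.

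The remaining case is when $H$ is simultaneously $K_3$-free and $3P_1$-free. Since $R(3,3)=6$, this forces $|V(H)|\leq 5$, and a short direct check (using that in such a graph the neighbourhood of every vertex is independent of size at most~$2$ and the non-neighbourhood is a clique of size at most~$2$, forcing every vertex to have degree exactly~$2$ on five vertices) shows that the $(K_3,3P_1)$-free graphs on at most five vertices are precisely the induced subgraphs of $P_4$ together with $2P_2$, $C_4$ and $C_5$. Since we are in the case $H\not\ssi P_4$, it remains to handle $H\in\{2P_2,C_4,C_5\}$. For $H=2P_2$ the class of split graphs is $H$-free, and split graph isomorphism is \GI-complete via a standard reduction from bipartite isomorphism that turns one side of the bipartition into a clique (with a small gadget to pin down the split partition). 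For $H\in\{C_4,C_5\}$ the class of chordal graphs is $H$-free, and since split graphs are chordal by Observation~\ref{o-inclusion}, chordal graph isomorphism inherits the \GI-completeness from split graph isomorphism.

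The main obstacle is the last subcase: one has to recognise via Ramsey that only the three tiny exceptional graphs $2P_2$, $C_4$ and $C_5$ remain outside the $P_4$-induced-subgraph list, and then import a classical \GI-completeness result for split (hence chordal) graph isomorphism. The two Ramsey-driven cases are essentially immediate from the well-known hardness of bipartite graph isomorphism.
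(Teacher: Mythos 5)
The paper does not prove Theorem~\ref{t-gip4}; it is imported from Booth and Colbourn~\cite{BC79} (with another proof in~\cite{KS12}), so there is no in-paper argument to compare against. Your proposal is correct and is essentially the standard argument from that literature: the tractable side reduces to $P_4$-free (cograph) isomorphism via Observation~\ref{l-obs} (historically settled by cotrees, though deriving it from bounded clique-width and~\cite{GS15} is valid within this paper's toolkit), and the hardness side is exactly the Ramsey split into $K_3\ssi H$, $3P_1\ssi H$, and the finitely many $(K_3,3P_1)$-free exceptions $2P_2$, $C_4$, $C_5$, each of which contains the class of split graphs as an $H$-free subclass (since split graphs are precisely the $(C_4,C_5,2P_2)$-free graphs~\cite{FH77}), so all three exceptional cases follow from the single classical fact that {\sc Graph Isomorphism} is \GI-complete on split graphs. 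The only step needing genuine care is the one you flag yourself: the reduction from bipartite to split isomorphism must pin down the split partition (and the bipartition) with a gadget, which is standard; your five-vertex degree argument for $C_5$ and your four-vertex enumeration both check out.
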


Note that {\sc Graph Isomorphism} is polynomial-time solvable even for the class of permutation graphs~\cite{Co81}, which contains the class of $P_4$-free graphs.

Schweitzer~\cite{Sc17} observed great similarities between the techniques used for classifying boundedness of clique-width and classifying the complexity of {\sc Graph Isomorphism} for hereditary graph classes.
He proved that {\sc Graph Isomorphism} is \GI-complete for any graph class~${\cal G}$ that allows a so-called simple path encoding and also showed that every such graph class~${\cal G}$ has unbounded clique-width.
Indeed, the {\tt UCW Method} relies on some clique-width-boundedness-preserving transformations of an arbitrary graph from some known graph class~${\cal G'}$ of unbounded clique-width, such as the class of walls, to a graph of the unknown class~${\cal G}$.
One way to do this is to show that the graphs in~${\cal G}$ contain a simple path encoding of graphs from~${\cal G'}$.

Kratsch and Schweitzer~\cite{KS12} initiated a complexity classification for {\sc Graph Isomorphism} for $(H_1, H_2)$-free graphs.
Schweitzer~\cite{Sc17} extended the results of~\cite{KS12} and proved that the number of unknown cases is finite, but did not explicitly list what these cases were.
As mentioned earlier, {\sc Graph Isomorphism} is polynomial-time solvable for graphs of bounded clique-width~\cite{GS15}.
Bonamy, Dabrowski, Johnson and Paulusma~\cite{BDJP18} therefore combined the known results for boundedness of clique-width for bigenic classes (Theorem~\ref{thm:classification2}) with the results of~\cite{KS12} and~\cite{Sc17} to obtain an explicit list of only~14 cases, for which the complexity of {\sc Graph Isomorphism} was unknown.
In the same paper they reduced this number to~$7$ and gave the following state-of-the-art summary; recall that $K_{1,t}^+$ and~$K_{1,t}^{++}$ are the graphs obtained from~$K_{1,t}$ by subdividing one edge once or twice, respectively.

\begin{theorem}[\cite{BDJP18}]\label{thm:gi-classification2}
For a class~${\cal G}$ of graphs defined by two forbidden induced subgraphs, the following holds:
\begin{enumerate}
\item {\sc Graph Isomorphism} is solvable in polynomial time on~${\cal G}$ if~${\cal G}$ is equivalent\footnote{Equivalence is defined in the same way as for clique-width (see Footnote~\ref{footnote:equivalence}).
If two classes are equivalent, then the complexity of {\sc Graph Isomorphism} is the same on both of them.~\cite{BDJP18}.} to a class of $(H_1,H_2)$-free graphs such that one of the following holds:
\begin{enumerate}[(i)]
\renewcommand{\theenumii}{(\roman{enumii})}
\renewcommand{\labelenumii}{(\roman{enumii})}
\item \label{thm-old:gi-classification2:poly:P4}$H_1$ or $H_2 \ssi P_4$\\[-1.5em]
\item \label{thm-old:gi-classification2:poly:K1t+P1-co-K1t+P1}$\overline{H_1}$ and $H_2 \ssi K_{1,t}+\nobreak P_1$ for some~$t \geq 1$\\[-1.5em]
\item \label{thm-old:gi-classification2:poly:P3+tP1-coP3+tP1}$\overline{H_1}$ and $H_2 \ssi tP_1+\nobreak P_3$ for some~$t \geq 1$\\[-1.5em]
\item \label{thm-old:gi-classification2:poly:Kt}$H_1 \ssi K_t$ and $H_2 \ssi 2K_{1,t}, K_{1,t}^+$ or~$P_5$ for some $t \geq 1$\\[-1.5em]
\item \label{thm-old:gi-classification2:poly:paw}$H_1 \ssi \paw$ and $H_2 \ssi P_2+\nobreak P_4, P_6, S_{1,2,2}$ or~$K_{1,t}^{++}+\nobreak P_1$ for some~$t \geq 1$\\[-1.5em]
\item \label{thm-old:gi-classification2:poly:diamond}$H_1 \ssi \diamondgraph$ and $H_2 \ssi P_1+\nobreak 2P_2$\\[-1.5em]
\item \label{thm-old:gi-classification2:poly:gem}$H_1 \ssi \gem$ and $H_2 \ssi P_1+\nobreak P_4$ or~$P_5$ or\\[-1.5em]
\item \label{thm-old:gi-classification2:poly:crossed-house}$H_1 \ssi \overline{2P_1+P_3}$ and $H_2 \ssi P_2+\nobreak P_3$.
\end{enumerate}
\item {\sc Graph Isomorphism} is \GI-complete on~${\cal G}$ if~${\cal G}$ is equivalent to a class of $(H_1,H_2)$-free graphs such that one of the following holds:
\begin{enumerate}[(i)]
\renewcommand{\theenumii}{(\roman{enumii})}
\renewcommand{\labelenumii}{(\roman{enumii})}
\item \label{thm-old:gi-classification2:hard:not-path-star-forest}neither~$H_1$ nor~$H_2$ is a path star forest\\[-1.5em]
\item \label{thm-old:gi-classification2:hard:not-co-path-star-forest}neither~$\overline{H_1}$ nor~$\overline{H_2}$ is a path star forest\\[-1.5em]
\item \label{thm-old:gi-classification2:hard:K3}$H_1\si K_3$ and $H_2\si 2P_1+\nobreak 2P_2,P_1+\nobreak 2P_3,2P_1+\nobreak P_4$ or~$3P_2$\\[-1.5em]
\item \label{thm-old:gi-classification2:hard:K4}$H_1\si K_4$ and $H_2\si K_{1,4}^{++}, P_1+\nobreak 2P_2$ or~$P_1+\nobreak P_4$\\[-1.5em]
\item \label{thm-old:gi-classification2:hard:K5}$H_1\si K_5$ and $H_2\si K_{1,3}^{++}$\\[-1.5em]
\item \label{thm-old:gi-classification2:hard:2P2}$H_1 \si C_4$ and $H_2 \si K_{1,3}, 3P_1+\nobreak P_2$ or~$2P_2$\\[-1.5em]
\item \label{thm-old:gi-classification2:hard:2P1+P2}$H_1\si \diamondgraph$ and $H_2 \si K_{1,3}, P_2+\nobreak P_4, 2P_3$ or~$P_6$ or\\[-1.5em]
\item \label{thm-old:gi-classification2:hard:gem}$H_1\si \overline{P_1+P_4}$ and $H_2\si P_1+\nobreak 2P_2$.
\end{enumerate}
\end{enumerate}
\end{theorem}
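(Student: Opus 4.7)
The plan is to assemble the classification from several already-established ingredients rather than attacking each of the infinitely many pairs $(H_1,H_2)$ from scratch. First, I would invoke the classification of Kratsch and Schweitzer~\cite{KS12} together with the finiteness result of Schweitzer~\cite{Sc17}, which together reduce the problem to a finite explicit list of pairs $(H_1,H_2)$, modulo the equivalence relation generated by simultaneous complementation and by swapping $3P_1$ with $P_1+\nobreak P_3$ (this equivalence does not change the complexity of \textsc{Graph Isomorphism}, so working with equivalence classes is safe). Second, I would cross-reference this finite list with the bigenic clique-width classification in Theorem~\ref{thm:classification2}: every case in part~(1) of that theorem gives bounded clique-width, and therefore polynomial-time \textsc{Graph Isomorphism} by the meta-algorithm of Grohe and Schweitzer~\cite{GS15}. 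This single step immediately handles items~\ref{thm-old:gi-classification2:poly:P4}, \ref{thm-old:gi-classification2:poly:diamond}, \ref{thm-old:gi-classification2:poly:gem} and \ref{thm-old:gi-classification2:poly:crossed-house}, and also several subcases of \ref{thm-old:gi-classification2:poly:Kt}--\ref{thm-old:gi-classification2:poly:paw}, via the corresponding items of Theorem~\ref{thm:classification2}.

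Third, for the hardness side, the ``generic'' items~\ref{thm-old:gi-classification2:hard:not-path-star-forest} and~\ref{thm-old:gi-classification2:hard:not-co-path-star-forest} should be established by the path-encoding technique of Schweitzer~\cite{Sc17}: if neither forbidden graph (respectively, neither of their complements) is a path-star-forest, then the graph class contains encodings of arbitrary graphs via long induced paths and stars, giving a \textsf{GI}-completeness reduction from general \textsc{Graph Isomorphism}. The remaining specific hardness items~\ref{thm-old:gi-classification2:hard:K3}--\ref{thm-old:gi-classification2:hard:gem} fall outside the scope of the generic encoding because one of the forbidden graphs is small (a clique, $C_4$, diamond or gem), so I would verify them individually by the direct graph-product or subdivision reductions established in~\cite{KS12,Sc17}, most of which exploit that the standard \textsc{Graph Isomorphism}-hard instances (bipartite graphs, split graphs, etc.) already avoid these small induced subgraphs.

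Fourth, the genuinely new content is the handful of polynomial-time items that are neither covered by Theorem~\ref{thm:classification2} nor by prior isomorphism results. In particular, the generalized families in items~\ref{thm-old:gi-classification2:poly:K1t+P1-co-K1t+P1}, \ref{thm-old:gi-classification2:poly:P3+tP1-coP3+tP1}, the $2K_{1,t}$ and $K_{1,t}^+$ subcases of~\ref{thm-old:gi-classification2:poly:Kt}, and the $K_{1,t}^{++}+\nobreak P_1$ subcase of~\ref{thm-old:gi-classification2:poly:paw} need tailored structural arguments: in each of these the graph class has unbounded clique-width (so the meta-algorithm is not directly available), but a decomposition-based isomorphism algorithm can still be built, typically by analysing the modular decomposition or by isolating a bounded-clique-width ``core'' after removing modules of twins. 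I would follow the atom-and-module decomposition strategy outlined in Section~\ref{s-meta}, combined with a colour-refinement step that respects twin equivalence classes, to reduce each of these classes to a bounded-clique-width subclass on which Grohe--Schweitzer applies.

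The main obstacle, I expect, will be precisely these new polynomial-time subcases for the parametrized families of forbidden graphs (those depending on $t$): there the clique-width can be arbitrarily large, so the meta-theorem is unavailable, and one must produce a uniform structural decomposition that is both computable in polynomial time and fine enough to reconstruct an isomorphism. The hardness side is comparatively routine once Schweitzer's path-encoding framework is in place; the real work lies in the algorithmic side for the infinite families, and in checking that no pair in the finite residual list from~\cite{Sc17} slips through uncovered by either Theorem~\ref{thm:classification2} or the new decompositions.
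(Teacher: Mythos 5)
Your overall architecture matches how this survey presents the result: Theorem~\ref{thm:gi-classification2} is quoted directly from~\cite{BDJP18}, and the surrounding text describes exactly the assembly you propose --- combine the bigenic clique-width classification (Theorem~\ref{thm:classification2}) with the Grohe--Schweitzer meta-algorithm~\cite{GS15} for the polynomial-time cases of bounded clique-width, use the path-encoding framework of~\cite{KS12,Sc17} for the generic hardness items and the finiteness of the residual list, and then treat the remaining cases separately. The survey itself offers no proof beyond this provenance, so at that level of resolution you are reconstructing the intended route.

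There is, however, a concrete gap in your fourth step, which is precisely where the new content of~\cite{BDJP18} lies. For the parametrized families in items~(ii) and~(iii), the $2K_{1,t}$ and $K_{1,t}^+$ parts of item~(iv), and the $K_{1,t}^{++}+P_1$ part of item~(v), you propose to isolate a ``bounded-clique-width core'' via modular/twin decomposition and then apply Grohe--Schweitzer. This cannot succeed: these hereditary classes have unbounded clique-width (as you yourself note), and by Fact~\ref{fact:prime} the prime graphs of a hereditary class form a class of bounded clique-width if and only if the class itself does. Since prime graphs are exactly those left unchanged by modular (in particular twin) decomposition, the ``core'' you extract still ranges over a class of unbounded clique-width, so the meta-theorem of~\cite{GS15} remains unavailable after your reduction. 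The polynomial-time algorithms for these families must therefore rest on machinery other than clique-width --- in~\cite{BDJP18} this is done via reductions to isomorphism instances of bounded colour multiplicity or bounded degree, resolved by group-theoretic isomorphism tests, together with bespoke structural analyses of each class. A smaller inaccuracy: not all of the specific hardness items~(iii)--(viii) are already established in~\cite{KS12,Sc17}; several of the \GI-completeness constructions are new to~\cite{BDJP18}, so delegating all of them to the earlier reductions undersells the work required on the hardness side as well.
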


As shown in~\cite{BDJP18}, Theorem~\ref{thm:gi-classification2} leads to the following open problem.

\newpage
\begin{oproblem}\label{oprob:gi}
What is the complexity of {\sc Graph Isomorphism} on $(H_1,H_2)$-free graphs in the following seven cases?
\begin{enumerate}[(i)]
\item $H_1=K_3$ and $H_2 \in \{P_7,S_{1,2,3}\}$\\[-1.5em]
\item $H_1=K_4$ and $H_2=S_{1,1,3}$\\[-1.5em]
\item $H_1=\diamondgraph$ and $H_2 \in \{P_1+\nobreak P_2+\nobreak P_3,P_1+\nobreak P_5\}$\\[-1.5em]
\item $H_1=\gem$ and $H_2=P_2+\nobreak P_3$\\[-1.5em]
\item $H_1=\overline{2P_1+P_3}$ and $H_2=P_5$.
\end{enumerate}
\end{oproblem}

\begin{sloppypar}
For $H$-induced-minor-free graphs the classification for the complexity of {\sc Graph Isomorphism} is given in Theorem~\ref{thm:gi}.
Note that the second and third tractable cases follow from Theorem~\ref{thm:cw-ind-min} and the fact that {\sc Graph Isomorphism} is polynomial-time solvable on graphs of bounded clique-width~\cite{GS15}.
We refer to \figurename~\ref{fig:gem-crossed-house} for a picture of the graphs $\overline{2P_1+P_3}$ and $\overline{P_1+P_4}$.
\end{sloppypar}

\begin{theorem}[\cite{BOS15}] \label{thm:gi}
Let~$H$ be a graph.
The {\sc Graph Isomorphism} problem on $H$-induced-minor-free graphs is polynomial-time solvable if:
\begin{enumerate}[(i)]
\item $H$ is a complete graph, \\[-1.5em]
\item $H \ssi \overline{2P_1+P_3}$ or \\[-1.5em]
\item $H \ssi \overline{P_1+P_4}$
\end{enumerate}
and \GI-complete otherwise.
\end{theorem}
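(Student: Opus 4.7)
The plan is to split into the polynomial-time direction and the \GI-completeness direction.

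For the polynomial-time direction, cases (ii) and (iii) follow immediately: by Theorem~\ref{thm:cw-ind-min}, the class of $H$-induced-minor-free graphs has bounded clique-width whenever $H \ssi \overline{2P_1+P_3}$ or $H \ssi \overline{P_1+P_4}$, and by the meta-theorem of Grohe and Schweitzer~\cite{GS15}, {\sc Graph Isomorphism} is polynomial-time solvable on any graph class of bounded clique-width. This already disposes of case~(i) for $r \leq 4$, since $K_3 \ssi \overline{P_1+P_4}$ and $K_4 \ssi \overline{2P_1+P_3}$. For case~(i) with $r \geq 5$, however, a separate argument is required: the class of $K_r$-induced-minor-free graphs has unbounded clique-width already for $r=5$, since it contains the class of all planar graphs, whose clique-width is unbounded. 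The plan here is to establish a structural decomposition of $K_r$-induced-minor-free graphs---either via clique cut-sets into atoms, in the spirit of Section~\ref{s-atoms}, or via a Robertson-Seymour-style tree decomposition adapted to the induced-minor setting---and then to perform canonical isomorphism testing recursively over the decomposition. I expect this to be the most involved part of the proof: $K_r$-induced-minor-freeness is strictly weaker than $K_r$-minor-freeness, so the Robertson-Seymour structure theorem for $K_r$-minor-free graphs does not apply directly and must be adapted with care.

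For the \GI-completeness direction, fix $H$ that is not complete and that is not an induced subgraph of either~$\overline{2P_1+P_3}$ or~$\overline{P_1+P_4}$. The goal is to exhibit, inside the class of $H$-induced-minor-free graphs, a rich subclass on which {\sc Graph Isomorphism} is already known to be \GI-complete. A natural first reduction is by independence number: since neither vertex deletion nor edge contraction can increase the independence number of a graph, every graph~$G$ with $\alpha(G)<\alpha(H)$ is automatically $H$-induced-minor-free. Hence if $\alpha(H) \geq 3$---equivalently, $H$ contains an induced~$3P_1$---then every graph~$G$ with $\alpha(G)\leq 2$ lies in the class; by taking complements, isomorphism on such graphs is equivalent to isomorphism on triangle-free graphs, which is classically \GI-complete (it already contains the bipartite graphs).

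The residual cases are the non-complete graphs $H$ with $\alpha(H)=2$ that embed into neither~$\overline{2P_1+P_3}$ nor~$\overline{P_1+P_4}$; these form the main remaining obstacle. For each such $H$, the plan is to apply the simple path-encoding framework of Schweitzer~\cite{Sc17} in order to realise a \GI-complete family inside the class of $H$-induced-minor-free graphs, modelling the construction on the analogous hardness reductions used for Theorem~\ref{thm:gi-classification2}. Since this framework simultaneously forces unboundedness of clique-width along the same graphs that encode the hard instances, it meshes naturally with Theorem~\ref{thm:cw-ind-min}, which guarantees that in all residual cases the induced-minor-free class under consideration does have unbounded clique-width, leaving room for such encodings to live inside it.
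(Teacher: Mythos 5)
First, a caveat: the paper does not actually prove Theorem~\ref{thm:gi} --- it is quoted from~\cite{BOS15}, and the only justification given in the text is precisely the one you use for cases~(ii) and~(iii), namely Theorem~\ref{thm:cw-ind-min} combined with the result of Grohe and Schweitzer~\cite{GS15}. That part of your proposal is correct and matches the paper. The problems are in the rest.

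Your treatment of case~(i) rests on a false premise. You claim that $K_r$-induced-minor-freeness is ``strictly weaker'' than $K_r$-minor-freeness and that the Robertson--Seymour structure theory must therefore be ``adapted with care''. In fact, when $H$ is a complete graph the two containment relations coincide: a $K_r$-minor model consists of $r$ pairwise adjacent connected branch sets, and since $K_r$ has no non-edges there is no adjacency that needs to be destroyed, so contracting each branch set and deleting the remaining vertices already yields $K_r$ without any edge deletions. Hence the class of $K_r$-induced-minor-free graphs \emph{is} the class of $K_r$-minor-free graphs, and polynomial-time solvability of {\sc Graph Isomorphism} on classes excluding a fixed minor is classical (Ponomarenko). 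Your announced ``most involved part'' thus evaporates once this observation is made; as written, it is a research plan built on a wrong claim rather than a proof.

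On the \GI-completeness direction, the reduction via independence number is a correct and clean observation: neither vertex deletion nor edge contraction increases $\alpha$, so if $\alpha(H)\geq 3$ then all graphs with $\alpha\leq 2$ are $H$-induced-minor-free, and {\sc Graph Isomorphism} on complements of triangle-free graphs is \GI-complete. But the residual cases --- non-complete $H$ with $\alpha(H)=2$ not contained in $\overline{2P_1+P_3}$ or $\overline{P_1+P_4}$, such as $H=C_4$ --- are exactly where the substance of~\cite{BOS15} lies, and ``apply Schweitzer's path-encoding framework, modelled on the reductions for Theorem~\ref{thm:gi-classification2}'' names a toolbox without exhibiting a construction. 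Moreover, your closing heuristic is unsound: unboundedness of clique-width is necessary but far from sufficient for such an encoding, as the class of $K_5$-induced-minor-free graphs itself shows (it contains all planar graphs, so it has unbounded clique-width, yet {\sc Graph Isomorphism} is polynomial-time solvable on it by case~(i)). So Theorem~\ref{thm:cw-ind-min} gives no evidence that the required encodings exist in the residual classes; they must be built explicitly for each~$H$.
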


\section{Well-Quasi-Orderability}\label{s-wqo}

We recall that the Robertson-Seymour Theorem~\cite{RS04-Wagner} states that the set of all finite graphs is well-quasi-ordered by the minor relation.
This result, combined with the cubic-time algorithm of~\cite{RS95} for testing if a graph~$G$ contains some fixed graph~$H$ as a minor, gives a cubic-time algorithm for testing whether a graph belongs to some minor-closed graph class.
Other known results on well-quasi-orderability include a result of Ding~\cite{Di92}, which implies that every class of graphs with bounded vertex cover number is well-quasi-ordered by the induced subgraph relation and a result of Mader~\cite{Ma72}, who showed that every class of graphs with bounded feedback vertex number is well-quasi-ordered by the topological minor relation.
Fellows, Hermelin and Rosamund~\cite{FHR12} simplified the proofs of Ding and Mader.
They also showed that every class of graphs of bounded circumference is well-quasi-ordered by the induced minor relation.
As applications they gave linear-time algorithms for recognizing graphs from any topological-minor-closed graph class with bounded feedback vertex number; any induced-minor-closed graph class of bounded circumference; and any induced-subgraph-closed graph class with bounded vertex cover number.

The Robertson-Seymour Theorem also implies that there exist graph classes of unbounded clique-width that are well-quasi-ordered by the minor relation.
For hereditary graph classes, the notion of well-quasi-orderability by the induced subgraph relation is closely related to boundedness of clique-width, but the exact relationship between the two notions is not yet fully understood.
In this section we survey results on well-quasi-orderability by the induced subgraph relation for hereditary classes, together with some more results for other containment relations.

In Section~\ref{s-intro}, we noted that Daligault, Rao and Thomass{\'e~\cite{DRT10} asked if every hereditary graph class that is well-quasi-ordered by the induced subgraph relation has bounded clique-width.
Lozin, Razgon and Zamaraev~\cite{LRZ18} gave a negative answer to this question.
That is, they found an example of a hereditary graph class that is well-quasi-ordered by the induced subgraph relation but has unbounded clique-width.
As the hereditary graph class in their example is not finitely defined (that is, this graph class is defined by infinitely many forbidden induced subgraphs), they conjectured the following.

\begin{conj}[\cite{LRZ18}]\label{c-f}
If a finitely defined hereditary class of graphs~${\cal G}$ is well-quasi-ordered by the induced subgraph relation, then~${\cal G}$ has bounded clique-width.
\end{conj}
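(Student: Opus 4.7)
The plan is to attack Conjecture~\ref{c-f} by its contrapositive: given a finitely defined hereditary graph class~${\cal G}$ of unbounded clique-width (so its forbidden set~${\cal F}={\cal F}_{\cal G}$ is finite), I would exhibit an infinite antichain in~${\cal G}$ under the induced subgraph relation. The finiteness of~${\cal F}$ is essential, as it ensures that any sufficiently generic infinite family of graphs can be realized inside~${\cal G}$ while avoiding the finitely many forbidden patterns.

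The first step is an easy case analysis based on cycles. Every proper induced subgraph of a cycle~$C_n$ is a linear forest (a disjoint union of paths), so any~$H$ that is neither a cycle nor a linear forest is not an induced subgraph of any~$C_n$. Hence, if every $H\in{\cal F}$ is neither a cycle nor a linear forest, then $C_n\in{\cal G}$ for every $n\geq 3$; since $C_n\ssi C_m$ forces $n=m$, the set $\{C_n:n\geq 3\}$ is already the desired infinite antichain. A cycle $C_k\in{\cal F}$ forbids only $C_k$ itself, so if~${\cal F}$ contains some cycles but no linear forest, an identical argument yields an antichain of cycles. The remaining case, in which~${\cal F}$ contains a linear forest, encompasses most of the open classifications surveyed in Section~\ref{s-hereditary} and is where the real difficulty lies.

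For this remaining case I would run the {\tt UCW Method} in reverse. Every currently known construction witnessing unboundedness of clique-width inside a hereditary class produces an explicit infinite family of graphs---walls and $k$-subdivided walls (Corollary~\ref{cor:walls}), the matrix partitions of Theorem~\ref{thm:generalunbounded}, bipartite and split permutation graphs (Theorems~\ref{t-bipper} and~\ref{t-splitpermutation}), unit interval graphs (Theorem~\ref{t-unitinterval}), twisted chain graphs~\cite{KPS17}, and the minimal classes of~\cite{ABLS,Lo11,CFKLZ}. For each such construction one can typically vary two independent parameters (for instance the height and width of a grid, or the lengths of two arms of a subdivided wall) to obtain a two-parameter family $\{G_{a,b}\}$ whose combinatorial rigidity forces $G_{a,b}\ssi G_{a',b'}$ only when $a\leq a'$ and $b\leq b'$; an appropriate antidiagonal then yields an infinite antichain. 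Facts~\ref{fact:del-vert}--\ref{fact:bip} can then be used to transport such an antichain back to a sequence of induced subgraphs of graphs already present in~${\cal G}$.

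The chief obstacle is that this programme is not uniform: it demands a case-by-case treatment of every known UCW construction, and there is no analogue of the Robertson--Seymour grid minor theorem guaranteeing that a finite family of obstructions witnesses every instance of unbounded clique-width. A fully general proof of Conjecture~\ref{c-f} therefore seems to require, as a prerequisite, a structural ``grid theorem for clique-width''---in particular a classification (or at least a good structural understanding) of the minimal hereditary classes of unbounded clique-width, of which~\cite{CFKLZ} has shown there are infinitely many---together with a verification that each such minimal class contains an infinite antichain. The dichotomies surveyed in Section~\ref{s-hereditary} can be viewed as gradually mapping out this boundary, but a uniform structural theorem remains the key missing ingredient.
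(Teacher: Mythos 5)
This statement is a \emph{conjecture} (due to Lozin, Razgon and Zamaraev), and the paper offers no proof of it --- Section~\ref{s-wqo} only surveys partial evidence and records that at least one bigenic case (Open Problem~\ref{o-con}) remains unresolved. Your submission is likewise not a proof but a research programme, and you say so yourself: the entire content of the contrapositive --- that every finitely defined hereditary class of unbounded clique-width contains an infinite antichain under the induced subgraph relation --- is deferred to a ``case-by-case treatment of every known UCW construction'' plus a hypothetical ``grid theorem for clique-width'' that does not exist. The only case you actually settle is the one where ${\cal F}_{\cal G}$ contains no linear forest, and that case never uses the unbounded-clique-width hypothesis at all (the cycles form an antichain regardless); it is the easy direction that is already implicit in the paper's discussion. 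Everything that makes the conjecture hard is left open.

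Beyond incompleteness, one concrete step in your plan is known to fail. You propose to use Facts~\ref{fact:del-vert}--\ref{fact:bip} to ``transport'' an infinite antichain from an auxiliary class ${\cal G}'$ back into ${\cal G}$. But the paper shows explicitly (Section~\ref{s-labeled}, first Example) that vertex deletion, subgraph complementation and bipartite complementation do \emph{not} preserve well-quasi-orderability by the induced subgraph relation: deleting one vertex from each cycle turns the infinite antichain of cycles into the well-quasi-ordered class of paths. So an antichain in ${\cal G}'$ gives no antichain in ${\cal G}$ after applying these operations, in either direction; this is precisely why the paper has to introduce the \emph{labelled} induced subgraph relation (Facts~\ref{fact:del-vert2}--\ref{fact:prime2}) to get operation-stable statements, and why the relationship between unbounded clique-width and the existence of unlabelled antichains remains genuinely open. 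If you want to make progress on this conjecture, the labelled framework of Section~\ref{s-labeled} is the place to start, not the unlabelled transport you describe.
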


We note that the reverse implication of the statement in Conjecture~\ref{c-f} is not true.
We can take the (hereditary) class of graphs of maximum degree at most~$2$, which have clique-width at most~$4$ by Proposition~\ref{p-atmost2}.
However, the class of graphs of maximum degree at most~$2$ contains all cycles, which form an infinite anti-chain.
Furthermore, the class of graphs of maximum degree at most~$2$, is finitely defined: it is the class of $(\claw,\paw,\diamondgraph,K_4)$-free graphs.

\subsection{Well-Quasi-Orderability Preserving Operations}\label{s-labeled}

In order to prove that some class of graphs is well-quasi-ordered by the induced subgraph relation or not, we would like to use similar facts to those used to prove boundedness or unboundedness of clique-width.
This is not straightforward, as there is no analogue of Facts~\ref{fact:del-vert}--\ref{fact:subdiv} for well-quasi-orderability by the induced subgraph relation.
We show this in the three examples below, but first we recall that these facts concern, respectively, vertex deletion (Fact~\ref{fact:del-vert}), subgraph complementation (Fact~\ref{fact:comp}), bipartite complementation (Fact~\ref{fact:bip}), being prime (Fact~\ref{fact:prime}), being $2$-connected (Fact~\ref{fact:2-conn}), and edge subdivision for graphs of bounded maximum degree (Fact~\ref{fact:subdiv}).

\begin{example}
A counterexample for analogues of Facts~\ref{fact:del-vert}--\ref{fact:bip} is formed by the class of cycles~\cite{DLP18}: deleting a vertex of a cycle, complementing the subgraph induced by two adjacent vertices, or applying a bipartite complementation between two adjacent vertices yields a path.
The set of cycles is an infinite anti-chain with respect to the induced subgraph relation, but the set of paths is well-quasi-ordered.
\end{example}

\begin{example}\label{e-prime2con}
A counterexample for analogues of Facts~\ref{fact:prime}--\ref{fact:2-conn} is formed by the following class of graphs.
For $i\geq 1$, take a path of length~$i$ with end-vertices~$u$ and~$v$ and add vertices $u'$, $u''$, $v'$,~$v''$ with edges $uu'$, $uu''$, $vv'$ and~$vv''$.
Call the resulting graph~$H_i$ (see also \figurename~\ref{fig:H_i-graphs}) and let~${\cal H}$ be the class of graphs~$H_i$ (and their induced subgraphs).
If $i\neq j$, then~$H_i$ is not an induced subgraph of~$H_j$, which implies that~${\cal H}$ is not well-quasi-ordered by the induced subgraph relation.
However, the prime graphs of~${\cal H}$ are paths, which are well-quasi-ordered by the induced subgraph relation.
This shows that the analogue to Fact~\ref{fact:prime} does not hold for well-quasi-orderability by the induced subgraph relation.
The analogue to Fact~\ref{fact:2-conn} does not hold either, as~${\cal H}$ contains no $2$-connected graphs.
\end{example}

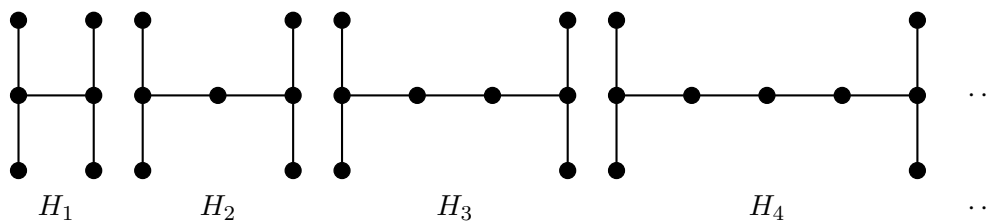
\begin{figure}
\begin{center}
\begin{tabular}{ccccc}
\begin{tikzpicture}[scale=1]
\GraphInit[vstyle=Simple]
\SetVertexSimple[MinSize=6pt]
\Vertex[x=0,y=-1]{a1}
\Vertex[x=0,y=0]{a2}
\Vertex[x=0,y=1]{a3}
\Vertex[x=1,y=-1]{c1}
\Vertex[x=1,y=0]{c2}
\Vertex[x=1,y=1]{c3}
\Edges(a1,a2,a3)
\Edges(c1,c2,c3)
\Edges(a2,c2)
\end{tikzpicture}
&
\begin{tikzpicture}[scale=1]
\GraphInit[vstyle=Simple]
\SetVertexSimple[MinSize=6pt]
\Vertex[x=0,y=-1]{a1}
\Vertex[x=0,y=0]{a2}
\Vertex[x=0,y=1]{a3}
\Vertex[x=1,y=0]{b1}
\Vertex[x=2,y=-1]{c1}
\Vertex[x=2,y=0]{c2}
\Vertex[x=2,y=1]{c3}
\Edges(a1,a2,a3)
\Edges(c1,c2,c3)
\Edges(a2,b1,c2)
\end{tikzpicture}
&
\begin{tikzpicture}[scale=1]
\GraphInit[vstyle=Simple]
\SetVertexSimple[MinSize=6pt]
\Vertex[x=0,y=-1]{a1}
\Vertex[x=0,y=0]{a2}
\Vertex[x=0,y=1]{a3}
\Vertex[x=1,y=0]{b1}
\Vertex[x=2,y=0]{b2}
\Vertex[x=3,y=-1]{c1}
\Vertex[x=3,y=0]{c2}
\Vertex[x=3,y=1]{c3}
\Edges(a1,a2,a3)
\Edges(c1,c2,c3)
\Edges(a2,b1,b2,c2)
\end{tikzpicture}
&
\begin{tikzpicture}[scale=1]
\GraphInit[vstyle=Simple]
\SetVertexSimple[MinSize=6pt]
\Vertex[x=0,y=-1]{a1}
\Vertex[x=0,y=0]{a2}
\Vertex[x=0,y=1]{a3}
\Vertex[x=1,y=0]{b1}
\Vertex[x=2,y=0]{b2}
\Vertex[x=3,y=0]{b3}
\Vertex[x=4,y=-1]{c1}
\Vertex[x=4,y=0]{c2}
\Vertex[x=4,y=1]{c3}
\Edges(a1,a2,a3)
\Edges(c1,c2,c3)
\Edges(a2,b1,b2,b3,c2)
\end{tikzpicture}
&
\begin{tikzpicture}[scale=1]
\node at (0,0) {$\cdots$};
\GraphInit[vstyle=Simple]
\SetVertexSimple[MinSize=6pt,LineColor=white,FillColor=white]
\Vertex[x=0,y=1]{blank1}
\Vertex[x=0,y=-1]{blank2}
\end{tikzpicture}\\
$H_1$ & $H_2$ & $H_3$ & $H_4$ & $\cdots$
\end{tabular}
\end{center}
\caption{\label{fig:H_i-graphs}The graphs~$H_i$ from Example~\ref{e-prime2con}.}
\end{figure}

\begin{example}
To obtain a counterexample for the analogue of Fact~\ref{fact:subdiv} we consider the class of graphs~${\cal H}_1$ consisting of the graph~$H_1$ from Example~\ref{e-prime2con} only.
This class is well-quasi-ordered by the induced subgraph relation.
However, we can obtain the class~${\cal H}$ in Example~\ref{e-prime2con}, which is not well-quasi-ordered, from~${\cal H}_1$ via edge subdivisions.
That is, for $i\geq 1$, the graph~$H_{i+1}$ is obtained from~$H_i$ by the subdivision of an edge of the path of length~$i$.
\end{example}

As these examples suggest, we need a stronger variant of well-quasi-orderability by the induced subgraph relation.
To define this variant, consider an arbitrary quasi-order $(W,\leq)$.
Then a graph~$G$ is a \defword{labelled} graph if each vertex~$v$ of~$G$ is equipped with a \defword{label}~$l_G(v)\in W$.
A graph~$F$ with labelling~$l_F$ is a \defword{labelled induced subgraph} of~$G$ if~$F$ is isomorphic to an induced subgraph~$G'$ of~$G$ such that there is an isomorphism which maps each vertex~$v$ of~$F$ to a vertex~$w$ of~$G'$ with $l_F(v)\leq l_G(w)$.
If $(W,\leq)$ is a well-quasi-order, then it is not possible for a graph class~${\cal G}$ to contain an infinite sequence of labelled graphs that is strictly-decreasing with respect to the labelled induced subgraph relation.
We say that~${\cal G}$ is well-quasi-ordered by the \defword{labelled} induced subgraph relation if for \emph{every} well-quasi-order $(W,\leq)$ the class~${\cal G}$ contains no infinite anti-chains of labelled graphs.

\begin{observation}\label{o-lwqo}
Every graph class that is well-quasi-ordered by the labelled induced subgraph relation is well-quasi-ordered by the induced subgraph relation.
\end{observation}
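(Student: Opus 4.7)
The proof is essentially a specialization of the labelled setting to a trivial label alphabet. The plan is to show that the induced subgraph relation is literally a special case of the labelled induced subgraph relation by using a one-element well-quasi-order as the label set.

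First I would fix an arbitrary graph class ${\cal G}$ that is well-quasi-ordered by the labelled induced subgraph relation. By the definition recalled just above the observation, this means that for \emph{every} well-quasi-order $(W,\leq)$, the class ${\cal G}$, with vertices labelled by elements of $W$, contains no infinite anti-chains with respect to the labelled induced subgraph relation (and also, as noted, no infinite strictly decreasing sequences).

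Next I would specialize to the trivial well-quasi-order $(W_0,\leq_0)$ where $W_0 = \{*\}$ is a single element, so that $* \leq_0 *$. This is vacuously a well-quasi-order. Now, for each graph $G \in {\cal G}$, assign the unique label $l_G(v) = *$ to every vertex $v \in V(G)$. With these labellings, the condition $l_F(v) \leq_0 l_G(w)$ reduces to $* \leq_0 *$, which always holds, so a labelled graph $F$ is a labelled induced subgraph of a labelled graph $G$ if and only if $F$ is (isomorphic to) an induced subgraph of $G$ in the usual (unlabelled) sense. Hence on the images of these trivially labelled graphs, the labelled induced subgraph relation coincides with the induced subgraph relation $\ssi$.

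The hypothesis then tells us that ${\cal G}$, with this trivial labelling, has no infinite strictly decreasing sequence and no infinite anti-chain under the labelled induced subgraph relation. Translating back, ${\cal G}$ contains no infinite strictly $\ssi$-decreasing sequence and no infinite $\ssi$-anti-chain, which by the characterization of well-quasi-orders recalled in Section~\ref{s-pre} means precisely that ${\cal G}$ is well-quasi-ordered by the induced subgraph relation, as desired. I do not anticipate any obstacle here: the argument is entirely formal, relying only on unpacking the two definitions and using the single-element well-quasi-order as a test case.
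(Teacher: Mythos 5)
Your proof is correct and is exactly the argument intended by the paper, which states this as an observation without proof: specializing to the one-element well-quasi-order makes the labelled induced subgraph relation coincide with the ordinary induced subgraph relation, and the claim follows immediately. No issues.
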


Daligault, Rao and Thomass{\'e} proved the following result.

\begin{theorem}[\cite{DRT10}]\label{t-drt10}
Every hereditary class of graphs that is well-quasi-ordered by the labelled induced subgraph relation is finitely defined.
\end{theorem}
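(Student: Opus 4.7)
The plan is to prove the contrapositive: assuming the hereditary class~${\cal G}$ is not finitely defined, I will exhibit an infinite anti-chain of labelled graphs in~${\cal G}$ under the labelled induced subgraph relation, contradicting the well-quasi-ordering assumption. Since~${\cal F}_{\cal G}$ is a minimal set of forbidden induced subgraphs, it is itself an anti-chain in the ordinary induced subgraph order, and by assumption it is infinite; enumerate it as $H_1, H_2, \ldots$. For each~$i$, fix an arbitrary vertex $v_i \in V(H_i)$ and set $G_i = H_i - v_i$; by the minimality of~$H_i$, the graph~$G_i$ lies in~${\cal G}$.

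Next I equip each~$G_i$ with a labelling from $W = \{0,1\}$, taken with the trivial (equality) well-quasi-order: set $l_i(u) = 1$ if~$u$ is adjacent to~$v_i$ in~$H_i$, and $l_i(u) = 0$ otherwise. The central claim is that for all $i \neq j$, the labelled graph $(G_i, l_i)$ is not a labelled induced subgraph of $(G_j, l_j)$. This yields an infinite sequence in~${\cal G}$ that fails the well-quasi-order condition for the labelled induced subgraph relation.

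To establish the claim, suppose for contradiction that an isomorphism~$\phi$ witnesses $(G_i, l_i)$ as a labelled induced subgraph of $(G_j, l_j)$, mapping~$G_i$ onto an induced subgraph~$G'$ of~$G_j$. Because equality is the only admissible comparison between labels, $l_i(u) = l_j(\phi(u))$ for every $u \in V(G_i)$. Extending~$\phi$ by sending~$v_i$ to~$v_j$ then produces an isomorphism from~$H_i$ onto the induced subgraph $H_j[V(G') \cup \{v_j\}]$ of~$H_j$: adjacencies inside~$G_i$ lift through~$\phi$ to adjacencies in~$G'$, and~$v_j$ is adjacent in~$H_j$ to exactly the label-$1$ vertices of~$G'$, which via~$\phi$ correspond to the neighbours of~$v_i$ in~$H_i$. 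Hence $H_i \ssi H_j$, contradicting the fact that $\{H_i\}$ is an anti-chain of distinct graphs.

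The main conceptual step, and the one I expect to require the most care to pin down, is the choice of labelling: the labels must encode exactly the information needed so that any labelled embedding of~$G_i$ into~$G_j$ automatically lifts to an (unlabelled) embedding of~$H_i$ into~$H_j$. Once this is set up correctly, the rest of the argument is essentially forced. It is worth noting that unlabelled well-quasi-orderability of~${\cal G}$ would not suffice for this strategy, since the family $\{G_i\}$ obtained by deleting one vertex from each forbidden subgraph need not form an unlabelled anti-chain; this is precisely the point at which the labelled variant of well-quasi-order is genuinely stronger than its unlabelled counterpart.
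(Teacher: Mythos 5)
Your proof is correct and is exactly the standard argument from~\cite{DRT10} (the paper only cites the result, without reproducing a proof): delete a vertex from each member of the infinite anti-chain~${\cal F}_{\cal G}$, label the remaining vertices by adjacency to the deleted vertex over the two-element anti-chain order, and observe that any label-preserving embedding of~$G_i$ into~$G_j$ would lift to an induced-subgraph embedding of~$H_i$ into~$H_j$. All the supporting details check out, including that $(\{0,1\},=)$ is a well-quasi-order and that minimality of~${\cal F}_{\cal G}$ gives both $G_i\in{\cal G}$ and the anti-chain property.
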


By Theorem~\ref{t-drt10} it is easy to prove that there exist hereditary graph classes that are well-quasi-ordered by the induced subgraph relation but not by the labelled induced subgraph relation.
Korpelainen, Lozin and Razgon~\cite{KLR13} gave the class of linear forests as an example (see also Example~\ref{e-paths} below).
The same authors conjectured that if a hereditary class of graphs~${\cal G}$ is defined by a finite set of forbidden induced subgraphs, then~${\cal G}$ is well-quasi-ordered by the induced subgraph relation if and only if it is well-quasi-ordered by the labelled induced subgraph relation.
However, Brignall, Engen and Vatter~\cite{BEV18} recently found a counterexample for this conjecture.

\begin{theorem}[\cite{BEV18}]\label{t-bev18}
There exists a graph class~${\cal G}^*$ with $|{\cal F}_{{\cal G}^*}|=14$ that is well-quasi-ordered by the induced subgraph relation but not by the labelled induced subgraph relation.
\end{theorem}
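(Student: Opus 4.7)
The plan is to establish the theorem by explicit construction: exhibit a hereditary class $\mathcal{G}^*$ with precisely $14$ minimal forbidden induced subgraphs, verify that every infinite sequence of unlabelled graphs in $\mathcal{G}^*$ contains a comparable pair (so $\mathcal{G}^*$ is well-quasi-ordered by the induced subgraph relation), and exhibit an explicit infinite anti-chain once we are allowed to attach labels from a two-element incomparable set. Since the statement is a pure existence claim, the work is concentrated in designing the class and then verifying two essentially independent properties.

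First, I would search for $\mathcal{G}^*$ among ``path-like'' classes: classes whose graphs consist of a long path (or a cycle-free ``spine'') together with a bounded number of small decorations at specific positions. The intuition is that if every graph in $\mathcal{G}^*$ is determined up to isomorphism by a single unbounded integer (spine length) and a bounded amount of extra data (which decorations appear at which end, orientation type, etc.), then well-quasi-orderability under $\ssi$ will follow. Concretely, after partitioning $\mathcal{G}^*$ into finitely many ``types'' according to the bounded data, each type becomes a family of graphs indexed by a single integer that forms a chain or near-chain under $\ssi$, and then Higman's Lemma (Lemma~\ref{lem:higman}) applied to sequences of such graphs delivers well-quasi-orderability of $\mathcal{G}^*$ itself.

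Second, I would determine the minimal forbidden induced subgraphs. The structural normal form alluded to above -- ``every graph in $\mathcal{G}^*$ is a decorated spine of one of finitely many shapes'' -- should be established by showing that any graph not of this shape contains, as an induced subgraph, one of a small explicit list of obstructions: a vertex of degree three in the wrong place, a short cycle, an incorrectly decorated endpoint, and so on. The delicate combinatorial task is to enumerate these obstructions, check that each is minimal (deleting any vertex yields a graph in $\mathcal{G}^*$), and confirm that exactly $14$ of them are needed. I expect this accounting step to be the main obstacle, since over- or under-counting by a single graph destroys the precise statement $|\mathcal{F}_{\mathcal{G}^*}| = 14$, and verifying minimality requires checking each candidate against every one-vertex deletion.

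Third, for the negative direction I would take the two-element antichain $(W,\leq)$ with $W = \{\alpha,\beta\}$ and no relations between $\alpha$ and $\beta$, and then label the two ``ends'' of the spine of a graph $G_n \in \mathcal{G}^*$ by $\alpha$ and $\beta$ in an asymmetric way (with all remaining vertices carrying, say, $\alpha$). A labelled induced embedding of $G_m$ into $G_n$ would be forced by the unique occurrence of the label $\beta$ and by the rigidity of the spine to send endpoints to endpoints and respect the spine orientation, thereby forcing $m = n$. This produces the required infinite anti-chain in the labelled order, completing the proof and showing, via Observation~\ref{o-lwqo}, that labelled well-quasi-orderability indeed fails while unlabelled well-quasi-orderability holds.
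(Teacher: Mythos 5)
The survey does not prove this theorem; it is quoted from Brignall, Engen and Vatter~\cite{BEV18}, so there is no in-paper proof to compare against and I have to judge your plan on its own terms. The first problem is that, for a pure existence statement, your proposal defers essentially all of the content: no class ${\cal G}^*$ is exhibited, none of the $14$ obstructions is written down, and the two verifications (minimality and the count, and unlabelled well-quasi-orderability) are explicitly flagged as work still to be done. That alone would make this a programme rather than a proof.

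The second problem is more serious: the shape of class you propose to search for cannot exist. If every graph in ${\cal G}^*$ is a long induced path (a ``cycle-free spine'') carrying only a bounded amount of decoration, then ${\cal G}^*$, being hereditary, contains $P_n$ for every~$n$. Every linear forest is an induced subgraph of a sufficiently long path, so no graph in ${\cal F}_{{\cal G}^*}$ may be a linear forest. But every \emph{proper} induced subgraph of a cycle is a linear forest, so the only way a cycle $C_n$ can be excluded from ${\cal G}^*$ is if $C_n$ itself lies in ${\cal F}_{{\cal G}^*}$; since ${\cal F}_{{\cal G}^*}$ is finite, ${\cal G}^*$ contains all but finitely many cycles, and the cycles form an infinite anti-chain under the induced subgraph relation. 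Hence a finitely defined hereditary class containing arbitrarily long induced paths is never well-quasi-ordered, and your Steps~1 and~2 are mutually inconsistent. (The survey's own examples illustrate exactly this tension: the class of linear forests is well-quasi-ordered but not finitely defined, while the finitely defined class of graphs of maximum degree at most~$2$ contains all cycles and so is not well-quasi-ordered.) This is precisely why the construction of~\cite{BEV18} is delicate: the graphs in their class, and in particular the graphs carrying the two-label anti-chain, must have bounded induced-path length, so the ``rigid spine'' that your labelling argument relies on has to be realised by a denser structure than an induced path, and the identification of the $14$ minimal obstructions is bound up with that density. Your Step~3 (a two-element incomparable label set pinning down the ends of a rigid structure) is the right idea for the negative direction, but it must be mounted on a construction that survives the cycle obstruction above.
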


Theorem~\ref{t-bev18} leads to the following open problem.

\begin{oproblem}\label{o-13}
Does there exist a hereditary graph class~${\cal G}$ with $|{\cal F}_{\cal G}|\leq 13$ that is well-quasi-ordered by the induced subgraph relation but not by the labelled induced subgraph relation?
\end{oproblem}

We consider an approach similar to one used for boundedness of clique-width.
A graph operation~$\gamma$ \defword{preserves} well-quasi-orderability by the labelled induced subgraph relation if, for every finite constant~$k$ and every graph class~${\cal G}$, every graph class~${\cal G}'$ that is $(k,\gamma)$-obtained from~${\cal G}$ is well-quasi-ordered by this relation if and only if~${\cal G}$ is.
We also say that a graph property~$\pi$ \defword{preserves} well-quasi-orderability by the labelled induced subgraph relation if for every graph class~${\cal G}$, the subclass of~${\cal G}$ with property~$\pi$ is well-quasi-ordered by the labelled induced subgraph relation if and only if this is the case for~${\cal G}$.

\paragraph{Facts about well-quasi orderability:}

\begin{enumerate}[\bf F{a}ct 1.]
\item \label{fact:del-vert2}Vertex deletion preserves well-quasi-orderability by the labelled induced subgraph relation~\cite{DLP18}.\\[-1em]

\begin{sloppypar}
\item \label{fact:comp2}Subgraph complementation preserves well-quasi-orderability by the labelled induced subgraph relation~\cite{DLP18}.\\[-1em]
\end{sloppypar}

\item \label{fact:bip2}Bipartite complementation preserves well-quasi-orderability by the labelled induced subgraph relation~\cite{DLP18}.\\[-1em]

\item \label{fact:prime2}Being prime preserves well-quasi-orderability by the labelled induced subgraph relation for hereditary classes~\cite{AL15}.
\end{enumerate}

For labelled well-quasi-orders, there is no analogue to Fact~\ref{fact:2-conn} (on $2$-connectivity) and~Fact~\ref{fact:subdiv} (on edge subdivision) as illustrated by the following counterexample.

\begin{example}\label{e-paths}
Let~${\cal F}$ be the (hereditary) class of linear forests.
The class~${\cal F}$ contains the class~${\cal P}$ of all paths on at least two vertices.
If we label the end-vertices of every path in~${\cal P}$ with one label and all other vertices with a second label incomparable with the first, we obtain an infinite anti-chain with respect to the labelled induced subgraph relation (see also \figurename~\ref{fig:Pi-antichain}).
Hence~${\cal F}$ is not well-quasi-ordered by the labelled induced subgraph relation.
However, the restriction of~${\cal F}$ to $2$-connected graphs is the empty class, which is well-quasi-ordered by the labelled induced subgraph relation.
Moreover, every graph of~${\cal F}$ has maximum degree at most~$2$.
However, every path of~${\cal P}$ can be obtained by repeatedly subdividing~$P_2$, and the class~$\{P_2\}$ is well-quasi-ordered by the labelled induced subgraph relation.
We conclude that Facts~\ref{fact:2-conn} and~\ref{fact:subdiv} for clique-width do not have a counterpart for well-quasi-orderability by the labelled induced subgraph relation.
\end{example}

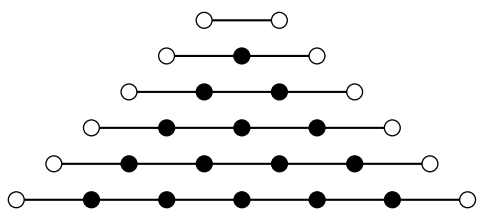
\begin{figure}
\begin{center}
\begin{tikzpicture}[scale=1]
\GraphInit[vstyle=Simple]
\SetVertexSimple[MinSize=6pt]
\SetVertexSimple[MinSize=6pt,FillColor=white]
\Vertex[x=0,y=0]{a0}
\Vertex[x=1,y=0]{az}
\Edges(a0,az)
\end{tikzpicture}

\begin{tikzpicture}[scale=1]
\GraphInit[vstyle=Simple]
\SetVertexSimple[MinSize=6pt]
\Vertex[x=1,y=0]{a1}
\SetVertexSimple[MinSize=6pt,FillColor=white]
\Vertex[x=0,y=0]{a0}
\Vertex[x=2,y=0]{az}
\Edges(a0,a1,az)
\end{tikzpicture}

\begin{tikzpicture}[scale=1]
\GraphInit[vstyle=Simple]
\SetVertexSimple[MinSize=6pt]
\Vertex[x=1,y=0]{a1}
\Vertex[x=2,y=0]{a2}
\SetVertexSimple[MinSize=6pt,FillColor=white]
\Vertex[x=0,y=0]{a0}
\Vertex[x=3,y=0]{az}
\Edges(a0,a1,a2,az)
\end{tikzpicture}

\begin{tikzpicture}[scale=1]
\GraphInit[vstyle=Simple]
\SetVertexSimple[MinSize=6pt]
\Vertex[x=1,y=0]{a1}
\Vertex[x=2,y=0]{a2}
\Vertex[x=3,y=0]{a3}
\SetVertexSimple[MinSize=6pt,FillColor=white]
\Vertex[x=0,y=0]{a0}
\Vertex[x=4,y=0]{az}
\Edges(a0,a1,a2,a3,az)
\end{tikzpicture}

\begin{tikzpicture}[scale=1]
\GraphInit[vstyle=Simple]
\SetVertexSimple[MinSize=6pt]
\Vertex[x=1,y=0]{a1}
\Vertex[x=2,y=0]{a2}
\Vertex[x=3,y=0]{a3}
\Vertex[x=4,y=0]{a4}
\SetVertexSimple[MinSize=6pt,FillColor=white]
\Vertex[x=0,y=0]{a0}
\Vertex[x=5,y=0]{az}
\Edges(a0,a1,a2,a3,a4,az)
\end{tikzpicture}

\begin{tikzpicture}[scale=1]
\GraphInit[vstyle=Simple]
\SetVertexSimple[MinSize=6pt]
\Vertex[x=1,y=0]{a1}
\Vertex[x=2,y=0]{a2}
\Vertex[x=3,y=0]{a3}
\Vertex[x=4,y=0]{a4}
\Vertex[x=5,y=0]{a5}
\SetVertexSimple[MinSize=6pt,FillColor=white]
\Vertex[x=0,y=0]{a0}
\Vertex[x=6,y=0]{az}
\Edges(a0,a1,a2,a3,a4,a5,az)
\end{tikzpicture}
\end{center}
\caption[An anti-chain of paths under the labelled induced subgraph relation.]{\label{fig:Pi-antichain}An anti-chain of paths under the labelled induced subgraph relation.
The two colours are incomparable.}
\end{figure}

As a final remark in this section, we note that it is easy to verify that graph classes of bounded neighbourhood diversity (introduced in~\cite{La12}) have bounded clique-width and are well-quasi-ordered by the labelled induced subgraph relation.
Moreover, the same property also holds for graph classes of bounded uniformicity (introduced in~\cite{KL11}) or bounded lettericity (introduced in~\cite{Petkovsek2002}); uniformicity and lettericity are more general than neighbourhood diversity.

\subsection{Results for Hereditary Graph Classes}\label{s-wqoh}

We now survey known results for well-quasi-orderability of hereditary graphs by the induced subgraph relation.
As we shall see, all known results agree with Conjecture~\ref{c-f}.
We start with a result of Damaschke.

\begin{theorem}[\cite{Da90}]\label{t-da90}
Let~$H$ be a graph.
The class of $H$-free graphs is well-quasi-ordered by the induced subgraph relation if and only if $H \ssi P_4$.
\end{theorem}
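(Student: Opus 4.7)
The plan is to prove the two directions of the equivalence separately.

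For the ``if'' direction, I would first observe that if $H \ssi P_4$, then by transitivity of the induced subgraph relation every $H$-free graph is also $P_4$-free, so the class of $H$-free graphs is contained in the class of cographs. It therefore suffices to show that the class of cographs is well-quasi-ordered by the induced subgraph relation. The key structural fact I would use is that every cograph $G$ on at least two vertices is either disconnected or has disconnected complement; equivalently, $G$ is a disjoint union or a join of strictly smaller cographs. The plan is then a minimal bad sequence argument in the style of Nash--Williams: assume for contradiction there is an infinite antichain of cographs and extract a minimal one $G_1, G_2, \ldots$; after passing to a subsequence and possibly replacing each $G_i$ by its complement (which is again a cograph, and which preserves antichain structure since complementation is an order-isomorphism of the induced subgraph relation), assume every $G_i$ is disconnected with components $C_{i,1}, \ldots, C_{i,k_i}$. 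Minimality of the bad sequence prevents any infinite antichain among the set of all components, so that set is well-quasi-ordered, and Higman's Lemma (Lemma~\ref{lem:higman}) applied to the sequences of components forces $G_i \ssi G_j$ for some $i < j$, contradicting the antichain assumption.

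For the ``only if'' direction, suppose $H \not\ssi P_4$. I claim that $H$ must contain at least one of the five graphs $K_3$, $3P_1$, $C_4$, $2P_2$, $C_5$ as an induced subgraph. For $|V(H)| \leq 2$ the hypothesis fails. For $|V(H)| = 3$, the only three-vertex graphs not induced in $P_4$ are $K_3$ and $3P_1$. For $|V(H)| = 4$, a direct check of the eleven four-vertex graphs shows that the ten which differ from $P_4$ each contain $K_3$, $3P_1$, $C_4$, or $2P_2$ as an induced subgraph. For $|V(H)| = 5$, either $H$ contains $K_3$ or $3P_1$, or $H = C_5$, since $C_5$ is the unique five-vertex graph with no triangle and no independent set of size three. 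For $|V(H)| \geq 6$, Ramsey's theorem ($R(3,3) = 6$) forces $H$ to contain $K_3$ or $3P_1$.

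Given this claim, by Observation~\ref{l-obs} the class of $F$-free graphs is contained in the class of $H$-free graphs for each ``minimal bad'' $F \in \{K_3, 3P_1, C_4, 2P_2, C_5\}$, so it suffices to exhibit an infinite antichain of $F$-free graphs for each such $F$. I would use cycles and their complements: $\{C_n : n \geq 4\}$ is an antichain of triangle-free graphs handling $F = K_3$; $\{\overline{C_n} : n \geq 4\}$ gives an antichain of $3P_1$-free graphs, handling $F = 3P_1$; restricting to $n \geq 5$ removes the unique cycle containing $C_4$ as induced subgraph (respectively the unique complement of a cycle containing $2P_2$), handling $F = C_4$ and $F = 2P_2$; and $\{C_n : n \geq 4,\; n \neq 5\}$ handles $F = C_5$. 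In every case the cycles (respectively their complements) form antichains because $C_m \ssi C_n$ only when $m = n$. The main obstacle is the ``if'' direction: setting up the minimal bad sequence argument for cographs requires invoking the Nash--Williams construction carefully so that the resulting set of components is well-quasi-ordered, after which Higman's Lemma closes the induction; the ``only if'' direction, by contrast, is routine once the five minimal forbidden subgraphs and their corresponding antichains have been identified.
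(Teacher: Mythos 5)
Your proof is correct. The paper states Theorem~\ref{t-da90} only as a cited result of Damaschke and does not reproduce an argument, so there is no in-text proof to compare against; what you give is the standard argument: for the positive direction, reduce to cographs via Observation~\ref{l-obs}, use the fact that every cograph on at least two vertices is a disjoint union or a join of smaller cographs, and close a Nash--Williams minimal-bad-sequence argument with Higman's Lemma (Lemma~\ref{lem:higman}); for the negative direction, use Ramsey's theorem and a small case analysis to reduce to the five graphs $K_3$, $3P_1$, $C_4$, $2P_2$, $C_5$, each of which is avoided by an infinite antichain of cycles or complements of cycles. The one step to write carefully in the positive direction is the complementation: first pass to an infinite subsequence in which every graph is of the same type (all disconnected, or all with disconnected complement) and only then complement the entire subsequence; the pieces used in the join case are co-components, which are still proper induced subgraphs of the original graphs, so the minimality of the bad sequence still forces the set of all pieces to be well-quasi-ordered before Higman's Lemma is applied. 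Your sketch already indicates this, so the plan is sound as written.
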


In fact, the same classification holds for the labelled induced subgraph relation~\cite{AL15}, which means that
if there is a hereditary class~${\cal G}$ which gives a positive answer to Open Problem~\ref{o-13}, then $|{\cal F}_{\cal G}|\geq 2$.
We also note that the classification of Theorem~\ref{t-da90} coincides with the one of Theorem~\ref{thm:single} for boundedness of clique-width.
In order to increase our understanding of well-quasi-orderability by the induced subgraph relation we can follow the same approaches as done in Section~\ref{s-hereditary} for clique-width.
However, considerably less work has been done on this subject.

Just as in Section~\ref{s-hereditary}, we can first restrict ourselves to $H$-free graphs contained in some other hereditary graph class.
In particular, results for $H$-free bipartite graphs, such as those in~\cite{KL11b}, have shown to be useful.
For instance, they have been used to prove results on well-quasi-orderability for $(H_1,H_2)$-free graphs~\cite{KL11}.
Combining the results for $H$-free bipartite and $H$-free triangle-free graphs of~\cite{KL11b,KL11} with the results of~\cite{AL15,Di92,DLP17} and Ramsey's Theorem for the case when $H=sP_1$ $(s\geq 1)$ yields the following two classifications (see~\cite{DLP17} for further explanation).

\begin{theorem}[\cite{DLP17}]\label{t-bipartite2}
Let~$H$ be a graph.
The class of $H$-free bipartite graphs is well-quasi-ordered by the induced subgraph relation if and only if $H=sP_1$ for some $s\geq 1$ or~$H$ is an induced subgraph of $P_1+\nobreak P_5$, $P_2+\nobreak P_4$ or~$P_6$.
\end{theorem}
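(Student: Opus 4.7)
The plan is to establish both implications of the stated equivalence.

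\textbf{Forward direction.} I treat three cases. If $H = sP_1$, then any $sP_1$-free bipartite graph has no independent set of size $s$; since both sides of a bipartition are independent sets, each side has size at most $s-1$, so the graph has fewer than $2s$ vertices. The class is finite and trivially well-quasi-ordered. If $H \ssi P_6$, $H \ssi P_1 + P_5$, or $H \ssi P_2 + P_4$, then the class of $H$-free bipartite graphs is contained in the class of $H^\star$-free bipartite graphs for the corresponding maximal $H^\star \in \{P_6,\, P_1+P_5,\, P_2+P_4\}$. Korpelainen and Lozin~\cite{KL11b} proved that each of these three maximal classes is well-quasi-ordered by the induced subgraph relation, and well-quasi-orderability is inherited by subclasses.

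\textbf{Reverse direction, non-forest case.} Suppose $H$ is not $sP_1$ for any $s$ and is not an induced subgraph of $P_6$, $P_1+P_5$, or $P_2+P_4$. I produce an infinite antichain of $H$-free bipartite graphs. If $H$ is not a linear forest --- that is, $H$ has a vertex of degree at least $3$ or contains an induced cycle --- then for any $k$ greater than the length of the longest cycle in $H$ (any $k$ will do if $H$ is non-bipartite), $H$ is not an induced subgraph of $C_{2k}$, since induced subgraphs of $C_{2k}$ are $C_{2k}$ itself and linear forests of maximum degree $2$. Hence a tail of $\{C_{2k} : k \geq 2\}$ is an infinite antichain of bipartite graphs in the $H$-free class under the induced subgraph relation.

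\textbf{Reverse direction, linear-forest case.} Otherwise $H$ is a linear forest with at least one edge. A finite case analysis on the multiset of path lengths of the components of $H$ shows that $H$ contains, as an induced subgraph, one of the three ``minimal bad'' linear forests $P_3+P_3$, $3P_1+P_2$, or $3P_2$. Briefly: if $H$ has a component of size at least $7$ or has two components of size at least $3$, then $H \si P_3+P_3$ (for $P_7$, take vertices $v_1v_2v_3$ and $v_5v_6v_7$); if $H$ has three components of size at least $2$, then $H \si 3P_2$; and in the remaining cases --- where $H$ has at most one component of size at least $3$ (of size $\leq 6$) and at most two components of size $\geq 2$ --- the failure of $H$ to lie in $P_6$, $P_1+P_5$, or $P_2+P_4$ forces sufficiently many extra isolated vertices or extra edges to yield $H \si 3P_1+P_2$. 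For each of these three minimal bad $F$, infinite antichains of $F$-free bipartite graphs are constructed in~\cite{KL11b} (see also~\cite{DLP17}); since any $F$-free bipartite graph is $H$-free whenever $F \ssi H$, these antichains lie in the class of $H$-free bipartite graphs, completing the argument.

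\textbf{Main obstacle.} The substantive combinatorial content is in the linear-forest subcase of the reverse direction, specifically the existence of infinite antichains of bipartite graphs avoiding each of $P_3+P_3$, $3P_1+P_2$, and $3P_2$. Such antichains (typically built from carefully chosen cycle-like or ladder-like bipartite graphs augmented with pendant edges) are pairwise incomparable under induced subgraph containment, and their construction is the technical core of~\cite{KL11b}. The routine part is the combinatorial reduction showing that every ``bad'' linear forest contains one of the three minimal forbidden forests, which is straightforward but requires attention to ensure exhaustiveness.
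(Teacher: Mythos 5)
Your proposal is correct and takes essentially the same route as the source: the paper gives no proof of its own here, stating the result as a citation to~\cite{DLP17} obtained by combining the trivial $sP_1$ case, the positive results for $P_6$-, $(P_1+\nobreak P_5)$- and $(P_2+\nobreak P_4)$-free bipartite graphs, and infinite antichains for even cycles and the three minimal bad linear forests $2P_3$, $3P_2$ and $3P_1+\nobreak P_2$ --- exactly the decomposition you describe, and your reduction of every bad linear forest to one of these three checks out. The only caveat is attribution: crediting all three maximal positive cases to~\cite{KL11b} alone is not quite right (the paper credits the combination to~\cite{KL11b,KL11,AL15,Di92,DLP17}, and at least one of the positive bipartite cases is new to~\cite{DLP17}), but this is a citation detail rather than a mathematical gap.
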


\begin{theorem}[\cite{DLP17}]\label{t-triangle2}
Let~$H$ be a graph.
The class of $(K_3,H)$-free graphs is well-quasi-ordered by the induced subgraph relation if and only if $H=sP_1$ for some $s\geq 1$ or~$H$ is an induced subgraph of $P_1+\nobreak P_5$, $P_2+\nobreak P_4$, or~$P_6$.
\end{theorem}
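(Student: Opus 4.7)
The proof divides into the two implications of the if-and-only-if statement. For the ``only if'' direction, observe that the class of $(K_3,H)$-free graphs contains the class of $(K_3,H)$-free bipartite graphs, which is exactly the class of $H$-free bipartite graphs. If $H$ is not bipartite, then every bipartite graph is $H$-free, so our class contains all bipartite graphs; the infinite anti-chain $C_4,C_6,C_8,\ldots$ then witnesses the failure of well-quasi-ordering. If $H$ is bipartite and not in the allowed list, Theorem~\ref{t-bipartite2} gives an infinite anti-chain of $H$-free bipartite graphs directly, so $(K_3,H)$-free graphs are not well-quasi-ordered.

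For the ``if'' direction with $H=sP_1$, Ramsey's theorem ensures that every $(K_3,sP_1)$-free graph has fewer than $R(3,s)$ vertices, so the class is finite up to isomorphism and trivially well-quasi-ordered. For $H$ an induced subgraph of $P_1+\nobreak P_5$, $P_2+\nobreak P_4$, or $P_6$, the key structural observation is that long induced odd cycles contain $H$ as an induced subgraph: $C_{2k+1}$ contains an induced $P_6$ for $2k+1\geq 7$, and an induced $P_1+\nobreak P_5$ or $P_2+\nobreak P_4$ for $2k+1\geq 9$. Consequently every $(K_3,H)$-free graph is either bipartite or contains an induced $C_5$ (and possibly a $C_7$), but no longer induced odd cycle. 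The bipartite case is handled by Theorem~\ref{t-bipartite2}. For the non-bipartite case, one would fix a shortest induced odd cycle $C$ in $G$, classify the remaining vertices of $G$ by their adjacency pattern to $V(C)$, and show that triangle-freeness and $H$-freeness together force each attachment class to be independent and to extend into the rest of $G$ in a restricted way; an application of Higman's Lemma to the resulting sequence-like decomposition then yields well-quasi-ordering.

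The principal technical obstacle is the non-bipartite subcase, since the tools in Section~\ref{s-labeled} (Facts~1--4 on vertex deletion, subgraph and bipartite complementation, and primality) preserve well-quasi-orderability only under the \emph{labelled} induced subgraph relation. A clean implementation of the plan therefore probably requires first proving the labelled version of both Theorem~\ref{t-bipartite2} (in the allowed cases) and the non-bipartite subcase, then deducing the unlabelled statement via Observation~\ref{o-lwqo}. The most delicate step is the explicit structural reduction around the short odd cycle: one must, after applying a bounded number of vertex deletions and bipartite complementations, bring the graph into a form whose analysis fits within the $H$-free bipartite framework, and this requires careful case analysis to handle all local configurations compatibly with the labelling.
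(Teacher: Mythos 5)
Your ``only if'' direction and the $H=sP_1$ case are correct and are essentially the standard argument: the class of $(K_3,H)$-free graphs contains every $H$-free bipartite graph, so Theorem~\ref{t-bipartite2} forces $H$ into the stated list, and Ramsey's theorem bounds the order of every $(K_3,sP_1)$-free graph by $R(3,s)$. Your observation that induced odd cycles of length at least~$9$ (respectively at least~$7$ when $H=P_6$) contain each of the listed graphs, so that a non-bipartite graph in the class has a shortest induced odd cycle of length~$5$ or~$7$, is also correct, and you are right that the reduction machinery of Section~\ref{s-labeled} forces one to work with the \emph{labelled} relation and descend to the unlabelled statement via Observation~\ref{o-lwqo}. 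Note that the survey itself does not prove this theorem: it is quoted from the literature and observed to be the specialization of Theorem~\ref{t-wqowqo} to $H_1=K_3$, so there is no in-paper argument to compare against beyond this framing.

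The genuine gap is that the entire positive content of the theorem for $H\in\{P_1+\nobreak P_5,\;P_2+\nobreak P_4,\;P_6\}$ is asserted rather than proved. The sentence ``one would \dots show that triangle-freeness and $H$-freeness together force each attachment class to be independent and to extend into the rest of $G$ in a restricted way; an application of Higman's Lemma \dots then yields well-quasi-ordering'' is a statement of intent, not an argument: it does not identify the decomposition, does not verify that the vertices far from the cycle (those with no neighbour on $C$) can be controlled, and does not exhibit the linearly ordered or sequence-like structure to which Lemma~\ref{lem:higman} would apply. Each of these three cases is a separate, substantial theorem requiring its own structural analysis (this is exactly the content of the cited work underlying Theorem~\ref{t-wqowqo}), and the analysis does not reduce to a routine adaptation of the clique-width argument in Proposition~\ref{p-noC5-partial2}: there, a bounded number of vertex deletions and bipartite complementations sufficed because only \emph{boundedness} of clique-width was at stake, whereas here one must additionally prove labelled well-quasi-orderability of the residual bipartite-like structure and of the interaction between the attachment classes, which is where the real difficulty lies. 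You have correctly located the obstacle, but locating it is not the same as overcoming it, so the proof is incomplete.
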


We note that the classifications of Theorem~\ref{t-bipartite2} and~\ref{t-triangle2} coincide.
In contrast, we recall that it is not yet clear if the classifications for boundedness of clique-width on $H$-free bipartite graphs and $(K_3,H)$-free graphs also coincide; see Open Problem~\ref{oprob:twographs}.

\begin{sloppypar}
We now present the state-of-the-art summary for well-quasi-orderability for classes on $(H_1,H_2)$-free graphs, which is obtained by combining results from~\cite{AL15,Di92,DLP18,DLP17,KL11b,KL11}.
Note that Theorem~\ref{t-wqowqo} implies Theorem~\ref{t-triangle2}.
\end{sloppypar}

\begin{theorem}[\cite{DLP17}]\label{t-wqowqo}
Let~${\cal G}$ be a class of graphs defined by two forbidden induced subgraphs.
Then:
\begin{enumerate}
\item ${\cal G}$ is well-quasi-ordered by the \defword{labelled} induced subgraph relation if it is equivalent\footnote{Equivalence is defined in the same way as for clique-width (see Footnote~\ref{footnote:equivalence}).
If two classes are equivalent, then one of them is well-quasi-ordered by the induced subgraph relation if and only if the other one is~\cite{KL11}.} to a class of $(H_1,H_2)$-free graphs such that one of the following holds:
\begin{enumerate}[(i)]
\item $H_1$ or~$H_2 \ssi P_4$ \\[-1.7em]
\item $H_1=K_s$ and $H_2=tP_1$ for some $s,t\geq 1$ \\[-1.7em]
\item $H_1 \ssi \paw$ and $H_2 \ssi P_1+\nobreak P_5, P_2+\nobreak P_4$ or~$P_6$ \\[-1.7em]
\item $H_1 \ssi \diamondgraph$ and $H_2 \ssi P_2+\nobreak P_3$ or~$P_5$.\\[-9pt]
\end{enumerate}
\item ${\cal G}$ is not well-quasi-ordered by the induced subgraph relation if it is equivalent to a class of $(H_1,H_2)$-free graphs such that one of the following holds:
\begin{enumerate}[(i)]
\item neither~$H_1$ nor~$H_2$ is a linear forest \\[-1.7em]
\item $H_1 \si K_3$ and $H_2 \si 3P_1+\nobreak P_2, 3P_2$ or~$2P_3$ \\[-1.7em]
\item $H_1 \si C_4$ and $H_2 \si 4P_1$ or~$2P_2$ \\[-1.7em]
\item $H_1 \si \diamondgraph$ and $H_2 \si 4P_1, P_2+\nobreak P_4$ or~$P_6$ \\[-1.7em]
\item $H_1 \si \gem$ and $H_2 \si P_1+\nobreak 2P_2$.
\end{enumerate}
\end{enumerate}
\end{theorem}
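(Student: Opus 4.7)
The plan is to treat the positive cases (well-quasi-orderability under the labelled induced subgraph relation) and the negative cases (existence of an infinite anti-chain under the induced subgraph relation) separately, and within each part to rely on the preservation facts of Section~\ref{s-labeled} together with the known results for $H$-free bipartite graphs (Theorem~\ref{t-bipartite2}) and for $H$-free graphs (Theorem~\ref{t-da90} and its labelled strengthening).

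For the positive list, case~(i) is immediate from the labelled version of Theorem~\ref{t-da90}. Case~(ii) is a Ramsey-type argument: by Ramsey's theorem a $(K_s,tP_1)$-free graph has bounded order, so even the labelled induced subgraph relation on finitely many labelled graphs is trivially a well-quasi-order (using Higman's Lemma~\ref{lem:higman} for the labels). Case~(iii) is handled by the paw-free structure theorem of Olariu, which says that every connected paw-free graph is either triangle-free or complete multipartite; for each $H_2\in\{P_1+P_5, P_2+P_4, P_6\}$, the triangle-free side is covered by Theorem~\ref{t-bipartite2}, while the complete multipartite side follows from Higman's Lemma after treating each partition class as a label. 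Case~(iv), which involves diamond-free graphs, is the most delicate: I would first use Fact~\ref{fact:prime2} to reduce to prime diamond-free graphs, in which every edge lies in at most one maximal clique so the graph is essentially determined by its edge-set modulo a bounded gadget, and then iteratively apply Facts~\ref{fact:del-vert2}--\ref{fact:bip2} to reduce to a bipartite instance that falls under Theorem~\ref{t-bipartite2}.

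For the negative list, the unifying strategy is to exhibit infinite anti-chains inside each class. Case~(i) follows from the analogue of Corollary~\ref{cor:s} for well-quasi-orderability: if neither $H_1$ nor $H_2$ is a linear forest then both contain a cycle, a claw, or a sufficiently branching tree, so the class of cycles (which is an infinite anti-chain under the induced subgraph relation) sits inside the $(H_1,H_2)$-free graphs. For cases~(ii)--(v), the standard gadget anti-chains from~\cite{KL11,KL11b,DLP18} apply: $H_1$-sided constraints such as $K_3$, $C_4$, $\diamondgraph$, $\gem$ leave room for anti-chains built from paths or cycles with pendant gadgets that respect the forbidden $H_2$; in each case it suffices to verify that one specific known anti-chain (e.g.\ cycles, the graphs~$H_i$ of Example~\ref{e-prime2con}, or the bipartite anti-chains from~\cite{KL11b}) avoids both $H_1$ and $H_2$ as induced subgraphs.

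The main obstacle, in my view, is the diamond-free case~(iv) on the positive side. Unlike the paw-free case, there is no clean structural dichotomy to reduce to a known class, so one must combine the preservation facts in a more delicate way; the difficulty is to show that after bounded modifications the class embeds into a finite union of classes each of which is handled by Theorem~\ref{t-bipartite2} or by case~(i). Verifying this for $H_2 = P_5$ in particular requires a detailed analysis of how a long induced path interacts with the clique structure of a prime diamond-free graph, and this is where the bulk of the new technical work lies. The remainder of the proof is largely an assembly of known results, organised by the forbidden pair $(H_1,H_2)$ as in Theorem~\ref{thm:classification2}, cross-checked against the complement-closure property discussed in Section~\ref{s-complement} via the equivalence relation in the footnote.
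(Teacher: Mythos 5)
The paper itself gives no proof of this theorem: it is stated as a summary assembled from~\cite{AL15,Di92,DLP18,DLP17,KL11b,KL11}, so your outline can only be judged on whether it would actually close the argument, and as written it does not. The first problem is a type mismatch on the positive side. Part~1 asserts well-quasi-orderability under the \emph{labelled} induced subgraph relation, but Theorem~\ref{t-bipartite2} (and likewise Theorem~\ref{t-triangle2}), on which you lean for cases~(iii) and~(iv), is stated only for the unlabelled relation; you need the labelled strengthenings from the underlying papers, and these do not follow formally from the unlabelled statements (the whole point of Theorem~\ref{t-bev18} is that labelled and unlabelled well-quasi-orderability can genuinely differ for finitely defined classes). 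Relatedly, in case~(iii) you route the triangle-free components of a paw-free graph through Theorem~\ref{t-bipartite2}, but triangle-free is not bipartite (odd holes), so the correct target is the $(K_3,H_2)$-free classification, again in labelled form. The Olariu decomposition and the double application of Higman's Lemma to complete multipartite components are the right ingredients, but you also need one further application of Higman's Lemma over the sequence of components to handle disconnected graphs; cases~(i) and~(ii) of the positive part are fine as you present them.

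The more serious gap is positive case~(iv), which you correctly identify as the crux but do not prove. ``Every edge lies in at most one maximal clique'' is true of diamond-free graphs, but it does not by itself yield a bounded sequence of applications of Facts~\ref{fact:del-vert2}--\ref{fact:bip2} landing in a class covered by the bipartite or triangle-free results: prime $(\diamondgraph,P_5)$-free and $(\diamondgraph,P_2+\nobreak P_3)$-free graphs can contain arbitrarily large cliques attached in complicated ways, and the published proofs of these two cases each require a bespoke structural analysis --- this is precisely the new technical content of~\cite{DLP18,DLP17}, and a proof that defers ``the bulk of the new technical work'' has not established the theorem. Similarly, on the negative side your case~(i) (large cycles avoid every graph that is not a linear forest) is complete and correct, but cases~(ii)--(v) are discharged by an appeal to ``standard gadget anti-chains'' without exhibiting, for each pair, a specific infinite anti-chain and checking that its members avoid both forbidden graphs; for instance the $(\gem,P_1+\nobreak 2P_2)$ and $(\diamondgraph,4P_1)$ cases need particular constructions from~\cite{KL11,AL15} and are not covered by cycles or by the graphs of Example~\ref{e-prime2con}.
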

Theorem~\ref{t-wqowqo} does not cover six cases, which are all still open (see also~\cite{DLP17}).

\begin{oproblem}\label{o-wqo}
Is the class of $(H_1,H_2)$-free graphs well-quasi-ordered by the induced subgraph relation when:\\[-1.5em]
\begin{enumerate}[(i)]
\item $H_1=\diamondgraph$ and $H_2 \in \{P_1+\nobreak 2P_2, P_1+\nobreak P_4\}$\\[-1.5em]
\item $H_1=\gem$ and $H_2 \in \{P_1+\nobreak P_4, 2P_2, P_2+\nobreak P_3, P_5\}$.
\end{enumerate}
\end{oproblem}

It follows from Theorems~\ref{thm:classification2} and~\ref{t-wqowqo} that the class of $(\overline{P_1+P_4},P_2+\nobreak P_3)$-free graphs is the only class of $(H_1,H_2)$-free graphs left for which Conjecture~\ref{c-f} still needs to be verified (see also~\cite{DLP18}).

\begin{oproblem}\label{o-con}
Is Conjecture~\ref{c-f} true for the class of $(H_1,H_2)$-free graphs when $H_1=\overline{P_1+P_4}$ and $H_2=P_2+\nobreak P_3$?
\end{oproblem}

Finally, instead of the induced subgraph relation or the minor relation, one can also consider other containment relations.
Ding~\cite{Di92} proved that for a graph~$H$, the class of $H$-subgraph-free graphs is well-quasi-ordered by the subgraph relation if and only if~$H$ is a linear forest.
This result can be readily generalized.

\begin{sloppypar}
\begin{theorem}\label{t-di92}
Let $\{H_1,\ldots,H_p\}$ be a finite set of graphs.
The class of $(H_1,\ldots,H_p)$-subgraph-free graphs is well-quasi-ordered by the subgraph relation if and only if~$H_i$ is a linear forest for some $i \in \{1,\ldots,p\}$.
\end{theorem}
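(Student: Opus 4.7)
The plan is to generalize Ding's theorem (the $p=1$ case of this statement, cited immediately before the theorem) from a single forbidden subgraph to any finite family, with cycles serving as the universal antichain.

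For the ``if'' direction, I will argue that if some $H_i$ is a linear forest, then Ding's theorem already gives that the class of $H_i$-subgraph-free graphs is well-quasi-ordered by the subgraph relation. Since the class of $(H_1,\ldots,H_p)$-subgraph-free graphs is a subclass of this, it inherits well-quasi-orderability: an infinite sequence in the subclass is an infinite sequence in the superclass, where some pair must be comparable, and this pair is automatically comparable in the subclass.

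For the ``only if'' direction, I will exhibit an explicit infinite antichain. Setting $N = \max_i |V(H_i)|$, I claim $\{C_n : n > N\}$ is an infinite antichain of $(H_1,\ldots,H_p)$-subgraph-free graphs under the subgraph relation. The key structural observation is that every proper subgraph of $C_n$ is a linear forest: deleting an edge of $C_n$ yields $P_n$, deleting a vertex yields $P_{n-1}$, and further vertex/edge deletions preserve being a linear forest. Since each $H_i$ has $|V(H_i)| \leq N < n$, we cannot have $H_i = C_n$; and since no $H_i$ is a linear forest, $H_i$ cannot be a proper subgraph of $C_n$ either. Hence each $C_n$ with $n > N$ is $(H_1,\ldots,H_p)$-subgraph-free. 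The same structural observation shows that the family is an antichain: if $m \neq n$ and $C_m$ were a subgraph of $C_n$, then $C_m$ would be a \emph{proper} subgraph of $C_n$, hence a linear forest, contradicting that $C_m$ contains a cycle.

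There is no substantive obstacle here; the argument is elementary and essentially replicates Ding's construction uniformly across all forbidden subgraphs simultaneously. The only point requiring slight care is choosing $n$ strictly greater than $\max_i |V(H_i)|$, so that in the case where some $H_i$ happens itself to be a cycle, it still cannot coincide with $C_n$.
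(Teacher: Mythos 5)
Your proof is correct and follows essentially the same route as the paper: the ``if'' direction invokes Ding's theorem and heredity of well-quasi-orderability under taking subclasses, and the ``only if'' direction exhibits long cycles as an infinite antichain avoiding all the $H_i$. The only (cosmetic) difference is in verifying that long cycles are $(H_1,\ldots,H_p)$-subgraph-free: you use the single observation that every proper subgraph of a cycle is a linear forest, whereas the paper splits into the case where $H_i$ contains a cycle (bounding by girth) and the case where $H_i$ contains a claw (bounding by maximum degree); both verifications are sound.
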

\end{sloppypar}

\begin{proof}
If~$H_i$ is a linear forest for some $i \in \{1,\ldots,p\}$, then we can apply the result of Ding~\cite{Di92}.
Now suppose that every~$H_i$ either has a cycle or an induced claw. 
We let~$g$ be the maximum girth over all~$H_i$ that contain a cycle. 
Then the set of cycles of length at least $g+1$ is an infinite antichain of $(H_1,\ldots,H_p)$-free graphs with respect to the subgraph relation.
\end{proof}

Kami\'nski, Raymond and Trunck~\cite{KRT18} and B{\l}asiok, Kami\'nski, Raymond and Trunck~\cite{BKRT18} gave classifications for the contraction relation and induced minor relation, respectively. 
We note that the connectivity condition in Theorem~\ref{t-krt18} is natural, as the edgeless graphs form an antichain under the contraction relation.
We refer to \figurename~\ref{fig:gem-crossed-house} for pictures of the graphs $\overline{2P_1+P_3}$ and $\overline{P_1+P_4}$ in Theorem~\ref{t-b2}.

\begin{theorem}[\cite{KRT18}]\label{t-krt18}
Let~$H$ be a graph.
The class of connected $H$-contraction-free graphs is well-quasi-ordered by the contraction relation if and only if $H \in \{C_3,\diamondgraph,P_1,P_2,P_3\}$.
\end{theorem}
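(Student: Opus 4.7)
The plan is to prove the two directions of the equivalence separately. For the forward (``if'') direction we must show that for each $H \in \{P_1,P_2,P_3,C_3,\diamondgraph\}$ the class of connected $H$-contraction-free graphs is well-quasi-ordered by the contraction relation. The cases $H=P_1$ and $H=P_2$ are essentially vacuous: every connected graph contracts to~$P_1$ (contract everything to a single vertex), so the class is empty; every connected graph on at least two vertices contracts to~$P_2$ (contract all but one vertex of a chosen edge, using connectedness of the remainder), so the class is~$\{K_1\}$. Both are trivially well-quasi-ordered.

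For $H=C_3$ we first show that a connected graph~$G$ contracts to $C_3=K_3$ if and only if~$G$ contains a cycle. The forward implication is immediate since contraction preserves the presence of a cycle; for the converse, split any cycle of~$G$ into three nonempty arcs, make each arc a bag of a partition witnessing the contraction, and attach each remaining vertex to an adjacent bag using connectedness. Consequently, the connected $C_3$-contraction-free graphs are exactly the trees, and Kruskal's Tree Theorem (in the form that finite trees are well-quasi-ordered under topological embedding, which for trees coincides with the contraction relation) gives the result. For $H=P_3$ and $H=\diamondgraph$ the plan is to obtain an explicit structural characterization of connected $H$-contraction-free graphs via a careful analysis of which partitions of $V(G)$ into connected bags can realize~$H$ as a quotient (for $P_3$, what is needed is that no connected proper induced subgraph~$B$ of~$G$ splits $G-B$ into two or more components; for $\diamondgraph$ a similar ``separator-style'' condition must be analyzed). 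Once the classes are described, one verifies that each reduces to a well-understood well-quasi-ordered family (complete graphs, cycles, and a few simple graph products), with well-quasi-orderability confirmed using Higman's Lemma (Lemma~\ref{lem:higman}) applied to an appropriate encoding, or a Kruskal-style argument.

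For the backward (``only if'') direction we must exhibit, for each $H\notin\{P_1,P_2,P_3,C_3,\diamondgraph\}$, an infinite antichain of connected $H$-contraction-free graphs under contraction. The plan is to construct a family $\{G_i\}_{i\geq 1}$ tailored to the structure of~$H$. Since the list of exceptional $H$'s is short and consists of very small or highly constrained graphs, one expects that as soon as~$H$ has four vertices and differs from the diamond, there is enough ``room'' to avoid~$H$ as a contraction in a suitably decorated path-like or cycle-like family. Natural candidates are subdivided double stars with differently sized heads and differing path lengths, or families of cycles augmented by pendant structures of varying size, since contracting such a graph to another member of the family can be ruled out by tracking degree sequences or cyclomatic numbers. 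A case split based on the structure of~$H$ (whether $H$ has a triangle, whether $H$ is a path, the maximum degree and cyclomatic number of~$H$) is expected.

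The main obstacle will be the antichain construction of the ``only if'' direction: one must simultaneously (a) design, for each $H$ outside the exceptional list, a family whose members avoid~$H$ as a contraction, and (b) verify rigorously that no member of the family contracts to another, which requires ruling out every possible partition of one~$G_i$ into connected bags that could yield~$G_j$. A secondary difficulty lies in the structural characterization for $H=\diamondgraph$, where the class of connected diamond-contraction-free graphs is richer than for~$C_3$, and one must combine a careful partition argument with Higman's Lemma to conclude well-quasi-orderability.
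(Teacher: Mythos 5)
This theorem is quoted in the survey from~\cite{KRT18}; the paper contains no proof of it, so there is nothing internal to compare your argument against, and your proposal has to stand on its own. As it stands it has genuine gaps. First, a local error: for trees the contraction order does \emph{not} coincide with topological embedding --- e.g.\ $K_{1,5}$ is a contraction of the double star with two degree-$4$ centres but is not a topological minor of it. What is true (and what you actually need) is only the inclusion that every topological minor of a tree is also a contraction of it, so that Kruskal's theorem, which well-quasi-orders the coarser relation as well, still saves the $C_3$ case; but this inclusion must be stated and proved, not asserted as an equality. More seriously, the two places where all the real work lies are left as intentions. For $H=\diamondgraph$ the class in question is exactly the class of connected graphs in which every block is a clique or a cycle (the survey itself records this characterization in Section~\ref{s-wqoh}), and proving that this class is well-quasi-ordered \emph{by contraction} requires a labelled Kruskal/Higman argument along the block tree that keeps track of how contractions can merge and shrink blocks; ``reduces to a well-understood family'' is not an argument, and note that your own treatment of the special case of trees was already shaky.

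For the ``only if'' direction the gap is structural as well as technical. You propose to handle ``each $H$ outside the list,'' but that is an infinite family of graphs, so you need a reduction to finitely many cases. The standard one is: if $H$ contracts to $H'$, then every $H'$-contraction-free graph is $H$-contraction-free, so an infinite antichain inside the class of connected $H'$-contraction-free graphs also witnesses non-well-quasi-orderability for $H$; hence it suffices to exhibit antichains for a finite set of small ``minimal bad'' graphs $H'$ (together with the separate observation that for disconnected $H$ no connected graph contracts to $H$ at all, so the class is all connected graphs). You neither set up this reduction nor produce a single concrete antichain with a verification that no member contracts to another --- and such verifications are delicate precisely because one must exclude \emph{every} partition into connected parts. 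Until the clique-cactus well-quasi-ordering argument and at least one fully verified antichain (plus the finite-case reduction) are supplied, the proposal is a plausible outline rather than a proof.
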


\begin{theorem}[\cite{BKRT18}]\label{t-b2}
Let~$H$ be a graph.
The class of $H$-induced-minor-free graphs is well-quasi-ordered by the induced-minor relation if and only if $H \ssi \overline{2P_1+P_3}$ or $H \ssi \overline{P_1+P_4}$.
\end{theorem}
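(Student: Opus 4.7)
The characterization of $H$ in the statement coincides with that of Theorem \ref{thm:cw-ind-min}, so one may hope to piggyback on the case analysis developed there. We handle the two implications separately.

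\emph{Necessity.} Suppose $H \not\ssi \overline{2P_1+P_3}$ and $H \not\ssi \overline{P_1+P_4}$. The plan is, for each such $H$, to exhibit an explicit infinite antichain of $H$-induced-minor-free graphs under the induced minor relation. Following the case analysis used in Theorem \ref{thm:cw-ind-min}, we partition the admissible graphs $H$ into a finite list of subcases according to which small induced subgraphs $H$ is forced to contain. For each subcase we exhibit a concrete family: natural candidates include theta graphs (two internally disjoint paths of prescribed lengths joining a common pair of endpoints), cycles with pendants attached at antipodal positions, and blow-ups of small gadgets. A numerical invariant (for instance, the multiset of path lengths in a theta graph) is preserved under both edge contractions and vertex deletions, which certifies the antichain property, while the choice of family is tailored to avoid producing $H$ as an induced minor.

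\emph{Sufficiency.} We must show that both ${\cal G}_1$, the class of $\overline{2P_1+P_3}$-induced-minor-free graphs, and ${\cal G}_2$, the class of $\overline{P_1+P_4}$-induced-minor-free graphs, are well-quasi-ordered under the induced minor relation. Neither of the two five-vertex forbidden graphs is an induced minor of the other, so ${\cal G}_1$ and ${\cal G}_2$ are incomparable and must be treated separately, although the scheme is the same in both cases. First, prove a structural decomposition theorem asserting that every graph in the class is assembled, along small (typically clique) separators, from ``atomic'' pieces that are either bounded in size or belong to a restricted easy family. Second, verify that the atomic pieces form a set that is already well-quasi-ordered by induced minor, for instance because they have bounded size or because they lie in a class (such as graphs of bounded treewidth) known to be well-quasi-ordered. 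Third, encode each graph as a tree whose nodes carry the atomic pieces as labels and whose edges correspond to the separator gluings; then apply Kruskal's tree theorem together with Higman's Lemma (Lemma \ref{lem:higman}) to lift the WQO from the labels to the trees, and hence to the original class.

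The principal obstacle is the decomposition step in the $\overline{P_1+P_4}$-induced-minor-free case. Forbidding the gem as an induced minor translates, via contraction, into a subtle global restriction: after no sequence of admissible contractions may any vertex become adjacent to four vertices that induce a $P_4$. Because induced minors are not permitted to delete edges, one cannot directly import decomposition theorems from classical minor theory, and a decomposition tailored to the induced-minor setting must be developed from scratch; this is where the bulk of the technical work lies. Once such a decomposition is in place, the lifting step via Kruskal and Higman is a routine application of standard well-quasi-order machinery.
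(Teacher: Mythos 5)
A preliminary remark: this survey does not prove Theorem~\ref{t-b2}; the result is quoted from~\cite{BKRT18}, so there is no in-paper proof to compare against. Judged on its own, your proposal is a reasonable-sounding outline, but it contains two claims that are actually false and it defers the step that constitutes the entire difficulty of the theorem.

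For the necessity direction, the families you propose do not form antichains under the induced minor relation. ``Two internally disjoint paths of prescribed lengths joining a common pair of endpoints'' is simply a cycle, and cycles are totally ordered by induced minors (contracting an edge shortens the cycle); the same collapse happens for genuine theta graphs and for cycles with pendants, because contracting an edge inside a path strictly decreases its length. Hence the asserted ``numerical invariant preserved under edge contractions'' does not exist -- this is exactly what makes the induced minor order harder than the induced subgraph order, where cycles \emph{do} form an antichain. The known infinite antichains for induced minors (for example the alternating double-wheel family used by Thomas to show that planar graphs are not well-quasi-ordered by induced minors) rely on parity/alternation gadgets rather than on lengths. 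Moreover the necessity direction must handle \emph{every} $H$ that is an induced subgraph of neither $\overline{2P_1+P_3}$ nor the gem, including graphs as small as $3P_1$ (both maximal graphs have independence number~$2$), for which the class in question is the class of graphs of independence number at most~$2$; none of your candidate families lives in, or adapts to, such classes. So the necessity half is not merely unfinished -- its central mechanism is broken.

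For the sufficiency direction, the structural decomposition that you correctly identify as ``where the bulk of the technical work lies'' is entirely deferred, so nothing is proved. Worse, the proposed fallback -- that the atomic pieces are well-quasi-ordered because they lie in ``a class (such as graphs of bounded treewidth) known to be well-quasi-ordered'' -- is false for the induced minor relation: bounded treewidth yields well-quasi-ordering under \emph{minors}, but Thomas's planar antichain has bounded treewidth, so bounded-treewidth classes need not be well-quasi-ordered by induced minors. Finally, even granting a clique-separator decomposition, lifting Kruskal's tree theorem and Higman's Lemma to the induced minor order is not ``routine'': one must show that an induced-minor embedding between two glued graphs can be normalised to respect the decompositions (contractions may merge branch sets across separators and create adjacencies between pieces), and this compatibility argument is itself a substantive part of any such proof.
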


We pose the following two open problems.

\begin{oproblem}\label{o-o2}
Determine for which pairs of graphs $(H_1,H_2)$ the class of connected $(H_1,H_2)$-contraction-free graphs is well-quasi-ordered by the contraction relation.
\end{oproblem}

\begin{oproblem}\label{o-o3}
Determine for which pairs of graphs $(H_1,H_2)$ the class of $(H_1,H_2)$-induced-minor-free graphs is well-quasi-ordered by the induced minor relation.
\end{oproblem}

For containment relations other than the induced subgraph relation we can ask the following question: does there exist a containment-closed graph class of unbounded clique-width that is well-quasi-ordered by the same containment relation?  
The Robertson-Seymour Theorem~\cite{RS04-Wagner} tells us that that the class of all (finite) graphs is well-quasi-ordered by the minor relation.
Hence, if we forbid minors, we can consider the class of all finite graphs, which has unbounded clique-width.
By Theorems~\ref{t-finite}.\ref{t-finite-subgraph} and~\ref{t-di92}, we would need to forbid an infinite set of graphs for the subgraph relation to find a positive answer to this question.
A \defword{clique-cactus graph}  is a graph in which each block is either a complete graph or a cycle (these graphs are also known as cactus block graphs).
The class of $\diamondgraph$-contraction-free graphs coincides with the class of clique-cactus graphs~\cite{KRT18}.
As complete graphs and cycles have clique-width at most~$2$ and~$4$, respectively, clique-cactus graphs have bounded clique-width due to Fact~\ref{fact:prime}.
Hence, by Theorem~\ref{t-krt18} we would need to forbid a set of at least two graphs for the contraction relation (when considering connected graphs).
The classification in Theorem~\ref{t-b2} coincides with the classification in Theorem~\ref{thm:cw-ind-min} for boundedness of the clique-width of $H$-induced-minor-free graphs.
Hence we would also need to forbid a set of at least two graphs for the induced minor relation.
We note that the hereditary graph class given in~\cite{LRZ18} (that is well-quasi-ordered by the induced subgraph relation, but has unbounded clique-width) is not closed under contractions, subgraphs or induced minors.
Hence, this class does not give a positive answer to the question for contractions, subgraphs or induced minors.

\section{Variants of Clique-Width}\label{s-con}

We have surveyed results and techniques for proving (un)boundedness of clique-width for various families of hereditary graph classes and stated a number of open problems.
We conclude our paper with a brief discussion of some other variants of clique-width.
Lozin and Rautenbach~\cite{LR07} introduced the notion of relative clique-width, whose definition is more consistent with the definition of treewidth. 
Computing relative clique-width is \NP-hard, as shown by M\"uller and Urner~\cite{MU10}, but the concept has not been studied for hereditary graph classes.
Courcelle~\cite{Co14} and F\"urer~\cite{Fu17} defined symmetric clique-width and multi-clique-width, respectively.
Both these width parameters are equivalent to clique-width~\cite{Co14,Fu17}.
As this survey focuses on boundedness of clique-width, we therefore do not discuss these parameters any further here.
Instead we focus on two other variants, namely linear clique-width (Section~\ref{s-lcw}) and power-bounded clique-width (Section~\ref{s-pcw}).

\subsection{Linear Clique-Width}\label{s-lcw}

\defword{Linear clique-width}~\cite{GW05,LR07}, also called \defword{sequential clique-width}, is defined in the same way as clique-width except that in Operation~\ref{operation:disjoint-union} (the disjoint union operation) of the definition of clique-width, at least one of the two graphs must consist of a single vertex.
Just as clique-width is equivalent to NLC-width and rank-width, linear clique-width is equivalent to linear NLC-width~\cite{GW05} and linear rank-width (see, for example,~\cite{Ou17}).\footnote{We note that the corresponding variants for directed graphs were recently introduced by Gurski and Rehs~\cite{GR18}.}
Moreover, just as is the case for clique-width, the notion of linear clique-width is also not well understood, and similar approaches to those for clique-width have been followed.
To illustrate this, the following analogous results to those for clique-width are known.
Computing linear clique-width is \NP-hard~\cite{FRRS09} for general graphs, but it is polynomial-time solvable for forests~\cite{AK15} and distance-hereditary graphs~\cite{AKK17}.
Moreover, graphs of linear clique-width at most~$3$ can be recognized in polynomial time~\cite{HMP11}, but the computational complexity of recognizing graphs of linear clique-width at most~$c$ is unknown for $c\geq 4$ (see~\cite{HMP12} for some partial results for $c=4$).
Another analogous result is due to Gurski and Wanke who proved the following theorem (compare to Theorem~\ref{t-gw1}).

\begin{theorem}[\cite{GW07}]\label{t-gw2}
A class of graphs~${\cal G}$ has bounded path-width if and only if the class of the line graphs of graphs in~${\cal G}$ has bounded linear clique-width.
\end{theorem}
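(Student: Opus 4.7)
The plan is to prove both directions by mirroring the strategy used for Theorem~\ref{t-gw1}, replacing tree decompositions with path decompositions and general clique-width expressions with linear ones, so the ``branch-like'' nature of the decomposition matches the ``sequential'' nature of the expression.

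For the ``only if'' direction I would start with a path decomposition $(X_1,\ldots,X_m)$ of $G$ of width $k$, rewritten in the usual nice form where consecutive bags differ by a single vertex addition, vertex removal, or edge introduction. I would build a linear $O(k^2)$-expression for $L(G)$ by processing the bags left to right, adding a new vertex of $L(G)$ (i.e.\ a singleton, which is what linear clique-width permits as an operand of $\oplus$) exactly when an edge of $G$ is introduced. The labels at any step would be: $\ell_{\{u,v\}}$ for each pair of distinct vertices currently in the bag, indicating edges of $G$ whose \emph{both} endpoints are still present; $\ell_v$ for each $v$ in the current bag, indicating edges with only $v$ still present; and a single ``done'' label $\ell_\ast$ for edges whose endpoints have both been forgotten. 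Introducing an edge $uv$ then amounts to creating a fresh singleton, firing an $\eta$ operation with every label that encodes $u$ as endpoint (namely $\ell_u$ and each $\ell_{\{u,w\}}$ for $w\in X_i\setminus\{u,v\}$) and symmetrically with those encoding $v$, then relabelling the singleton to $\ell_{\{u,v\}}$. Forgetting a vertex $w$ consists of a bounded number of relabellings ($\ell_w\mapsto\ell_\ast$ and $\ell_{\{u,w\}}\mapsto\ell_u$ for each remaining $u\in X_i$). The total number of labels is $(k{+}1)+\binom{k{+}1}{2}+2$, which is bounded in terms of $k$.

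For the ``if'' direction I would take a linear $k$-expression for $L(G)$ and extract from it the order $e_1,\ldots,e_m$ in which edges of $G$ (viewed as vertices of $L(G)$) are created. Define $B_i\subseteq V(G)$ to consist of every vertex $v$ that is an endpoint of some $e_j$ with $j\le i$ and also of some $e_{j'}$ with $j'\ge i$. Each $v\in V(G)$ then appears in a contiguous interval of bags, and if $e_i=uv$ then $u,v\in B_i$, so $(B_1,\ldots,B_m)$ is already a path decomposition of $G$; the work is to bound $|B_i|$ linearly in~$k$. Here the key technical observation is that for each ``active'' vertex $v\in B_i$ there is some $e_j$ with $j\le i$ incident to $v$ whose label at time $i$ must be one that will subsequently be used (via an $\eta$ operation) to attach the next edge incident to $v$; two vertices $v\ne v'$ of $B_i$ cannot rely on the same label for this purpose without forcing spurious adjacencies in $L(G)$ between the edge incident to $v$ but not $v'$ and the future edge incident to $v'$ but not $v$. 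A careful charging argument therefore assigns distinct labels (or bounded-size bundles of labels) to distinct vertices of $B_i$, giving $|B_i|=O(k)$.

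The main obstacle will be the bound in the ``if'' direction: a single label class in the expression for $L(G)$ may contain edges of $G$ that are incident to several different vertices of $G$ simultaneously (an edge has two endpoints, after all), so the obvious injective charging of vertices of $B_i$ to labels does not work directly. The technical heart of the argument is to show that this doubling is the only obstruction, by analysing, for each label class at step $i$, the common endpoint structure of the edges it contains relative to the boundary vertices, and using the linear (non-branching) structure of the expression to rule out more complicated overlaps. Once this is done, everything else is bookkeeping analogous to the standard proof of Theorem~\ref{t-gw1}.
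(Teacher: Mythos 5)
The paper does not actually prove this statement: it is imported verbatim from Gurski and Wanke~\cite{GW07}, just like its treewidth analogue Theorem~\ref{t-gw1}, so there is no in-paper argument to compare yours against. Taken on its own terms, your reconstruction follows the same general route as the cited source (translate a path decomposition into a sequential expression, and read a path decomposition off the vertex-creation order of a linear expression), and the ``only if'' direction is complete and correct: the labels $\ell_{\{u,v\}}$, $\ell_v$, $\ell_\ast$ record, for each already-created edge of $G$, which of its endpoints survive in the current bag, which is exactly the information needed to attach a newly introduced edge $uv$ to every previously created edge sharing an endpoint with it and to nothing else; since $G$ is simple each class $\ell_{\{u,v\}}$ is a singleton, every $\oplus$ has a one-vertex operand, and $O(k^2)$ labels suffice.

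In the ``if'' direction your bags $B_i$ do form a path decomposition (up to isolated vertices of $G$, which are harmless), and charging boundary vertices to the label classes of their already-created incident edges is the right key lemma. One correction, though: the claim that two distinct vertices of $B_i$ cannot rely on the same label, i.e.\ that ``doubling is the only obstruction'', is false as stated. Three boundary vertices can share one class: put the created edges $v_1v_2$ and $v_1v_3$ in the same label class and let $v_1x$ and $v_2v_3$ be still uncreated; then $v_1,v_2,v_3$ all charge to that class, and no spurious adjacency is forced because each constrained pair of edges genuinely meets. The correct statement -- which your parenthetical about ``bounded-size bundles'' implicitly needs -- is that the number per class is a small constant: a fourth vertex $v_4$ with created edge $\{v_4,u\}$ in the same class would force $u\in\{v_1,x\}\cap\{v_2,v_3\}=\emptyset$, since every not-yet-created edge incident to a charged vertex must meet every created edge of the class (label classes only merge along the single branch of a linear expression, so a later $\eta$ attaching one of them attaches all of them). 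Carrying out this short case analysis is the actual content of the lemma and still yields $|B_i|=O(k)$, so the proof goes through; but as written that step is asserted rather than proved, and the specific constant claimed is wrong.
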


By definition, every graph class of bounded linear clique-width has bounded clique-width, but the reverse implication does not hold.
For example, recall that every $P_4$-free graph has clique-width at most~$2$ by Theorem~\ref{thm:single} and that every tree has clique-width at most~$3$ by Proposition~\ref{p-forest}.
In contrast, Gurski and Wanke~\cite{GW05} proved that the class of $P_4$-free graphs and even the class of complete binary trees have unbounded linear clique-width.
This led Brignall, Korpelainen and Vatter to consider hereditary subclasses of $P_4$-free graphs.
They proved the following dichotomy result.

\begin{theorem}[\cite{BKV15}]\label{t-triviallyperfect}
A hereditary subclass of $P_4$-free graphs has bounded linear clique-width if and only if it contains neither the class of $(C_4,P_4)$-free graphs nor the class of $(2P_2,P_4)$-free graphs.
\end{theorem}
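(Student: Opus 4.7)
\medskip
\noindent\textbf{Proof proposal for Theorem~\ref{t-triviallyperfect}.}

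The plan is to prove the two directions separately, exploiting the complementation symmetry $\overline{C_4}=2P_2$ and $\overline{P_4}=P_4$: the class of $(C_4,P_4)$-free graphs is exactly the complement of the class of $(2P_2,P_4)$-free graphs. Since linear clique-width can be shown to be preserved under subgraph complementation (mimicking the argument for Fact~\ref{fact:comp}, by running a $k$-expression in the same order with edge-adding operations toggled between edges and non-edges among the relevant label classes), the necessity direction will reduce to a single construction and the sufficiency direction need only deal with the pair of hypotheses symmetrically via the cotree.

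For the necessity direction, assume the hereditary subclass $\mathcal{G}$ of $P_4$-free graphs contains the class of $(C_4,P_4)$-free (trivially perfect) graphs; the other case then follows by complementation. It suffices to exhibit an infinite family of trivially perfect graphs with unbounded linear clique-width. Recall that trivially perfect graphs are built from single vertices by repeatedly taking disjoint unions and adding a universal vertex; equivalently, they are comparability graphs of rooted forests. First I would take the sequence $G_n$ of comparability graphs of complete binary trees of depth $n$ and argue, by a standard cut-based lower bound on linear clique-width (counting how many distinct neighbourhood types must coexist across any prefix of the linear construction), that $\mathrm{lcw}(G_n)$ grows with $n$. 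This style of argument is essentially the one used by Gurski and Wanke~\cite{GW05} to show that cographs have unbounded linear clique-width; I would adapt it by verifying that the witnesses can be chosen to be trivially perfect.

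For the sufficiency direction, let $\mathcal{G}$ be a hereditary subclass of cographs containing neither the class of $(C_4,P_4)$-free graphs nor the class of $(2P_2,P_4)$-free graphs. By heredity there exist a $(C_4,P_4)$-free graph $H_1\notin\mathcal{G}$ and a $(2P_2,P_4)$-free graph $H_2\notin\mathcal{G}$, so $\mathcal{G}\subseteq\mathrm{Free}(H_1,H_2,P_4)$, and it suffices to bound the linear clique-width of the latter class. The plan is to work with the canonical cotree $T(G)$ of each cograph $G\in\mathcal{G}$, whose internal nodes are labelled $\cup$ or $\otimes$ and strictly alternate along each root-to-leaf path. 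Observe that a trivially perfect graph corresponds to a cotree in which every $\otimes$-node has at most one non-leaf child, and dually for the complement. Let $k_1$ be the smallest integer such that every trivially perfect graph with at least $k_1$ leaves in a canonical cotree contains $H_1$ as an induced subgraph (such $k_1$ exists because trivially perfect graphs are well-quasi-ordered by the induced subgraph relation; alternatively, one can exhibit a concrete witness by iterating the ``add universal vertex to disjoint union'' construction). Define $k_2$ analogously for $H_2$. Then in $T(G)$ every $\otimes$-node has at most $k_1$ leaf children (else we could read off $H_1$), and dually every $\cup$-node has at most $k_2$ leaf children; moreover, a counting argument on the ``trivially perfect part'' hanging below each $\otimes$-node shows that its non-leaf children are also bounded in number, and symmetrically for $\cup$-nodes. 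This forces $T(G)$ to have bounded branching and, crucially, bounded alternation-depth, from which a linear $k$-expression for $G$ can be built by a depth-first traversal that processes one principal sub-cotree at a time while retaining only a bounded number of labels for the ``interface'' between already-constructed vertices and those still to come.

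The main obstacle will be the last step: turning the combinatorial restriction on the cotree into an explicit linear $k$-expression with $k$ independent of $|V(G)|$. The difficulty is that unlike ordinary clique-width, the linear variant forbids joining two non-trivial already-built subgraphs, so one must insert the vertices of each cotree subtree in a serial order while keeping track of which future join- and union-operations they will participate in; this is exactly where unbounded alternation in a cotree defeats linear clique-width. Thus the heart of the proof lies in showing that forbidding $H_1$ and $H_2$ forces the alternations between $\cup$ and $\otimes$ nodes in the cotree to come in a form that can be ``flattened'' using only boundedly many labels, and I would expect the cleanest formulation to be an induction on the height of $T(G)$ with an explicit bound of the form $k\le f(k_1,k_2)$.
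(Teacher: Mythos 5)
The paper does not prove this theorem; it is quoted from Brignall, Korpelainen and Vatter~\cite{BKV15}, so there is no in-paper proof to compare against and I am judging your argument on its own terms. Your necessity direction is on the right track: the recursively defined graphs $G_0=K_1$, $G_n=K_1\otimes(G_{n-1}+G_{n-1})$ (equivalently, comparability graphs of complete binary trees) are indeed the standard witnesses that the trivially perfect graphs have unbounded linear clique-width, the complementation symmetry is legitimate, and the lower bound can be pushed through by a ``three disjoint copies force one extra label'' lemma in the style of the pathwidth-of-trees argument --- though you only gesture at this step.

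The sufficiency direction, however, contains a genuine gap. First, the constant $k_1$ you posit does not exist: well-quasi-ordering of the trivially perfect graphs does not imply that every sufficiently large trivially perfect graph contains a \emph{fixed} $H_1$ as an induced subgraph (take $H_1=P_3$; then $K_n$ and $nP_1$ are arbitrarily large trivially perfect graphs containing no $P_3$). Second, and more fundamentally, the structural conclusion you aim for --- that the cotree has bounded branching and bounded alternation-depth --- is false and cannot be the right invariant. The class of threshold graphs, i.e.\ the $(C_4,2P_2,P_4)$-free graphs, arises from your setup with $H_1=2P_2$ and $H_2=C_4$; it has bounded linear clique-width, yet its cotrees are caterpillars of unbounded height whose internal nodes have unboundedly many leaf children. (Bounded branching plus bounded height would in any case force the graphs themselves to have bounded order, which is clearly too strong.) The parameter that actually controls linear clique-width of a cograph is a Strahler-type rank of its cotree: roughly, a node's rank exceeds the maximum rank of its children only when that maximum is attained at least twice. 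A cotree of large rank contains a deep \emph{complete binary alternating} cotree in a suitable minor-like sense, and such a cotree yields every trivially perfect graph and every co-trivially perfect graph as induced subgraphs of the corresponding cograph (since every rooted forest order embeds into the ancestor order of a sufficiently deep complete binary tree); excluding one graph of each type therefore bounds the rank, and bounded rank can then be converted into a linear $k$-expression by your depth-first strategy. Replacing ``bounded height and branching'' by ``bounded rank'' is not a cosmetic fix --- it is the heart of the theorem, and your proposal as written does not reach it.
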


We note that $(C_4,P_4)$-free graphs are also known as the \defword{trivially perfect} or \defword{quasi-threshold} graphs.

To obtain an analogous result to Theorem~\ref{thm:single}, we state the following two results.
The first one is due to Gurski.
The second can be easily derived from known results.

\begin{theorem}[\cite{Gurski06}]\label{t-gu06}
A graph has linear clique-width at most~$2$ if and only if it is $(2P_2,\overline{2P_3},P_4)$-free.
\end{theorem}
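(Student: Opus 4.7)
My plan is to prove both directions separately. For necessity, since linear clique-width is non-increasing under induced subgraphs, it suffices to show that each of $P_4$, $2P_2$, and $\overline{2P_3}$ has linear clique-width at least $3$. For $P_4$, this is immediate from Example~\ref{e-p4}: its ordinary clique-width is $3$, and linear clique-width is bounded below by clique-width. For $2P_2$ and $\overline{2P_3}$ I rely on the key observation that, in a linear $k$-expression, once two vertices share a label class they share a class forever---relabellings act uniformly on every label class---so in particular they have identical adjacencies to all vertices added later in the expression. For $2P_2$ with vertices $\{a,b,c,d\}$ and edges $ab,cd$, a short case analysis on the first two added vertices (split by whether they form an edge or a non-edge), combined with the observation applied to the third, yields a contradiction in every case. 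For $\overline{2P_3}$ I argue using timing constraints: every edge must be produced by an $\eta$-operation at which its endpoints occupy different label classes, while every non-edge forbids its endpoints from being in different classes at any $\eta$-operation after they both exist. The non-edges $a_1a_2,a_2a_3$ thus imply that $a_2$ is the \emph{last} of $a_1,a_2,a_3$ to be added---otherwise no $\eta$ could simultaneously separate $a_1,a_3$ (to create the edge $a_1a_3$) while keeping $a_2$ in a class shared with both---and, moreover, from the instant $a_2$ is added the set $\{a_1,a_2,a_3\}$ shares a single class at every subsequent $\eta$. Symmetrically for $b_1,b_2,b_3$. Combining these observations with the special feature of the two-label regime (any non-trivial relabel merges every existing vertex into one class, so new class-splits can only be introduced by adding fresh vertices), and the requirement to produce all nine cross-edges $a_ib_j$ without ever producing any of the four forbidden non-edges, a case analysis on the relative positions of the times at which $a_2$ and $b_2$ are added rules out every candidate linear $2$-expression.

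For sufficiency I proceed by induction on $|V(G)|$, with base case $G=K_1$ of linear clique-width $1$. For the inductive step, suppose first that $G$ is disconnected. Then $2P_2$-freeness forces all but one component to be edgeless, so $G=G'+mK_1$ for some smaller $(2P_2,\overline{2P_3},P_4)$-free graph $G'$ and some $m\ge 0$; I extend the inductive linear $2$-expression for $G'$ by relabelling everything to label $1$ and then appending $m$ label-$1$ vertex additions. Suppose instead that $G$ is connected. Since $G$ is a cograph, $\overline{G}$ is disconnected, so $G=G_1*G_2$ is a join (where $*$ denotes disjoint union with all cross-edges added); and since $\overline{2P_3}=\overline{P_3}*\overline{P_3}$ cannot appear as an induced subgraph of $G$, at least one side---say $G_1$---must be $\overline{P_3}$-free and hence complete multipartite with parts $Q_1,\ldots,Q_k$. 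By induction $G_2$ has a linear $2$-expression; I extend it by first relabelling everything to label $1$ and then processing the parts of $G_1$ one at a time. For each $Q_i$, I add all of its vertices with label $2$ (they remain pairwise non-adjacent since they share a label), apply $\eta_{1,2}$ (which connects $Q_i$ to everything built so far, matching the join structure), and relabel $2\to 1$ before moving on. The result is a valid linear $2$-expression for $G_1*G_2=G$.

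The main obstacle is the necessity argument for $\overline{2P_3}$: a naive pigeonhole on the last added vertex does not immediately give a contradiction, because the ``clean'' partition of the other five vertices into $\{a_1,a_3\}$ and $\{b_1,b_2,b_3\}$ is already uniform with respect to the sixth vertex $a_2$'s adjacencies. The contradiction instead must come from tracking the cumulative consequences of the forced ordering of edge- and non-edge-creating $\eta$-operations, combined with the rigid two-label-phase structure in which every non-trivial relabelling collapses all existing vertices into one class. I expect this case analysis to require the most care; the $2P_2$ case is quickly dispensed with, and the sufficiency direction essentially assembles two operations already familiar from elsewhere in the survey, namely prepending a cograph disjoint union (for the disconnected case) and layering a complete multipartite graph on top of a pre-built $G_2$ (for the connected join case).
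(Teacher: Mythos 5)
The paper does not actually prove this statement: Theorem~\ref{t-gu06} is quoted from Gurski~\cite{Gurski06} as a known characterization, so there is no internal proof to compare against, and your argument stands as a self-contained alternative. It is correct. The sufficiency direction is complete as written: $2P_2$-freeness reduces the disconnected case to appending isolated vertices, and in the connected case the cograph join $G=G_1\ast G_2$ with one factor $(P_1+\nobreak P_2)$-free (forced because $\overline{2P_3}=\overline{P_3}\ast\overline{P_3}$) lets you stack the independent parts of a complete multipartite factor on top of an inductively built $G_2$ via the relabel/add-with-label-$2$/$\eta_{1,2}$ loop, which is a genuine linear $2$-expression. For necessity, the $P_4$ and $2P_2$ cases are fine, and the only place you stop at a sketch, the $\overline{2P_3}$ case, does close with the ingredients you list; the cleanest way to finish is a backward argument from the last vertex rather than a free-form case split. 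Since $a_2$ must be introduced after $a_1,a_3$ and $b_2$ after $b_1,b_3$, the last vertex is $a_2$ or $b_2$, say $a_2$; the $\eta$ creating the edges $a_2b_j$ forces the label classes at that moment to be exactly $\{a_1,a_2,a_3\}$ and $\{b_1,b_2,b_3\}$, and since any effective relabel would merge all vertices into one class, the classes just before $a_2$ is added are already $\{a_1,a_3\}$ and $\{b_1,b_2,b_3\}$. Repeating this one step back, the fifth vertex must be $b_2$ (if it were $a_1$, $a_3$, $b_1$ or $b_3$ it would be born into its partner's class and the edge $a_1a_3$ or $b_1b_3$ could never be created), so just before $b_2$ the classes are $\{a_1,a_3\}$ and $\{b_1,b_3\}$; the fourth vertex then necessarily joins the class of its partner, making $a_1a_3$ or $b_1b_3$ impossible. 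Writing out that three-line descent turns your sketch into a full proof.
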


\begin{theorem}\label{thm:single2}
Let~$H$ be a graph.
The class of $H$-free graphs has bounded linear clique-width if and only if~$H$ is an induced subgraph of~$P_1+\nobreak P_2$ or~$P_3$.
Furthermore, $(P_1+\nobreak P_2)$-free graphs and $P_3$-free graphs have linear clique-width at most~$2$ and~$3$, respectively.
\end{theorem}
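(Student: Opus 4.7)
The plan is to prove both directions of the biconditional by combining explicit linear expressions for the upper bounds with two results quoted earlier in the excerpt, namely Theorem~\ref{thm:single} and the Gurski--Wanke theorem that $P_4$-free graphs have unbounded linear clique-width.

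For the sufficient direction, I would first observe that $(P_1+\nobreak P_2)$-free graphs are precisely the complete multipartite graphs: since the complement of $P_1+\nobreak P_2$ is $P_3$, a graph is $(P_1+\nobreak P_2)$-free if and only if its complement is $P_3$-free, i.e.\ a disjoint union of cliques. A complete multipartite graph with parts $V_1,\ldots,V_k$ admits a linear $2$-expression built by processing the parts in order, maintaining the invariant that the currently constructed graph carries only label~$1$: when moving to a new part, the first new vertex is added with label~$2$, made adjacent to the existing vertices via $\eta_{1,2}$, and then relabelled $2\to 1$; further vertices of the same part are introduced with label~$1$ and no new edges. For $P_3$-free graphs, which are disjoint unions of cliques, a linear $3$-expression is obtained by reserving label~$3$ for vertices in already-completed cliques and building each new clique using the standard $2$-label linear construction on labels~$1$ and~$2$; between consecutive cliques, all current label-$1$ vertices are relabelled to~$3$. (An alternative argument for $(P_1+\nobreak P_2)$-free graphs would be to verify that $P_1+\nobreak P_2$ appears as an induced subgraph of each of $2P_2$, $\overline{2P_3}$ and $P_4$, so that $(P_1+\nobreak P_2)$-free graphs are $(2P_2,\overline{2P_3},P_4)$-free, and then invoke Theorem~\ref{t-gu06}.) If $H\ssi P_1+\nobreak P_2$ or $H\ssi P_3$, then by Observation~\ref{l-obs} the class of $H$-free graphs is contained in the class of $(P_1+\nobreak P_2)$-free or $P_3$-free graphs respectively, which gives both the boundedness claim and the explicit bounds of the ``furthermore'' part.

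For the necessary direction, suppose $H$ is not an induced subgraph of either $P_1+\nobreak P_2$ or $P_3$. If $H\not\ssi P_4$, then Theorem~\ref{thm:single} gives that the class of $H$-free graphs already has unbounded clique-width, and hence unbounded linear clique-width. Otherwise $H\ssi P_4$, and a direct enumeration shows that the induced subgraphs of $P_4$ are precisely the empty graph, $P_1$, $2P_1$, $P_2$, $P_3$, $P_1+\nobreak P_2$ and $P_4$ itself; all of these except $P_4$ are induced subgraphs of $P_3$ or of $P_1+\nobreak P_2$, so the only remaining option is $H=P_4$. For this case I would quote the Gurski--Wanke result~\cite{GW05} stated immediately after Theorem~\ref{t-gw2} that $P_4$-free graphs have unbounded linear clique-width.

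The proof is essentially a routine consequence of what is already in hand: the main obstacle is bookkeeping rather than mathematics. Concretely, one must carefully enumerate the induced subgraphs of $P_4$ to confirm that $P_4$ is the unique case not already covered by $(P_1+\nobreak P_2)$-freeness or $P_3$-freeness, and verify that the described linear $k$-expressions indeed respect the ``add one vertex at a time'' restriction of linear clique-width at every step.
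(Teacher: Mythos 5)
Your overall route is the same as the paper's: reduce to $H\ssi P_4$ via Theorem~\ref{thm:single}, dispose of $H=P_4$ using the Gurski--Wanke fact that $P_4$-free graphs have unbounded linear clique-width (the paper instead invokes Theorem~\ref{t-triviallyperfect}, but since the class of all $P_4$-free graphs contains the $(C_4,P_4)$-free graphs these are the same observation), handle $P_3$-free graphs as disjoint unions of cliques with a third label reserved for finished components, and handle $(P_1+\nobreak P_2)$-free graphs via Theorem~\ref{t-gu06}. The enumeration of induced subgraphs of $P_4$ is correct, and the $P_3$-free construction is fine.

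The one step that fails is your explicit linear $2$-expression for complete multipartite graphs. You maintain the invariant that all constructed vertices carry label~$1$, join only the \emph{first} vertex of each new part to the existing graph via $\eta_{1,2}$, and then introduce the remaining vertices of that part ``with label~$1$ and no new edges.'' Those remaining vertices must be adjacent to every vertex of all \emph{earlier} parts, so this builds the wrong graph: for parts $\{a\}$ and $\{b_1,b_2\}$ your procedure produces $P_2+\nobreak P_1$ instead of $P_3$. The correct two-label construction keeps the current part on label~$2$ and the completed parts on label~$1$: each new vertex of the current part enters with label~$2$ followed by $\eta_{1,2}$ (which only creates the missing edges to earlier parts, since edges are not duplicated), and the relabelling $\rho_{2\to 1}$ is applied once, when the part is finished. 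Since you also offer the parenthetical alternative --- observing that $P_1+\nobreak P_2$ is an induced subgraph of each of $2P_2$, $\overline{2P_3}$ and $P_4$ and invoking Theorem~\ref{t-gu06} --- which is exactly the paper's argument and is correct, the proof as a whole goes through once the faulty construction is repaired or simply replaced by that alternative.
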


\begin{proof}
By Theorem~\ref{thm:single} it suffices to consider the case when~$H$ is an induced subgraph of~$P_4$
and by Theorem~\ref{t-triviallyperfect} we may assume that $H \neq P_4$.
Let~$G$ be an $H$-free graph.
If $H \ssi P_3$, then every connected component of~$G$ is a complete graph.
Complete graphs are readily seen to have linear clique-width at most~$2$. 
Hence,~$G$ has linear clique-width at most~$3$ (after creating each connected component, we relabel all of its vertices to a third label).
The only remaining case is $H=P_1+\nobreak P_2$.
Since~$P_1+\nobreak P_2$ is an induced subgraph of $2P_2$, $\overline{2P_3}$ and~$P_4$, Theorem~\ref{t-gu06} implies that~$G$ has linear clique-width at most~$2$.
\end{proof}

Theorem~\ref{thm:single2} leads to the following open problem.

\begin{oproblem}\label{o-power}
Determine for which pairs of graphs~$(H_1,H_2)$ the class of $(H_1,H_2)$-free graphs has bounded linear clique-width.
\end{oproblem}

Just as is the case for clique-width, we expect that results on boundedness of linear clique-width for $H$-free bipartite graphs would be useful for solving Open Problem~\ref{o-power}.
We therefore also pose the following open problem.

\begin{oproblem}\label{o-power2}
Determine for which graphs~$H$ the class of $H$-free bipartite graphs has bounded linear clique-width.
\end{oproblem}

Finally, we can also prove an analogous result to Theorem~\ref{t-line}.

\begin{sloppypar}
\begin{theorem}\label{t-linelinear}
Let~$\{H_1,\ldots,H_p\}$ be a finite set of graphs.
Then the class of $(H_1,\ldots,H_p)$-free line graphs has bounded linear clique-width if and only if $H_i\in {\cal T}$ for some $i \in \{1,\ldots,p\}$.
\end{theorem}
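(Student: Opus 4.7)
The plan is to mimic the proof of Theorem~\ref{t-line} almost line for line, replacing treewidth with path-width and using the path-width analogue Theorem~\ref{t-gw2} in place of Theorem~\ref{t-gw1}. The key observation driving the adaptation is that the route used in the original proof actually produces bounded path-width already: the Bienstock--Robertson--Seymour--Thomas excluded-planar-minor theorem gives bounded path-width directly (and ``consequently bounded treewidth'' is only invoked to feed Theorem~\ref{t-gw1}). So the ``if'' direction needs essentially no modification.

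For the ``if'' direction, suppose $H_i\in{\cal T}$, so $H_i=L(F)$ for some $F\in{\cal S}$. Since every component of $F$ is either a subdivided claw or a path, $F$ is a planar forest, so by Observation~\ref{o-freefree} the class of $F$-subgraph-free graphs coincides with the class of $F$-minor-free graphs, and this latter class has bounded path-width. Applying Theorem~\ref{t-gw2}, the class of line graphs of $F$-subgraph-free graphs (which, by Whitney's theorem for line graphs, coincides with the class of $H_i$-free line graphs up to negligible exceptions) has bounded linear clique-width. The class of $(H_1,\ldots,H_p)$-free line graphs is a subclass, hence also has bounded linear clique-width.

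For the ``only if'' direction, suppose $H_i\notin{\cal T}$ for every $i$. Exactly as in the proof of Theorem~\ref{t-line}, we may assume each $H_i$ is a line graph (otherwise forbidding it does nothing) and pick a component $H_i'\notin{\cal T}$ of $H_i$; then $H_i'\ne K_3$ admits a unique $F_i'$ with $L(F_i')=H_i'$ and $F_i'\notin{\cal S}$. Hence there is an integer $k_i$ such that the class of $F_i'$-subgraph-free graphs contains every $k_i$-subdivided wall, and setting $k=\max_i k_i$ the class of $(F_1',\ldots,F_p')$-subgraph-free graphs contains every $k$-subdivided wall. By Corollary~\ref{cor:walls} this class has unbounded clique-width; by the Courcelle--Olariu bound $\cw(G)\le 4\cdot 2^{\tw(G)-1}+1$ this forces unbounded treewidth, and since $\tw(G)\le\pw(G)$ we obtain unbounded path-width. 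Theorem~\ref{t-gw2} then yields that the class of $(H_1',\ldots,H_p')$-free line graphs has unbounded linear clique-width, and since this is a subclass of the class of $(H_1,\ldots,H_p)$-free line graphs, the latter has unbounded linear clique-width too.

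The argument is essentially a transcription, so there is no real obstacle; the only point that requires a moment's care is verifying that the chain ``unbounded clique-width of $k$-subdivided walls $\Rightarrow$ unbounded treewidth $\Rightarrow$ unbounded path-width'' is legitimate in the reverse direction, and this is immediate from the Courcelle--Olariu inequality together with the trivial $\tw\le\pw$. Everything else is a direct reuse of the ingredients already marshalled for Theorem~\ref{t-line}.
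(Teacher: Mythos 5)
Your proof is correct and the ``if'' direction matches the paper's argument exactly (bounded path-width from the excluded planar minor, then Theorem~\ref{t-gw2}). For the ``only if'' direction the paper takes a one-line shortcut you missed: since every linear $k$-expression is in particular a $k$-expression, bounded linear clique-width implies bounded clique-width, so unboundedness of clique-width from Theorem~\ref{t-line} immediately gives unboundedness of linear clique-width without re-running the wall/path-width argument --- your longer route through Courcelle--Olariu and $\tw\le\pw$ is valid but unnecessary.
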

\end{sloppypar}

\begin{proof}
First suppose that $H_i\in {\cal T}$ for some $i \in \{1,\ldots,p\}$.
By definition of~${\cal T}$, it follows that~$H_i$ is the line graph of some graph $F_i\in {\cal S}$.
We repeat the arguments of the proof of Theorem~\ref{t-line} to find that the class of $F_i$-subgraph-free graphs has bounded path-width.
Then, by Theorem~\ref{t-gw2}, the class of $H_i$-free graphs, and thus the class of $(H_1,\ldots,H_p)$-free graphs, has bounded linear clique-width.
Now suppose that $H_i\notin {\cal T}$ holds for every $i \in \{1,\ldots,p\}$.
By Theorem~\ref{t-line}, the class of $(H_1,\ldots,H_p)$-free line graphs has unbounded clique-width, and thus unbounded linear clique-width.
\end{proof}

\subsection{Power-Bounded Clique-Width}\label{s-pcw}

Recall that the $r$-th power of~$G$ $(r\geq 1)$ is the graph with vertex set~$V(G)$ and an edge between two vertices~$u$ and~$v$ if and only if~$u$ and~$v$ are at distance at most~$r$ from each other in~$G$.
Gurski and Wanke~\cite{GW09} proved that the clique-width of the $r$-th power of a tree is at most $r+2+\max\{\lfloor\frac{r}{2}\rfloor -1,0\}$ and that the $r$-th power of a graph~$G$ has clique-width at most $2(r+1)^{\tw(G)+1}-1$.

A graph class~${\cal G}$ has \defword{power-bounded clique-width} if there is a constant~$r$ such that the graph class consisting of all $r$-th powers of all graphs from~${\cal G}$ has bounded clique-width; otherwise~${\cal G}$ has \defword{power-unbounded clique-width}.
Hence, if a graph class has bounded clique-width, it has power-bounded clique-width (we can take $r=1$).
The reverse implication does not hold.
This follows, for example, from a comparison of Theorem~\ref{thm:single} with the following classification for $H$-free graphs of Bonomo, Grippo, Milani\v{c} and Safe.

\begin{theorem}[\cite{BGMS14}]\label{t-power1}
Let~$H$ be a graph.
Then the class of $H$-free graphs has power-bounded clique-width if and only if~$H$ is a linear forest.
\end{theorem}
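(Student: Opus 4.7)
The plan is to prove both directions using a diameter / disjoint-union-of-cliques argument one way, and two concrete bounded-degree constructions combined with Corollary~\ref{c-cwtw} the other way.

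For the ``if'' direction, suppose $H$ is a linear forest with components $P_{k_1},\ldots,P_{k_m}$, and set $N=k_1+\cdots+k_m+(m-1)$. Placing the paths along a single path with one-vertex gaps shows that $H$ is an induced subgraph of $P_N$, so every $H$-free graph is $P_N$-free. A connected $P_N$-free graph has diameter at most $N-2$, since a shortest path between two vertices at distance $d$ is chordless and witnesses an induced $P_{d+1}$. Consequently, for $r=N-2$ the $r$-th power of any $H$-free graph turns each connected component into a clique, yielding a disjoint union of cliques, which has clique-width at most~$2$.

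For the ``only if'' direction I argue contrapositively. If $H$ is not a linear forest, then either $H$ contains a cycle (and hence, via a shortest cycle, an induced $C_\ell$ where $\ell$ is the girth of $H$), or $H$ is a forest containing a vertex of degree at least~$3$, which together with three of its neighbours yields an induced $K_{1,3}$ (since a forest contains no triangles). In the first case, pick $k$ with $6(k+1)>\ell$ and take the $k$-subdivided walls $W_k^h$; these have girth $6(k+1)>\ell$, contain no induced $C_\ell$, and are therefore $H$-free. In the second case, take line graphs $L(W_0^h)$ of walls; these are $K_{1,3}$-free and hence $H$-free.

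The core technical claim is that in each family the $r$-th power has unbounded clique-width as $h\to\infty$, for every fixed $r\ge 1$. Both families consist of graphs of bounded maximum degree (at most~$3$ for $W_k^h$, at most~$4$ for $L(W_0^h)$), and taking $r$-th powers preserves bounded degree (the bound depends only on $r$ and the original maximum degree). By Corollary~\ref{c-cwtw} it therefore suffices to show that these $r$-th powers have unbounded treewidth. But the $r$-th power contains the original graph as a spanning subgraph, and treewidth is monotone under subgraphs (via minor-monotonicity), so it is enough to show the original families have unbounded treewidth. For $W_k^h$ this is immediate: $W_0^h$ is a minor of $W_k^h$ (contract each subdivision path) and walls have treewidth $\Theta(h)$. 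For $L(W_0^h)$, Theorem~\ref{t-gw1} combined with the unbounded treewidth of walls gives unbounded clique-width of $\{L(W_0^h)\}$, which in this bounded-degree setting forces unbounded treewidth by Corollary~\ref{c-cwtw} again.

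The main obstacle is ensuring that passing to the $r$-th power does not destroy the treewidth lower bound; the spanning-subgraph observation combined with the degree bound on $r$-th powers keeps Corollary~\ref{c-cwtw} applicable and closes the argument. The girth and degree calculations for subdivided walls and their line graphs are routine, and the ``if'' direction of the proof in fact yields the explicit bound $r=N-2$ with the $r$-th power having clique-width at most~$2$.
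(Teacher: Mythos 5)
The survey does not actually prove Theorem~\ref{t-power1}; it only cites~\cite{BGMS14}, so there is no in-paper proof to compare against. Your argument is correct and is essentially the standard route to this result. For the ``if'' direction, the chain linear forest $\Rightarrow$ $H\ssi P_N$ $\Rightarrow$ components of $H$-free graphs have diameter at most $N-2$ $\Rightarrow$ the $(N-2)$-th power is a disjoint union of cliques (clique-width at most~$2$) is sound; the only cosmetic issue is that for $N\leq 2$ you should take $r=\max\{1,N-2\}$ since powers are only defined for $r\geq 1$ (these cases are trivial, as the graphs are then edgeless). For the ``only if'' direction, the case split is exhaustive: a graph that is not a linear forest contains a shortest (hence induced) cycle $C_\ell$ or, being a forest with a degree-$3$ vertex, an induced $K_{1,3}$; the $k$-subdivided walls of girth $6(k+1)>\ell$ and the line graphs of walls are then $H$-free by Observation~\ref{l-obs}. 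The key technical step --- that $r$-th powers of these bounded-degree families retain unbounded clique-width --- is handled cleanly: $G\subseteq G^r$ as a spanning subgraph gives $\tw(G^r)\geq\tw(G)$, powers preserve bounded degree, and Corollary~\ref{c-cwtw} (together with Theorem~\ref{t-gw1} for the line-graph family) converts unbounded treewidth back into unbounded clique-width. I see no gap.
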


Bonomo, Grippo, Milani\v{c} and Safe also proved the following classification for $(H_1,H_2)$-free graphs when both~$H_1$ and~$H_2$ are connected.

\begin{theorem}[\cite{BGMS14}]\label{t-power2}
Let~$H_1$ and~$H_2$ be two connected graphs.
Then the class of $(H_1,H_2)$-free graphs has power-bounded clique-width if and only if
\begin{enumerate}[(i)]
\item at least one of~$H_1$ and~$H_2$ is a path, or\\[-2em]
\item $H_1=S_{1,i,j}$ for some $i,j\geq 1$ and $H_2=T_{0,i',j'}$ for some $i',j'\geq 0$.
\end{enumerate}
\end{theorem}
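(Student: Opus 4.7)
\medskip
\noindent
\textbf{Proof proposal.}
The plan is to prove both directions of the biconditional separately, with the reverse (``only if'') direction being the main source of technical work.

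For the forward direction, suppose first that one of $H_1,H_2$, say $H_1$, is a path. Then $H_1$ is a linear forest, so by Theorem~\ref{t-power1} the class of $H_1$-free graphs has power-bounded clique-width. Since taking induced subgraphs commutes with taking powers (an induced subgraph of $G^r$ is always of the form $H^r$ for some induced subgraph $H$ of $G$), the subclass of $(H_1,H_2)$-free graphs inherits power-boundedness, establishing case~(i). For case~(ii), suppose $H_1=S_{1,i,j}$ and $H_2=T_{0,i',j'}$. The idea is to fix $r$ large enough in terms of $i,j,i',j'$ and show that $G^r$ has bounded treewidth (and hence bounded clique-width, by the Corneil--Rotics bound). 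The structural content of being $S_{1,i,j}$-free forbids a subdivided claw, which, together with $T_{0,i',j'}$-freeness (forbidding a triangle with long pendants), should limit how many vertices can lie within a ball of radius $r$ around any edge in a ``long'' way: one would argue that $G^r$ is $K_{t,t}$-subgraph-free for some $t=t(i,j,i',j',r)$ and that, in addition, every biconnected piece of $G^r$ has bounded diameter, so a tree decomposition of bounded width can be assembled along the block structure of $G^r$.

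For the reverse direction, assume that neither $H_1$ nor $H_2$ is a path and condition~(ii) does not hold. We produce, for every $r\geq 1$, an infinite family of $(H_1,H_2)$-free graphs whose $r$-th powers have arbitrarily large clique-width. The case analysis proceeds as follows. If both $H_1$ and $H_2$ contain a cycle, let $g=\max(g(H_1),g(H_2))$; choose $k$ with $6(k+1)>g$ and take the family of $k$-subdivided walls. Since a shortest cycle is induced, both $H_1$ and $H_2$ contain an induced $C_{g(H_i)}$, so a graph of girth larger than $g$ is $(H_1,H_2)$-free; the $r$-th power of a $k$-subdivided wall, for $k$ chosen much larger than $r$ as well, still satisfies the hypotheses of Theorem~\ref{thm:generalunbounded}, and has unbounded clique-width. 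If both $H_1$ and $H_2$ are trees that are not paths, then both contain $K_{1,3}$ as an induced subgraph, so we work inside the class of claw-free graphs, for which line graphs of $k$-subdivided walls with $k$ large furnish witnesses (controlling powers via Corollary~\ref{c-cwtw} since these are of bounded maximum degree, combined with Theorem~\ref{t-gw1} and the fact that taking powers here does not collapse treewidth to a constant). The mixed case, where $H_1$ is a tree and $H_2$ contains a triangle, splits into two subcases: either $H_1\ne S_{1,i,j}$ (so $H_1$ has either two branch vertices, or a single branch vertex with all three legs of length at least $2$), or $H_2\ne T_{0,i',j'}$ (so $H_2$ is either not of the triangle-with-two-pendants form, or its triangle carries three long pendants). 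In each subcase one exhibits a structured family, obtained from suitable subdivisions of walls or from suitably dressed-up bipartite lift constructions, that avoids both $H_1$ and $H_2$ as induced subgraphs while witnessing unbounded clique-width of its $r$-th powers via Theorem~\ref{thm:generalunbounded}.

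The main obstacle will be the structural analysis in case~(ii) of the forward direction: one must explain why the co-presence of the tree $S_{1,i,j}$ and the triangle-based $T_{0,i',j'}$ as forbidden induced subgraphs is exactly the right balance to allow a power $G^r$ to have bounded treewidth. In particular, the short leg of length $1$ in $S_{1,i,j}$ is crucial---without it, the configuration $S_{h,i,j}$ with $h\geq 2$ would not be forbidden and one could build large ``thickened tree'' structures whose powers remain of unbounded clique-width. Correspondingly, the zero-length pendant in $T_{0,i',j'}$ is what couples the two conditions, and I expect the technical heart of the proof to be a combinatorial lemma that uses both hypotheses simultaneously to bound the radius of $2$-connected blocks in some power of $G$.
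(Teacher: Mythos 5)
First, a point of reference: the survey does not prove Theorem~\ref{t-power2} at all --- it is quoted from~\cite{BGMS14} without proof --- so there is no in-paper argument to compare yours against, and I can only judge the proposal on its own terms. On those terms, the forward direction for case~(ii), which you yourself identify as the technical heart, rests on a step that fails outright. You propose to show that $G^r$ has bounded treewidth (via $K_{t,t}$-subgraph-freeness and bounded-diameter blocks) and then pass to clique-width. But for $i,j\geq 1$ and $(i',j')\neq(0,0)$ the class of $(S_{1,i,j},T_{0,i',j'})$-free graphs contains \emph{every} complete graph: $K_n$ is $K_{1,3}$-free and hence $S_{1,i,j}$-free, and it contains no induced copy of any non-complete graph, so it is $T_{0,i',j'}$-free. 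Since $K_n^r=K_n$ has treewidth $n-1$ and contains $K_{t,t}$ as a subgraph once $n\geq 2t$, neither bounded treewidth nor $K_{t,t}$-subgraph-freeness can hold for any power of this class; the argument must bound clique-width directly, exploiting that dense pieces such as cliques are cheap for clique-width even though they are expensive for treewidth. A smaller flaw in the same direction: your parenthetical claim that every induced subgraph of $G^r$ has the form $H^r$ for an induced subgraph $H$ of $G$ is false (deleting vertices can increase distances). You do not need it --- power-bounded clique-width passes to subclasses trivially because $\{G^r:G\in{\cal G}'\}\subseteq\{G^r:G\in{\cal G}\}$ whenever ${\cal G}'\subseteq{\cal G}$ --- but the stated justification should not stand.

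The reverse direction has the correct quantifier structure (for every $r$, a family whose $r$-th powers have unbounded clique-width), and the high-girth and claw-free cases go through: a $k$-subdivided wall $W$ (or its line graph) has bounded maximum degree, $W$ is a spanning subgraph of $W^r$, so $\tw(W^r)\geq\tw(W)$ is unbounded while the maximum degree of $W^r$ stays bounded for fixed $r$, and Corollary~\ref{c-cwtw} finishes. The gap here is the mixed case, where ``suitably dressed-up bipartite lift constructions'' is not an argument, and subdivided walls cannot serve: for example, $(S_{2,2,2},K_3)$-free graphs exclude every $k$-subdivided wall with $k\geq 2$, since such walls contain induced copies of $S_{2,2,2}$ at their branch vertices. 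The witnesses actually needed are the bipartite permutation graphs (which are $S_{2,2,2}$-free and triangle-free) when the $S_{1,i,j}$ condition fails, and the unit interval graphs (which are $K_{1,3}$-free and net-free) when the $T_{0,i',j'}$ condition fails; both have power-unbounded clique-width by~\cite{BGMS14}, as noted at the end of Section~\ref{s-pcw}. Your closing intuition about why the length-$1$ leg of $S_{1,i,j}$ and the length-$0$ pendant of $T_{0,i',j'}$ are exactly the tight parameters is correct --- these two classes are precisely what enforces it --- but the proposal supplies neither those constructions nor a viable plan for the positive half of case~(ii).
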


The case when~$H_1$ or~$H_2$ is disconnected has not yet been settled.

\begin{oproblem}\label{o-power3}
Determine for which pairs of graphs~$(H_1,H_2)$ the class of $(H_1,H_2)$-free graphs has power-bounded clique-width.
\end{oproblem}

We note that analogous results to Theorems~\ref{t-unitinterval} and~\ref{t-bipper} exist for power-bounded clique-width; that is, the classes of bipartite permutation graphs and unit interval graphs have power-unbounded clique-width~\cite{BGMS14}.
For more open problems on power-bounded clique-width, we refer to~\cite{BGMS14}.

\thankyou{We thank Robert Brignall, Vincent Vatter and an anonymous reviewer for helpful comments.
The work was supported by the Leverhulme Trust Research Project Grant RPG-2016-258.}

\bibliography{mybib-short}
\myaddress
\end{document}